\documentclass{amsart}
\usepackage{mathrsfs,amssymb,amsmath,amsthm,stmaryrd}
\usepackage[all,cmtip]{xy}
\usepackage{cite}

\usepackage[T1]{fontenc}
\usepackage[french, english]{babel}

\usepackage{fancyhdr}
\fancyhead{}
\fancyhead[CO]{\fontsize{7}{9} \selectfont CONSTRUCTION OF HIGHER-DIMENSIONAL ALF CALABI-YAU METRICS}
\fancyhead[CE]{\fontsize{7}{9} \selectfont DAHENG MIN}
\fancyhead[LE,RO]{\fontsize{7}{9} \selectfont \thepage}

\pagestyle{fancy}

\newtheorem{theorem}{Theorem}[section]
\newtheorem*{theorem*}{Theorem}
\newtheorem{lemma}[theorem]{Lemma}
\newtheorem{proposition}[theorem]{Proposition}
\newtheorem{corollary}[theorem]{Corollary}

\theoremstyle{definition}
\newtheorem{definition}[theorem]{Definition}
\newtheorem{example}[theorem]{Example}

\theoremstyle{remark}
\newtheorem{remark}[theorem]{Remark}

\numberwithin{equation}{subsection}

\newcommand{\op}[1]{\operatorname{#1}}
\newcommand{\p}{\prime}

\DeclareMathOperator\SU{SU}
\DeclareMathOperator\U{U}
\DeclareMathOperator\SO{SO}
\DeclareMathOperator\Sp{Sp}

\begin{document}

\title{Construction of higher-dimensional ALF Calabi-Yau metrics}

\author{Daheng Min}
\address{Sorbonne Universit\'{e} and Universit\'{e} Paris Cit\'{e}, CNRS, IMJ-PRG, F-75005 Paris, France.}
\email{min@imj-prg.fr}
\date{June 2, 2023}

\selectlanguage{english}
\begin{abstract}
Roughly speaking, an ALF metric of real dimension $4n$ should be a metric such that it has a ($4n-1$)-dimensional asymptotic cone, the volume growth of this metric is of order $4n-1$ and its sectional curvature tends to 0 at infinity.

In this paper, we first show that the Taub-NUT deformation of a hyperk\"ahler cone with respect to a locally free $\mathbb{S}^1$-symmetry is ALF hyperk\"ahler. Using this metric at infinity, we establish the existence of ALF Calabi-Yau metric on certain crepant resolutions. In particular, we prove that there exist ALF Calabi-Yau metrics on canonical bundles of classical homogeneous Fano contact manifolds.
\end{abstract}


\maketitle

\section{Introduction}

\subsection{Motivation}
After Yau's celebrated confirmation of Calabi conjecture \cite{Yau78} which implies the existence of a Ricci-flat K\"ahler metric on a compact K\"ahler manifold satisfying certain
condition, there have been many works on non-compact Calabi-Yau metrics. Among those works, a particular class introduced by Hawking \cite{Hawking:1976jb}, namely gravitational
instanton, is of special interest. They are complete hyperk\"ahler 4-manifolds with a decaying curvature at infinity.

Under the assumption of finite energy, the gravitational instantons are classified into four types according to their `dimension $m$ at infinity' (see
\cite[Section 6.4]{sun2021collapsing} for the precise definition of the following spaces). We have ALE ($m=4$), ALF ($m=3$), ALG and ALG* ($m=2$), ALH and ALH* ($m=1$) gravitational instantons. (See also \cite{Chen-Chen1} for a classification under the assumption of a faster than quadratic curvature decay. In this case, it means that the metric is locally asymptotic to a $\mathbb{T}^{4-m}$-fibration over $\mathbb{R}^m$ near the infinity).

The gravitational instantons have been very well understood. Examples of ALE gravitational instantons are found by Eguchi and Hanson \cite{Eguchi:1978xp}, Gibbons and Hawking
\cite{Gibbons-Hawking78} and Hitchin \cite{Polygonsandgravitons}. A complete classification of ALE gravitational instantons is given by Kronheimer
\cite{Kronheimer1, Kronheimer2}, showing that ALE gravitational instantons are crepant resolutions of $\mathbb{C}^2/G$ where $G$ is a finite subgroup of $\SU(2)$. For ALF
gravitational instantons, examples of cyclic type, known as multi-Taub-NUT metrics, are constructed by Hawking \cite{Hawking:1976jb}, while Cherkis and Kapustin
\cite{Cherkis-Kaupstin1, Cherkis-Kaupstin2} find the dihedral type. The cyclic type is classified by Minerbe \cite{Minerbe} and then a complete classification of ALF instantons
is given by Chen and Chen \cite{Chen-Chen2}.

Certain aspects of ALE gravitational instantons can be generalized to higher dimensions. Bando, Kasue and Nakajima \cite{BKN} show that faster than quadratic curvature decay and
Euclidean volume growth imply ALE property. Joyce \cite{joyce2000compact} systematically studies certain resolutions of $\mathbb{C}^m/G$ where $G$ is a finite subgroup of $\U(m)$
that may not act freely on $\mathbb{C}^m\setminus\{0\}$. More generally, if we allow for the quadratic decay of curvature, or arbitrary asymptotic cone, then asymptotically conic
Calabi-Yau manifolds can be thought as a generalization of ALE instantons to higher dimensions. To this end, in addition to Joyce's works, we have works by Goto \cite{Goto} and Van
Coevering \cite{VanCoevering} which study the crepant resolution of a Calabi-Yau cone. We also have works by Conlon and Hein \cite{Conlon-Hein1} which study both resolutions and deformations of Calabi-Yau cones.

Compared to ALE instantons, less is known about how to generalize the theory of ALF instantons to higher dimensions. Chen and Li \cite{Chen-Li} show that under a faster than
quadratic curvature decay and some holonomy control, the metric is asymptotically a torus fibration over an ALE space. However, there is no known example of a higher-dimensional ALF Calabi-Yau manifold satisfying their assumptions.

In analogy to ALF instantons, we propose the following notion of a higher-dimensional ALF metric. An ALF metric of real dimension $4n$ should be
a complete metric such that it has a ($4n-1$)-dimensional asymptotic cone, the volume growth of this metric is of the order $4n-1$ and its sectional curvature tends to 0 at infinity. In
this article, we give an affirmative response to the following question: Does there exist an ALF Calabi-Yau metric of dimension strictly larger than $4$?

\subsection{Statement of results}

The first main result of this article is as follows.
\begin{theorem}\label{theorem introduction main theorem part 1}
Let $S$ be a compact connected 3-Sasakian manifold of dimension $4n-1$ ($n\geq 1$) admitting a locally free $\mathbb{S}^1$-symmetry. Let $(M,g_0, I_i, \omega_i)$ be the hyperk\"ahler cone over $S$, then for any $a > 0$, a certain deformation (known as the Taub-NUT deformation) $(M, g_a, I_i^a,\omega_i^a)$ with respect to this $\mathbb{S}^1$-symmetry is an ALF hyperk\"ahler metric on $M$ in the sense that the sectional curvature $K_{g_a}$ is bounded by $\frac{C}{\rho_a}$ for some $C>0$, the volume growth of $g_a$ is of order $4n-1$ and its asymptotic cone is $M_0\times \mathbb{R}^3$, which is a metric cone of dimension $4n-1$. Here $\rho_a$ is the distance function measured by the deformed metric $g_a$, and $M_0$ is the hyperk\"ahler quotient of $M$ with respect to the $\mathbb{S}^1$-symmetry and hyperk\"ahler moment $0\in \mathbb{R}^3$.
\end{theorem}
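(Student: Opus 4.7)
My plan is to construct $(M, g_a)$ as a hyperk\"ahler quotient of $(M, g_0) \times (\mathbb{H}, g_{\mathrm{flat}})$ by a diagonal $\mathbb{S}^1$-action, generalising the classical Taub-NUT deformation of $\mathbb{H}$. Let $V$ denote the generator of the given locally free $\mathbb{S}^1$-action on $M$ and let $\mu : M \to \mathbb{R}^3$ be its hyperk\"ahler moment map; by cone homogeneity $|\mu|$ grows quadratically with $\rho$. Equip $M \times \mathbb{H}$ with the product hyperk\"ahler structure and let $\mathbb{S}^1$ act diagonally via the given action on $M$ and by left multiplication on $\mathbb{H}$, so that the combined moment map is (schematically) $\tilde\mu = \mu + a\mu_{\mathbb{H}}$. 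Define $g_a$ as the induced hyperk\"ahler metric on $\tilde\mu^{-1}(0)/\mathbb{S}^1$. Projection to the first factor identifies this quotient bijectively with $M$, since on $\tilde\mu^{-1}(0)$ the $\mathbb{H}$-coordinate is determined by $\mu(x)$ up to the $\mathbb{S}^1$-action. Smoothness of $g_a$ away from the zero-level of $\mu$ is immediate from the freeness of $\mathbb{S}^1$ on $\mathbb{H}\setminus\{0\}$; near $\mu^{-1}(0)$ one argues via the Morse-Bott normal form of $\mu$ along the cone directions.

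Next, I would extract the Kaluza-Klein structure of $g_a$. Away from $\mu^{-1}(0)$, $M \to M/\mathbb{S}^1$ is a principal $\mathbb{S}^1$-bundle, and direct reduction of the product metric yields a schematic form
\begin{equation*}
g_a \;=\; \frac{1}{|V|_{g_0}^2 + a}\,\theta^2 + \pi^* h_a,
\end{equation*}
where $\theta$ is a connection $1$-form and $h_a$ the induced metric on the orbit space. Since $|V|_{g_0}^2 \sim \rho^2$ on the cone, the fibers have bounded length comparable to $\sqrt{a}$ at infinity, while the base metric $h_a$ is asymptotically modelled on $M_0 \times \mathbb{R}^3$: the $\mathbb{R}^3$ factor is parametrised by $\mu/\sqrt{a}$ and the transverse directions descend to the hyperk\"ahler reduction $M_0 = \mu^{-1}(0)/\mathbb{S}^1$. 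Bounded collapsing fibers over this cone base immediately give volume growth of order $4n-1$ and identify the Gromov--Hausdorff asymptotic cone as $M_0 \times \mathbb{R}^3$.

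The sectional curvature estimate $|K_{g_a}| \le C/\rho_a$ is the main obstacle. The strategy is O'Neill's formulas for the Riemannian submersion $(M, g_a) \to (M/\mathbb{S}^1, h_a)$, requiring decay bounds on three ingredients: the intrinsic curvature of $h_a$ (which inherits the $1/\rho_a^2$ decay of the metric cone $M_0\times\mathbb{R}^3$ that it approximates), the second fundamental form of the fibers (controlled by $\nabla(|V|_{g_0}^2+a)^{-1}$ and decaying like $1/\rho_a^3$), and the curvature $d\theta$ of the connection. Via the hyperk\"ahler moment identity, the last is essentially $d\mu/(|V|_{g_0}^2+a)$, which at infinity decays like $1/\rho_a$ because $|d\mu| \sim \rho_a$ while $|V|_{g_0}^2 \sim \rho_a^2$. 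Plugging these rates into O'Neill's formulas delivers $|K_{g_a}| \le C/\rho_a$. The most delicate point will be keeping these estimates uniform across the entire end of $M$, including directions along which $\mu(x)$ stays small relative to $\rho_a$; this interpolation between the regime far from $\mu^{-1}(0)$ and the asymptotic slab around it is where the technical bookkeeping lives.
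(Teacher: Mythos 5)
Your construction of $g_a$ as a hyperk\"ahler quotient of $M\times\mathbb{H}$ and the Kaluza--Klein picture of a circle of bounded length collapsing over a base asymptotic to $M_0\times\mathbb{R}^3$ are the right framework and match the paper, but two of the three claims rest on steps that do not go through as written. The serious gap is the curvature estimate. O'Neill's formulas for the submersion $(M,g_a)\to(M/\mathbb{S}^1,h_a)$ express $K_{g_a}$ in terms of $K_{h_a}$, so you need a curvature bound on the quotient metric $h_a$ as an \emph{input}, and that is exactly as unknown as $K_{g_a}$ itself. The only justification you offer is that $h_a$ ``approximates'' the cone $M_0\times\mathbb{R}^3$; but the approximation one can actually prove is $C^0$ (only Gromov--Hausdorff near the ray $\{0\}\times\mathbb{R}^3$), which gives no control on curvature. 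Worse, the asserted rate $|K_{h_a}|\le C/\rho_a^2$ is false: over $x\in\mathbb{R}^3$ the fiber of $h_a$ is the hyperk\"ahler quotient $M_x$, a \emph{resolution} of the cone $M_0$ whose exceptional set has size of order $\sqrt{|x|}$ and hence curvature of order $1/|x|\sim 1/\rho_a$ there. Your intermediate rates also conflate the cone distance $\rho$ with $\rho_a$: one has $|T|^2\sim\rho^2$ and $|dx_i|\sim\rho$, while $\rho_a$ ranges between $\rho$ and $\rho^2$ depending on whether $\rho(m)\gtrsim|x|$, so ``$|d\mu|\sim\rho_a$ and $|T|^2\sim\rho_a^2$'' fails precisely in the region you flag as delicate. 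The paper runs the submersion in the opposite direction: Bielawski's estimate (Proposition \ref{lemma of bielawski}) bounds $|K_{g_a}-K_{M\times\mathbb{H}}|$ by the square of a connection term, the total space $M\times\mathbb{H}$ has known curvature $O(1/\rho^2)$ because it is a cone times a flat factor, and the bound $C/\rho_a$ then follows from the distance distortion $\rho_a\le C\rho^2$ of Proposition \ref{estimate of rho1, generalized}. This reversal is the key idea your proposal is missing.

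The second gap is the word ``immediately'' in the volume growth and asymptotic cone step. To show that $h_a$ is close to the product cone one needs a quantitative expansion of the quotient metric in coordinates adapted to the double fibration: the fiber metrics satisfy $g_x=g_{0,0,0}+|x|H_x+O(|x|^2/\rho^4)$ and the cross terms between the $M_0$- and $\mathbb{R}^3$-directions decay (Proposition \ref{proposition asymptotic of g_x_1,0,0} and formula \eqref{g_a} in the paper); none of this computation appears in your proposal, and the closeness it yields holds only on the region $\rho(m)\gtrsim|x|$. Near the ray $\{0\}\times\mathbb{R}^3$ the exceptional sets have diameter of order $\sqrt{|x|}=o(\rho_a)$, and one must prove by explicit path-length estimates (the function $H(m,|x|)$ in the paper's proof of Proposition \ref{asymptotic cone}) that they collapse to points in the rescaled Gromov--Hausdorff limit; this is where the real work in identifying the asymptotic cone lies. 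Finally, a smaller but genuine issue with your setup: the auxiliary circle action on the flat factor must be free (translation in $q_0$, equivalently rotation of the $\mathbb{S}^1$ factor of $\mathbb{S}^1\times\mathbb{R}^3$); a rotation of $\mathbb{H}$ fixing the origin is what ``multiplication by $e^{i\theta}$'' gives, and with that choice the level set $\tilde\mu^{-1}(0)$ degenerates over $\mu^{-1}(0)\subset M$, so the quotient is no longer diffeomorphic to $M$.
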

We will explain the Taub-NUT deformation in Section \ref{section hk metrics with S1 symmetry}, here we point out that in the special case of $S = \mathbb{S}^3$, the resulting metric in Theorem \ref{theorem introduction main theorem part 1} is the Taub-NUT metric on $\mathbb{C}^2$, which is known to be ALF. The passage from the Euclidean metric on $\mathbb{C}^2$ to the Taub-NUT metric is known as the Gibbons-Hawking deformation, and the Taub-NUT deformation is a generalization of the Gibbons-Hawking deformation to higher dimensions. In this way, we think of Theorem \ref{theorem introduction main theorem part 1} as a generalization of the Taub-NUT metric. A special case of $S = \mathbb{S}^{4n-1}$ of the theorem produces the so-called Taubian-Calabi metric; in this case $M_0$ is a toric hyperk\"ahler cone depending on the choice of the $\mathbb{S}^1$-symmetry. According to \cite{gibbons1997hyperkahler}, the name Taubian-Calabi is due to Ro\v{c}ek \cite{Rocek:1983ja}. It follows that Theorem \ref{theorem introduction main theorem part 1} implies that the Taubian-Calabi metric has an ALF property. Besides the case $S = \mathbb{S}^{4n-1}$, we also give other applications of Theorem \ref{theorem introduction main theorem part 1} in Section \ref{section Examples}.

One feature of Theorem \ref{theorem introduction main theorem part 1} is that it allows us to produce many ALF hyperk\"ahler metrics on the same complex manifold. As an example, we will show that, in addition to the Taubian-Calabi metric, there exist infinitely many ALF hyperk\"ahler metrics on $\mathbb{C}^{2n}$ with different asymptotic cones (see Example \ref{example C^2n generalized}). This phenomenon also appears in the context of asymptotically conic Calabi-Yau metric, we mention the works \cite{Yangli2017}, \cite{ConlonRochon2021} and \cite{zbMATH07131296} which give counterexamples to a conjecture of Tian \cite[Remark 5.3]{zbMATH05234298}.

Another feature of Theorem \ref{theorem introduction main theorem part 1} is that the asymptotic cone $M_0\times \mathbb{R}^3$ is generally not a smooth cone. For example, $\{0\}\times \mathbb{R}^3$ may be a singular locus. We will see in Section \ref{section hk metrics with locally free S1 action} and Section \ref{section Twist coordinates} how this singularity in the asymptotic cone is resolved in the ALF metric. Examples of asymptotically conic Calabi-Yau metrics with singular asymptotic cone are studied by Joyce \cite{joyce2000compact} (QALE manifold), Sz\'{e}kelyhidi \cite{zbMATH07131296}, Yang Li \cite{Yangli2017}, Conlon, Degeratu and Rochon \cite{ConlonRochon2019, ConlonRochon2021}.

The proof of Theorem \ref{theorem introduction main theorem part 1} is based on a careful study of the Taub-NUT deformation from Section \ref{section hk metrics with S1 symmetry} to Section \ref{section Twist coordinates}.

Observe that as long as the 3-Sasakian manifold is not isometrically the unit sphere, its cone $M$ is not smooth at the vertex. Thus, we are led to consider resolution of quotients of $M$. This is the motivation of the second main result:
\begin{theorem}\label{theorem introduction main theorem part 2}
With the assumptions of Theorem \ref{theorem introduction main theorem part 1}, suppose furthermore that $n\geq 2$ and there is a finite group $\Gamma$ acting on $S$ whose extension to $M$ preserves one of the complex structures $I_1$ and its deformed K\"ahler potential $K_{\mathrm{ALF}}^a$.
Let $\pi: Y\rightarrow M/\Gamma$ be an $I_1-$holomorphic crepant resolution, then for any compactly supported K\"ahler class of $Y$ and any $c > 0$, there exists an ALF Calabi-Yau metric $\omega$ in this class which is asymptotic to $c\omega_1^a$ near the infinity. More precisely, we have
\begin{align}
|\nabla^k(\omega - c\pi^*(i\partial\bar\partial K_{\mathrm{ALF}}^a))|_\omega \leq C(k,\varepsilon)(1 + \rho_\omega)^{-4n + 3 + \varepsilon},
\end{align}
where $\varepsilon > 0$ is sufficiently small, $\rho_\omega$ is the distance from a point in $Y$ measured by $\omega$ and $k\geq 0$. Here $K_{\mathrm{ALF}}^a$ is invariant by $\Gamma$ so we think of it as a function on $M/\Gamma$.
\end{theorem}
In the work of Van Coevering \cite{VanCoevering}, asymptotically conic Calabi-Yau metrics are constructed in each compactly supported K\"ahler class of crepant resolution of Ricci-flat K\"ahler cone. So, Theorem \ref{theorem introduction main theorem part 2} should be understood as an ALF analogue of the result of \cite{VanCoevering}. We also have Theorem \ref{theorem isolated singularity} as an ALF analogue of the work of Joyce \cite{joyce2000compact} and Theorem \ref{theorem twistor space} as an ALF analogue of the work of Calabi \cite{calabi1979metriques}. Both of the two theorems are deduced from Theorem \ref{theorem introduction main theorem part 2}.

We assume $n\geq 2$ in Theorem \ref{theorem introduction main theorem part 2}. In most applications of the theorem, the underlying manifold $Y$ is generally not hyperk\"ahler, and in this case the Taub-NUT deformation is not applicable to $Y$. This turns out to be quite different from the case $n=1$, where the ALF-$A_k$ instantons are obtained by a Gibbons-Hawking deformation of the ALE instantons. This partially justifies our indirect approach. On the other hand, in the case $n=1$, there is a ``Kummer construction'' of ALF-$D_k$ instantons discussed in the work of Biquard and Minerbe \cite{biquard2011kummer}. So we may also think of Theorem \ref{theorem introduction main theorem part 2} as a higher-dimensional analogue of their work.

As for applications of Theorem \ref{theorem introduction main theorem part 2}, we have
\begin{corollary}\label{corollary introduction}
Let $S$ be one of the following homogeneous 3-Sasakian manifolds: $\Sp(n)/\Sp(n-1)$, ($n\geq 2$), $\SU(m)/\op{S}(\U(m-2)\times \U(1))$, ($m\geq 3$), $\SO(l)/(\SO(l-4)\times \Sp(1))$, ($l\geq 5$), $G_2/\Sp(1)$, then there exist infinitely many different 3-Sasakian $\mathbb{S}^1-$symmetries of $S$. Let $(M,g_0,\omega_i,I_i)$ be the hyperk\"ahler cone over $S$, fix any $\mathbb{S}^1$-symmetry of $S$ and let $(M, g_a, \omega_i^a, I_i^a)$ be the Taub-NUT deformation ($a>0$). Then for any $\varepsilon >0$, $c>0$, there exists an ALF Calabi-Yau metric on $\mathcal{K}_Z$ in the class $\varepsilon c_1(Z)$ which is asymptotic to $c\omega_1^a$. Here $Z$ is the twistor space of $S$ which is a homogeneous Fano contact manifold, $\mathcal{K}_Z$ is the canonical bundle of $Z$, and $c_1(Z)$ denotes the first Chern class of $Z$.
\end{corollary}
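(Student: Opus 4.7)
The plan is to deduce Corollary~\ref{corollary introduction} from Theorem~\ref{theorem introduction main theorem part 2} by identifying $\mathcal{K}_Z$ as an $I_1$-holomorphic crepant resolution of $M/\Gamma$ for a suitable finite cyclic group $\Gamma$.

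First I would exhibit the infinitely many 3-Sasakian $\mathbb{S}^1$-symmetries. For each of the four listed families $S = G/H$, the centralizer inside $G$ of the $Sp(1)$ of Reeb vector fields has rank at least $2$; this can be read off from the explicit embedding of $H$ in $G$. Choosing distinct rational one-parameter subgroups of a rank-$2$ subtorus of this centralizer yields infinitely many distinct $\mathbb{S}^1$-subgroups of $G$ acting on $S$ preserving all three Sasakian structures, and a generic choice is locally free since the generic stabilizer is finite.

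Next I would construct the crepant resolution. In the complex structure $I_1$, the $\mathbb{C}^*$-action obtained by complexifying the standard Reeb $\mathbb{S}^1 \subset Sp(1)$ acts freely on $M \setminus \{o\}$ with quotient the twistor space $Z$; hence $M \setminus \{o\}$ is the total space of a holomorphic $\mathbb{C}^*$-bundle over $Z$ associated to a line bundle $\mathcal{L}$, which by weight counting is a root of the contact line bundle $L$. Since $Z$ has complex dimension $2n-1$, the contact Fano relation reads $\mathcal{K}_Z = L^{-n}$, and for a suitable integer $k$ the fiberwise $k$-th power map identifies $(M \setminus \{o\})/\Gamma$ with $\mathcal{K}_Z \setminus Z_0$, where $\Gamma = \mathbb{Z}_k$ acts on fibers by $k$-th roots of unity. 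Extending over the vertex and the zero section gives a proper $I_1$-holomorphic birational morphism $\pi \colon \mathcal{K}_Z \to M/\Gamma$ contracting $Z_0$, and crepancy of $\pi$ follows from $\mathcal{K}_Z = L^{-n}$.

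Since $\Gamma$ is a subgroup of the $I_1$-holomorphic scaling $\mathbb{C}^*$ it preserves $I_1$, and since the Taub--NUT K\"ahler potential $\rho_a^2$ from Section~\ref{section hk metrics with S1 symmetry} is radial in the Reeb direction it also preserves $\rho_a^2$. Moreover $\Gamma$ commutes with any 3-Sasakian $\mathbb{S}^1$-symmetry of $S$ coming from $G$, so all hypotheses of Theorem~\ref{theorem introduction main theorem part 2} are satisfied. Because $\mathcal{K}_Z$ deformation-retracts onto the Fano manifold $Z$ and $c_1(Z)$ is ample, for any small $\epsilon > 0$ the class $\epsilon c_1(Z) \in H^2(Y,\mathbb{R})$ is represented by a K\"ahler form concentrated near $Z_0$ and differing from $c\omega_1^a$ by a compactly supported form; applying Theorem~\ref{theorem introduction main theorem part 2} then delivers the ALF Calabi--Yau metric. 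The principal obstacle is the identification in the third paragraph: explicitly matching the $\mathbb{C}^*$-bundle structure on $M \setminus \{o\}$ with the contact line bundle on $Z$, computing the exact order of $\Gamma$, and verifying crepancy; this amounts to weight bookkeeping that must be carried out consistently across all four homogeneous families.
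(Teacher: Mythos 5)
Your overall strategy coincides with the paper's: realize $\mathcal{K}_Z$ as an $I_1$-holomorphic crepant resolution of $M/\mathbb{Z}_k$ where $\mathbb{Z}_k$ is a cyclic subgroup of the Reeb circle $\mathbb{S}^1_{\xi_1}$ (with $k$ the Fano index of $Z$, or a divisor of it when $S$ is not simply connected), check that this $\Gamma$ satisfies the hypotheses of Theorem \ref{theorem introduction main theorem part 2} because $\xi_1$ commutes with $T$ and preserves $\rho^2$, $x_1$ and $x_2^2+x_3^2$, and then apply that theorem to a compactly supported K\"ahler class. The paper packages this as Theorem \ref{theorem twistor space} and quotes the identification $C(S)/\mathbb{Z}_{I(Z)}\cong\mathcal{K}_Z^\times$ from Boyer--Galicki rather than rederiving it, so your third paragraph, while more work than necessary, is not where the difficulty lies.

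The genuine gap is in your second paragraph. ``A generic choice is locally free since the generic stabilizer is finite'' is not a valid inference: local freeness requires \emph{every} stabilizer to be finite, and the dangerous points are exactly the non-generic orbits. This verification is where the bulk of the paper's Section \ref{section Examples} lives (Propositions \ref{proposition locally freeness of SU(m)}, \ref{proposition locally freeness of SO(l)}, \ref{proposition locally freeness of G_2}), and the answers are not what naive genericity suggests: for $SO(l)/(SO(l-4)\times Sp(1))$ the circle with weights $(b_1,\dots,b_s)$ is locally free iff the $|b_\beta|$ are \emph{pairwise distinct} --- all weights nonzero does not suffice --- and for $G_2/Sp(1)$ one must additionally rule out $T'-\lambda T$ becoming parallel to some $T_q$ coming from the residual $Sp(1)$, which requires an explicit pointwise argument on $N_\nu$. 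These computations are the actual content of the ``infinitely many symmetries'' claim, not the weight bookkeeping for the crepant resolution that you flag as the principal obstacle. Two smaller imprecisions: $\Gamma$ preserves the deformed potential not because it is ``radial in the Reeb direction'' but because $K_1^a=\frac{1}{2}\rho^2+a^2x_1^2+\frac{a^2}{2}(x_2^2+x_3^2)$ and the map $\Phi_a$ are $\mathbb{S}^1_{\xi_1}$-invariant/equivariant (using $L_{\xi_1}x_1=0$ and the rotation of $(x_2,x_3)$); and the statement that $\epsilon c_1(Z)$ contains an actual K\"ahler form, not merely a compactly supported cohomology class, is supplied in the paper by Calabi's metric on $\mathcal{K}_Z$ --- your deformation-retract argument gives the class but not its K\"ahlerness.
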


For example, if $S = \mathbb{S}^{4n-1} = \Sp(n)/\Sp(n-1)$, then Corollary \ref{corollary introduction} implies that there are ALF Calabi-Yau metrics on the total space $\mathcal{K}_{\mathbb{CP}^{2n-1}}$ of the canonical bundle of $\mathbb{CP}^{2n-1}$ and one of them is asymptotic to the Taubian-Calabi metric.

The proof of Theorem \ref{theorem introduction main theorem part 2} follows the approach of Tian-Yau's work \cite{zbMATH04186562, zbMATH00059791}, which is a non-compact version of the classical Calabi-Yau theorem.

Finally, let us discuss some aspects that are still open.

It is worth pointing out that Theorem \ref{theorem introduction main theorem part 2} is an existence result. Since we have already found many different ALF Calabi-Yau metrics on the same complex space, the uniqueness problem should be proposed more or less as follows: Given a prescribed asymptotic cone and K\"ahler class, is the ALF Calabi-Yau metric unique? We mention \cite{sze2020uniqueness} and \cite{conlonhein2024} as results of this type for asymptotically conic Calabi-Yau metrics.

Both Theorem \ref{theorem introduction main theorem part 1} and Theorem \ref{theorem introduction main theorem part 2} produce ALF Calabi-Yau metrics of even complex dimension. So, it is open whether there exists an ALF Calabi-Yau metric of odd complex dimension.

As we shall see in Proposition \ref{estimate of K, generalized}, the curvature decay of the Taub-NUT deformation given by Theorem \ref{theorem introduction main theorem part 1} is of order $O(\frac{1}{\rho})$. In \cite{zbMATH07698520}, it is proved that for a complete Calabi-Yau metric of maximal voulume growth, the quadratic curvature decay ($|\op{Rm}|\leq \frac{C}{\rho^2}$) is equivalent to being asymptotic to a Calabi-Yau cone with smooth link. Then it is natural to ask whether there is a similar result for non-maximal volume growth. For example, for ALF Calabi-Yau metrics, is there a relation between the quadratic curvature decay and the smoothness of the link of asymptotic cone? In the four-dimensional case, we know that ALF gravitational instantons have faster than quadratic curvature decay and smooth asymptotic cone. But in higher dimensions, we have no examples of (non-trivial) ALF Calabi-Yau metrics with quadratic curvature decay or smooth asymptotic cone, and it will be interesting to find such examples. Intuitively, guided by the case of maximal volume growth mentioned above, the author believes that the simultaneous occurrence of $|Rm| = O(\frac{1}{\rho})$ and the singular link of asymptotic cone in his construction is not a coincidence (see also Example \ref{example EH blown down} for a concrete example).

In a recent work by Apostolov and Cifarelli \cite{apostolov2023hamiltonian}, complete Ricci-flat K\"ahler metrics on $\mathbb{C}^n$ ($n\geq 2$) whose volume growth are of order $2n-1$ are constructed (\cite[Theorem 1.4]{apostolov2023hamiltonian}). It will be interesting to understand whether there is a link between their construction and Theorem \ref{theorem introduction main theorem part 1} when $M=\mathbb{C}^{2m}$.

\subsection{Organization of the article}
Let $(M,g,I_i,\omega_i)$ be a hyperk\"ahler manifold, which means that for each $i = 1,2,3$, the triple $(g,I_i,\omega_i)$ is a K\"ahler structure on $M$, and the three complex structures satisfy the relation $I_1I_2I_3 = -1$. Assume that there is an $\mathbb{R}$-action on $M$ which preserves the hyperk\"ahler structure.

Consider the hyperk\"ahler manifold $\mathbb{H}$. The $\mathbb{R}$-translation on its first real coordinate is tri-holomorphic and isometric. The hyperk\"ahler quotient of $M\times\mathbb{H}$ by $\mathbb{R}$ is known to be a Taub-NUT deformation of $M$. In Section \ref{section hk metrics with S1 symmetry}, we will give a description of the deformed hyperk\"ahler structure in Proposition \ref{deformation}, which is a natural generalization of the Gibbons-Hawking ansatz. In particular, the underlying smooth manifold of the hyperk\"ahler quotient is diffeomorphic to $M$, and we will denote by $(M,g_a,I_1^a,I_2^a,I_3^a,\omega_1^a,\omega_2^a,\omega_3^a)$ the deformed hyperk\"ahler structure.

In Section \ref{section Basic properties of the main construction}, we deduce some properties of the Taub-NUT deformation of hyperk\"ahler cone. We show in Proposition \ref{estimate of K, generalized} that the sectional curvature of $g_a$ is bounded by $\frac{C}{\rho_a}$, where $\rho_a$ is the distance function of $g_a$. In Proposition \ref{complex symplectic
structure} we show that there is a complex symplectomorphism $\Phi_a$ from $(M,I_1^a,\omega_2^a + i\omega_3^a)$ to $(M,I_1,\omega_2 + i\omega_3)$ and in Proposition \ref{deformed
kahler potential} we give a formula of $I_i^a-$K\"ahler potential of $\omega_i^a$. In Proposition \ref{Sp(1) acts by isometry} and \ref{Inner left Sp(1) action} we show that the
$\Sp(1)$-action generated by the Reeb fields on $M$ preserves the metric $g_a$ but rotates the three complex structures $I_i^a$.

In Section \ref{section hk metrics with locally free S1 action}, we assume moreover that the $\mathbb{R}$-action on the hyperk\"ahler cone $M=C(S)$ is a locally free $\mathbb{S}^1$-symmetry. For $x\in \mathbb{R}^3$, denote by $M_x = \mu^{-1}(x)/\mathbb{S}^1$ the hyperk\"ahler quotient of $M$ with respect to the $\mathbb{S}^1$-action at the moment $x$, here $\mu$ denotes the hyperk\"ahler moment map. With the help of the Morse-Bott theory, in Corollary \ref{corollary S_0 is connected} and Lemma \ref{lemma P is diffeomorphism} we will show that $M_x$ is connected. And we will show in Subsection \ref{subsection the hk quotients of M} that for $x\neq 0$, $M_x$ is a resolution of $M_0$ in a certain sense.

In Section \ref{section Twist coordinates}, we choose a coordinate system and calculate the metric tensor $g_a$ in terms of these coordinates. This enables us to deduce the
asymptotic behavior of $g_a$ in Subsection \ref{subsection the asymptotic behavior}. Moreover, in Proposition \ref{volume growth} we show that the volume growth of $g_a$ is of order
$4n-1$ where $\dim_\mathbb{R}M = 4n$ and in Proposition \ref{asymptotic cone} we prove that the asymptotic cone of $g_a$ is the product $M_0\times\mathbb{R}^3$, finishing the proof of Theorem \ref{theorem introduction main theorem part 1}.

Section \ref{section ALF CY metrics on crepant resolutions} is devoted to the proof of Theorem \ref{theorem introduction main theorem part 2} using the approach of Tian-Yau \cite{zbMATH04186562, zbMATH00059791}.
More precisely, in Subsection \ref{subsection asymptotic ALF CY metric} we construct an approximately Calabi-Yau metric $\hat\omega$ on $Y$ by gluing $\omega_Y$ and
$\omega_1^a$. After that, we apply a result of Hein \cite{Heinthesis} to show the existence of the solution of the Monge-Amp\`{e}re equation, and this in turn produces a
genuine Calabi-Yau metric.

Finally, applications of Theorem \ref{theorem introduction main theorem part 1} and Theorem \ref{theorem introduction main theorem part 2} are discussed in Section \ref{section Examples}. In addition to Corollary \ref{corollary introduction}, we will also prove an existence result for the ALF Calabi-Yau metrics on the crepant resolution of isolated singularity (see Theorem \ref{theorem isolated singularity}).

\section{Hyperk\"ahler metrics with $\mathbb{R}$-symmetry} \label{section hk metrics with S1 symmetry}

\subsection{Decomposition of the metric} \label{subsection decomposition of the metric}

Let $(M,g,I_i,\omega_i)$ be a hyperk\"ahler manifold. Suppose that there is a non-vanishing vector field $T$
on $M$ which preserves the hyperk\"ahler structure, i.e. $L_Tg=0,L_TI_i=0,L_T\omega_i=0$, for $i=1,2,3$, where $L_T$ stands for the Lie derivative.
So $T$ is Killing with respect to $g$, holomorphic with respect to $I_i$. For example, if there is a locally free $\mathbb{S}^1$-action on $M$ which preserves its hyperk\"ahler
structure, then the generator $T$ of this $\mathbb{S}^1$-action will satisfy the above conditions.

Define $\theta$ as the 1-form dual to $T$ with respect to $g$, that is, $\theta(Y)=g(T,Y)$ for any tangent vector $Y$. For $i=1,2,3$, define
$T_i=I_iT$, and define $\theta_i$ as the dual of $T_i$ with respect to $g$. It follows that $\theta_i=I_i\theta$. Since $g$ is invariant by $I_i$, it follows that $T,T_1,T_2,T_3$ are mutually orthogonal and consequently $\theta,\theta_1,\theta_2,\theta_3$ are mutually orthogonal with respect to the induced metric on $T^*M$. Since
the hyperk\"ahler structure and $T$ itself are invariant by $T$, it follows that $T_i$, $\theta,\theta_i$ are invariant by $T$.

Define $V=\frac{1}{g(T,T)}=|T|^{-2}_g$, $\eta=V\theta$, then $\eta(T)=1$. Here $V$ and $\eta$ are well defined, since we have assumed that $T$ is non-vanishing. And they are also invariant by $T$. Now
$\sqrt{V}T,\sqrt{V}T_1,\sqrt{V}T_2,\sqrt{V}T_3$ form an orthonormal set of vector fields, hence
$\sqrt{V}\theta=\frac{1}{\sqrt{V}}\eta,\sqrt{V}\theta_1,\sqrt{V}\theta_2,\sqrt{V}\theta_3$ form an orthonormal set of 1-forms.

Denote by $L\subset TM$ the vector subbundle of $TM$ generated by $T,T_1,T_2,T_3$, and denote by $L^{\perp}$ the orthogonal complement of $L$ in $TM$
with respect to $g$ so that $TM=L^{\perp}\oplus L$ and $g(L^\perp,L)=0$. Since $T_i=I_iT$, it is clear that $L$ is invariant by $I_i$, that is to say $I_iL=L$. Note
that $g$ is $I_i$-invariant, so $L^\perp$ is also invariant by $I_i$. In other words, we have $(I_iX)^\perp=I_iX^\perp$, where $X^\perp$ is the
orthogonal projection of $X$ onto $L^\perp$. In this way, we decompose at each point $m\in M$ the tangent space $T_mM$ into a direct sum of two
orthogonal $\mathbb{H}$-linear subspaces.

The above discussion shows that:

\begin{align}
g|_L=V\left(\sum_{i=1}^3\theta_i^2 + \theta^2\right)=V\sum_{i=1}^3\theta_i^2 + \frac{1}{V}\eta^2.
\end{align}
Denote by $\bar{g}$ the ``restriction'' of $g$ on $L^\perp$, that is to say $\bar{g}(X,Y)=g(X^\perp,Y^\perp)$. Then we have
\begin{align}
g=\bar{g} + V\sum_{i=1}^3\theta_i^2 + \frac{1}{V}\eta^2.
\end{align}

Locally we may choose $x_i$ to be a moment map of $T$ with respect to $\omega_i$:
\begin{align}
dx_i=-\iota_T\omega_i.
\end{align}

Then it follows that $dx_i = -\theta_i$, so by the calculations above we have
\begin{align}\label{g}
g=\bar{g} + V\sum_{i=1}^3 (dx_i)^2 + \frac{1}{V}\eta^2.
\end{align}

One verifies that $I_1dx_1=\frac{1}{V}\eta$, $I_1dx_2=dx_3$, so by formula \eqref{g} we get
\begin{align}
\omega_1=\bar{\omega}_1+dx_1\wedge\eta + Vdx_2\wedge dx_3,
\end{align}
where $\bar{\omega}_i(X,Y)=\omega_i(X^\perp,Y^\perp)$ is the ``restriction'' of $\omega_i$ to $L^\perp$. Similarly for any cyclic permutation $(i,j,k)$ of $(1,2,3)$ we have
\begin{align} \label{omegai}
\omega_i=\bar{\omega}_i+dx_i\wedge\eta + Vdx_j\wedge dx_k.
\end{align}

\begin{remark}
When $\op{dim}_\mathbb{R}M=4$, the function $V$ is a harmonic function of $x_1,x_2,x_3$ and $d\eta$ is a 2-form on $\mathbb{R}^3$, see for example \cite{Gibbons-Hawking78} for the
Gibbons-Hawking ansatz. However, in general, the function $V$ is not a function of
$x_1,x_2,x_3$ and $d\eta$ cannot be considered a 2-form in $\mathbb{R}^3$.
\end{remark}

If the functions $x_1,x_2,x_3$ are well defined globally in $M$, then the map $\mu:M\rightarrow \mathbb{R}^3$ given by
$\mu(m)=(x_1(m),x_2(m),x_3(m))$ is the hyperk\"ahler moment map. Suppose that there is a $\lambda\in \mathbb{R}^3$ such that $T$ generates a free
$\mathbb{S}^1$-action on $\mu^{-1}(\lambda)$, then one can form the hyperk\"ahler quotient $Q_\lambda=\mu^{-1}(\lambda)/\mathbb{S}^1$ which has a natural hyperk\"ahler
structure. It is the space of $\mathbb{S}^1$-orbits in $\mu^{-1}(\lambda)$. Let $q\in Q_\lambda$ and let $m\in \mu^{-1}(\lambda)$ be any point in the
orbit associated with $q$. Then the tangent space $T_qQ_\lambda$ can be identified with $L^\perp_m\subset T_mM$. Under this identification, the
hyperk\"ahler structure of $Q_\lambda$ is given by the restrictions $g|_{L^\perp}, \omega_i|_{L^\perp},I_i|_{L^\perp}$. From this point of view, we can think of
$\bar{g}$ (resp. $\bar{\omega}_i$) as the Riemannian metric (resp. K\"ahler form) of the hyperk\"ahler quotient.

\begin{example} \label{H}
We denote by $\mathbb{H}$ the (skew) field of quaternions. One can identify $\mathbb{H}$ with $\mathbb{R}^4$ by identifying each quaternion
$u=q_0+q_1i+q_2j+q_3k$ with $(q_0,q_1,q_2,q_3)\in \mathbb{R}^4$. The left multiplication by $i,j,k$ in $\mathbb{H}$ defines three
integrable almost complex structures on $\mathbb{H}$, denoted by $I_1,I_2,I_3$.

The standard Euclidean metric on $\mathbb{H}$ is given by
\begin{align} \label{gH}
g_\mathbb{H}=\sum_{\alpha=0}^3(dq_\alpha)^2.
\end{align}
Let $\kappa_\alpha$ be the corresponding K\"ahler form given by $\kappa_\alpha(X,Y)=g_\mathbb{H}(I_\alpha X,Y)$, then by direct computation we have
\begin{align}\label{kappaalpha}
\kappa_\alpha&=dq_0 \wedge dq_\alpha + dq_\beta\wedge dq_\gamma,
\end{align}
where $(\alpha,\beta,\gamma)$ is any cyclic permutation of $(1,2,3)$. It follows that $d\kappa_\alpha=0$, so $(g_\mathbb{H},I_1,I_2,I_3,\kappa_1,\kappa_2,\kappa_3)$ is a
hyperk\"ahler structure on $\mathbb{H}$.

Let $T=\frac{\partial}{\partial q_0}$, then $T$ is a nonvanishing vector field on $\mathbb{H}$ which preserves its hyperk\"ahler structure. In fact,
$T$ is the generator of the $\mathbb{R}$-action of translation on $\mathbb{H}$ defined by $s\cdot q=q+s$, for $s\in \mathbb{R}, q\in \mathbb{H}$. Then we
have $V=\frac{1}{g_\mathbb{H}(T,T)}=1$, and $-q_\alpha$ is the moment map of $T$ with respect to $\kappa_\alpha$.

It follows that if we apply the decomposition \eqref{g} to this example, we will get \eqref{gH}, and if we apply the decomposition \eqref{omegai} to this example, then we will
get \eqref{kappaalpha}.

Sometimes we want an action of $\mathbb{S}^1$ rather than $\mathbb{R}$. The map $\mathbb{H}\rightarrow \mathbb{S}^1\times\mathbb{R}^3$ defined by
$(q_0,q_1,q_2,q_3)\mapsto(e^{iq_0},q_1,q_2,q_3)$ gives $\mathbb{S}^1\times\mathbb{R}^3$ the quotient hyperk\"ahler structure, and the $\mathbb{R}$-action on $\mathbb{H}$ descends to
an $\mathbb{S}^1$-action on $\mathbb{S}^1\times\mathbb{R}^3$ which preserves its hyperk\"ahler structure.
\end{example}

\begin{remark}
In \cite{kronheimer2004hyperkahler}, it is proved that for any compact Lie group $G$, there is a hyperk\"ahler structure on the cotangent bundle
$T^*G^c$ of its complexification $G^c$ which is invariant by the left and right action of $G$, and that if we take $G=\mathbb{S}^1$, then the hyperk\"ahler
manifold $T^*(\mathbb{S}^1)^c$ is exactly the hyperk\"ahler manifold $\mathbb{S}^1\times \mathbb{R}^3$ that we just considered. Moreover, the left or right
action of $\mathbb{S}^1$ on $T^*(\mathbb{S}^1)^c$ is exactly the action we have pointed out above.
\end{remark}

\begin{example} \label{C^2n}
Consider $\mathbb{C}^{2n}=\mathbb{C}^2\times\dots\times\mathbb{C}^2$ for $n\geq 1$, with complex coordinates $(z_1,w_1,\dots,z_n,w_n)$. Its hyperk\"ahler structure is obtained by
identifying each $\mathbb{C}^2$ component with $\mathbb{H}$ described in Example \ref{H} as follows: Fix $1\leq a \leq n$, the pair $(z_a,w_a)\in \mathbb{C}^2$ is identified with
$z_a+w_aj\in \mathbb{H}$.

So the Euclidean metric $g_0$ is defined by
\begin{align}
g_0 = \sum_{a=1}^n|dz_a|^2 + \sum_{a=1}^n|dw_a|^2.
\end{align}

Define the $\mathbb{S}^1$-action on $\mathbb{C}^{2n}$ by
\begin{align}
e^{it}\cdot(z_1,w_1,\dots,z_n,w_n)=(e^{it}z_1,e^{-it}w_1,\dots,e^{it}z_n,e^{-it}w_n).
\end{align}
Here we note that since $(z_a+w_aj)e^{it} = e^{it}z_a + e^{-it}w_aj$, we know that after identifying $\mathbb{C}^{2n}$ with $\mathbb{H}^n$, the $\mathbb{S}^1$-action can be viewed
as the right multiplication with $e^{it}$. Since $e^{it}$ is of the norm $1$, this action preserves the metric. And since left multiplication commutes with right multiplication, we know
that this action preserves the complex structures, hence the three K\"ahler forms.

Then we have $V=\frac{1}{\rho^2}$, where $\rho$ is the radius defined by $\rho^2=\sum_{a=1}^n|z_a|^2 + \sum_{a=1}^n|w_a|^2$.

Then we may define the moment maps $x_1,x_2,x_3$ by
\begin{align}\label{x_1 of C2n}
x_1&= \frac{1}{2}\sum_{a=1}^n|z_a|^2 - \frac{1}{2}\sum_{a=1}^n|w_a|^2,\\
x_2+ix_3 &= -i\sum_{a=1}^n z_aw_a.\label{x_2 x_3 of C2n}
\end{align}
This can be verified directly, or one can deduce this formula by Proposition \ref{moment map of cone}.

If one applies the decomposition \eqref{g} to this example, one will get
\begin{align}
g_0=\bar{g} + \frac{1}{\rho^2}\sum_{i=1}^3 (dx_i)^2 + \rho^2\eta^2,
\end{align}
here $\eta = Vg_0(T,-)$ is the connection $1$-form of the $\mathbb{S}^1$-action.
\end{example}

\subsection{Hyperk\"ahler cone with $\mathbb{R}$-symmetry}\label{subsection hk cone with S1 action}

The previous example $\mathbb{C}^{2n}$ is a metric cone, this suggests that one may consider a hyperk\"ahler cone admitting an $\mathbb{R}$-symmetry. Compared with the previous
subsection, now we have globally defined moment maps and a $\Sp(1)$-action on the cone, as we shall explain.

Let $(S,g_S)$ be a compact Riemannian manifold, then $(C(S)=\mathbb{R}_+\times S,g_{C(S)} = d\rho^2 + \rho^2g_S)$ is called the Riemannian cone of $S$, here $\rho$ is the coordinate
of $\mathbb{R}_+$ and in fact it is the distance to the vertex of the cone. It is clear that $S$ can be identified with $\{\rho=1\}\subset C(S)$ as a Riemannian submanifold of
$C(S)$. The vector field $\rho\frac{\partial}{\partial \rho}$ is called the Euler field, it generates the homothety of the cone $\lambda\cdot(\rho,s)=(\lambda\rho,s)$ for
$\lambda,\rho \in \mathbb{R}_+, s\in S$. If there is a $\mathbb{R}$-isometry of $S$, then this action can be extended to an isometry of $C(S)$ in a unique way such that the action
commutes with the homothety. In particular, let $T$ be the generator of the $\mathbb{R}$-action, then $[T,\rho\frac{\partial}{\partial\rho}]=0$.

Let $M=C(S)$ be a Riemannian cone. If $M$ is also equipped with a K\"ahler structure $(M,g,\omega,I)$ such that $g$ coincides with the conic metric, then $S$ is called a Sasakian
manifold. Let $\xi=I(\rho\frac{\partial}{\partial\rho})$ be the Reeb vector field; then it is orthogonal to the Euler field, and hence it can be viewed as a vector field on $S$. It is known that
the Reeb field is Killing with respect to the metric of $M$ and $S$, and it is also holomorphic with respect to $I$. It can be verified that $L_{\rho\frac{\partial}{\partial
\rho}}I=0$ and consequently $[\rho\frac{\partial}{\partial\rho},\xi]=0$. Regarding the K\"ahler potential, it is known that a K\"ahler potential of $M$ is $\frac{1}{2}\rho^2$, that is, $\omega = i\partial\bar\partial (\frac{1}{2}\rho^2)$.

Suppose now that there is an $\mathbb{R}$-isometry on $S$, then it can be extended to $M$ with generator $T$. It is now clear that $\rho$ is invariant by this action. The next
proposition will give a formula of the moment map.
\begin{proposition}\label{moment map of cone}
Let $K$ be an $\mathbb{R}$-invariant K\"ahler potential of the K\"ahler manifold $M$, then $x=\frac{1}{2}d^cK(T)$ defines a moment map of this $\mathbb{R}$-action.
\end{proposition}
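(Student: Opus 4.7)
The strategy is to verify the moment map equation $dx = -\iota_T\omega$ directly, by combining a standard $d^c$-presentation of the K\"ahler form with Cartan's magic formula. Since any K\"ahler potential $K$ gives $\omega = \tfrac{1}{2}dd^c K$, the equation to establish is equivalent to
\[
d(\iota_T d^c K) = -\iota_T(dd^c K),
\]
and Cartan's formula $L_T = d\iota_T + \iota_T d$ immediately rewrites this as the single vanishing statement $L_T(d^c K) = 0$. So the entire proposition reduces to that one identity.

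To prove $L_T(d^c K) = 0$, I would combine two ingredients. The first is the hypothesis $L_T K = 0$, which is just the $\mathbb{R}$-invariance of $K$. The second is that $T$ is holomorphic, which lets one commute $L_T$ past the operator $d^c$. Once $L_T d^c = d^c L_T$ is available, the desired vanishing is immediate, $L_T(d^c K) = d^c(L_T K) = 0$. Plugging back into the Cartan computation gives $d(\iota_T d^c K) = -\iota_T(dd^c K)$, i.e.\ $2\,dx = -2\,\iota_T\omega$, which is exactly the moment map equation.

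The one point that deserves explicit attention is the holomorphicity of $T$, needed to justify $L_T d^c = d^c L_T$. In the setting of this paper---$T$ generates an $\mathbb{R}$-symmetry on a hyperk\"ahler cone and preserves its full hyperk\"ahler structure, in particular each complex structure $I_i$---this is already recorded in Subsection \ref{subsection decomposition of the metric}, so there is nothing new to check here; one just has to point back to it. After that, the verification of the proposition is a one-line application of Cartan's formula together with the invariance of the potential.
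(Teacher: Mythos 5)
Your proof is correct and is essentially the paper's own argument: both reduce the moment map equation via Cartan's formula to the identity $L_T(d^cK)=0$, which follows from the $T$-invariance of $K$ and of the complex structure. No substantive difference.
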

\begin{proof}
First we note that if $\alpha$ is a 1-form and $X,Y$ are two vector fields, then
\begin{align}
d(\alpha(X))(Y)=(L_X\alpha)(Y)-(d\alpha)(X,Y).
\end{align}
Taking $\alpha=d^cK$ and $X=T$, since $I$ and $K$ are $T$-invariant, we have $L_X\alpha=0$, hence
\begin{align}
d(d^cK(T))(Y)=-(dd^cK)(T,Y)=-2\omega(T,Y)=-2(\iota_T\omega)(Y).\qed\qedhere
\end{align}
\end{proof}

From the above proposition we know that
\begin{align}
x=\frac{1}{4}d^c(\rho^2)(T) = -\frac{1}{4}d(\rho^2)(IT)
\end{align}
is a moment map with respect to the $\mathbb{R}$-action and $\omega$. It is the best choice of the moment map since it is homogeneous with respect to the homothety: we have
$L_{\rho\frac{\partial}{\partial \rho}}x=2x$.

Now assume that $M$ admits a hyperk\"ahler structure $(M,g_0,I_i,\omega_i)$, where $g_0$ coincides with the conic metric. In this case $S$ is called a
3-Sasakian manifold. Let $\xi_i=I_i\rho\frac{\partial}{\partial \rho},(i=1,2,3)$ be the Reeb fields, then we have $[\xi_i,\xi_j]=-2\xi_k$ when $(i,j,k)$ is a cyclic permutation of
$(1,2,3)$. So, it follows that $\xi_1,\xi_2,\xi_3$ generate an $\Sp(1)$-action on $M$. For the K\"ahler potential, we have $\omega_j = i\partial\bar\partial_{I_j}
(\frac{1}{2}\rho^2)$ for $j=1,2,3$. For more details, we refer to \cite{boyer2008sasakian} as a reference on Sasakian and 3-Sasakian geometry.

Suppose that there is an $\mathbb{R}$-action on $S$ whose extension to $M$ preserves its hyperk\"ahler structure. Let $T$ be its generator, then by the discussion above, we know
that for $j=1,2,3$
\begin{align}\label{xj}
x_j=\frac{1}{4}d^c_{I_j}(\rho^2)(T) = -\frac{1}{4}d(\rho^2)(I_jT)
\end{align}
are the hyperk\"ahler moment maps. As before, we will use $\mu=(x_1,x_2,x_3)$ to denote the hyperk\"ahler moment map from $M$ to $\mathbb{R}^3$. The following proposition describes
the effect of the $\Sp(1)$-action to the hyperk\"ahler moment maps.
\begin{proposition}\label{Sp(1)-equivariance, generalized}
For $i,j,k\in\{1,2,3\}$, we have $L_{\xi_i}x_j=-2\varepsilon_{ijk}x_k$.
\end{proposition}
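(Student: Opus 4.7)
The plan is to combine the moment map equation $dx_j=-\iota_T\omega_j$ with the quaternion algebra of $I_1,I_2,I_3$ on the tangent bundle. Since $x_j$ is a function,
\[
L_{\xi_i}x_j \;=\; \iota_{\xi_i}\,dx_j \;=\; -\iota_{\xi_i}\iota_T\,\omega_j \;=\; \omega_j(\xi_i,T).
\]
Substituting $\xi_i=I_i\rho\partial_\rho$ and $\omega_j(X,Y)=g_0(I_jX,Y)$ transforms the right-hand side into $g_0(I_jI_i\,\rho\partial_\rho,\,T)$. The identities $I_\ell^2=-1$ and $I_1I_2I_3=-1$ are equivalent to $I_jI_i=-\delta_{ij}\,\mathrm{id}-\epsilon_{ijk}I_k$ (summed over $k$), so I can rewrite
\[
L_{\xi_i}x_j \;=\; -\delta_{ij}\,g_0(\rho\partial_\rho,T)\;-\;\epsilon_{ijk}\,g_0(\xi_k,T).
\]

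Now the two terms on the right must be dealt with. The first vanishes because $T$ is obtained by extending an $\mathbb{R}$-symmetry from $S$ to $M$ subject to $[T,\rho\partial_\rho]=0$; being tangent to $S=\{\rho=1\}$, $T$ continues to satisfy $d\rho(T)=0$ along every ray, so $g_0(\rho\partial_\rho,T)=\rho\,d\rho(T)=0$. For the second, unwinding formula \eqref{xj} gives
\[
x_k \;=\; -\tfrac{1}{4}d(\rho^2)(I_kT) \;=\; -\tfrac{\rho}{2}\,g_0(\partial_\rho,I_kT) \;=\; \tfrac{\rho}{2}\,g_0(I_k\partial_\rho,T) \;=\; \tfrac{1}{2}\,g_0(\xi_k,T),
\]
after moving $I_k$ to the first slot via the $I_k$-invariance of $g_0$. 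Plugging $g_0(\rho\partial_\rho,T)=0$ and $g_0(\xi_k,T)=2x_k$ into the display above produces $L_{\xi_i}x_j=-2\epsilon_{ijk}x_k$.

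The computation is essentially bookkeeping, so I do not see a genuine obstacle: the quaternion identity handles the algebra of the complex structures, while the two normalizations $d\rho(T)=0$ and $g_0(\xi_k,T)=2x_k$ follow directly from the cone structure and the definition of the hyperk\"ahler moment map. The only point where the hypothesis that $M$ is a cone (rather than a general hyperk\"ahler manifold with $\mathbb{R}$-symmetry) really enters is in the vanishing of $d\rho(T)$, which replaces a potentially nontrivial $(\delta_{ij})$-contribution by zero.
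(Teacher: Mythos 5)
Your proof is correct and follows essentially the same route as the paper's: both reduce $L_{\xi_i}x_j$ via the moment map equation to a $g_0$-pairing, apply the quaternion relation $I_iI_j=\epsilon_{ijk}I_k-\delta_{ij}$, kill the $\delta_{ij}$ term using the $T$-invariance of $\rho$, and identify the remaining term with $-2x_k$ via $g_0(X,\rho\frac{\partial}{\partial\rho})=\frac{1}{2}d(\rho^2)(X)$. The only difference is cosmetic (you move the complex structures onto $\rho\frac{\partial}{\partial\rho}$ rather than onto $T$).
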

Here $\varepsilon_{ijk}=0$ if some indices coincide, $\varepsilon_{ijk}=1$ if $(i,j,k)$ is a cyclic permutation of $(1,2,3)$ and $\varepsilon_{ijk}=-1$ if $(i,j,k)$ is a noncyclic
permutation of $(1,2,3)$.
\begin{proof}
First, we note that $I_iI_j=\varepsilon_{ijk}I_k-\delta_{ij}1$ and $g_0(X,\rho\frac{\partial}{\partial\rho}) = \frac{1}{2}d(\rho^2)(X)$ for any tangent vector $X$ of $M$. Next, we
observe that since $\rho$ is invariant by $T$, we have $d(\rho^2)(T)=0$. Then we calculate
\begin{align*}
L_{\xi_i}x_j &= dx_j(\xi_i) = -(\iota_T\omega_j)(\xi_i) = -\omega_j(T,\xi_i) = -g_0(I_jT,\xi_i) = -g_0(I_jT,I_i(\rho\frac{\partial}{\partial \rho})) \\
             &= g_0(I_iI_jT,\rho\frac{\partial}{\partial \rho}) = g_0(\varepsilon_{ijk}I_kT-\delta_{ij}T,\rho\frac{\partial}{\partial \rho}) =
             \frac{1}{2}d(\rho^2)(\varepsilon_{ijk}I_kT-\delta_{ij}T)  \\
             &= \frac{1}{2}\varepsilon_{ijk}d(\rho^2)(I_kT) = -2\varepsilon_{ijk}x_k.\qedhere
\end{align*}
\end{proof}
Passing from infinitesimal generator to the level of group action, we have
\begin{proposition} \label{2-fold covering}
Denote by $\mu = (x_1,x_2,x_3): M\rightarrow \mathbb{R}^3$ the hyperk\"ahler moment map. For $q\in \Sp(1)$, $m\in M$, we have
\begin{align}
\mu(q\cdot m) = \phi(q)\mu(m),
\end{align}
where $\phi: \Sp(1) \rightarrow \SO(3)$ is the standard 2-fold covering.
\end{proposition}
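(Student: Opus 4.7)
The plan is to integrate the infinitesimal equivariance of Proposition \ref{Sp(1)-equivariance, generalized} up to the Lie group $Sp(1)$. First I would recast that proposition in matrix form: letting $A_i$ denote the $3\times 3$ matrix with entries $(A_i)_{jk}=-2\epsilon_{ijk}$, the statement reads
\begin{align*}
d\mu(\xi_i(m)) = A_i\,\mu(m) \quad \text{for every } m\in M.
\end{align*}
A direct calculation using the standard realization of $\phi$ by conjugation on $\op{Im}\mathbb{H}\cong\mathbb{R}^3$ shows that $A_i = d\phi(e_i)$, where $e_i$ is the element of $\mathfrak{sp}(1)$ whose fundamental vector field on $M$ is $\xi_i$; the factor $2$ appearing in $A_i$ is precisely what encodes the $2$-to-$1$ nature of $\phi$.

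Next I would fix $m\in M$ and $u = \sum_i c_i e_i \in \mathfrak{sp}(1)$, set $q(t) = \exp(tu)$, and compare the two $\mathbb{R}^3$-valued curves $f(t) = \mu(q(t)\cdot m)$ and $h(t) = \phi(q(t))\mu(m)$. Using linearity of the infinitesimal equivariance, one finds
\begin{align*}
f'(t) = d\mu\bigl(\xi_u(q(t)\cdot m)\bigr) = d\phi(u)\,\mu(q(t)\cdot m) = d\phi(u)\,f(t),
\end{align*}
while on the other hand $h'(t) = d\phi(u)\,h(t)$ holds by definition of $\phi$. Since $f(0)=h(0)=\mu(m)$, uniqueness for linear ODEs yields $f\equiv h$, i.e. $\mu(\exp(tu)\cdot m) = \phi(\exp(tu))\mu(m)$ for every $t\in\mathbb{R}$.

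Finally, because $Sp(1)\cong S^3$ is compact and connected, every element of $Sp(1)$ is of the form $\exp(u)$ for some $u\in\mathfrak{sp}(1)$, so the identity propagates to all of $Sp(1)$. The only delicate point, really a bookkeeping issue, is verifying $A_i = d\phi(e_i)$ with consistent signs: one must reconcile the convention used to extend the $\mathbb{R}$-action from $S$ to $M$ (which fixes the correspondence $\xi_i \leftrightarrow e_i$) with the conjugation formula $d\phi(u)(v)=uv-vu$ on $\op{Im}\mathbb{H}$. Once this matching is pinned down, the factor $-2\epsilon_{ijk}$ exactly realizes the standard isomorphism $\mathfrak{sp}(1)\to\mathfrak{so}(3)$, and the proposition follows.
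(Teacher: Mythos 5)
Your proof is correct and is exactly the argument the paper intends: the paper states this proposition with no proof beyond the phrase ``passing from infinitesimal generator to the level of group action,'' and your ODE integration of Proposition \ref{Sp(1)-equivariance, generalized}, together with surjectivity of $\exp$ on the compact connected group $Sp(1)$, is the standard way to carry that out. The sign bookkeeping you flag (matching $-2\epsilon_{ijk}$ with $d\phi$ and the convention $[\xi_i,\xi_j]=-2\xi_k$) is indeed the only point requiring care, and it checks out.
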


\begin{remark}
In the special case of $\mathbb{C}^{2n}$ described in Example \ref{C^2n}, this $\Sp(1)$-action is quite easy to understand. In fact, as we have identified $\mathbb{C}^{2n}$ with
$\mathbb{H}^n$, this $\Sp(1)$-action is the left multiplication on $\mathbb{H}^n$.
\end{remark}

Similarly, one may ask the effect of the $\Sp(1)$-action on $g_0$, $I_j$ and $\omega_j$.
\begin{proposition}[{\cite[Chapter 13]{boyer2008sasakian}}] \label{Sp(1)-equivariance on hk sructures}
We have $L_{\xi_i}g_0 = 0$, $L_{\xi_i}I_j = -2\varepsilon_{ijk}I_k$ and $L_{\xi_i}\omega_j = -2\varepsilon_{ijk}\omega_k$.
\end{proposition}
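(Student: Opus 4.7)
The plan is to exploit two foundational facts about the hyperk\"ahler cone: each complex structure $I_j$ is parallel with respect to the Levi-Civita connection $\nabla$ of $g_0$, and the Euler field $E = \rho \partial_\rho$ satisfies $\nabla_X E = X$ for every tangent vector $X$ (the defining relation for the position vector field on a Riemannian cone). These two facts together imply the master formula
\begin{equation*}
\nabla_X \xi_i = \nabla_X(I_i E) = I_i \nabla_X E = I_i X,
\end{equation*}
which will drive all three identities. Once the first two identities are in hand, the third is a formal consequence of the Leibniz rule for $\omega_j(\cdot,\cdot) = g_0(I_j \cdot, \cdot)$.

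For $L_{\xi_i} g_0 = 0$, one may either invoke the standard Sasakian fact that the Reeb field of a K\"ahler cone is Killing (applied to each of the three Sasakian structures underlying the 3-Sasakian structure on $S$), or argue directly: from the master formula,
\begin{equation*}
g_0(\nabla_X \xi_i, Y) + g_0(X, \nabla_Y \xi_i) = g_0(I_i X, Y) + g_0(X, I_i Y) = 0
\end{equation*}
by antisymmetry of $I_i$, so $\xi_i$ is Killing.

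For $L_{\xi_i} I_j$, I would expand $(L_{\xi_i} I_j)(X) = [\xi_i, I_j X] - I_j [\xi_i, X]$ using torsion-freeness of $\nabla$. Since $I_j$ is parallel, the terms $\nabla_{\xi_i}(I_j X)$ and $I_j \nabla_{\xi_i} X$ cancel, leaving
\begin{equation*}
(L_{\xi_i} I_j)(X) = -\nabla_{I_j X} \xi_i + I_j \nabla_X \xi_i = -I_i I_j X + I_j I_i X = -[I_i, I_j] X,
\end{equation*}
and the quaternion relations give $[I_i, I_j] = 2\epsilon_{ijk} I_k$, whence $L_{\xi_i} I_j = -2\epsilon_{ijk} I_k$.

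The third identity then follows by writing $\omega_j(X, Y) = g_0(I_j X, Y)$ and applying the Leibniz rule for the Lie derivative along $\xi_i$: since $L_{\xi_i} g_0 = 0$, only the $I_j$ factor contributes, giving
\begin{equation*}
(L_{\xi_i}\omega_j)(X,Y) = g_0\bigl((L_{\xi_i} I_j) X, Y\bigr) = -2\epsilon_{ijk} \omega_k(X, Y).
\end{equation*}
There is no serious obstacle here; the only thing to recognize is that the identity $\nabla_X \xi_i = I_i X$, obtained by combining the cone property with the parallelness of $I_i$, is the engine driving all three computations.
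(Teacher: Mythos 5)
Your proof is correct. The paper itself gives no argument for this proposition --- it is quoted from Boyer--Galicki, Chapter 13 --- so there is nothing internal to compare against; your computation is a complete, self-contained verification. The two inputs you isolate are exactly the right ones: $\nabla_X(\rho\partial_\rho)=X$ on any Riemannian cone and $\nabla I_j=0$ on a hyperk\"ahler manifold, giving $\nabla_X\xi_i=I_iX$; from there the Killing property follows from skew-adjointness of $I_i$, the formula $(L_{\xi_i}I_j)X=-[I_i,I_j]X=-2\epsilon_{ijk}I_kX$ follows from torsion-freeness, and the statement for $\omega_j$ follows by Leibniz. The signs are consistent with the paper's conventions (left multiplication by $i,j,k$, so $I_iI_j=\epsilon_{ijk}I_k-\delta_{ij}$, matching the $-2\epsilon_{ijk}$ appearing in Proposition \ref{Sp(1)-equivariance, generalized}). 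One small point worth flagging: the proposition as printed reads $L_{\xi_i}\omega_j=-2\epsilon_{ijk}\omega_j$, which is evidently a typo for $-2\epsilon_{ijk}\omega_k$; your derivation correctly produces $\omega_k$, which is the intended statement.
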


\subsection{The Taub-NUT deformation}\label{subsection the Taub-NUT deformation}

Let $(M,g,I_1,I_2,I_3,\omega_1,\omega_2,\omega_3)$ be a hyperk\"ahler manifold. Suppose $T$ is a vector field on $M$ which preserves
its hyperk\"ahler structure. Unlike Subsection \ref{subsection decomposition of the metric}, we do not suppose that $T$ is non-vanishing. Instead, we assume that $T$ is a complete
vector field and that the $\mathbb{R}$-action generated by $T$ is tri-Hamiltonian (that is, admitting a global hyperk\"ahler moment map). Denote by
$M^\p\subset M$ the open subset where $T\neq 0$. Clearly $M^\p$ is invariant by $T$. All the discussion in Subsection \ref{subsection decomposition of the metric} applies to $M^\p$.

According to \cite{boyer2008sasakian}, the procedure sending $M$ to the hyperk\"ahler quotient of $M\times\mathbb{H}$ by the $\mathbb{R}$-action generated by
$aT+\frac{\partial}{\partial q_0}$ is called a Taub-NUT deformation (of order 1) of $M$. Such deformations are parameterised by $a > 0$. When $\dim_\mathbb{R}M = 4$, this
deformation is also known as the standard Gibbons-Hawking deformation and is used in \cite{gibbons1997hyperkahler} to produce ALF instantons. In the following proposition, we give
a formula of the deformed structures, generalizing the formulas given in \cite{gibbons1997hyperkahler} to higher dimensions.

\begin{proposition}\label{deformation}
The hyperk\"ahler quotient of $M\times \mathbb{H}$ with respect to the $\mathbb{R}$-action generated by $Z=aT+\frac{\partial}{\partial q_0}~ (a>0)$
is diffeomorphic to $M$ as a smooth manifold. Under a certain identification, its hyperk\"ahler structure $(g_a,I_1^a,I_2^a,I_3^a,\omega_1^a,\omega_2^a,\omega_3^a)$ is $T$-invariant
on $M$. And on $M^\p$ we have
\begin{align}\label{g1}
g_a&=\bar{g} + (V+a^2)\sum_{i=1}^3(dx_i)^2 + \frac{1}{V+a^2}\eta^2,\\
I_i^a&=I_i-\frac{a^2V}{a^2+V}\theta\otimes I_iT + a^2dx_i\otimes T,\\
\omega_i^a&=\omega_i + a^2dx_j\wedge dx_k,
\end{align}
where $(i,j,k)$ is any cyclic permutation of $(1,2,3)$. In particular, the hyperk\"ahler structure is independent of the choice of the value of the moment map when forming the
hyperk\"ahler quotient.
\end{proposition}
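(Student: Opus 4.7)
The plan is to unpack the hyperk\"ahler quotient construction explicitly, using a slice for the $\mathbb{R}$-action to identify the quotient smoothly with $M$ and then read off the induced tensors. On $M\times\mathbb{H}$ with the product hyperk\"ahler structure, the moment map of $Z=aT+\partial/\partial q_0$ is $\mu_Z=(ax_1-q_1,\,ax_2-q_2,\,ax_3-q_3)$ up to an additive constant. The $\partial/\partial q_0$ component guarantees that the action is free and proper on every level set, and $\{q_0=0\}\cap\mu_Z^{-1}(\lambda)$ is a global slice. For $\lambda=0$ this slice is the graph $m\mapsto(m,(0,ax_1(m),ax_2(m),ax_3(m)))$, which produces the diffeomorphism with $M$ in the statement. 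Independence of the result from $\lambda$ follows because translating in $\mathbb{H}$ is a tri-holomorphic isometry intertwining the two $\mathbb{R}$-actions.

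Next I would compute the slice lift of $Y\in T_mM$, namely $\tilde Y=(Y,(0,a\,dx_1(Y),a\,dx_2(Y),a\,dx_3(Y)))$, and correct it to a horizontal vector $\tilde Y^h=\tilde Y-\frac{\langle\tilde Y,Z\rangle}{|Z|^2}Z$. The two arithmetic ingredients are $|Z|^2=a^2/V+1=(a^2+V)/V$ and $\langle\tilde Y,Z\rangle=a\theta(Y)$. Substituting $\tilde Y_1^h,\tilde Y_2^h$ into the product metric and simplifying with the decomposition $g=\bar g+V\sum_i dx_i^2+\eta^2/V$ from Subsection~\ref{subsection decomposition of the metric} gives the stated $g_a$: the Euclidean contribution $a^2\sum_i dx_i(Y_1)dx_i(Y_2)$ from $\mathbb{H}$ upgrades $V$ to $V+a^2$, while the correction term $\frac{a^2V\theta(Y_1)\theta(Y_2)}{a^2+V}$ combines with the $\eta^2/V$ term of $g$ to collapse $1/V$ into $1/(a^2+V)$.

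For the K\"ahler forms I would exploit that $\iota_Z(\omega_i+\kappa_i)=-d\mu_Z^i$ vanishes on $\mu_Z^{-1}(0)$, so $\omega_i+\kappa_i$ is already basic there and descends without any horizontal correction. Evaluating it on $\tilde Y_1,\tilde Y_2$ and using $\kappa_i=dq_0\wedge dq_i+dq_j\wedge dq_k$ together with the vanishing $q_0$-component of the lifts yields $\omega_i^a=\omega_i+a^2dx_j\wedge dx_k$. For $I_i^a$ I would apply $I_i\oplus I_i^{\mathbb{H}}$ to $\tilde Y^h$; the result is still horizontal because $\{Z,I_1Z,I_2Z,I_3Z\}$ spans a quaternionic subspace and the horizontal space is its metric complement inside $T\mu_Z^{-1}(0)$. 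Its $q_0$-component turns out to be $-a\,dx_i(Y)$, so re-expressing the output as the slice lift of some $Y'\in T_mM$ requires the adjustment $Y'=X-w^0aT$, where $X$ is the $M$-part; this yields $I_i^aY=I_iY-\frac{a^2V}{a^2+V}\theta(Y)\,I_iT+a^2dx_i(Y)\,T$, i.e.\ the tensorial formula in the statement.

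The main obstacle I anticipate is managing signs and quaternionic conventions: one repeatedly uses the identity $dx_i(Y)=\theta(I_iY)$ (from $dx_i=-\theta_i=-I_i\theta$ together with the convention $(I_i\alpha)(Y)=-\alpha(I_iY)$), along with the left-multiplication formulas for $I_i^{\mathbb{H}}$ in the coordinates $q_0,\dots,q_3$. Once those are tabulated the computation is essentially bookkeeping, and the verification that $(g_a,I_i^a,\omega_i^a)$ defines a hyperk\"ahler structure is then automatic from the abstract hyperk\"ahler quotient, so no separate integrability or closure check is required.
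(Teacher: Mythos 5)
Your proposal is correct and follows essentially the same route as the paper: the same moment map $(ax_1-q_1,ax_2-q_2,ax_3-q_3)$, the same $\{q_0=0\}$ slice identifying the quotient with $M$, and the same pullback computation $\sigma^*\iota^*(\omega_i+\kappa_i)=\omega_i+a^2\,dx_j\wedge dx_k$ for the K\"ahler forms. Where the paper stops and merely asserts that the remaining formulas follow from checking the quaternionic relations and the compatibility $\omega_i^a(X,Y)=g_a(I_i^aX,Y)$ (omitting the details), you instead derive $g_a$ and $I_i^a$ directly via horizontal lifts, and your arithmetic --- $|Z|^2=(a^2+V)/V$, $\langle \tilde{Y},Z\rangle=a\theta(Y)$, the collapse of $\eta^2/V$ to $\eta^2/(V+a^2)$, and the $q_0$-component bookkeeping giving $I_i^aY=I_iY-\frac{a^2V}{a^2+V}\theta(Y)I_iT+a^2dx_i(Y)T$ --- checks out.
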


\begin{remark}
If $T$ is the zero vector field, the proposition still holds. As long as $T$ is not identically $0$, the new hyperk\"ahler structure is different
from the original one.
\end{remark}

\begin{remark}
It it clear from the above proposition that $x_j$ is the moment map of $\omega_j^a$, for $j=1,2,3$.
\end{remark}

\begin{proof}
First, we note that the product $M\times \mathbb{H}$ is hyperk\"ahler, equipped with product hyperk\"ahler structure $(g_{M\times
\mathbb{H}}=g+g_\mathbb{H},I_{M\times\mathbb{H},i}=I_{i}+I_{\mathbb{H},i},\omega_{M\times \mathbb{H},i}=\omega_i+\kappa_i)$.
The vector field $Z=(aT,\frac{\partial}{\partial q_0})$ is complete in $M\times \mathbb{H}$ preserving the hyperk\"ahler structure, with moment map
$\mu=(ax_1-q_1,ax_2-q_2,ax_3-q_3)$. In particular, the range of the hyperk\"ahler moment map is the whole $\mathbb{R}^3$. The corresponding
$\mathbb{R}$-action is clearly free, since it acts by translations on the $\mathbb{H}$-component. Fix any $\lambda=(\lambda_1,\lambda_2,\lambda_3)\in
\mathbb{R}^3$, $\mu^{-1}(\lambda)$ is a smooth submanifold of $M\times \mathbb{H}$ and we have
\begin{align}
\mu^{-1}(\lambda)=\{(m,q)\in M\times \mathbb{H}\mid q=(q_0,ax_1(m)-\lambda_1,ax_2(m)-\lambda_2,ax_3(m)-\lambda_3)\}.
\end{align}

Define a map $\sigma: M\rightarrow \mu^{-1}(\lambda)$ by $\sigma(m)=(m,0,ax_1-\lambda_1,ax_2-\lambda_2,ax_3-\lambda_3)$. Then each $\mathbb{R}$-orbit
in $\mu^{-1}(\lambda)$ contains exactly one point of $\sigma(M)$ which is the point with $q_0=0$. Let $\pi: \mu^{-1}(\lambda) \rightarrow Q_\lambda=\mu^{-1}(\lambda)/\mathbb{R}$ be
the quotient map, then $\pi \circ\sigma$ identifies $M$ with the orbit space $Q_\lambda$.

Let $\iota:\mu^{-1}(\lambda) \rightarrow M\times\mathbb{H}$ be the inclusion map. Denote by $\omega_i^\p$ the hyperk\"ahler quotient K\"ahler form on $Q_\lambda$, then $\omega_i^\p$
is determined by $\pi^*\omega_i^\p = \iota^*(\omega_i + \kappa_i)$. By identification $\pi \circ\sigma: M \rightarrow Q_\lambda$, we get $\omega_i^a = \sigma^*\pi^*\omega_i^\p =
\sigma^*\iota^*(\omega_i + \kappa_i) = \omega_i + a^2dx_j \wedge dx_k$, where $(i,j,k)$ is any cyclic permutation of $(1,2,3)$. This proves the formula of $\omega_i^a$.

Note that the triple $(\omega_q^a, \omega_2^a, \omega_3^a)$ determines the hyperk\"ahler structure $(g_a,I_i^a,\omega_i^a)$, so to prove the rest
of the formulas, it suffices to verify that $I_1^a,I_2^2,I_3^a$ satisfy the quaternionic relations and that $\omega_i^a(X,Y) = g_a(I_i^aX,Y)$. We will omit the details.
\end{proof}

If $M=\mathbb{C}^2$ equipped with the hyperk\"ahler structure described in Example \ref{H} and the $\mathbb{S}^1$-action $e^{it}\cdot(z_1,z_2)=(e^{it}z_1,
e^{-it}z_2)$, then the above deformation of it is the Taub-NUT metric.

If $T$ is the generator of an $\mathbb{S}^1$-action on $M$, then we can replace $\mathbb{H}$ by $\mathbb{S}^1\times \mathbb{R}^3$ as described in Example \ref{H} and fix $a = 1$.
That is to say, the hyperk\"ahler quotient of $M\times \mathbb{H}$ with respect to the $\mathbb{R}$-action generated by $(T,\frac{\partial}{\partial q_0})$ is the
same as the hyperk\"ahler quotient of $M\times S^1\times \mathbb{R}^3$ with respect to the diagonal $S^1$-action in the product.

\section{Taub-NUT deformation of hyperk\"ahler cones}\label{section Basic properties of the main construction}

Let $(M,g_0,I_i,\omega_i)$ be a hyperk\"ahler cone with link $S$. Assume that $S$ admits an $\mathbb{R}$-symmetry which is automatically extended to $M$. Denote by $T$ its
generator. Let $(g_a,I_i^a,\omega_i^a)$ be the deformation described in Proposition \ref{deformation} of $M$ with $a>0$.

In this section, we will deduce some basic properties of this deformed hyperk\"ahler structure $(M,g_a,I_i^a,\omega_i^a)$.

\subsection{An estimation of the distance function}

Denote by $\rho_a$ the distance to the origin of $M$ measured by $g_a$. Recall that $\rho$ is the
distance for $g_0$. In this section, we shall compare $\rho_a$ and $\rho$.

Comparing formulas \eqref{g} and \eqref{g1}, one sees that
\begin{align}
g_a\leq g_0+ a^2 \sum_{i=1}^3(dx_i)^2.
\end{align}
Fixing any point $m\in M$, let $c(t)=tm,t\in[0,1]$ be the straight line connecting $0$ and $m$. Denote
by $l(g_a,c)$ the length of $c$ measured by $g_a$, then $\rho_a(m)\leq l(g_a,c)$.

Note that $c^\p(t)=m$ and $g_0(c^\p(t),c^\p(t))=\rho^2(m)$. Since the hyperk\"ahler moments $x_1,x_2,x_3$ are quadratic, that is,
$L_{\rho\frac{\partial}{\partial\rho}}x_i=2x_i$, we have $x(c(t))=t^2x(m)$ so $dx_i(c^\p(t))=2tx_i(m)$, hence
$\sum_{i=1}^3dx_i(c^\p(t))^2=4t^2|x(m)|^2$. Note that since the link $S$ of $M$ is compact, we have $|x(m)|\leq C\rho(m)^2$ for some constant $C>0$.

Now we can estimate $l(g_a,c)$ as follows:
\begin{align*}
l(g_a,c)&=\int_0^1\sqrt{g_a(c^\p(t),c^\p(t))}dt \\
        &\leq \int_0^1 \sqrt{\rho^2(m)+4t^2a^2|x(m)|^2}dt \\
        &\leq \sqrt{\rho^2(m)+4C^2a^2\rho^4(m)}\\
        &= \rho(m)\sqrt{1+4C^2a^2\rho^2(m)}
\end{align*}

So we conclude that
\begin{proposition}\label{estimate of rho1, generalized}
There exists $C>0$ such that $\rho_a(m)\leq C \rho^2(m)$ for any $m\in M$ with $\rho(m)\geq 1$.
\end{proposition}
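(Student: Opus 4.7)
The plan is to compare the deformed metric $g_a$ with the cone metric $g_0$ pointwise and then integrate along the radial path from the apex to $m$.

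First, reading off the formulas for $g$ from \eqref{g} and for $g_a$ from Proposition \ref{deformation}, one finds
\[
g_a - g_0 = a^2\sum_{i=1}^3 (dx_i)^2 + \Big(\frac{1}{V+a^2} - \frac{1}{V}\Big)\eta^2,
\]
and the second term is non-positive since $V>0$ and $a>0$. Hence on $M^\prime$ one has the pointwise bound $g_a \leq g_0 + a^2\sum_i (dx_i)^2$ as symmetric bilinear forms; at points where $T$ vanishes this reduces to $g_a = g_0$, so the bound in fact holds everywhere on $M$.

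Next, for a fixed $m\in M$ I would use the radial curve $c(t) = t\cdot m$, $t\in[0,1]$, coming from the homothety of the cone. Its tangent is proportional to the Euler field, so $g_0(c'(t),c'(t)) = \rho(m)^2$ is constant in $t$. The hyperk\"ahler moment maps are homogeneous of degree $2$ along the homothety, cf.\ Subsection \ref{subsection hk cone with S1 action}, i.e.\ $L_{\rho\partial_\rho}x_i = 2x_i$, so $x_i(c(t)) = t^2 x_i(m)$ and hence $dx_i(c'(t)) = 2t\, x_i(m)$. Combining this with compactness of the link $S$, which yields a constant $C>0$ with $|x(m)| \leq C\rho(m)^2$ for every $m$, the pointwise metric bound gives
\[
\sqrt{g_a(c'(t),c'(t))} \leq \sqrt{\rho(m)^2 + 4t^2 a^2 C^2 \rho(m)^4}.
\]
Integrating in $t$ from $0$ to $1$ and using $\rho_a(m) \leq l(g_a, c)$ produces $\rho_a(m) \leq \rho(m)\sqrt{1 + 4a^2 C^2 \rho(m)^2}$.

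The final cosmetic step is to bound the last expression by $C'\rho(m)^2$, absorbing a linear-in-$\rho$ remainder into an enlarged constant. I do not foresee any real obstacle: every ingredient --- the explicit expression for $g_a$ from Proposition \ref{deformation}, the degree-$2$ scaling of $x_j$ from \eqref{xj}, and the compactness of $S$ --- is already in the text, and the argument is really just a length estimate along one explicit path.
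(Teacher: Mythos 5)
Your proposal is correct and follows essentially the same route as the paper: the pointwise comparison $g_a \leq g_0 + a^2\sum_i (dx_i)^2$, the radial path $c(t)=tm$, the degree-$2$ homogeneity of the moment maps, and the bound $|x|\leq C\rho^2$ from compactness of $S$. The only additions are minor (making the sign of the $\eta^2$ term explicit and noting the bound persists where $T=0$), neither of which changes the argument.
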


It follows that by adding the vertex $0$ to $M$, the metric space $(M,d_{g_a})$ will be completed.

\subsection{An estimation of the sectional curvature}

In this section, we give an estimate of the sectional curvature of $g_a$.

\begin{proposition}[{\cite[Proposition 2.1]{zbMATH05292970}}]\label{lemma of bielawski}
Suppose that we have a hyperk\"ahler manifold $(M,g,I_1,I_2,I_3)$ with a tri-Hamiltonian action of a Lie group $G$ that is free, proper, isometric on the $c$-level set of the
hyperk\"ahler moment map $\mu=(\mu_1,\mu_2,\mu_3)$. Denote by $Q=\mu^{-1}(c)/G$ the hyperk\"ahler quotient. Define $\breve{\mathfrak{g}}$ to be the subbundle of the tangent bundle
of $M$ generated by the action of $G$ and $L=\breve{\mathfrak{g}}\oplus I_1\breve{\mathfrak{g}}\oplus I_2\breve{\mathfrak{g}}\oplus I_3\breve{\mathfrak{g}}$. For two tangent vectors
$X,Y$ at $m\in M$ contained in $L^\perp$, denote by $A(X,Y)$ the $\breve{\mathfrak{g}}$-part of $\nabla_{\tilde{X}}\tilde{Y}$, where $\nabla$ is the Levi-Civita connection and
$\tilde{X},\tilde{Y}$ are any extensions of $X,Y$ to a neighbourhood of $m$. So $A$ is the restriction to $L^\perp\times L^\perp$ of the O'Neil tensor of the submersion $M\rightarrow M/\mathbb{S}^1$.

Then the sectional curvature $K_Q$ of the hyperk\"ahler quotient $Q$ of $M$ by $G$ satisfies the pointwise estimate
\begin{align}
|K_Q(p)(\pi)-K_M(m)(\tilde{\pi})|\leq 9|A(m)|^2.
\end{align}
where $m$ is any point of $M$ projected to $p$ and $\tilde{\pi}$ is the horizontal lift of a plane $\pi\subset T_pQ$ to $T_mM$.
\end{proposition}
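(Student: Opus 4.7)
The plan is to realize the hyperk\"ahler quotient $Q$ in two steps: the inclusion $\mu^{-1}(c)\hookrightarrow M$ as a Riemannian submanifold, followed by the projection $\mu^{-1}(c)\to Q$ as a Riemannian submersion, and to chain the Gauss equation for the inclusion with the O'Neill formula for the submersion. The key input from hyperk\"ahler geometry is the moment-map identity $d\mu_i(X)=-\omega_i(\xi,X)=-g(I_i\xi,X)$ for $\xi\in\breve{\mathfrak{g}}$, which shows that $T\mu^{-1}(c)=\breve{\mathfrak{g}}\oplus L^\perp$, that the normal bundle of $\mu^{-1}(c)\subset M$ is $I_1\breve{\mathfrak{g}}\oplus I_2\breve{\mathfrak{g}}\oplus I_3\breve{\mathfrak{g}}$, and that the horizontal distribution of the submersion $\mu^{-1}(c)\to Q$ is exactly $L^\perp$. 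Consequently, the horizontal lift $\tilde\pi$ of a plane $\pi\subset T_pQ$ is automatically a plane in $L^\perp_m$.

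For an orthonormal basis $X,Y$ of $\tilde\pi$, combining Gauss and O'Neill gives
\begin{equation*}
K_Q(p)(\pi)-K_M(m)(\tilde\pi)=|II(X,Y)|^2-\langle II(X,X),II(Y,Y)\rangle+3|A_O(X,Y)|^2,
\end{equation*}
where $II$ is the second fundamental form of $\mu^{-1}(c)\subset M$, with values in $\bigoplus_{i=1}^3 I_i\breve{\mathfrak{g}}$, and $A_O(X,Y)=\tfrac{1}{2}[X,Y]^V\in\breve{\mathfrak{g}}$ is the O'Neill $A$-tensor. Both of these quantities should be expressible through the single tensor $A$ of the statement. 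For $A_O$, torsion-freeness of $\nabla$ gives $A_O(X,Y)=\tfrac{1}{2}(A(X,Y)-A(Y,X))$ immediately. For $II$, I would use $\nabla I_i=0$ together with the skew-symmetry of $I_i$ with respect to $g$ to compute, for any $\xi\in\breve{\mathfrak{g}}$,
\begin{equation*}
\langle\nabla_X Y,I_i\xi\rangle=-\langle\nabla_X(I_iY),\xi\rangle=-\langle A(X,I_iY),\xi\rangle,
\end{equation*}
whence $II(X,Y)=-\sum_{i=1}^3 I_iA(X,I_iY)$.

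It remains to estimate the three terms. Using $|A(U,V)|\leq|A|$ for unit vectors $U,V$ and the mutual orthogonality of the subbundles $I_i\breve{\mathfrak{g}}$, one obtains $|II(X,Y)|^2=\sum_{i=1}^3|A(X,I_iY)|^2\leq 3|A|^2$, and likewise $|II(X,X)|,|II(Y,Y)|\leq\sqrt{3}|A|$ and $|A_O(X,Y)|\leq|A|$. Each of the three contributions to the combined Gauss-O'Neill formula is thus bounded by $3|A|^2$, summing to $9|A|^2$. The main technical point I expect to be subtle is the tight bookkeeping with the three complex structures: a crude estimate would inflate the constant well beyond $9$, and it is essential to exploit the compatibility of each $I_i$ with both $g$ and $\nabla$, as well as the mutual orthogonality of the four subbundles $\breve{\mathfrak{g}},I_1\breve{\mathfrak{g}},I_2\breve{\mathfrak{g}},I_3\breve{\mathfrak{g}}$, to keep every piece tied to a single factor of $|A|^2$.
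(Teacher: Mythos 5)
Your proposal is a correct and complete argument, but note that the paper itself does not prove this proposition at all: it is quoted verbatim from Bielawski \cite[Proposition 2.1]{zbMATH05292970}, so there is no internal proof to compare against. Your route --- factoring the quotient as the isometric embedding $\mu^{-1}(c)\hookrightarrow M$ followed by the Riemannian submersion $\mu^{-1}(c)\to Q$, identifying the normal bundle with $\bigoplus_i I_i\breve{\mathfrak{g}}$ and the horizontal space with $L^\perp$ via $d\mu_i(\cdot)=-g(I_i\xi,\cdot)$, and then chaining Gauss with O'Neill --- is the standard (and, as far as I can tell, Bielawski's own) proof, and your bookkeeping does land exactly on the constant $9 = 3+3+3$. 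Two small points. First, the sign in your combined formula is off: the Gauss contribution to $K_Q-K_M$ is $\langle II(X,X),II(Y,Y)\rangle - |II(X,Y)|^2$ rather than its negative; this is immaterial since only the absolute value is estimated, with each of the three terms bounded by $3|A|^2$ exactly as you say. Second, the mutual orthogonality of $\breve{\mathfrak{g}}, I_1\breve{\mathfrak{g}}, I_2\breve{\mathfrak{g}}, I_3\breve{\mathfrak{g}}$ along $\mu^{-1}(c)$, which you use both to identify the $\breve{\mathfrak{g}}$-component in $\langle\nabla_X(I_iY),\xi\rangle=\langle A(X,I_iY),\xi\rangle$ and to split $|II(X,Y)|^2$ into three orthogonal pieces, deserves one line of justification: $g(\xi^a,I_i\xi^b)=\omega_i(\xi^b,\xi^a)=-d\mu_i^a(\xi^b)$ vanishes on $\mu^{-1}(c)$ by equivariance of the moment map when $c$ is fixed by the coadjoint action (automatically for the one-dimensional groups used in this paper). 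With those two remarks the argument is airtight.
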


\begin{proposition}\label{estimate of K, generalized}
Assume that the $\mathbb{R}$-action is locally free on $S$, that is, $T\neq 0$ on $S$. Let $K_{g_a}$ be the sectional curvature of $g_a$, then there exits $C>0$ such that $|K_{g_a}|\leq
\frac{C}{\rho_a}$.
\end{proposition}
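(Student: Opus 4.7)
The plan is to apply Bielawski's Proposition~\ref{lemma of bielawski} to the hyperk\"ahler quotient realization of the Taub-NUT deformation, namely $M\times\mathbb{H}\to M$ with the $\mathbb{R}$-action generated by $Z=aT+\partial/\partial q_0$. That proposition bounds $|K_{g_a}|$ by $|K_{M\times\mathbb{H}}|+9|A|^2$ at corresponding points, so the proof reduces to estimating these two quantities in terms of $\rho$, after which I would invoke Proposition~\ref{estimate of rho1, generalized} to convert $\rho$-estimates to $\rho_a$-estimates.

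For the ambient curvature, $\mathbb{H}$ is flat, so $K_{M\times\mathbb{H}}$ only sees $K_{g_0}$. Since $(M,g_0)$ is a Riemannian cone over the compact link $S$ and the homothety $\rho\mapsto\lambda\rho$ rescales $g_0$ by $\lambda^2$, the sectional curvature scales by $\lambda^{-2}$, giving the standard cone bound $|K_{g_0}|\leq C/\rho^2$. For the tensor $A$: since $Z$ is Killing, $\langle\nabla_X Y,Z\rangle=-\langle Y,\nabla_X Z\rangle$ for $X,Y$ horizontal, which yields $|A|^2\leq C|\nabla Z|^2/|Z|^2$. Because $\partial/\partial q_0$ is parallel on $\mathbb{H}$, $\nabla Z=a\nabla T$, and for the Killing field $T$ one has $|\nabla T|^2$ proportional to $|d\theta|^2$ with $\theta=g_0(T,\cdot)$. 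The homogeneity condition $[T,\rho\partial/\partial\rho]=0$ yields $\theta=\rho^2\theta_S$ on the cone, where $\theta_S$ is pulled back from $S$, so that
\begin{align*}
d\theta=2\rho\,d\rho\wedge\theta_S+\rho^2\,d\theta_S.
\end{align*}
A direct computation with the inverse cone metric shows that the $\rho$-powers in $d\theta$ exactly cancel the $\rho^{-2}$ factors coming from $g_0^{-1}$ on $S$-forms, so $|d\theta|^2_{g_0}$ is uniformly bounded. Meanwhile $|Z|^2=1+a^2\rho^2|T|^2_{g_S}$, and local freeness of the $\mathbb{R}$-action on $S$ forces $|T|^2_{g_S}\geq c>0$ on compact $S$. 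Hence
\begin{align*}
|A|^2\leq\frac{Ca^2}{1+ca^2\rho^2}\leq\frac{C'}{\rho^2}
\end{align*}
once $\rho$ is bounded below away from $0$.

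Combining the two estimates via Proposition~\ref{lemma of bielawski} gives $|K_{g_a}|\leq C''/\rho^2$, and then Proposition~\ref{estimate of rho1, generalized}, providing $\rho^2\geq \rho_a/C$, yields the desired $|K_{g_a}|\leq C'''/\rho_a$. The main technical obstacle is the uniform-in-$\rho$ boundedness of $|d\theta|^2_{g_0}$, which is really where the homogeneity of $T$ on the cone is used; without this cancellation, the growth of $|T|^2_{g_0}=\rho^2|T|_{g_S}^2$ could have propagated into $|\nabla T|^2$ and spoiled the estimate. The subsidiary step of passing from $\rho^{-2}$ to $\rho_a^{-1}$ should be noted as a genuine weakening: $\rho_a\lesssim\rho^2$ at infinity makes these two decay rates comparable only up to this algebraic loss, which is the cost of working with the intrinsic distance of the deformed metric.
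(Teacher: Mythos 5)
Your proof is correct and shares the paper's overall skeleton --- Bielawski's Proposition \ref{lemma of bielawski} applied to the quotient $M\times\mathbb{H}\to M$, the conic bound $|K_{g_0}|\leq C/\rho^2$, and Proposition \ref{estimate of rho1, generalized} to trade $\rho^{-2}$ for $\rho_a^{-1}$ --- but your control of the tensor $A$ takes a genuinely different route. The paper constructs an explicit homothety-adapted frame $\{X_0,\dots,X_3,Y_1,\dots,Y_{4n-4}\}$ of the horizontal distribution, extends it by homogeneity, and bounds $|\nabla_{X_i}X_j|$, $|\nabla_{X_i}Y_j|$, etc.\ term by term before assembling $|A|\leq C/\rho$. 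You instead observe that for horizontal $X,Y$ the $\breve{\mathfrak{g}}$-component of $\nabla_XY$ is governed by $g(\nabla_XY,Z)=-g(Y,\nabla_XZ)$, reduce to $|A|\leq a\|\nabla T\|/|Z|$, use the Killing identity $2g(\nabla_{\cdot}T,\cdot)=d\theta$, and check from $\theta=\rho^2\theta_S$ that $|d\theta|_{g_0}$ is uniformly bounded on the cone, while $|Z|^2=1+a^2|T|^2_{g_0}\geq 1+ca^2\rho^2$ by local freeness and compactness of $S$. This is shorter, replaces the frame construction (and the paper's ``by similar argument'' steps) by a two-line tensorial computation, and makes transparent exactly where local freeness and the cone structure enter; the paper's frame, on the other hand, is reused elsewhere and keeps track of which horizontal directions degenerate. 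One point both arguments gloss over equally: Bielawski's estimate compares $K_{g_a}$ with the ambient curvature of the \emph{horizontal lift} plane in $T(M\times\mathbb{H})$, which need not be tangent to the $M$-factor; the bound $|K_{M\times\mathbb{H}}(\tilde\pi)|\leq C/\rho^2$ still holds because the product curvature tensor is the sum of the factors' tensors and its norm is controlled by $\sup|K_{g_0}|$, but this deserves a sentence.
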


\begin{proof}
Recall that $M=C(S)$, so any vector field $X$ in $M$ can be decomposed as $X=X_S+x_r\rho\frac{\partial}{\partial \rho}$, here at each point $X_S\in TS$ and $x_r$ is a function. If
$X_S$ and $x_r$, viewed as objects in $S$ are independent of $\rho$, then the field $X$ is invariant by homothety. Such vector fields are completely determined by their value in
$S$. Moreover, if such a field is bounded on $S$ by some constant $C$, then it is bounded by $C\rho$ on $M$.

Let $X=X_S+x_r\rho\frac{\partial}{\partial \rho}$ and $Y=Y_S+y_r\rho\frac{\partial}{\partial \rho}$ be two homothety invariant field on $M$ and let $\nabla$ denote the Levi-Civita
connection on $M$ with respect to $g_0$, then we have
\begin{align}\label{nablaXY}
\nabla_XY=(\nabla^S_{X_S}Y_S+y_rX_S+x_rY_S) + (X_S(y_r) + x_ry_r-g_S(X_S,Y_S))\rho\frac{\partial}{\partial\rho}.
\end{align}
Here, $\nabla^S$ is the Levi-Civita connection of $g_S$. So, it follows that their covariant derivative is also homothety-invariant.

We want to apply Proposition \ref{lemma of bielawski}. Let $\dim_\mathbb{R}M=4n$. To begin with, we will choose a local frame of $T(M\times\mathbb{H})|_{S\times\mathbb{H}}$ in an
open subset $U$ of $S$. Let $Z=aT+\frac{\partial}{\partial q_0}$, $Z_i=I_iZ$ for $i=1,2,3$. Next, we choose $Y_1,\dots,Y_{4n-4}$ in $TM$ with $|Y_j|=1$ such that
$Y_1,\dots,Y_{4n-4},T,T_1,T_2,T_3$ form an orthogonal frame of $M$. Then we can choose $X_0$ in $T(M\times\mathbb{H})$ which is orthogonal to $Z,Z_i,Y_j$ with $|X_0|=1$. Finally, let
$X_i=I_iX_0$ where $i=1,2,3$. It is clear that $\{X_0,\dots,X_3,Y_1,\dots,Y_{4n-4}\}$ form an orthonormal frame of the orthogonal complement of $\{Z,Z_1,Z_2,Z_3\}$.

In the next step, we extend $X_i,Y_j$ to $\mathbb{R}_+\times U\times\mathbb{H}$ as follows. We extend $Y_i$ by homothety-invariance on $M$. As for $X_i$, we may decompose $X_i$
as $X_i=X^{(i)}_S + x^{(i)}_r\rho\frac{\partial}{\partial\rho} + q^{(i)}$. Here by $q^{(i)}$ we mean the $T\mathbb{H}$ part of $X_i$. It is important to note that we may choose
$X_0$ such that all the $q^{(i)}$ depend only on the coordinates of $S$, not on the coordinates of $\mathbb{H}$. We extend the $TM$-part of $X_i$ by homothety-invariance, and we
extend $q^{(i)}$ by $L_{\rho\frac{\partial}{\partial\rho}}q^{(i)} = 2q^{(i)}$. It is clear that $\{X_0,\dots,X_3,Y_1,\dots,Y_{4n-4}\}$ form an orthogonal frame of the orthogonal
complement of $\{Z,Z_1,Z_2,Z_3\}$.

By construction, we have $|Y_i| = \rho$ and there exists $C>0$ such that $|X_i|\geq C\rho^2$ (on a perhaps smaller neighbourhood $U^\p$ of $U$). We claim that $|\nabla_{X_i}X_j| \leq
C\rho^2$. Note that $X_j$ is independent of the coordinates of $\mathbb{H}$, so we may drop the component $q^{(i)}$ in $X_i$. By equation \eqref{nablaXY}, the $TM$-part of
$\nabla_{X_i}X_j$ is of order $O(\rho)$. By the construction of $q^{(j)}$, the $T\mathbb{H}$ part of $\nabla_{X_i}X_j$ is of the order $O(\rho^2)$. By similar argument, we have
$|\nabla_{X_i}Y_j|\leq C\rho$, $|\nabla_{Y_i}X_j|\leq C\rho^2$, $|\nabla_{Y_i}Y_j|\leq C\rho$.

Recall the definition of the tensor $A$ in Proposition \ref{lemma of bielawski}, we have
\begin{align*}
\frac{|A(X_i,X_j)|}{|X_i||X_j|}\leq \frac{C}{\rho^2}, \frac{|A(X_i,Y_j)|}{|X_i||Y_j|}\leq \frac{C}{\rho^2},\\
\frac{|A(Y_i,X_j)|}{|Y_i||X_j|}\leq \frac{C}{\rho}, \frac{|A(Y_i,Y_j)|}{|Y_i||Y_j|}\leq \frac{C}{\rho},
\end{align*}
It follows that $|A|\leq \frac{C}{\rho}$. So by Proposition \ref{lemma of bielawski} we have $|K_{g_a}-K_0|\leq \frac{C}{\rho^2}$, here $K_0$ is the sectional curvature of $g_0$. On the other hand, since $(M,g_0)$ is a Riemannian cone, we know that $|K_0|\leq C\frac{1}{\rho^2}$, so it follows that $|K_{g_a}|\leq\frac{C}{\rho^2}$. Finally, the result follows from
Proposition \ref{estimate of rho1, generalized}.
\end{proof}

\subsection{The deformed complex symplectic structure}\label{subsection of complex structure}

It is proved in \cite{lebrun1991complete} that the complex symplectic structure of the Taub-NUT metric is that of a standard
$\mathbb{C}^2$. In this subsection we will prove a similar result for the Taub-NUT deformation of hyperk\"ahler cones, thus generalizing the result of \cite{lebrun1991complete} (see also \cite[Chapter 7]{hein2012gravitational}).

We start from a general proposition concerning the deformation of $(1,0)$ form.

\begin{lemma}\label{deformed (1,0) form}
Under the same assumption of Proposition \ref{deformation}, fix $j=1,2,3$, let $\alpha$ be any $(1,0)$-form with respect to the complex structure
$I_j$. Then for any $a\geq 0$, the complex 1-form $\Psi_j=\alpha - ia^2\alpha(T)dx_j$ is of type $(1,0)$ with respect to $I_j^a$.
\end{lemma}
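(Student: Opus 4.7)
The plan is to verify directly that $\Psi_j(I_j^a X) = i\,\Psi_j(X)$ for every tangent vector $X$, which is the defining property of a $(1,0)$-form with respect to $I_j^a$. All the ingredients I need are in Proposition \ref{deformation}, together with the basic identities $\theta_j = I_j\theta$, $dx_j = -\theta_j$, and $g(T,T) = V^{-1}$ established in Subsection \ref{subsection decomposition of the metric}.

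First, I will apply the explicit formula
\[
I_j^a X = I_j X - \frac{a^2V}{a^2+V}\,\theta(X)\, I_jT + a^2\, dx_j(X)\, T
\]
to compute $\alpha(I_j^a X)$. Using the hypothesis that $\alpha$ is $(1,0)$ with respect to $I_j$, both $\alpha(I_j X) = i\alpha(X)$ and $\alpha(I_j T) = i\alpha(T)$. This gives
\[
\alpha(I_j^a X) = i\alpha(X) - \frac{ia^2V}{a^2+V}\,\theta(X)\,\alpha(T) + a^2\, dx_j(X)\,\alpha(T).
\]

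Next I will compute $dx_j(I_j^a X)$. The three needed values are $dx_j(I_j X)$, $dx_j(I_j T)$, and $dx_j(T)$. From $dx_j = -\theta_j$ and $\theta_j(Y) = g(I_jT,Y) = -\theta(I_jY)$, I get $dx_j(Y) = \theta(I_jY)$, so $dx_j(I_j X) = -\theta(X)$, $dx_j(I_jT) = -g(T,T) = -1/V$, and $dx_j(T) = 0$. Plugging in yields
\[
dx_j(I_j^a X) = -\theta(X) + \frac{a^2V}{a^2+V}\cdot\frac{1}{V}\,\theta(X) = -\frac{V}{a^2+V}\,\theta(X).
\]

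Finally I substitute these into $\Psi_j(I_j^a X) = \alpha(I_j^a X) - ia^2\alpha(T)\, dx_j(I_j^a X)$. The term $-\frac{ia^2V}{a^2+V}\theta(X)\alpha(T)$ from $\alpha(I_j^a X)$ and the term $+\frac{ia^2V}{a^2+V}\alpha(T)\theta(X)$ coming from $-ia^2\alpha(T)\cdot(-\frac{V}{a^2+V}\theta(X))$ cancel, leaving
\[
\Psi_j(I_j^a X) = i\alpha(X) + a^2\, dx_j(X)\,\alpha(T) = i\bigl(\alpha(X) - ia^2\alpha(T)\,dx_j(X)\bigr) = i\,\Psi_j(X),
\]
which is exactly what is needed. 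There is no real obstacle here; the only delicate point is making sure the sign conventions for $\theta_j = I_j\theta$ and $dx_j = -\theta_j$ are treated consistently so that the cancellation of the $\theta(X)\alpha(T)$ terms goes through.
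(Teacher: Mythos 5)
Your proof is correct and is essentially identical to the paper's: both verify $\Psi_j(I_j^aX)=i\Psi_j(X)$ by applying the explicit formula for $I_j^a$ from Proposition \ref{deformation} to compute $\alpha(I_j^aX)$ and $dx_j(I_j^aX)$, using $dx_j(I_jX)=-\theta(X)$, $dx_j(I_jT)=-1/V$, $dx_j(T)=0$, and then observing the cancellation of the $\theta(X)\alpha(T)$ terms. No further comment is needed.
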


\begin{proof}
Applying the formula for $I_j^a$ in Proposition \ref{deformation}, for any tangent vector $X$ we have
\begin{align}
dx_j(I_j^aX)&=dx_j(I_jX-\frac{a^2V}{a^2+V}\theta(X)I_jT+a^2dx_j(X)T)\\
            &=-\theta(X)+\frac{a^2}{a^2+V}\theta(X)+0\\
            &=-\frac{V}{a^2+V}\theta(X),
\end{align}
and we also have
\begin{align}
\alpha(I_j^aX)&=\alpha(I_jX-\frac{a^2V}{a^2+V}\theta(X)I_jT+a^2dx_j(X)T)\\
          &=i\alpha(X)-\frac{a^2V}{a^2+V}\theta(X)i\alpha(T)+a^2dx_j(X)\alpha(T).
\end{align}
So it follows that
\begin{align}
(\alpha-ia^2\alpha(T)dx_j)(I_j^aX)=i(\alpha-ia^2\alpha(T)dx_j)X.\qed\qedhere
\end{align}
\end{proof}

We complexify the $\mathbb{R}$-action with respect to $I_1$ to get a $\mathbb{C}$-action on $M$. More precisely, for any point $m\in M$, and for any $it\in i\mathbb{R}$, and any
sufficiently small $\tau \in \mathbb{R}$, we have
\begin{align}
\frac{d}{dt}((it)\cdot m) &= T((it)\cdot m),\\
\frac{d}{d\tau}(\tau\cdot m) &= -I_1T(\tau\cdot m).\label{complexification}
\end{align}

Fix any $m_0\in M$, consider the following differential relations along the $(-I_1T)$-orbit $m_\tau={\tau}\cdot m_0$:
\begin{align}\label{ODE system1}
\frac{d}{d\tau}(\rho^2(m_\tau)) &= d(\rho^2)(-I_1T(m_\tau)) = 4x_1(m_\tau),\\
\frac{d}{d\tau}(x_1(m_\tau)) &= dx_1(-I_1T(m_\tau)) = -g_0(I_1T,-I_1T) = |T|^2_{g_0}(m_\tau).\label{ODE system2}
\end{align}

Note that $|I_1T|_{g_0} \leq C|\rho\frac{\partial}{\partial\rho}|_{g_0}$ for some $C>0$, so the vector field $-I_1T$ is complete on $M$, hence the complexified $\mathbb{C}$-action
is well defined.

\begin{lemma}\label{lemma I_1T-orbit behavior}
For any $m\in M$, we have $\inf_{\tau\in\mathbb{R}}x_1(m_\tau) \leq 0$, $\sup_{\tau\in\mathbb{R}}x_1(m_\tau) \geq 0$.
\end{lemma}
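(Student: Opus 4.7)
The plan is to exploit the two ODEs \eqref{ODE system1}--\eqref{ODE system2} governing $\rho^2(m_\tau)$ and $x_1(m_\tau)$ along the $(-I_1T)$-orbit through $m$, which is defined for all $\tau \in \mathbb{R}$ thanks to the completeness of $-I_1T$ noted just above the lemma. The key observation is that \eqref{ODE system2} reads $\frac{d}{d\tau}x_1(m_\tau) = |T|^2_{g_0}(m_\tau) \geq 0$, so $\tau \mapsto x_1(m_\tau)$ is monotone non-decreasing. Combined with \eqref{ODE system1}, which says $\rho^2$ changes at rate $4x_1$, the sign of $x_1$ along the orbit entirely dictates whether $\rho^2$ grows or shrinks.

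For the inequality $\inf_\tau x_1(m_\tau) \leq 0$, I would argue by contradiction: suppose $x_1(m_\tau) \geq \varepsilon$ for all $\tau$ and some $\varepsilon > 0$. Integrating \eqref{ODE system1} backwards gives
\begin{equation*}
\rho^2(m_\tau) \;\leq\; \rho^2(m_0) + 4\varepsilon\tau \quad \text{for } \tau \leq 0,
\end{equation*}
which becomes negative as $\tau \to -\infty$, contradicting the nonnegativity of $\rho^2$. The symmetric inequality $\sup_\tau x_1(m_\tau) \geq 0$ follows by the mirror argument: if $x_1(m_\tau) \leq -\varepsilon < 0$ for all $\tau$, integrating \eqref{ODE system1} forward produces $\rho^2(m_\tau) \leq \rho^2(m_0) - 4\varepsilon\tau$ for $\tau \geq 0$, once again forcing $\rho^2$ negative.

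Since both ODEs are already in hand and the flow is complete, there is essentially no technical obstacle here; the proof is a short contradiction argument using only the sign of $\rho^2$. The one degenerate case worth flagging is $T(m_0) = 0$: then the orbit reduces to the point $m_0$, and \eqref{xj} gives $x_1(m_0) = -\tfrac{1}{4}d(\rho^2)(I_1T)(m_0) = 0$, so both claims hold trivially. Conceptually, the content of the lemma is that the monotone function $\tau \mapsto x_1(m_\tau)$ must cross (or touch) zero, which reflects the ``gradient-like'' character of $-I_1T$ for the moment map $x_1$; this is the feature that will presumably be exploited in the subsequent surjectivity/connectedness analysis on $M$.
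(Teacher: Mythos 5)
Your proof is correct, and it is in fact a streamlined variant of the paper's argument. Both proceed by contradiction from the hypothesis $x_1(m_\tau)\geq\varepsilon>0$ for all $\tau$, but you derive the contradiction solely from \eqref{ODE system1}: since $\frac{d}{d\tau}\rho^2(m_\tau)=4x_1(m_\tau)\geq 4\varepsilon$, integrating backwards forces $\rho^2(m_\tau)\leq\rho^2(m_0)+4\varepsilon\tau\to-\infty$, contradicting $\rho^2>0$ (the flow stays in $M$ by the completeness noted just before the lemma). The paper instead uses \eqref{ODE system1} only to conclude $\rho^2(m_\tau)\leq\rho^2(m)$ for $\tau\leq 0$, then combines this with the Cauchy--Schwarz-type identity $2x_1=-g_0(\rho\frac{\partial}{\partial\rho},I_1T)$ to obtain the lower bound $|T|(m_\tau)\geq\frac{2\delta}{\rho(m)}$, and finally feeds that into \eqref{ODE system2} to drive $x_1(m_\tau)\to-\infty$ as $\tau\to-\infty$. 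Your route is shorter and avoids the Cauchy--Schwarz step entirely; the paper's route has the mild advantage of producing quantitative control on $|T|$ along the orbit, which is in the same spirit as the estimates reused later (e.g.\ Lemma \ref{lemma estimation of tau}), but for the statement at hand nothing is lost. Your handling of the degenerate case $T(m_0)=0$ (orbit constant, $x_1(m_0)=0$ by \eqref{xj}) is a correct and welcome addition, since local freeness is not yet assumed in this subsection.
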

\begin{proof}
We will prove by contradiction. Suppose that there exist $m\in M$ and $\delta>0$ such that for any $\tau\in\mathbb{R}$, we have $x_1(m_\tau) > \delta$, then by \eqref{ODE system1},
$\rho^2(m_\tau)\leq\rho^2(m)$ for any $\tau\leq 0$. Note that $g_0(\rho\frac{\partial}{\partial\rho}, I_1T) = -2x_1$, it follows that $|T|(m_\tau)\geq\frac{2\delta}{\rho(m)}$ for
any $\tau\leq 0$. By \eqref{ODE system2}, we have $\lim_{\tau\rightarrow -\infty}x_1(m_\tau) = -\infty$, a contradiction. This proves the first assertion. The second can be proved
in a similar way.
\end{proof}

Define $\Phi_a: M\rightarrow M$ by the formula
\begin{align}\label{Phia}
\Phi_a(m) = (a^2x_1(m))\cdot m.
\end{align}
Now we can state the following proposition:
\begin{proposition}\label{complex sturcture}
The map $\Phi_a:(M,I_1^a)\rightarrow
(M,I_1)$ is an $\mathbb{R}$-equivariant biholomorphism.
\end{proposition}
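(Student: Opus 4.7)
The plan is to verify three things in sequence: $\mathbb{R}$-equivariance of $\Phi_a$, holomorphy of $\Phi_a$ viewed as a map $(M,I_1^a)\to(M,I_1)$, and bijectivity; a smooth bijective holomorphic map is then automatically a biholomorphism.

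\emph{Equivariance.} Because $T$ is Killing and $I_1$-holomorphic, $L_T(I_1 T)=(L_T I_1)(T)+I_1(L_T T)=0$, so the flows of $T$ and of $-I_1 T$ commute. Moreover $dx_1(T)=-\omega_1(T,T)=0$, so $x_1$ is $T$-invariant. Therefore
\begin{align*}
\Phi_a(e^{tT}m)=(a^2 x_1(e^{tT}m))\cdot(e^{tT}m)=e^{tT}\bigl((a^2 x_1(m))\cdot m\bigr)=e^{tT}\Phi_a(m).
\end{align*}

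\emph{Holomorphy.} This is the key step. The identity $L_{I_1 T}I_1=0$ follows from $\nabla I_1=0$ together with $L_T I_1=0$, so the flow $\phi_s$ of $-I_1 T$ preserves $I_1$ and in particular pulls back $(1,0)$-forms to $(1,0)$-forms. Writing $\Phi_a(m)=\phi_{s(m)}(m)$ with $s(m)=a^2 x_1(m)$, the chain rule gives
\begin{align*}
d\Phi_a(X)=d\phi_{s(m)}(X)-a^2\,dx_1(X)\,I_1 T|_{\Phi_a(m)}.
\end{align*}
For any $(1,0)$-form $\alpha$ on $(M,I_1)$, using $\alpha(I_1 T)=i\,\alpha(T)$ together with $d\phi_s(T|_m)=T|_{\phi_s(m)}$ (itself a consequence of $[T,-I_1 T]=0$), I obtain
\begin{align*}
\Phi_a^*\alpha=\phi_{s(m)}^*\alpha\;-\;i a^2\,(\phi_{s(m)}^*\alpha)(T)\,dx_1.
\end{align*}
Since $\phi_{s(m)}^*\alpha$ is $(1,0)$ with respect to $I_1$, Lemma \ref{deformed (1,0) form} identifies the right-hand side, pointwise at $m$, as a $(1,0)$-form with respect to $I_1^a$. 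As the $(1,0)$-forms span the complexified cotangent bundle, $\Phi_a$ is holomorphic.

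\emph{Bijectivity and conclusion.} The map $\Phi_a$ sends each $(-I_1 T)$-orbit into itself, so it suffices to examine its restriction to one. Parametrizing such an orbit by $\tau\mapsto\phi_\tau(m_0)$, the restriction becomes $\tau\mapsto h(\tau):=\tau+a^2 x_1(\phi_\tau(m_0))$, and \eqref{ODE system2} yields $h'(\tau)=1+a^2|T|^2(\phi_\tau(m_0))\geq 1$. Hence $h$ is a strictly increasing smooth bijection of $\mathbb{R}$ with itself, and consequently $\Phi_a$ is globally bijective. Combined with the holomorphy established above, this produces the desired $\mathbb{R}$-equivariant biholomorphism. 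The main obstacle I anticipate is the holomorphy step: one must carry out the computation of $\Phi_a^*\alpha$ carefully so that the $dx_1$-correction coming from differentiating $s(m)$ matches exactly the correction appearing in Lemma \ref{deformed (1,0) form}.
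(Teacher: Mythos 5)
Your proof is correct and follows essentially the same route as the paper: equivariance from $[T,I_1T]=0$, holomorphy via the chain-rule formula $d\Phi_a = dL_{a^2x_1} - a^2\,I_1T\otimes dx_1$ combined with Lemma \ref{deformed (1,0) form}, and bijectivity from the strict monotonicity of $\tau\mapsto \tau+a^2x_1(\tau\cdot m)$ via \eqref{ODE system2}. The only differences are cosmetic (you spell out why $L_{I_1T}I_1=0$, which the paper defers to Lemma \ref{I1T preserves omega23}).
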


\begin{proof}
Since $[T,I_1T]=0$, it follows that $\Phi_1$ is $\mathbb{R}$-equivariant.

Denote by $L_{\tau}$ the map $m\mapsto \tau\cdot m$, we know that $L_{\tau}$ is $I_1$-holomorphic. By the definition of $\Phi_a$, we have
\begin{align}\label{dPhia}
d\Phi_a|_m = (dL_{a^2x_1(m)})|_m - a^2I_1T|_{\Phi_a(m)}\otimes (dx_1)|_m.
\end{align}
For any $(1,0)-$covector $\alpha$ with respect to $I_1$ at $\Phi_a(m)$, the pull-back $L_{a^2x_1(m)}^*\alpha$ is a $(1,0)-$covector at $m$, hence by Lemma \ref{deformed (1,0) form},
$$\Phi_a^*\alpha = L_{a^2x_1(m)}^*\alpha - ia^2(L_{a^2x_1(m)}^*\alpha)(T)dx_1$$ is $(1,0)$ with respect to $I_1^a$. This shows that $\Phi_a$ is holomorphic.

For any $m\in M$, we want to show that there exists a unique $\tau_m\in \mathbb{R}$ such that $\Phi_a(\tau_m\cdot m) = m$, or equivalently $a^2x_1(\tau_m\cdot m) + \tau_m = 0$. Now
the function $a^2x_1(\tau\cdot m) + \tau$ is strictly increasing and tends to $\pm\infty$ as $\tau\rightarrow \pm\infty$, so $\tau_m$ exists and is unique. This shows that $\Phi_a$
is a bijection.
\end{proof}

\begin{lemma}\label{I1T preserves omega23}
The complex symplectic structure of $(M, I_1, \omega_2 + i\omega_3)$ is preserved by $I_1T$. More precisely, we have $L_{I_1T}I_1=0$, $L_{I_1T}\omega_2 = 0$ and $L_{I_1T}\omega_3 =
0$.
\end{lemma}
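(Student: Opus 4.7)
My plan is to prove the three equalities by exploiting the fact that $\omega_2 + i\omega_3$ is a holomorphic symplectic form with respect to $I_1$, together with the standard observation that on a complex manifold, whenever a real vector field preserves the complex structure, multiplying it by the complex structure gives another field preserving it.

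For the statement $L_{I_1T}I_1 = 0$, I would start from integrability of $I_1$, i.e.\ the vanishing of the Nijenhuis tensor $N_{I_1}(T,Y) = [I_1T, I_1Y] - [T,Y] - I_1[I_1T, Y] - I_1[T, I_1Y] = 0$. Combined with $L_T I_1 = 0$, which gives $[T, I_1 Y] = I_1[T,Y]$ and hence $I_1[T, I_1 Y] = -[T, Y]$, this rearranges to $[I_1 T, I_1 Y] - I_1[I_1 T, Y] = 0$, which is exactly $(L_{I_1T}I_1)(Y) = 0$.

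For the two statements about $\omega_2$ and $\omega_3$, I would combine them into the single equation $L_{I_1T}(\omega_2 + i\omega_3) = 0$ and take real and imaginary parts at the end. The key observation is that on any hyperk\"ahler manifold, the complex $2$-form $\Omega := \omega_2 + i\omega_3$ is of type $(2,0)$ with respect to $I_1$; from this it follows directly that for any vector field $X$, $\iota_{I_1 X}\Omega = i\, \iota_X \Omega$, because the $(2,0)$-type condition is equivalent to $\Omega(I_1 \cdot, \cdot) = i\,\Omega(\cdot, \cdot)$. Applying this with $X = T$ and using Cartan's formula plus closedness of $\omega_2$ and $\omega_3$:
\begin{align*}
L_{I_1T}\Omega \;=\; d\,\iota_{I_1T}\Omega + \iota_{I_1T}\,d\Omega \;=\; d(i\,\iota_T\Omega) \;=\; i\, L_T\Omega.
\end{align*}
Since $T$ preserves the hyperk\"ahler structure, $L_T\omega_2 = L_T\omega_3 = 0$, so $L_T\Omega = 0$, and therefore $L_{I_1T}\Omega = 0$. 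Separating into real and imaginary parts gives $L_{I_1T}\omega_2 = 0$ and $L_{I_1T}\omega_3 = 0$.

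There is no real obstacle here; the only point requiring mild care is the $(2,0)$-type statement for $\omega_2 + i\omega_3$, which is a well-known consequence of the quaternionic identities $I_1 I_2 = I_3$, $I_1 I_3 = -I_2$ combined with $\omega_j(\cdot,\cdot) = g(I_j \cdot, \cdot)$. Everything else is formal manipulation via Cartan's formula.
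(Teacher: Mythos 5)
Your proposal is correct and follows essentially the same route as the paper: both arguments reduce to Cartan's formula plus the quaternionic identity that turns $\iota_{I_1T}(\omega_2+i\omega_3)$ into $i\,\iota_T(\omega_2+i\omega_3)$ (the paper phrases this as the pointwise computation $\iota_{I_1T}\omega_3=-dx_2$ via the moment map, you phrase it as the $(2,0)$-type property, which is the same identity). Your Nijenhuis-tensor argument for $L_{I_1T}I_1=0$ is a correct expansion of the standard fact the paper merely cites.
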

\begin{proof}
That $I_1T$ preserves $I_1$ follows from the fact that the complexification of holomorphic action is also holomorphic. For the complex symplectic form, we will show that
$L_{I_1T}\omega_3 = 0$, and a similar argument works for $\omega_2$.

Since $\omega_3$ is closed, we have $L_{I_1T}\omega_3 = d(\iota_{I_1T}\omega_3)$. For any vector $Y$ we have
\begin{align}
(\iota_{I_1T}\omega_3)(Y) = \omega_3(I_1T,Y) = g_0(I_3I_1T,Y) = g_0(I_2T,Y) = \omega_2(T,Y) = -dx_2(Y),
\end{align}
so $\iota_{I_1T}\omega_2=dx_2$, hence $L_{I_1T}\omega_3 = d(-dx_2) = 0.$
\end{proof}

Finally we state the main result of this subsection:
\begin{proposition}\label{complex symplectic structure}
The map $\Phi_a: (M, I_1^a ,\omega_2^a + i\omega_3^a) \rightarrow (M, I_1, \omega_2 + i\omega_3)$ is a complex symplectomorphism.
\end{proposition}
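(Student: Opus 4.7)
The plan is to reduce this to a direct computation using \eqref{dPhia}, Lemma \ref{I1T preserves omega23}, and Proposition \ref{complex sturcture}. By that last proposition, $\Phi_a$ is already known to be an $I_1^a$-to-$I_1$ biholomorphism, so all that remains is to verify the pullback identity
\[
\Phi_a^*(\omega_2 + i\omega_3) \;=\; \omega_2^a + i\omega_3^a.
\]
First I would rewrite the right-hand side via Proposition \ref{deformation}: since $\omega_2^a = \omega_2 + a^2\, dx_3\wedge dx_1$ and $\omega_3^a = \omega_3 + a^2\, dx_1\wedge dx_2$, one finds
\[
\omega_2^a + i\omega_3^a \;=\; (\omega_2 + i\omega_3) + i a^2\, dx_1 \wedge (dx_2 + i\,dx_3).
\]
So the task is to produce exactly this correction from the pullback formula.

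Set $\Omega := \omega_2 + i\omega_3$. Using \eqref{dPhia}, for any tangent vectors $X,Y$ at $m$,
\[
(\Phi_a^*\Omega)_m(X,Y) \;=\; \Omega_{\Phi_a(m)}\!\bigl(dL_{a^2 x_1(m)}X - a^2\,dx_1(X)\,I_1T,\; dL_{a^2 x_1(m)}Y - a^2\,dx_1(Y)\,I_1T\bigr).
\]
Expanding bilinearly gives four terms; the $(I_1T,I_1T)$ term vanishes by antisymmetry. The $(dL X,dL Y)$ term equals $(L_{a^2x_1}^*\Omega)_m(X,Y)$, and by Lemma \ref{I1T preserves omega23} the flow $L_\tau$ of $-I_1T$ preserves $\Omega$, so this contributes exactly $\Omega_m(X,Y)$. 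For the two cross terms I would use the same lemma to push $I_1T|_{\Phi_a(m)}$ back to $I_1T|_m$ (since $[I_1T,I_1T]=0$ gives $dL_\tau(I_1T) = I_1T$) and apply $L_\tau$-invariance again to reduce them to $\Omega_m(X,I_1T)$ and $\Omega_m(Y,I_1T)$.

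The remaining calculation is to identify $\iota_{I_1T}\Omega$. Mimicking the proof of Lemma \ref{I1T preserves omega23}, one has
\[
\iota_{I_1T}\omega_2 = dx_3, \qquad \iota_{I_1T}\omega_3 = -dx_2,
\]
so $\Omega(X,I_1T) = -dx_3(X) + i\,dx_2(X) = i\,(dx_2+i\,dx_3)(X)$. Substituting into the two surviving cross terms yields
\[
(\Phi_a^*\Omega)_m(X,Y) = \Omega_m(X,Y) + i a^2\bigl[dx_1(X)(dx_2+i\,dx_3)(Y) - dx_1(Y)(dx_2+i\,dx_3)(X)\bigr],
\]
which is exactly $\Omega + i a^2\, dx_1\wedge(dx_2+i\,dx_3)$, matching the expression derived for $\omega_2^a+i\omega_3^a$. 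Combined with the biholomorphism statement of Proposition \ref{complex sturcture}, this shows $\Phi_a$ is a complex symplectomorphism.

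I expect no genuine obstacle here — the argument is a bookkeeping exercise. The only delicate point is the middle step, where one must use $L_\tau$-invariance of $\Omega$ twice (once on the diagonal block, once after recognizing $I_1T|_{\Phi_a(m)} = dL_{a^2x_1(m)}(I_1T|_m)$) to avoid any derivative-of-$x_1$ along the flow showing up. Sign conventions in $\iota_{I_1T}\omega_j = \mp dx_{j\pm 1}$ are the other place where a slip would change the final answer, so those should be checked against the decomposition \eqref{omegai}.
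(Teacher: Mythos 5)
Your proof is correct and follows essentially the same route as the paper: both rest on the differential formula \eqref{dPhia}, the $I_1T$-invariance of $\omega_2+i\omega_3$ from Lemma \ref{I1T preserves omega23}, and the identification of $\iota_{I_1T}\omega_j$ with $\pm dx_k$; your sign conventions check out against \eqref{omegai} and Proposition \ref{deformation}. The only cosmetic difference is that you pull back the full complex form at once for arbitrary tangent vectors, whereas the paper reduces to the real part $\Phi_a^*\omega_2=\omega_2^a$ and computes after decomposing $X,Y$ into components along $T$, $I_1T$ and their orthogonal complement.
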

\begin{proof}
By Proposition \ref{complex sturcture}, it suffices to show that $\Phi_a^*\omega_2 = \omega_2^a$.

Take any tangent vector $X,Y\in T_mM$, write $X = X_0 + a_0T + a_1I_1T, Y = Y_0 + b_0T + b_1I_1T$, where $X_0,Y_0$ are $g_0$-orthogonal to $T,I_1T$. Since $[T,I_1T] = 0$, we have
$(dL_{a^2{x_1}})|_m(T|_m) = T|_{\Phi_a(m)}$ and $(dL_{a^2{x_1}})|_m(I_1T|_m) = I_1T|_{\Phi_a(m)}$. Recall that $dx_1(T) = 0$, $dx_1(I_1T) = -|T|^2$, we have
\begin{align}
d\Phi_a(X) &= (dL_{a^2{x_1}})|_m(X_0) + a_0T + a_1(a^2|T|^2 + 1)I_1T,\\
d\Phi_a(Y) &= (dL_{a^2{x_1}})|_m(Y_0) + b_0T + b_1(a^2|T|^2 + 1)I_1T.
\end{align}
In the above formula $|T|^2$ is valued at $m$ while $T,I_1T$ are valued at $\Phi_a(m)$.

Along the curve $c(\tau) = \tau\cdot m$ for $\tau$ between $0$ and $a^2x_1(m)$, we have the following vector fields: $(dL_{\tau})(X_0),(dL_{\tau})(Y_0), T, I_1T$, each of them is
invariant by $I_1T$. And we also have a 2-form $\omega_2$, and by Lemma \ref{I1T preserves omega23} it is also invariant by $I_1T$. It follows that
$\omega_2((dL_{\tau})(X_0),(dL_{\tau})(Y_0)) = \omega_2(X_0,Y_0)$ for any tangent vector $X_0,Y_0$ at $m$. So by comparing $\omega_2(X, Y)$ and $\omega_2(d\Phi_a(X), d\Phi_a(Y))$,
we deduce that
\begin{align}
(\Phi_a^*\omega_2)(X,Y) = \omega_2(X,Y) + a^2|T|^2\omega_2(X_0,b_1I_1T) + a^2|T|^2\omega_2(a_1I_1T,Y_0).
\end{align}
We note that $dx_1(X) = -a_1|T|^2$, $dx_1(Y) = -b_1|T|^2$, $\omega_2(X_0,I_1T) = -dx_3(X)$, $\omega_2(I_1T,Y_0) = dx_3(Y)$, and it follows that
\begin{align}
\Phi_a^*\omega_2 = \omega_2 + a^2dx_3\wedge dx_1 = \omega_2^a,
\end{align}
here the last equality follows from Proposition \ref{deformation}.
\end{proof}

\subsection{A K\"ahler potential of the deformed metric}

In this subsection we give an explicit formula of the K\"ahler potential of $g_a$ with respect to $I_1^a$.
\begin{proposition}\label{deformed kahler potential}
Under the same assumption of Proposition \ref{deformation}. Fix $i=1,2,3$, suppose that $K_i$ is an $\mathbb{R}$-invariant K\"ahler potential of
$g$ with respect to $I_i$, that is to say $\frac{1}{2}dd^c_{I_i}K_i=\omega_i$, and suppose that the moment map $x_j$ is chosen so that
\begin{align}
d^c_{I_j}K_j(T)=2x_j.
\end{align}
Then $K_i^a=K_i+\frac{a^2}{2}(x_i^2+|x|^2)$ is an
$\mathbb{R}$-invariant K\"ahler potential of $g_a$ with respect to $I_i^a$. In fact for $(i,j,k)$ a cyclic permutation of $(1,2,3)$, we have
\begin{align}
d^c_{I_i^a}K_i^a = d^c_{I_i}K_i + a^2(x_jdx_k - x_kdx_j).
\end{align}
\end{proposition}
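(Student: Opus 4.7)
The plan is to prove directly the stronger identity
\begin{align*}
d^c_{I_i^a} K_i^a = d^c_{I_i} K_i + a^2(x_j\, dx_k - x_k\, dx_j);
\end{align*}
the Kähler potential claim then follows by applying $\frac{1}{2}d$ and using $d(x_j\, dx_k - x_k\, dx_j) = 2\, dx_j \wedge dx_k$, which combined with the formula $\omega_i^a = \omega_i + a^2\, dx_j \wedge dx_k$ of Proposition \ref{deformation} gives $\frac{1}{2}dd^c_{I_i^a} K_i^a = \omega_i^a$. The $T$-invariance of $K_i^a$ is automatic since $K_i$ and the moment maps $x_l$ are all $T$-invariant (the latter because $dx_l(T) = -\omega_l(T,T) = 0$).

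To prove the formula, I would first dualize the expression for $I_i^a$ in Proposition \ref{deformation}. With the convention $(I\alpha)(X) = -\alpha(IX)$ on 1-forms, a direct calculation yields
\begin{align*}
I_i^a \alpha = I_i \alpha + \frac{a^2 V}{a^2+V}\, \alpha(I_i T)\, \theta - a^2\, \alpha(T)\, dx_i,
\end{align*}
and I apply this with $\alpha = dK_i^a = dK_i + a^2(2x_i\, dx_i + x_j\, dx_j + x_k\, dx_k)$. The last term drops out since $dK_i^a(T) = 0$. For the $\theta$-coefficient, I need: $dK_i(I_i T) = -d^c_{I_i}K_i(T) = -2x_i$ (the hypothesis); $dx_i(I_i T) = -\omega_i(T, I_i T) = -|T|^2 = -V^{-1}$; and $dx_j(I_i T) = dx_k(I_i T) = 0$, the last identity following from $g_0(I_j T, I_i T) = -g_0(T, I_j I_i T) = \mp g_0(T, I_k T) = 0$ via the quaternion relations together with the orthogonality $T \perp I_l T$. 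These combine to give $dK_i^a(I_i T) = -2x_i(V+a^2)/V$, and after multiplying by $\frac{a^2 V}{a^2+V}$ the coefficient of $\theta$ collapses to exactly $-2a^2 x_i$.

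For the first term $I_i\, dK_i^a$, the key computations are $I_i\, dx_i = \theta$, $I_i\, dx_j = dx_k$, and $I_i\, dx_k = -dx_j$, each a consequence of $dx_l = -I_l \theta$ and the quaternion identities (carried over to the dual action, where the composition law is preserved). Assembling,
\begin{align*}
d^c_{I_i^a} K_i^a = I_i^a dK_i^a = d^c_{I_i} K_i + a^2\bigl(2x_i \theta + x_j\, dx_k - x_k\, dx_j\bigr) - 2a^2 x_i \theta = d^c_{I_i} K_i + a^2(x_j\, dx_k - x_k\, dx_j),
\end{align*}
the $2a^2 x_i \theta$ terms canceling as expected. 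The main obstacle is really just the careful sign bookkeeping in the quaternionic identities for the dual action on $\{dx_l, \theta\}$; once these are pinned down, the assertion follows mechanically.
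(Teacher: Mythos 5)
Your proposal is correct and follows essentially the same route as the paper: both apply the formula for $I_i^a$ from Proposition \ref{deformation} to the differential of $K_i^a$ (the paper computes $d^c_{I_1^a}$ of $K_1, x_1, x_2, x_3$ separately and sums, which is the same computation organized term by term), and your sign bookkeeping — $dK_i(I_iT)=-2x_i$, $dx_i(I_iT)=-V^{-1}$, $dx_j(I_iT)=0$, $I_i dx_i=\theta$, $I_i dx_j=dx_k$ — all checks out, including the cancellation of the $2a^2x_i\theta$ terms. No gaps.
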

Here we use the notations $|x|^2=x_1^2+x_2^2+x_3^2$ and $d^c_I=I\circ d=i(\bar{\partial}_{I}-\partial)$, so that $i\partial\bar{\partial}_I=\frac{1}{2}dd^c_I$.

\begin{proof}
By symmetry, we will prove the formula for $i=1$.

It is clear that $K_1^a$ is a $T$-invariant function since the moment maps $x_1,x_2,x_3$ are invariant.

By formula of $I_1^a$ in Proposition \ref{deformation} and assumption on $x_j$, we have
\begin{align}
d^c_{I_1^a}K_1 &= d^c_{I_1}K_1 - \frac{a^2V}{V + a^2}2x_1\theta,\\
d^c_{I_1^a}x_1 &= \frac{V}{V + a^2}\theta,
\end{align}
and $d^c_{I_1^a}x_2 = d^c_{I_1}x_2 = dx_3$, $d^c_{I_1^a}x_3 = d^c_{I_1}x_3 = -dx_2$. It follows that
\begin{align}
d^c_{I_1^a}K_1^a = d^c_{I_1}K_1 + a^2(x_2dx_3 - x_3dx_2).
\end{align}
So, by formula of $\omega_1^a$ in Proposition \ref{deformation}, we have $\frac{1}{2}dd^c_{I_1^a}K_1^a = \omega_1^a$.
\end{proof}

Applying the above proposition to the case of Taub-NUT deformation of a hyperk\"ahler cone where $x_1,x_2,x_3$ are defined by \eqref{xj}, we have
\begin{proposition}\label{K_1^1}
The function $K_1^a=\frac{1}{2}\rho^2+a^2x_1^2+\frac{a^2}{2}(x_2^2+x_3^2)$ is an $\mathbb{R}$-invariant K\"ahler potential of $g_a$ with respect to $I_1^a$.
\end{proposition}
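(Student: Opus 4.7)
The plan is to deduce Proposition \ref{K_1^1} as a direct specialization of Proposition \ref{deformed kahler potential} to the hyperkähler cone setting. What needs to be checked is that the two hypotheses of that proposition are satisfied with the canonical choice $K_i = \frac{1}{2}\rho^2$, and then one simply substitutes into the formula $K_i^a = K_i + \frac{a^2}{2}(x_i^2 + |x|^2)$ with $i = 1$.

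First I would recall from Subsection \ref{subsection hk cone with S1 action} that on the hyperkähler cone $M = C(S)$ the function $\frac{1}{2}\rho^2$ is simultaneously an $I_j$-Kähler potential of $g_0$ for every $j = 1,2,3$, i.e.\ $\omega_j = i\partial\bar{\partial}_{I_j}(\frac{1}{2}\rho^2)$. Moreover, since $T$ arises from an $\mathbb{R}$-isometry of $S$ extended homothety-equivariantly to $M$, it preserves $\rho$, so $\frac{1}{2}\rho^2$ is $\mathbb{R}$-invariant. This verifies the first hypothesis of Proposition \ref{deformed kahler potential} with $K_j = \frac{1}{2}\rho^2$ for each $j$.

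Next I would verify the normalization condition $d^c_{I_j} K_j(T) = 2x_j$. This is immediate from the definition \eqref{xj} of the hyperkähler moment map on a cone: by that formula $x_j = \frac{1}{4} d^c_{I_j}(\rho^2)(T)$, so $d^c_{I_j}(\tfrac{1}{2}\rho^2)(T) = 2 x_j$ as required. Both hypotheses being satisfied, Proposition \ref{deformed kahler potential} applies at $i = 1$ and yields
\begin{align*}
K_1^a = \tfrac{1}{2}\rho^2 + \tfrac{a^2}{2}\bigl(x_1^2 + |x|^2\bigr) = \tfrac{1}{2}\rho^2 + a^2 x_1^2 + \tfrac{a^2}{2}\bigl(x_2^2 + x_3^2\bigr),
\end{align*}
which is the claimed formula, and it is $\mathbb{R}$-invariant since $\rho$ and the three moment maps are. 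There is no real obstacle here; the whole content of the proposition has already been absorbed into Proposition \ref{deformed kahler potential}, and the only thing specific to the cone case is the existence of the uniform potential $\frac{1}{2}\rho^2$ together with the compatibility of the moment map normalization \eqref{xj}, both of which are standard for 3-Sasakian cones.
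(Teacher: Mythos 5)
Your proposal is correct and follows exactly the paper's route: the paper states Proposition \ref{K_1^1} as an immediate application of Proposition \ref{deformed kahler potential} with $K_1=\frac{1}{2}\rho^2$ and the moment maps normalized by \eqref{xj}, which is precisely the verification you carry out. Nothing to add.
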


\begin{remark}
The above formula for the potential can be seen as a generalization of the formula of the potential of Taub-NUT metric given in \cite{lebrun1991complete}.
\end{remark}

\begin{corollary}\label{corollary K_ALF^a}
The function $K_{\mathrm{ALF}}^a = (\Phi_a^{-1})^*K_1^a$ is an $I_1-$potential of the K\"ahler metric $(M,(\Phi_a^{-1})^*g_a)$.
\end{corollary}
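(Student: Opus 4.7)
The plan is to deduce the corollary as a formal consequence of Proposition \ref{complex sturcture} and Proposition \ref{K_1^1}, using nothing more than the naturality of $d^c$ under biholomorphisms.

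First, I would set up the transport of the K\"ahler data. By Proposition \ref{complex sturcture}, the map $\Phi_a: (M, I_1^a)\to(M, I_1)$ is a biholomorphism, so its inverse $\Phi_a^{-1}: (M, I_1)\to(M, I_1^a)$ is also a biholomorphism. Consequently the pullback Riemannian metric $(\Phi_a^{-1})^*g_a$ on $M$ is K\"ahler with respect to $I_1$, with K\"ahler form $(\Phi_a^{-1})^*\omega_1^a$. This is what needs to be matched on the nose by $\tfrac12 dd^c_{I_1}K_{ALF}^a$.

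Next, I would invoke naturality: whenever $\phi:(N_1,J_1)\to(N_2,J_2)$ is holomorphic and $f$ is a smooth function on $N_2$, one has $\phi^*(d^c_{J_2}f) = d^c_{J_1}(\phi^*f)$, and $d$ obviously commutes with $\phi^*$. Applying this to $\Phi_a^{-1}$ (holomorphic from $(M,I_1)$ to $(M,I_1^a)$) and to the function $K_1^a$, I get
\begin{align*}
dd^c_{I_1}K_{ALF}^a \;=\; dd^c_{I_1}\bigl((\Phi_a^{-1})^*K_1^a\bigr) \;=\; (\Phi_a^{-1})^*\bigl(dd^c_{I_1^a}K_1^a\bigr) \;=\; 2\,(\Phi_a^{-1})^*\omega_1^a,
\end{align*}
where the last equality is Proposition \ref{K_1^1}. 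Since the right-hand side is (twice) the K\"ahler form of $(M,(\Phi_a^{-1})^*g_a,I_1)$, this shows that $K_{ALF}^a$ is an $I_1$-K\"ahler potential of $(\Phi_a^{-1})^*g_a$, proving the corollary.

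There is no real obstacle here; the statement is essentially tautological given the two ingredients. The only subtlety worth being careful about is bookkeeping the direction of the pullback and the change of complex structure it induces; one should check once and for all that $\Phi_a^{-1}$ really is $(I_1,I_1^a)$-holomorphic (immediate from Proposition \ref{complex sturcture}), after which the naturality argument above goes through verbatim.
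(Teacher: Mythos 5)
Your argument is correct and is precisely the (one-line) reasoning the paper leaves implicit: the corollary is stated without proof as an immediate consequence of Proposition \ref{complex sturcture} and Proposition \ref{K_1^1}, via naturality of $d^c$ under the biholomorphism $\Phi_a^{-1}$. Your bookkeeping of directions and complex structures is accurate, so nothing further is needed.
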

In summary, by Proposition \ref{complex symplectic structure}, if one cares only about the complex symplectic structure of $M=C(S)$, then the Taub-NUT deformation leaves $(M,I_1,\omega_2 + i\omega_3)$ unchanged. Moreover, by Corollary \ref{corollary K_ALF^a} if one also wants to keep track of the K\"ahler form associated to $I_1$, then the Taub-NUT deformation can be viewed as replacing its potential $\frac{1}{2}\rho^2$ by $K_{\mathrm{ALF}}^a$. Note that a hyperk\"aher structure is completely determined by its three K\"ahler forms, so we have an equivalent description of the Taub-NUT deformation of hyperk\"ahler cones.

Then we will prove an estimate of $dK_1^a\wedge d^c_{I_1^a}K_1^a$ which turns out to be useful.
\begin{lemma}\label{lemma 4x_1^2V}
For $i = 1,2,3$, we have $4x_i^2 \leq \rho^2|T|^2$, or equivalently $4x_1^2V\leq \rho^2$ when $T\neq 0$.
\end{lemma}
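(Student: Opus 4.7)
The plan is to unfold the definition of $x_i$ and apply the Cauchy--Schwarz inequality directly. By formula \eqref{xj}, we have
\begin{align*}
x_i = -\tfrac{1}{4} d(\rho^2)(I_iT).
\end{align*}
On a Riemannian cone one has the identity $\tfrac{1}{2}d(\rho^2)(X) = g_0(X, \rho\tfrac{\partial}{\partial\rho})$ for any tangent vector $X$, which was already used in the proof of Proposition \ref{Sp(1)-equivariance, generalized}. Substituting $X = I_iT$ gives the clean expression
\begin{align*}
x_i = -\tfrac{1}{2}\,g_0\!\left(I_iT,\, \rho\tfrac{\partial}{\partial\rho}\right).
\end{align*}

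From here the claim is immediate: Cauchy--Schwarz yields
\begin{align*}
|x_i| \leq \tfrac{1}{2}\,|I_iT|_{g_0}\cdot\bigl|\rho\tfrac{\partial}{\partial\rho}\bigr|_{g_0} = \tfrac{1}{2}\,|T|_{g_0}\cdot \rho,
\end{align*}
since $I_i$ is an isometry of $g_0$ and $\rho\tfrac{\partial}{\partial\rho}$ has $g_0$-norm equal to $\rho$ on the cone $M = C(S)$. Squaring gives $4x_i^2 \leq \rho^2 |T|_{g_0}^2$. When $T \neq 0$, dividing by $|T|^2$ and recalling $V = |T|_{g_0}^{-2}$ gives the equivalent form $4x_i^2 V \leq \rho^2$.

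There is no real obstacle; the lemma is just a packaging of Cauchy--Schwarz once one notices that $x_i$ is (up to a factor of $-\tfrac{1}{2}$) the $g_0$-inner product of the two vector fields $I_iT$ and the Euler field $\rho\partial_\rho$.
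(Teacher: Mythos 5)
Your proof is correct and follows essentially the same route as the paper: the paper's one-line argument also rewrites $2x_i = -g_0(\rho\frac{\partial}{\partial\rho}, I_iT)$ and concludes by Cauchy--Schwarz. Your version simply spells out the intermediate identities.
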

\begin{proof}
One verifies that $2x_i = -g_0(\rho\frac{\partial}{\partial \rho}, I_iT)$, so the result follows from the Cauchy-Schwartz inequality.
\end{proof}

\begin{proposition}\label{proposition dK wedge d^cK}
There exists $C > 0$ such that $dK_1^a\wedge d^c_{I_1^a}K_1^a \leq C K_1^add^c_{I_1^a}K_1^a$.
\end{proposition}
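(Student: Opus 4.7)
The plan is to reduce the inequality to a pointwise estimate of $|dK_1^a|^2_{g_a}$ in terms of the potential $K_1^a$, and then to compute that norm explicitly using the decomposition of the metric along $L$.

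For the first step, with the paper's convention $d^c_I f = -df\circ I$ (which one can read off from the proof of Proposition~\ref{moment map of cone}), a direct evaluation gives
\begin{align*}
(df\wedge d^c_{I_1^a} f)(X, I_1^a X) = df(X)^2 + df(I_1^a X)^2 \leq |df|^2_{g_a}\, g_a(X,X)
\end{align*}
for any real function $f$, the second step being Cauchy--Schwarz. Hence $df\wedge d^c_{I_1^a}f \leq |df|^2_{g_a}\,\omega_1^a$ as $(1,1)$-forms with respect to $I_1^a$. Applied to $f=K_1^a$ and combined with $dd^c_{I_1^a}K_1^a = 2\omega_1^a$ (Proposition~\ref{K_1^1}), the proposition reduces to finding $C'>0$ with $|dK_1^a|^2_{g_a}\leq C' K_1^a$.

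For the norm, differentiate to get $dK_1^a = \rho\, d\rho + 2a^2 x_1\, dx_1 + a^2 x_2\, dx_2 + a^2 x_3\, dx_3$ and use the decomposition $g_a = \bar g + (V+a^2)\sum_i (dx_i)^2 + (V+a^2)^{-1}\eta^2$ of Proposition~\ref{deformation}, which yields $|dx_i|^2_{g_a} = (V+a^2)^{-1}$ and shows that $dx_1,dx_2,dx_3,\eta$ are $g_a$-orthogonal in $L^*$. The subtle piece is the radial form $\rho\, d\rho$; decomposing $\rho\,\partial/\partial\rho$ into its $L$-part and $L^\perp$-part by testing against $T$ and $T_j$ using $g_0(\rho\,\partial/\partial\rho, T)=0$ and $g_0(\rho\,\partial/\partial\rho, T_j) = -2x_j$ (from \eqref{xj}), one finds $(\rho\,\partial/\partial\rho)_L = -2V\sum_j x_j T_j$ whose $g_0$-dual is $2V\sum_j x_j\, dx_j$, while the orthogonal complement has squared $g_a$-norm equal to its $g_0$-norm $\rho^2 - 4V|x|^2$, non-negative by the Pythagorean identity (of which Lemma~\ref{lemma 4x_1^2V} is the componentwise form).

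Assembling the pieces and tracking how the cross-terms $-4V x_j^2$ coming from the $L^\perp$-part combine with the $L^*$-contributions, a direct calculation gives
\begin{align*}
|dK_1^a|^2_{g_a} = \rho^2 + 4a^2 x_1^2 + \frac{a^4}{V+a^2}(x_2^2+x_3^2).
\end{align*}
Term-wise comparison with $2K_1^a = \rho^2 + 2a^2 x_1^2 + a^2(x_2^2+x_3^2)$, using $a^4/(V+a^2) \leq a^2$ for $V\geq 0$, gives $|dK_1^a|^2_{g_a} \leq 4K_1^a$, which combined with the first step produces the inequality with $C = 2$. The main technical difficulty is the algebraic bookkeeping leading to the clean expression above: it is essential that the coefficient of $(x_2^2+x_3^2)$ simplify to $a^4/(V+a^2)$ rather than to something of order $V$, since the latter would blow up near the zero set of $T$ and spoil the estimate.
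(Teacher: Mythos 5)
Your proof is correct, and while it follows the same broad strategy as the paper's (reduce to a pointwise quadratic inequality, then exploit the decomposition $g_a=\bar g+(V+a^2)\sum_i(dx_i)^2+(V+a^2)^{-1}\eta^2$), the key step is genuinely different. The paper decomposes the test vector $X$ along $L\oplus L^\perp$, expands $(dK_1^a(X))^2+(d^c_{I_1^a}K_1^a(X))^2$, and verifies four separate term-by-term estimates via the mean value inequality and Lemma \ref{lemma 4x_1^2V}. You instead compute the exact dual norm $|dK_1^a|^2_{g_a}=\rho^2+4a^2x_1^2+\frac{a^4}{V+a^2}(x_2^2+x_3^2)$ (which I have checked), in which the term $-4V|x|^2$ from the $L^\perp$-component cancels identically against the $L^*$-contributions; this bypasses Lemma \ref{lemma 4x_1^2V} and the mean value inequality altogether, yields the clean constant $C=2$, and makes the uniformity in $a$ (which the paper only records in a remark) transparent. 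Two minor points. First, the inequality $df(X)^2+df(I_1^aX)^2\leq|df|^2_{g_a}g_a(X,X)$ is not quite plain Cauchy--Schwarz (which gives a factor $2$); you need the Bessel-type observation that this is the squared norm of the projection of $\operatorname{grad}f$... rather, of $X$ onto the $g_a$-orthogonal pair $\operatorname{grad}f, I_1^a\operatorname{grad}f$ of equal length --- though even the factor $2$ would only change $C$ to $4$, so nothing is at stake. Second, your closing remark that a coefficient of order $V$ on $(x_2^2+x_3^2)$ "would spoil the estimate" is not accurate: Lemma \ref{lemma 4x_1^2V} gives $V(x_2^2+x_3^2)\leq\frac{1}{2}\rho^2\leq K_1^a$, which is precisely how the paper absorbs such terms; your exact cancellation is elegant but not essential. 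Neither point affects the validity of the argument.
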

\begin{proof}
It suffices to show that for any tangent vector $X$ of $M$, we have
\begin{align}\label{aim dK wedge d^cK}
(dK_1^a(X))^2 + (d^c_{I_1^a}K_1^a(X))^2 \leq C K_1^a g_a(X,X).
\end{align}
Decompose $X = \bar{X} + V(a_0T - a_1T_1 - a_2T_2 - a_3T_3)$, where $T_i = I_iT$, $\bar{X}$ is orthogonal to $T$ and $T_i$ with respect to $g_0$, then $\theta(X) = a_0$, $dx_i(X) = a_i$ for $i = 1,2,3$.
Recall that $d(\rho^2)(T) = 0$ and $d(\rho^2)(T_i) = -4x_i$, we have
\begin{align}
dK_1^a(X) = \frac{1}{2}d(\rho^2)(\bar{X}) + 2(V + a^2)x_1a_1 + (2V + a^2)(x_2a_2 + x_3a_3).
\end{align}
Similarly we have
\begin{align}
d^c_{I_1^a}K_1^a(X) = \frac{1}{2}d^c_{I_1}(\rho^2)(\bar{X}) + 2Vx_1a_0 + (2V + a^2)(x_2a_3 - x_3a_2).
\end{align}
By mean value inequality $(\frac{x+y+z}{3})^2 \leq \frac{x^2 + y^2 +z^2}{3}$, we have
\begin{align}
&(dK_1^a(X))^2 + (d^c_{I_1^a}K_1^a(X))^2 \\
&\leq 3 \left((\frac{1}{2}d(\rho^2)(\bar{X}))^2 + (\frac{1}{2}d^c_{I_1^a}(\rho^2)(\bar{X}))^2 + 4V^2x_1^2a_0^2 \right) + \\
& + 3 \left( 4(V + a^2)^2x_1^2a_1^2 + (2V + a^2)^2(x_2^2 + x_3^2)(a_2^2 + a_3^2)  \right).
\end{align}
On the other hand, recalling the formula of $g_a$ in Proposition \ref{deformation}, we get
\begin{align}
g_a(X,X) = g_0(\bar{X},\bar{X}) + (V + a^2)(a_1^2 + a_2^2 + a_3^2) + \frac{V^2}{V + a^2}a_0^2.
\end{align}
Now by Lemma \ref{lemma 4x_1^2V} the inequality \eqref{aim dK wedge d^cK} is implied by the following estimates:
\begin{align}
(\frac{1}{2}d(\rho^2)(\bar{X}))^2 + (\frac{1}{2}d^c_{I_1^a}(\rho^2)(\bar{X}))^2 \leq \rho^2g_0(\bar{X},\bar{X}), \\
V^2x_1^2a_0^2 \leq C(\rho^2 + a^2(2x_1^2 + x_2^2 + x_3^2))\frac{V^2}{V + a^2}a_0^2, \\
4(V + a^2)^2x_1^2a_1^2 \leq C(\rho^2 + a^2(2x_1^2 + x_2^2 + x_3^2)) (V + a^2)a_1^2,
\end{align}
\begin{align}
(2V + a^2)^2(x_2^2 + x_3^2)(a_2^2 + a_3^2) \leq C(\rho^2 + a^2(2x_1^2 + x_2^2 + x_3^2)) (V + a^2)(a_2^2 + a_3^2).\qed\qedhere
\end{align}
\end{proof}
\begin{remark}
Note that the proof shows that $C$ can be chosen uniformly for all $a>0$.
\end{remark}

\begin{corollary}\label{corollary dd^cK^alpha>0}
There exists $0 < \alpha_0 < 1$ such that for all $\alpha > \alpha_0$, we have $dd^c_{I_1^a}(K_1^a)^\alpha > 0$.
\end{corollary}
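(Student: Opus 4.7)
The plan is to differentiate $(K_1^a)^\alpha$ twice and reduce positivity to the $(1,1)$-form inequality already provided by Proposition \ref{proposition dK wedge d^cK}. Writing $K := K_1^a$ and $d^c := d^c_{I_1^a}$ for brevity, the identities $d(K^\alpha)=\alpha K^{\alpha-1}\,dK$ and $d^c(K^\alpha)=\alpha K^{\alpha-1}\,d^c K$ yield, after one more exterior differentiation,
\begin{align}
dd^c(K^\alpha) = \alpha K^{\alpha-2}\bigl[(\alpha-1)\,dK\wedge d^c K + K\,dd^c K\bigr].
\end{align}
By Proposition \ref{K_1^1} we have $K \geq \tfrac{1}{2}\rho^2 > 0$ on $M\setminus\{0\}$, so the prefactor $\alpha K^{\alpha-2}$ is positive there, and it suffices to show that the bracket is a strictly positive $(1,1)$-form.

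Next I would record two facts: $dd^c K = 2\omega_1^a > 0$, and $dK\wedge d^c K = 2i\,\partial K\wedge\bar\partial_{I_1^a} K$ is a non-negative $(1,1)$-form (of rank at most one). Hence for $\alpha \geq 1$ both summands in the bracket are non-negative with $K\,dd^c K$ strictly positive, and the corollary is immediate in that range.

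The case requiring actual work is $\alpha < 1$, where the coefficient $\alpha - 1$ is negative. Here the plan is to invoke Proposition \ref{proposition dK wedge d^cK}, which gives the $(1,1)$-form inequality $dK \wedge d^c K \leq C\,K\,dd^c K$. Multiplying by $\alpha - 1 < 0$ reverses the inequality, and adding $K\,dd^c K$ to both sides gives
\begin{align}
(\alpha-1)\,dK\wedge d^c K + K\,dd^c K \geq \bigl(1 - (1-\alpha)C\bigr)\,K\,dd^c K,
\end{align}
which is strictly positive as soon as $\alpha > 1 - 1/C$. After enlarging $C$ if necessary so that $1 - 1/C \in (0,1)$, any $\alpha_0 \in (1 - 1/C, 1)$ does the job.

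There is no serious obstacle: all the analytic substance lives in Proposition \ref{proposition dK wedge d^cK}, and this corollary is essentially a formal algebraic consequence. The only point deserving a little care is to read the inequality of Proposition \ref{proposition dK wedge d^cK} as one of $(1,1)$-forms, but this is exactly how it was proved: the pointwise reduction to $(dK(X))^2 + (d^c K(X))^2 \leq C\, K\,g_a(X,X)$ is precisely the pairing of both $(1,1)$-forms against $(X, I_1^a X)$.
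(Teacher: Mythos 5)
Your proof is correct and follows essentially the same route as the paper: the chain-rule identity $dd^c(K^\alpha)=\alpha K^{\alpha-2}[(\alpha-1)dK\wedge d^cK+K\,dd^cK]$ combined with the inequality $dK\wedge d^c K\leq C\,K\,dd^cK$ of Proposition \ref{proposition dK wedge d^cK}, with $\alpha_0$ taken close enough to $1$ (your explicit threshold $\alpha_0>1-1/C$ is just a spelled-out version of the paper's ``sufficiently close to $1$'').
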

\begin{proof}
In general, consider smooth functions $f:M\rightarrow\mathbb{R}$ and $\psi: \mathbb{R} \rightarrow \mathbb{R}$, we have
\begin{align}\label{dd^c of composition}
dd^c(\psi\circ f) = \psi^{\p\p}(f) df\wedge d^cf + \psi^\p(f)dd^cf.
\end{align}
Now let $\psi(s) = s^\alpha$ and $f = K_1^a$, applying Proposition \ref{proposition dK wedge d^cK} we have
\begin{align}
dd^c_{I_1^a}(K_1^a)^\alpha = \alpha(K_1^a)^{\alpha-2}\left[(\alpha - 1)dK_1^a\wedge d^c_{I_1^a}K_1^a + K_1^add^c_{I_1^a}K_1^a \right] > 0
\end{align}
by choosing $\alpha_0$ sufficiently close to $1$.
\end{proof}
\begin{remark}
On the K\"ahler cone $M$, $\frac{1}{2}\rho^2$ is a potential and it is known that for any $\alpha > 0$, we have $dd^c(\frac{1}{2}\rho^{2})^{\alpha} > 0$. So we can think of Corollary \ref{corollary dd^cK^alpha>0} as a generalization of this property to the Taub-NUT deformation.
\end{remark}

In general, let $(M,g,I_1,I_2,I_3,\omega_1,\omega_2,\omega_3)$ be a hyperk\"ahler manifold, then there is a family of complex structures parameterized
by the unit sphere $\mathbb{S}^2$: $\{x_1I_1+x_2I_2+x_3I_3 \mid x_1^2+x_2^2+x_3^2=1\}$. In this and the previous subsection we only focus on the K\"ahler structure of a
particular complex structure of this family, so it is natural to consider the same problem with respect to other complex structures. It turns out
that all the K\"ahler structures are isomorphic, due to the $\Sp(1)$-equivariance of the construction, see the next subsection.

\subsection{The $\Sp(1)$-equivariance}

Recall from Subsection \ref{subsection hk cone with S1 action} that there is an $\Sp(1)$-action on $M$ and its effect on $x_1,x_2,x_3$ is given by the group morphism $\phi:\Sp(1)
\rightarrow \SO(3)$.

\begin{proposition}\label{Sp(1) acts by isometry}
The group $\Sp(1)$ acts isometrically with respect to $(M,g_a)$.
\end{proposition}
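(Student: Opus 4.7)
The plan is to invoke the explicit formula
\begin{align*}
g_a = \bar g + (V+a^2)\sum_{i=1}^3 (dx_i)^2 + \tfrac{1}{V+a^2}\eta^2
\end{align*}
from Proposition \ref{deformation} and to verify that each of the three summands is separately preserved by the $Sp(1)$-action. The essential ingredient is that the generator $T$ is itself $Sp(1)$-invariant as a vector field: since the $\mathbb{R}$-action on $S$ has been extended to $M$ so as to commute with the homothety, $[T,\rho\partial/\partial\rho]=0$, and combined with $L_T I_i = 0$ this yields $L_{\xi_i}T = [\xi_i,T] = 0$ for each Reeb field $\xi_i = I_i\rho\partial/\partial\rho$.

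Granting this, and using Proposition \ref{Sp(1)-equivariance on hk sructures} (so $L_{\xi_i}g_0 = 0$ and $L_{\xi_i}I_j = -2\epsilon_{ijk}I_k$) together with Proposition \ref{Sp(1)-equivariance, generalized} (so $L_{\xi_i}x_j = -2\epsilon_{ijk}x_k$), each summand is handled in turn. Because $T$ and $g_0$ are $Sp(1)$-invariant, so are the function $V = |T|^{-2}_{g_0}$, the one-form $\theta = g_0(T,\cdot)$, and hence $\eta = V\theta$ and $V+a^2$. Next, the subbundle $L = \op{span}\{T,I_1T,I_2T,I_3T\}$ is $Sp(1)$-invariant, since $L_{\xi_i}(I_jT) = -2\epsilon_{ijk}I_kT$; by invariance of $g_0$ its orthogonal complement $L^\perp$ is preserved too, so $\bar g = g_0|_{L^\perp}$ is $Sp(1)$-invariant. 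Finally the moment-map piece is invariant because
\begin{align*}
L_{\xi_i}\sum_{j=1}^3 dx_j\otimes dx_j = -2\sum_{j,k}\epsilon_{ijk}\bigl(dx_j\otimes dx_k + dx_k\otimes dx_j\bigr) = 0
\end{align*}
by antisymmetry of $\epsilon_{ijk}$ in $j,k$ paired against the symmetrization on the right.

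There is no real analytic obstacle here, only a bookkeeping point to keep clearly in mind: although the $Sp(1)$-action rotates the complex structures $I_j$, the moment maps $x_j$, and the vector fields $I_jT$ among themselves, both the fixed generator $T$ and the cone metric $g_0$ are genuinely $Sp(1)$-invariant, so every scalar, one-form, or subbundle built from $(g_0,T)$ alone is automatically invariant. An alternative, slightly more structural route would be to lift the $Sp(1)$-action to $M\times\mathbb{H}$ acting trivially on the $\mathbb{H}$-factor, check that it commutes with the $\mathbb{R}$-action generated by $Z=aT+\partial/\partial q_0$, and conclude that it descends to an isometry of the hyperkähler quotient; but the direct computation above is the shorter path.
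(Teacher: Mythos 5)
Your proof is correct and is exactly the approach the paper takes --- the paper's entire proof is ``one applies the formula of $g_a$ in Proposition \ref{deformation}'' --- so you have simply supplied the omitted details of checking that $\bar g$, $V$, $\eta$, and $\sum_i (dx_i)^2$ are each $Sp(1)$-invariant, which you do correctly. One caveat on your closing aside: the alternative route via $M\times\mathbb{H}$ would require the $Sp(1)$-lift to act on the $\mathbb{H}$-factor by conjugation (rotating $(q_1,q_2,q_3)$ by $\phi(q)$ while fixing $q_0$) rather than trivially, since a trivial action on $\mathbb{H}$ does not preserve the level sets of the moment map $\mu=(ax_1-q_1,ax_2-q_2,ax_3-q_3)$.
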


\begin{proof}
We apply the formula of $g_a$ in Proposition \ref{deformation}.
\end{proof}

Fix $x=(x_1,x_2,x_3)\in \mathbb{S}^2\subset\mathbb{R}^3$, that is to say $x_1^2+x_2^2+x_3^2=1$. Denote by $I_x=x_1I_1+x_2I_2+x_3I_3$ the complex structure associated with $x$. Choose
$A_x\in \SO(3)$ such that the first column of $A_x$ is $(x_1,x_2,x_3)^t$ and then choose $q_x\in \mathbb{H}$ such that $\phi(q_x)=A_x$. Note that the choices of $A_x$ and $q_x$ are
not unique.

\begin{proposition}\label{Inner left Sp(1) action}
The action of $q_x\in \Sp(1)$ gives an isometric biholomorphism $(M,I_1,g_0)\rightarrow (M,I_x,g_0)$. Furthermore, denote by $I_x^a=x_1I_1^a+x_2I_2^a+x_3I_3^a$ the deformed complex
structure associated to $x$, then the action by $q_x$ gives an isometric biholomorphism $(M,I_1^a,g_a)\rightarrow (M,I_x^a,g_a)$.
\end{proposition}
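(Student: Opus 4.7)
The plan is to verify, in both the undeformed and deformed settings, that $R_{q_x}$ is an isometry and that its pullback carries $I_x$ (resp.\ $I_x^a$) to $I_1$ (resp.\ $I_1^a$); since $R_{q_x}$ is a diffeomorphism, this yields the biholomorphism. For isometry: in the undeformed case, the Killing equations $L_{\xi_i} g_0 = 0$ from Proposition \ref{Sp(1)-equivariance on hk sructures} integrate over the connected group $Sp(1)$ to $R_q^* g_0 = g_0$ for every $q$; in the deformed case it is exactly Proposition \ref{Sp(1) acts by isometry}.

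The core ingredient is the transformation law of the complex structures under $Sp(1)$. I would first integrate the infinitesimal relation $L_{\xi_i} I_j = -2\epsilon_{ijk} I_k$ of Proposition \ref{Sp(1)-equivariance on hk sructures} to the identity
\begin{align*}
R_q^* I_j = \sum_{k=1}^{3} \phi(q)_{jk}\, I_k
\end{align*}
for every $q \in Sp(1)$, and one checks that the $SO(3)$-valued homomorphism appearing here is precisely the $\phi$ of Proposition \ref{2-fold covering}: both $I_j$ and the moment maps $x_j$ satisfy the same infinitesimal rotation rule, by comparison with Proposition \ref{Sp(1)-equivariance, generalized}. Setting $q = q_x$ and using $I_x = \sum_j x_j I_j$ gives $R_{q_x}^* I_x = \sum_k \bigl(\sum_j x_j \phi(q_x)_{jk}\bigr) I_k$; the defining property of $A_x = \phi(q_x)$ (first column $(x_1,x_2,x_3)^t$) together with $A_x \in SO(3)$ yields $\sum_j x_j \phi(q_x)_{jk} = \delta_{k1}$, whence $R_{q_x}^* I_x = I_1$.

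For the deformed case, I would substitute the explicit formula $I_i^a = I_i - \tfrac{a^2 V}{a^2 + V}\,\theta \otimes I_i T + a^2\, dx_i \otimes T$ from Proposition \ref{deformation}. The key additional observation is that $T$, and hence $V = |T|_{g_0}^{-2}$ and $\theta = g_0(T,\cdot)$, are $Sp(1)$-invariant: because $T$ preserves the hyperk\"ahler structure of the cone and commutes with the Euler field $\rho\partial_\rho$ (by the way the $\mathbb{R}$-action on $S$ is extended to $M$), it preserves each Reeb vector $\xi_i = I_i(\rho\partial_\rho)$, so $[T, \xi_i] = 0$ for all $i$ and the two actions commute. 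Combining this invariance with the established transformation rule for $I_j$ and with the consequence $R_q^* dx_j = \sum_k \phi(q)_{jk} dx_k$ of Proposition \ref{2-fold covering}, a termwise check gives $R_q^* I_i^a = \sum_k \phi(q)_{ik} I_k^a$, and the same argument as before concludes $R_{q_x}^* I_x^a = I_1^a$.

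The main delicacy is precisely the commutation $[T, \xi_i] = 0$, which is what makes the formula for $I_i^a$ transform equivariantly under $Sp(1)$; once this is secured, the remainder is a direct application of the transformation rules to each term in the formula, and the only remaining care is ensuring that the homomorphism $\phi$ arising from integrating $L_{\xi_i} I_j$ matches the $\phi$ of Proposition \ref{2-fold covering}, which is forced by compatibility with Proposition \ref{Sp(1)-equivariance, generalized}.
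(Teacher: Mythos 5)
Your proposal is correct and follows essentially the same route as the paper: integrate the infinitesimal relations $L_{\xi_i}I_j=-2\epsilon_{ijk}I_k$ of Proposition \ref{Sp(1)-equivariance on hk sructures} to the group level, use the first-column property of $A_x=\phi(q_x)$ to get $q_x^*I_x=I_1$, and then check the deformed case termwise on the formula for $I_i^a$ from Proposition \ref{deformation} using the $Sp(1)$-invariance of $T$, $V$, $\theta$ and the equivariance of the $dx_i$. The paper's proof is just a terser version of this, citing the same two propositions plus Proposition \ref{Sp(1) acts by isometry} for the isometry statement; your filling-in of the details (in particular $[T,\xi_i]=0$) is accurate.
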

\begin{proof}
First, we will use $q_x$ to denote the action of $q_x$ on $M$ and $dq_x$ its differential. For the first assertion, it suffices to prove that $dq_xI_1=(x_1I_1+x_2I_2+x_3I_3)dq_x$.
But this follows from Proposition \ref{Sp(1)-equivariance on hk sructures}.

And the second assertion follows from the first, Proposition \ref{Sp(1)-equivariance on hk sructures} and the formulas for $I_1^a,I_2^a,I_3^a$ in Proposition \ref{deformation}.
\end{proof}
\begin{remark}
It follows that the K\"ahler structures $(M,I_x^a,g_a)$ are all isomorphic for $x\in \mathbb{S}^2$.
\end{remark}
\begin{remark}
Note that the choice of $q_x$ has a degree of freedom of 1. More precisely, if $q_x$ is a choice, then so is $q_xe^{i\theta}$ for $e^{i\theta} \in \mathbb{S}^1$. If one makes a choice
of the complex symplectic form of $(M,I_x^a,g_a)$ and requires $q_x$ to be a complex symplectomorphism, then this degree of freedom will be eliminated.
\end{remark}

\section{Hyperk\"ahler cone with a locally free $\mathbb{S}^1$-symmetry} \label{section hk metrics with locally free S1 action}
In this section we consider the situation of a hyperk\"ahler cone admitting a locally free $\mathbb{S}^1$-symmetry. More precisely, let $S$ be a 3-Sasakian compact connected manifold of dimension $4n - 1$ that admits a locally free $\mathbb{S}^1$-action of automorphisms. Let $M = C(S)$ be the hyperk\"ahler cone over $S$ equipped with $(g_0, I_i, \omega_i)$. The
$\mathbb{S}^1$-action extends to a locally free action on $M$, let $T$ denote its generator, then there exists $C>1$ such that $C^{-1}\rho^2 \leq |T|^2_{g_0} \leq C\rho^2$. Since an
$\mathbb{S}^1$-action is a special case of an $\mathbb{R}$-action, discussion in Section \ref{section Basic properties of the main construction} applies here.

\subsection{The twistor space of $S$} \label{subsection the twistor space of S}
In the beginning of this subsection we only assume that $S$ admits an $\mathbb{S}^1$-symmetry, which may not be locally free.

By \cite{boyer2008sasakian} any 3-Sasakian manifold is quasi-regular, so the Reeb field $\xi_1 = I_1\rho\frac{\partial}{\partial \rho}$ generates a locally free
$\mathbb{S}^1$-action. Recall that $K_1 = \frac{1}{2}\rho^2$ is a K\"ahler potential of $\omega_1$ with respect to $I_1$, which is invariant by $\xi_1$ (see Proposition
\ref{Sp(1)-equivariance, generalized}). Hence, by Proposition \ref{moment map of cone}, $\frac{1}{2}d^c_{I_1}K_1(\xi_1) = \frac{1}{2}\rho^2$ is a moment map of the
$\mathbb{S}^1_{\xi_1}$-action with respect to $\omega_1$. Here we use the notation $\mathbb{S}^1_{\xi_1}$ to indicate the $\mathbb{S}^1$-action generated by $\xi_1$.

The K\"ahler quotient $Z = S/\mathbb{S}^1_{\xi_1}$ of $M$ with respect to $\mathbb{S}^1_{\xi_1}$ at the moment $\frac{1}{2}\rho^2 = \frac{1}{2}$ is known to be the twistor space of
the 3-Sasakian manifold $S$. It is compact connected as $S$ is. It is also a complex orbifold equipped with a Fano K\"ahler-Einstein metric $(Z, I_Z, \omega_Z)$. Since
$\dim_\mathbb{R}S = 4n - 1$, we know that $\dim_\mathbb{C}Z = 2n - 1$.

Let $\pi: S\rightarrow Z$ be the projection map. For $s\in S$ we can identify $T_{\pi(s)}Z$ with $\{\xi_1\}^\perp\subset T_sS$ by $d\pi_s$, where $\{\xi_1\}^\perp$ represents the
$g_0$-orthogonal complement of $\xi_1$ in $T_sS$. Then $I_Z$ is the restriction of $I_1$ on $\{\xi_1\}^\perp$, and $\pi^*\omega_Z = i_S^*\omega_1$ where $i_S:S\rightarrow M$ is the
inclusion of $S$ in $M$.

Since $L_{\xi_1}x_1 = 0$, the moment map $x_1$ descends to a function $\bar{x}_1$ on $Z$. And since $L_{\xi_1}T = 0$, the $\mathbb{S}^1_T$-action descends to an
$\mathbb{S}^1$-action on $Z$ generated by
\begin{align}\label{Tbar}
\bar{T} = d\pi(T - \frac{g_0(T,\xi_1)}{g_0(\xi_1,\xi_1)}\xi_1) = d\pi(T - 2x_1\xi_1).
\end{align}

Since $x_1$ is the moment map of $T$ on $M$, one verifies easily that $\bar{x}_1$ is the moment map of $\bar{T}$ on $Z$, and the gradient of $\bar{x}_1$ is given by
\begin{align}
\op{grad}_Z(\bar{x}_1) = -I_Z\bar{T} = d\pi(-I_1T - 2x_1\rho\frac{\partial}{\partial \rho}) = d\pi(\op{grad}_S(x_1)).
\end{align}
Then by \cite[Lemma 5.3]{ET1997hamiltoniantorus}, $\bar{x}_1$ is a Morse-Bott function on $Z$ and its critical suborbifolds are K\"ahler with even index. Denote by $(\phi_t =
\Phi_t^{\op{grad}_Z(\bar{x}_1)})_{t\in\mathbb{R}}$ the gradient flow of $\bar{x}_1$. For each connected component $D_i$ of the critical set $\op{crit}_Z(\bar{x}_1)$, let
$\lambda_Z(D_i)$ be its index and let
\begin{align}
W^+_Z(D_i) &= \{z\in Z \mid \lim_{t\rightarrow+\infty}\phi_t(z) \in D_i\},\\
W^-_Z(D_i) &= \{z\in Z \mid \lim_{t\rightarrow-\infty}\phi_t(z) \in D_i\}.
\end{align}
be the stable and unstable set of $D_i$, then $W^+_Z(D_i)$ (respectively, $W^-_Z(D_i)$) is an orbi-bundle over $D_i$ of $\lambda_Z(D_i)$-dimensional (respectively ($4n-2-\dim_\mathbb{R}D_i -\lambda_Z(D_i)$)-dimensional) fibers.
\begin{lemma}
The function $\bar{x}_1$ has unique local minima and maxima, and the corresponding level sets are connected. Moreover, let $D_i$ be a connected component of $\op{crit}_Z(\bar{x}_1)$
such that $\bar{x}_1(D_i) > 0$, then $\op{codim}_\mathbb{C}(W^-_Z(D_i)) \geq 1$.
\end{lemma}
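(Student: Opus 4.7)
The plan is to treat the two assertions in turn, reducing the codimension bound to a statement on the global minimum of $\bar{x}_1$, which I then attack via the residual $Sp(1)$-symmetry.

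First I would handle the connectedness of level sets together with the uniqueness of the local extrema. Since $Z$ is a compact connected K\"ahler orbifold and $\bar{x}_1$ is the moment map of the Hamiltonian $\mathbb{S}^1_{\bar{T}}$-action, the Atiyah--Guillemin--Sternberg connectedness theorem (in its Lerman--Tolman extension to the orbifold category) yields both the connectedness of each level set and the uniqueness of a local minimum and maximum component. Alternatively, using that $\bar{x}_1$ is Morse--Bott with even-index critical suborbifolds (the fact from \cite{ET1997hamiltoniantorus} already invoked in the excerpt), one can argue directly: any critical suborbifold with index $\neq 0$ has index $\geq 2$ and any with coindex $\neq 0$ has coindex $\geq 2$, so Morse-Bott cell attachments preserve connectedness of sublevel and superlevel sets, forcing uniqueness of the extrema and connectedness of all level sets.

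For the codimension statement, the fiber dimension formula from the excerpt gives $\dim_{\mathbb{R}} W^-_Z(D_i) = 4n-2-\lambda_Z(D_i)$, so $\operatorname{codim}_{\mathbb{R}} W^-_Z(D_i) = \lambda_Z(D_i)$. Because $\operatorname{grad}_Z \bar{x}_1 = -I_Z \bar{T}$ together with $\bar{T}$ assembles into the real and imaginary parts of a holomorphic $\mathbb{C}^\times$-action on $Z$, the Bialynicki-Birula decomposition identifies $W^-_Z(D_i)$ with the positive-weight cell at $D_i$, which is a complex suborbifold; hence $\operatorname{codim}_{\mathbb{C}} W^-_Z(D_i) = \lambda_Z(D_i)/2$, and I only need $\lambda_Z(D_i) \geq 2$. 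By evenness of the index this reduces to $\lambda_Z(D_i) \neq 0$, i.e., $D_i$ is not a local minimum. By the uniqueness from the previous paragraph, $D_i$ is the unique local minimum if and only if $\bar{x}_1(D_i) = \min_Z \bar{x}_1$, so it suffices to prove $\min_Z \bar{x}_1 \leq 0$.

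To prove $\min_Z \bar{x}_1 \leq 0$, I would exploit the $Sp(1)$-symmetry of Subsection 3.5. Taking $q = j \in Sp(1)\subset \mathbb{H}$, a direct computation of the adjoint action on $\operatorname{Im}\mathbb{H}$ yields $\phi(j) = \operatorname{diag}(-1,1,-1)$, so by Proposition \ref{2-fold covering} the action of $j$ on $M$ sends $x_1$ to $-x_1$. The same computation also gives $\operatorname{Ad}(j)\xi_1 = -\xi_1$, which means the $j$-action preserves the $\mathbb{S}^1_{\xi_1}$-foliation (reversing orientation leafwise) and consequently descends to a self-map $\bar{j}: Z \to Z$ with $\bar{j}^*\bar{x}_1 = -\bar{x}_1$. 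The image of $\bar{x}_1$ is then symmetric about $0$, so $\min_Z \bar{x}_1 = -\max_Z \bar{x}_1 \leq 0$, completing the reduction. The main obstacle I anticipate is not conceptual but bookkeeping: one must check carefully that the Bialynicki-Birula complex structure on the unstable cells and the Atiyah-style connectedness theorem extend cleanly to the K\"ahler orbifold $Z$ at its singular points, and that the adjoint computation $\operatorname{Ad}(j)\xi_1 = -\xi_1$ together with the descent of $j$ through the Reeb quotient is tracked consistently with the sign conventions used for the Reeb fields $\xi_i$.
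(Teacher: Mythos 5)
Your proof is correct and takes essentially the same route as the paper: the first assertion via the Morse--Bott argument with even-dimensional critical suborbifolds and even indices (and coindices), and the second via an $Sp(1)$ rotation reversing the sign of $x_1$ to show that a component with $\bar{x}_1(D_i)>0$ cannot be the unique local minimum, so $\lambda_Z(D_i)\geq 2$ and $\op{codim}_\mathbb{C}W^-_Z(D_i)\geq 1$. Your extra care in checking that the chosen $Sp(1)$ element preserves the $\xi_1$-foliation and descends to $Z$, and in invoking Bialynicki--Birula to see $W^-_Z(D_i)$ as a complex suborbifold, only makes explicit what the paper leaves implicit.
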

\begin{proof}
Let $D_1,\dots,D_N$ be connected components of $\op{crit}_Z(\bar{x}_1)$, then we can order them to have the decomposition of $Z$ as follows:
$$Z = W_Z^-(D_1)\sqcup\dots\sqcup W_Z^-(D_s)\sqcup W_Z^-(D_{s+1})\dots\sqcup W_Z^-(D_N),$$
where $D_1,\dots,D_s$ have zero index and for $i>s$, we have $\lambda_Z(D_i)\geq 2$. Let $a_i = \bar{x}_1(D_i)$, then $a_1,\dots,a_s$ are local minimas of $\bar{x}_1$. For $i>s$, since $\op{codim}_\mathbb{R}(W_Z^-(D_i))\geq 2$, the set $M\setminus\sqcup_{i>s}W_Z^-(D_i)$ is connected. Hence $s=1$ and $a_1$ is the unique minima, moreover its level set $D_1$ is connected. Conversely, one can consider the decomposition of $Z$ by $W_Z^+(D_i)$ to prove the assertion for local maximas.

The second assertion follows from the first, since if
$\bar{x}_1(D_i) > 0$ then by a rotation of $\Sp(1)$ sending $(x_1,x_2,x_3)$ to $(-x_1,-x_2,x_3)$, there exists a point on $Z$ such that $\bar{x}_1 < 0$, so $D_i$ cannot be the level
set of a local minimum.
\end{proof}

Similarly, recall $\mu_c = x_2 + ix_3: M\rightarrow\mathbb{C}$ is $I_1-$holomorphic and $L_{\rho\frac{\partial}{\partial \rho}}\mu_c = 2\mu_c$, so $\mu_c^{-1}(0)\subset M$ defines a
K\"ahler submanifold of $M$ which is also a metric cone. Assuming $n\geq 2$, it follows that $S^\p = S\cap \mu_c^{-1}(0)$ is a Sasakian manifold. Since $S$ is quasi-regular, so is
$S^\p$, then $Z^\p = S^\p/\mathbb{S}^1_{\xi_1}$ is a K\"ahler orbifold $(Z^\p,I_{Z^\p},\omega_{Z^\p})$, which can be seen as a suborbifold of $Z$. Note that $\dim_\mathbb{R}S^\p =
4n - 3$ so $\dim_\mathbb{C}Z^\p = 2n-2$.

By a similar argument, the $\mathbb{S}^1_T$-action descends to $Z^\p$ with generator $\bar{T}$ and moment map $\bar{x}_1$. So $\bar{x}_1$ is a Morse-Bott function on $Z^\p$ and its
critical suborbifolds are K\"ahler with even index. By almost the same proof, we have
\begin{lemma}\label{lemma codim in Zp}
If $S^\p$ is connected, then $\bar{x}_1$ has unique local minima and maxima, and the corresponding level sets are connected. Moreover, for a connected component $D_i$ of
$\op{crit}_{Z^\p}(\bar{x}_1)$, let
\begin{align}
W^-_{Z^\p}(D_i) = \{z^\p \in Z^\p\mid \lim_{t\rightarrow -\infty}\phi_t(z^\p)\in D_i\}
\end{align}
be its unstable set. If $\bar{x}_1(D_i) > 0$, then $\op{codim}_\mathbb{C}(W^-_{Z^\p}(D_i)) \geq 1$.
\end{lemma}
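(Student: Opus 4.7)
The plan is to transcribe the proof of the preceding lemma almost verbatim, replacing $Z$ by its K\"ahler suborbifold $Z'$ and checking that the two ingredients remain valid. The paragraph immediately preceding the lemma already asserts that $\bar{x}_1|_{Z'}$ is Morse-Bott with K\"ahler critical suborbifolds of even index, and $Z' = S'/\mathbb{S}^1_{\xi_1}$ is compact and, under the standing hypothesis that $S'$ is connected, also connected. I would therefore invoke the same general Morse-Bott fact as before --- on a compact connected orbifold a Morse-Bott function whose critical suborbifolds are all even-dimensional with even index admits a unique local minimum and a unique local maximum, with connected corresponding level sets --- to conclude the first assertion directly.

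For the second assertion, the strategy is again to produce a point of $Z'$ where $\bar{x}_1 < 0$ whenever $\bar{x}_1(D_i) > 0$. The key extra verification is that the element $q \in Sp(1)$ with $\phi(q)$ equal to the rotation $(x_1,x_2,x_3) \mapsto (-x_1,-x_2,x_3)$ both preserves $S'$ and descends to a map on $Z'$. Invariance of $S'$ follows immediately from Proposition \ref{2-fold covering}: for any $m \in S'$ one has $\mu(q\cdot m) = \phi(q)\mu(m) = (-x_1(m), 0, 0)$, so $q\cdot m$ still lies in $\mu_c^{-1}(0)\cap S = S'$. Descent to $Z'$ follows from the fact that $q$ conjugates $I_1$ to $-I_1$ and hence $\xi_1$ to $-\xi_1$, so the $q$-action normalizes $\mathbb{S}^1_{\xi_1}$. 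With these in hand I would, given $z \in D_i$, lift to $s \in S'$, apply $q$ to get $s' \in S'$ with $x_1(s') = -x_1(s) < 0$, and project to obtain a point of $Z'$ where $\bar{x}_1 < 0$. Thus $D_i$ cannot be the level set of a local minimum of $\bar{x}_1|_{Z'}$, so $\lambda_{Z'}(D_i) > 0$; being even, $\lambda_{Z'}(D_i) \geq 2$. The standard Morse-Bott description of the unstable set then gives $\op{codim}_{\mathbb{R}} W^-_{Z'}(D_i) = \lambda_{Z'}(D_i) \geq 2$, i.e.\ $\op{codim}_{\mathbb{C}} W^-_{Z'}(D_i) \geq 1$.

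I do not anticipate any serious obstacle; the only slightly subtle points are the invariance of $S'$ and the descent of the $q$-action to $Z'$, both of which drop out of how $q$ acts on the moment map and on $\xi_1$.
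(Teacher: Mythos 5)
Your proposal is correct and is exactly the argument the paper intends: the paper disposes of this lemma with ``by almost the same proof,'' and you supply precisely the two checks that make the transcription from $Z$ to $Z^\p$ legitimate (the rotation $\phi(q)=\mathrm{diag}(-1,-1,1)$ preserves $x_2=x_3=0$ and normalizes $\mathbb{S}^1_{\xi_1}$, hence acts on $Z^\p$). Your reading of $W^-_{Z^\p}(D_i)$ as the genuine unstable set (so that its real codimension equals the index $\lambda_{Z^\p}(D_i)\geq 2$) is the intended one, consistent with the earlier definition of $W^-_Z$ and with the lemma's later application; the ``$t\to+\infty$'' in the displayed definition is a typo in the paper.
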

We have the following proposition which relates the critical sets in $Z$ and $Z^\p$.
\begin{lemma}
We have $\op{crit}_Z(\bar{x}_1) = \op{crit}_{Z^\p}(\bar{x}_1)$.
\end{lemma}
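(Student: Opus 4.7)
My plan is to identify both critical sets as zero sets of the natural vector field $\bar T$, using that the gradient of $\bar{x}_1$ on a K\"ahler orbifold is $-I\bar T$. Concretely, on $Z$ we have $\op{grad}_Z\bar{x}_1 = -I_Z \bar T$, and similarly on the K\"ahler suborbifold $Z^\p$ the gradient of $\bar{x}_1|_{Z^\p}$ equals the $g_{Z^\p}$-orthogonal projection of $\op{grad}_Z\bar{x}_1$ onto $TZ^\p$. Critical points of $\bar{x}_1$ on $Z$ are therefore exactly the zeros of $\bar T$ (in $TZ$), while critical points of $\bar{x}_1|_{Z^\p}$ are the points of $Z^\p$ where the $TZ^\p$-component of $\bar T$ vanishes. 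So the statement reduces to showing that at every point of $Z^\p$ the vector $\bar T$ is automatically tangent to $Z^\p$, and that conversely every zero of $\bar T$ on $Z$ lies in $Z^\p$.

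For the first point I would work upstairs on $S$, where $\bar T = d\pi(T - 2x_1\xi_1)$. Since $T$ is tri-Hamiltonian we have $L_T(x_2 + ix_3) = -\iota_T\iota_T(\omega_2+i\omega_3) = 0$, so $T$ is tangent to the fibre $\mu_c^{-1}(0)$ and hence to $S^\p$. At a point $s\in S^\p$, Proposition \ref{Sp(1)-equivariance, generalized} gives $L_{\xi_1}x_2 = -2x_3 = 0$ and $L_{\xi_1}x_3 = 2x_2 = 0$, so $\xi_1$ is also tangent to $S^\p$ along $S^\p$. Therefore $T - 2x_1\xi_1$ lies in $T_sS^\p$, and its image under $d\pi_s$ lies in $T_{\pi(s)}Z^\p$. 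Because $Z^\p$ sits inside $Z$ as a complex suborbifold (it is the quotient of the $I_1$-complex submanifold $\mu_c^{-1}(0)\subset M$ by the complexified $\xi_1$-action), the almost complex structure $I_Z$ preserves $TZ^\p$, so $\op{grad}_Z\bar{x}_1 = -I_Z\bar T$ is tangent to $Z^\p$ along $Z^\p$. This gives the inclusion $\op{crit}_{Z^\p}(\bar{x}_1)\subseteq \op{crit}_Z(\bar{x}_1)$.

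For the reverse inclusion, suppose $p\in Z$ is a critical point of $\bar{x}_1$, and lift it to $s\in S$. Then $T|_s - 2x_1(s)\xi_1|_s \in \ker d\pi_s = \mathbb{R}\xi_1|_s$, so $T|_s = \lambda\,\xi_1|_s$ for some $\lambda\in\mathbb{R}$. Since the $\mathbb{S}^1$-action on $S$ is assumed locally free, $T|_s\neq 0$, hence $\lambda\neq 0$. Applying $L_T$ to $x_2$ and $x_3$ and using Proposition \ref{Sp(1)-equivariance, generalized} we get $0 = L_Tx_2 = \lambda L_{\xi_1}x_2 = -2\lambda x_3$ and $0 = L_Tx_3 = 2\lambda x_2$, which forces $x_2(s) = x_3(s) = 0$, i.e.\ $s\in S^\p$ and $p\in Z^\p$. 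Combined with the first paragraph this shows $\op{crit}_Z(\bar{x}_1) = \op{crit}_{Z^\p}(\bar{x}_1)$.

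The only mildly delicate step is verifying that $I_Z$ restricts to the complex structure on $T_pZ^\p$; this is the reason I would insist on identifying $Z^\p$ as the K\"ahler quotient of the $I_1$-complex cone $\mu_c^{-1}(0)$ by $\mathbb{S}^1_{\xi_1}$ before invoking the tangency argument. Everything else is formal manipulation with the moment-map identities already recorded in the paper.
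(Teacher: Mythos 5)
Your proof is correct and follows essentially the same route as the paper: the forward inclusion comes from the gradient of $\bar{x}_1$ being tangent to $Z^\p$ (the paper expresses this as $dx_1$ annihilating the normal directions $I_2T,I_3T$ of $\mu_c^{-1}(0)$, which is the same fact), and the reverse inclusion from the observation that at a critical point the lift of $\bar{T}$ forces $T$ to be parallel to $\xi_1$, whence $x_2=x_3=0$. The one substantive difference is how you extract $x_2=x_3=0$ from $T|_s=\lambda\xi_1|_s$: you differentiate $x_2,x_3$ along $T$ and therefore need $\lambda\neq 0$, i.e.\ local freeness. The paper instead uses the pointwise identities $2x_j=g_0(T,\xi_j)$ together with $g_0(\xi_1,\xi_j)=0$ for $j=2,3$, which give the conclusion for any $\lambda$, including $\lambda=0$. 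This matters only because the lemma is placed in the part of the subsection where local freeness has been temporarily dropped; your argument closes that remaining case trivially, since if $T|_s=0$ then $x_j(s)=\tfrac12 g_0(T,\xi_j)(s)=0$ for all $j$ and $s\in S^\p$ anyway.
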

\begin{proof}
Since on $M$ we have $dx_1(I_2T) = dx_1(I_3T) = 0$, it follows that $\op{crit}_{Z^\p}(\bar{x}_1)$ is contained in $\op{crit}_Z(\bar{x}_1)$. Conversely, suppose $z\in\op{crit}_Z(\bar{x}_1)$,
and let $s\in S$ such that $\pi(s) = z$. We have $T(s) - 2x_1(s)\xi_1(s) = 0$ and, in particular, $T(s)$ is parallel to $\xi_1(s)$. On the other hand, by \eqref{xj} we have
$g_0(T,\xi_2) = 2x_2$, $g_0(T,\xi_3) = 2x_3$, so we must have $x_2(s) = x_3(s) = 0$. This shows that $s\in S^\p$ and hence $z\in Z^\p$.
\end{proof}

From now on we assume that $T$ is non-vanishing on $S$, that is to say, the $\mathbb{S}^1_T$-action is locally free.
\begin{proposition}\label{S prime plus is connected}
Assume that the $\mathbb{S}^1$-action is locally free. Let $S^\p_{>0} = S^\p \cap \{x_1 > 0\}$, then $S^\p_{>0}$ is connected.
\end{proposition}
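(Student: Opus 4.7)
My plan is to descend to the twistor space and apply Morse--Bott theory, the key input being a Hessian computation on the normal line bundle $N_{Z^\p/Z}$. Since $x_1$ is $\xi_1$-invariant (Proposition \ref{Sp(1)-equivariance, generalized}), $\pi$ restricts to an $\mathbb{S}^1_{\xi_1}$-orbi-bundle $S^\p_{>0}\to Z^\p_{>0}:=\{\bar{x}_1>0\}\cap Z^\p$; as its fibres are connected, it suffices to show that $Z^\p_{>0}$ is connected. I would do so by applying Morse--Bott theory to $\bar{x}_1|_{Z^\p}$, using that $\op{crit}_{Z^\p}(\bar{x}_1)=\op{crit}_Z(\bar{x}_1)$ and that the unique local maximum $D_{\max}$ of $\bar{x}_1$ on the connected compact orbifold $Z$ lies in $Z^\p$ with $\bar{x}_1(D_{\max})>0$ (the latter following from the $Sp(1)$-rotation argument of the preceding discussion). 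The aim is then to show that the super-level sets $\{\bar{x}_1\ge c\}\cap Z^\p$ are connected for every $c\in(0,\bar{x}_1(D_{\max})]$, starting from a connected tubular neighbourhood of the connected $D_{\max}$ and adding Morse--Bott handles as $c$ decreases; connectedness is preserved provided the coindex $d_{Z^\p}(D_i)$ of each critical component $D_i\neq D_{\max}$ with $\bar{x}_1(D_i)>0$ is at least $2$.

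The main obstacle is verifying this rank bound, and the key step of the proof is a Hessian computation on the complex normal line $N_{Z^\p/Z}|_{D_i}$. Since $\mu_c=x_2+ix_3$ is $T$-invariant on $M$ and transforms under $\xi_1$ with weight $2i$, it descends to a $\bar{T}$-invariant holomorphic section of $\mathcal{O}_Z(2)$ whose vanishing locus is $Z^\p$, and $d\mu_c$ provides a $\bar{T}$-equivariant isomorphism $N_{Z^\p/Z}\simeq\mathcal{O}_Z(2)|_{Z^\p}$. At any lift $m\in\pi^{-1}(D_i)\cap S$, the vanishing $\bar{T}(D_i)=0$ combined with $\ker d\pi=\mathbb{R}\xi_1$, $g_0(T,\xi_1)=2x_1$, and $|\xi_1|_{g_0}=1$ on $S$ force $T|_m=2\bar{x}_1(D_i)\,\xi_1|_m$. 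Coupled with the weight $2i$ of $\xi_1$ on $\mathcal{O}_Z(2)$ and the linearisation of the $\bar{T}$-invariant section $\mu_c$, a short computation yields that the Hessian of $\bar{x}_1$ on $N_{Z^\p/Z}|_{D_i}$ has eigenvalue $-4\bar{x}_1(D_i)$, strictly negative whenever $\bar{x}_1(D_i)>0$.

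It follows that $N_{Z^\p/Z}|_{D_i}$ lies in the negative-Hessian subspace of $T_{D_i}Z$, hence $\lambda_{Z^\p}(D_i)=\lambda_Z(D_i)-2$ and $d_{Z^\p}(D_i)=d_Z(D_i)$. The preceding lemma on $Z$ supplies $\lambda_Z(D_i)\ge 2$, and the evenness of the coindex for K\"ahler moment maps then yields $d_Z(D_i)\ge 2$ for every $D_i\neq D_{\max}$ with $\bar{x}_1(D_i)>0$. Consequently the Morse--Bott handles added to $\{\bar{x}_1\ge c\}\cap Z^\p$ have real rank $\ge 2$ with connected attaching sphere bundles over connected base components, so the super-level sets stay connected as $c\downarrow 0$, giving connectedness of $Z^\p_{>0}$ and hence of $S^\p_{>0}$.
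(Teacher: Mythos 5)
Your argument is correct in substance, but it takes a genuinely different and much heavier route than the paper's. The paper's proof is purely topological: local freeness makes $\mu_S^{-1}(0)$ a codimension-$3$ submanifold of $S$, so $S\setminus\mu_S^{-1}(0)$ is connected; by the $Sp(1)$-equivariance of $\mu$ (Proposition \ref{2-fold covering}) this set equals $Sp(1)\cdot S^\p_{>0}$, and any $q\in Sp(1)$ carrying a point of $S^\p_{>0}$ to another must fix the point $(x_1,0,0)$ with $x_1>0$, hence lie in the connected subgroup $\{e^{i\theta}\}$, which preserves $S^\p_{>0}$; so two components of $S^\p_{>0}$ would sweep out disjoint open sets covering a connected space. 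Your route instead descends to $Z$ and runs Morse--Bott theory on $\bar{x}_1|_{Z^\p}$, with the equivariant weight on $N_{Z^\p/Z}$ as the key input. That computation checks out: at a lift of a critical component with value $c>0$ one has $T=2c\,\xi_1$, and since $L_T\mu_c=0$ while $L_{\xi_1}\mu_c=2i\mu_c$, the linearization of $T-2x_1\xi_1$ acts on the normal line (trivialized by $d\mu_c$, which is nonzero there because $d\mu_c(I_2T)=-|T|^2\neq 0$ by local freeness) with weight $-4c$, giving Hessian eigenvalue $-4c<0$; one can verify this literally on $\mathbb{H}^2$ with an $\mathbb{S}^1$-action of weights $(1,k)$, where the intermediate fixed point $[1:0:0:0]$ has normal Hessian $-2=-4\cdot\tfrac12$. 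What your approach buys is the extra relation $\lambda_{Z^\p}(D_i)=\lambda_Z(D_i)-2$ and a proof that only uses the $Sp(1)$-action through the uniqueness of the maximum; what it costs is length and the need to interpret $\mu_c$ as a section of an orbifold line bundle on $Z$ (calling it $\mathcal{O}_Z(2)$ is only literally correct for $Z=\mathbb{CP}^{2n-1}$, though nothing beyond transversality of its zero locus is actually used). Two small points of hygiene: the coindex bound $d_Z(D_i)\geq 2$ for $D_i\neq D_{\max}$ does not follow from the inequality $\lambda_Z(D_i)\geq 2$ you cite (that only rules out local minima); it follows from the \emph{uniqueness of the local maximum} in the same lemma together with evenness of the coindex, which you do have available, so this is a wording slip rather than a gap. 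And you are right to work with the unconditional lemma on $Z$ rather than Lemma \ref{lemma codim in Zp}, whose hypothesis (connectedness of $S^\p$) sits downstream of the present proposition and would make the argument circular.
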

\begin{proof}
First, we note that since $\mu^{-1}(0)\cap S$ has codimension 3 in $S$, we know that $S\setminus\mu^{-1}(0)$ is connected. Then we observe that $S\setminus\mu^{-1}(0) = \Sp(1)\cdot
S^\p_{>0}$, since $\Sp(1)$ rotates $x_1,x_2,x_3$ by Proposition \ref{Sp(1)-equivariance, generalized}. Now, if $S^\p_{>0}$ has two connected components $A, B$, we claim that
$\Sp(1)\cdot A$ and $\Sp(1)\cdot B$ are disjoint. Suppose not, then there is $q\in \Sp(1),a\in A, b\in B$ such that $b = q\cdot a$. Since $a,b\in S^\p_{>0}$, $q$ must be of the form
$e^{i\theta}$ for some $\theta\in \mathbb{R}$. Then $c(t) = e^{it\theta}\cdot a$ is a curve in $S^\p_{>0}$ joining $a,b$, so we get a contradiction which proves the claim. It
follows that $\Sp(1)\cdot A$ and $\Sp(1)\cdot B$ are two connected components of $S\setminus\mu^{-1}(0)$, which contradicts the fact that $S\setminus\mu^{-1}(0)$ is connected.
\end{proof}

\begin{proposition}\label{S prime is connected}
Under the same assumption as above, the Sasakian manifold $S^\p$ is connected.
\end{proposition}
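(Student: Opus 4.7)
\bigskip

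\textbf{Plan.} I will mimic the strategy of Proposition \ref{S prime plus is connected}: use the $Sp(1)$-symmetry to split $S^\p$ into pieces that are already known to be connected, and then glue them along a suitable transition set. Specifically, I want to write
\[
S^\p \;=\; S^\p_{>0}\,\cup\,(\mu^{-1}(0)\cap S)\,\cup\,S^\p_{<0},
\]
show each of the first and third pieces is connected, and show the middle piece lies in the closure of each of them.

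\smallskip

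First, by Proposition \ref{2-fold covering} I can pick an element $q\in Sp(1)$ whose image $\phi(q)\in SO(3)$ is the $180^\circ$-rotation about the $x_2$-axis, i.e.\ $(x_1,x_2,x_3)\mapsto(-x_1,x_2,-x_3)$. Such a $q$ is an isometric diffeomorphism of $S$ that preserves $S^\p=\{x_2=x_3=0\}$ and swaps $S^\p_{>0}$ with $S^\p_{<0}$. Combined with Proposition \ref{S prime plus is connected}, this immediately yields that $S^\p_{<0}$ is connected as well.

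\smallskip

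Next, I would verify that $\mu^{-1}(0)\cap S\neq\emptyset$ (this is where the assumption $n\ge 2$ enters). Since the $\mathbb{S}^1_T$-action is locally free, $T\neq 0$ on $M\setminus\{0\}$, and the three vectors $I_1T,I_2T,I_3T$ are pairwise orthogonal of norm $|T|$, so $d\mu_m:T_mM\to\mathbb{R}^3$ is surjective on all of $M\setminus\{0\}$; hence $\mu$ is a submersion with fibers of real dimension $4n-3\ge 5$. To produce an actual point in the zero fiber lying on $S$, one notes that $Z^\p\subset Z$ is the vanishing locus of the section $\mu_c$ of a positive line bundle on the compact K\"ahler Fano orbifold $Z$, so the Lefschetz hyperplane theorem (in its orbifold form, applicable since $\dim_\mathbb{C}Z^\p=2n-2\ge 2$) gives $H^0(Z^\p)\cong H^0(Z)=\mathbb Z$; hence $Z^\p$ is connected, and by the $j$-symmetry $\bar x_1$ takes both signs on $Z^\p$, so by the intermediate value theorem there is a point of $Z^\p$ at which $\bar x_1=0$. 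Lifting to $S$ yields a point of $S^\p$ with $x_1=0$, i.e.\ a point of $\mu^{-1}(0)\cap S$.

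\smallskip

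Third, I use the submersion property at any $p\in\mu^{-1}(0)\cap S$. At such $p$, $x_1(p)=x_2(p)=x_3(p)=0$ forces $g_0(T,\xi_i)(p)=2x_i(p)=0$ for $i=1,2,3$, so $T(p)$ is orthogonal to the 3-dimensional span of the Reeb fields; since $T(p)\neq 0$, this means $T(p)\notin\op{span}(\xi_1,\xi_2,\xi_3)$, and the Gram-matrix computation of the projections $(I_iT)^S$ onto $T_pS$ then shows that $\mu|_S$ is a submersion at $p$. The implicit function theorem produces, for each small $\epsilon>0$, a point $p_\epsilon\in S$ near $p$ with $\mu(p_\epsilon)=(\epsilon,0,0)$, whence $p_\epsilon\in S^\p_{>0}$ and $p_\epsilon\to p$ as $\epsilon\to 0^+$, giving $p\in\overline{S^\p_{>0}}$; analogously $p\in\overline{S^\p_{<0}}$.

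\smallskip

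Finally, $\overline{S^\p_{>0}}\supseteq S^\p_{>0}\cup(\mu^{-1}(0)\cap S)$ and similarly for $\overline{S^\p_{<0}}$, so $S^\p=\overline{S^\p_{>0}}\cup\overline{S^\p_{<0}}$; this is a union of two connected closed subsets whose intersection contains the nonempty set $\mu^{-1}(0)\cap S$, and is therefore connected. The main obstacle is the non-emptiness of $\mu^{-1}(0)\cap S$ in step two: the submersion and $Sp(1)$-invariance of the image only pin down $\mu(M\setminus\{0\})$ to be either $\mathbb R^3$ or $\mathbb R^3\setminus\{0\}$, and ruling out the latter genuinely uses the hypothesis $n\ge 2$, via Lefschetz or (equivalently) the nontriviality of the hyperk\"ahler quotient $M_0$.
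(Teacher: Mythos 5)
Your proof is correct, and its skeleton is the same as the paper's: both arguments take the connectedness of $S^\p_{>0}$ from Proposition \ref{S prime plus is connected}, transport it to $S^\p_{<0}$ by an element $q\in Sp(1)$ with $\phi(q)$ reversing $x_1$ while preserving $\{x_2=x_3=0\}$, and then glue the two halves across the level set $\{x_1=0\}$ using the fact that $x_1$ (equivalently $\mu$) is submersive there. The packaging differs slightly: the paper uses the regular-value/flow argument to produce a collar $S^\p\cap\{-\epsilon<x_1<\epsilon\}\cong(S^\p\cap\{x_1=0\})\times(-\epsilon,\epsilon)$ and writes $S^\p$ as a three-piece union, whereas you show each point of $\mu^{-1}(0)\cap S$ lies in $\overline{S^\p_{>0}}\cap\overline{S^\p_{<0}}$ and take a union of two closures; these are the same transversality computation. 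The one genuine addition on your side is the explicit verification that $\mu^{-1}(0)\cap S\neq\emptyset$ (via the connectedness of the ample divisor $Z^\p\subset Z$ and the $j$-symmetry of $\bar x_1$), without which the union in either version could be disconnected; the paper's proof tacitly relies on this, presumably because the non-emptiness of the 3-Sasakian quotient $S_0$ for $n\geq 2$ is standard background, so your Lefschetz detour is a legitimate (if heavier) way to close that small gap rather than a different route to the theorem.
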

\begin{proof}
Since $g_0(I_1T,\rho\frac{\partial}{\partial \rho}) = -2x_1$ and $T$ is non-vanishing on $S$, we know that $0$ is not a critical value of $x_1$ as a function on $S^\p$. Using the
inverse gradient flow of $x_1$, one can show that there exists small $\varepsilon > 0$ such that $S^\p\cap \{-\varepsilon < x_1<\varepsilon\}$ is diffeomorphic to $S^\p\cap\{x_1 = 0\} \times
(-\varepsilon, \varepsilon)$. Now $S^\p_{>0}$ is connected by the previous lemma. Let $q\in \Sp(1)$ such that $\phi(q)$ is the diagonal matrix with coefficients $-1, -1, 1$ on the
diagonal, where $\phi$ is the 2-fold covering as in Proposition \ref{2-fold covering}. Then $S^\p_{<0} = S^\p \cap \{x_1<0\} = q\cdot S^\p_{>0}$ is also connected. So $S^\p$ is
connected as the union of $S^\p_{>0}$, $S^\p \cap \{-\varepsilon < x_1<\varepsilon\}$ and $S^\p_{<0}$.
\end{proof}
It follows that the assumption of Lemma \ref{lemma codim in Zp} is satisfied if the $\mathbb{S}^1$-action is locally free.
\begin{corollary}\label{corollary S_0 is connected}
The 3-Sasakian quotient $S_0 = \mu_S^{-1}(0)/\mathbb{S}^1$ of $S$ with respect to a locally free $\mathbb{S}^1$-action is connected. Here, $\mu_S$ denotes the restriction of $\mu = (x_1,x_2,x_3)$ to $S$.
\end{corollary}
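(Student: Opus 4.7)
The plan is to reduce the connectedness of $S_0$ to the connectedness of the level set $\bar{x}_1^{-1}(0)\subset Z^\p$, and then settle the latter by a Morse--Bott argument built on Lemma~\ref{lemma codim in Zp} and the $Sp(1)$-symmetry of the construction.

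\emph{Step 1: Reduction to $Z^\p$.} First observe that $\mu_S^{-1}(0)=S\cap\{x_1=x_2=x_3=0\}$ coincides with $S^\p\cap\{x_1=0\}$, where $S^\p=S\cap\mu_c^{-1}(0)$ is the connected Sasakian submanifold from Proposition~\ref{S prime is connected}. Since $[\xi_1,T]=0$ (both preserve the homothety and $T$ preserves the hyperk\"ahler structure while $L_T\rho\tfrac{\partial}{\partial\rho}=0$) and $L_{\xi_1}x_1=0$ by Proposition~\ref{Sp(1)-equivariance, generalized}, the locus $S^\p\cap\{x_1=0\}$ carries commuting $\mathbb{S}^1_{\xi_1}$- and $\mathbb{S}^1_T$-actions. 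The projection $\pi\colon S^\p\to Z^\p$ identifies $(S^\p\cap\{x_1=0\})/\mathbb{S}^1_{\xi_1}$ with $\bar{x}_1^{-1}(0)$, and the induced surjection
\begin{equation*}
S_0=(S^\p\cap\{x_1=0\})/\mathbb{S}^1_T\;\longrightarrow\;\bar{x}_1^{-1}(0)/\mathbb{S}^1_T
\end{equation*}
has connected $\mathbb{S}^1_{\xi_1}$-orbit fibers. Hence connectedness of $S_0$ reduces to that of $\bar{x}_1^{-1}(0)\subset Z^\p$.

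\emph{Step 2: Morse--Bott on $Z^\p$.} The function $\bar{x}_1$ is the moment map of the Hamiltonian $\mathbb{S}^1_T$-action on the compact connected K\"ahler orbifold $Z^\p$, so at every critical component the normal bundle splits into complex weight spaces; in particular indices and coindices are automatically even. Lemma~\ref{lemma codim in Zp} gives uniqueness of the local minimum and maximum with connected extremal level sets, together with the bound $\op{codim}_{\mathbb{C}}W^-_{Z^\p}(D_i)\geq 1$ for every critical component with $\bar{x}_1(D_i)>0$. The $Sp(1)$-rotation $(x_1,x_2,x_3)\mapsto(-x_1,-x_2,x_3)$ preserves $\mu_c^{-1}(0)$, hence preserves $S^\p$; it sends $\xi_1\mapsto -\xi_1$ so descends to an orbifold automorphism of $Z^\p$, and it replaces $\bar{x}_1$ by $-\bar{x}_1$. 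Applying it yields the symmetric bound $\op{codim}_{\mathbb{C}}W^+_{Z^\p}(D_j)\geq 1$ for every critical component with $\bar{x}_1(D_j)<0$. The classical Atiyah Morse--Bott connectedness argument (the core of the Atiyah--Guillemin--Sternberg proof, valid in the K\"ahler orbifold category) then gives that every regular level set of $\bar{x}_1$ is connected.

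\emph{Step 3: Conclusion.} Since $T$ is nowhere zero on $S$ and $g_0(T,\xi_1)=2x_1$ vanishes on $\{x_1=0\}$, the pushforward $\bar{T}=d\pi(T)$ is nowhere zero on $\bar{x}_1^{-1}(0)$; equivalently $d\bar{x}_1=-\iota_{\bar{T}}\omega_{Z^\p}$ is nonvanishing there, so $0$ is a regular value. Therefore $\bar{x}_1^{-1}(0)$ is connected, and by Step~1 so is $S_0$.

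The main obstacle is Step~2: while the K\"ahler structure already forces the required parity of indices, one still needs the codimension $\geq 2$ information on both sides of $0$ in order to propagate connectedness across critical values of $\bar{x}_1$, and one must verify that the Atiyah argument goes through in the orbifold setting (orbifold transversality of the gradient flow and compatibility with the orbifold strata). Once these points are in hand the remainder of the argument is purely formal bookkeeping with quotients.
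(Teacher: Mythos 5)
Your proposal is correct and follows essentially the same route as the paper: reduce to the connectedness of the level set $\bar{x}_1^{-1}(0)$ in the connected orbifold $Z^\p$ (via Proposition \ref{S prime is connected}) and then invoke the Atiyah-type connectedness theorem for Hamiltonian $\mathbb{S}^1$-actions on compact K\"ahler orbifolds, which the paper simply cites as \cite[Lemma 5.1]{ET1997hamiltoniantorus} rather than re-deriving via the Morse--Bott/stable-set codimension argument you sketch. Your Step 3 (checking that $0$ is a regular value) is unnecessary since the connectedness of moment-map level sets holds at all values, but it is harmless.
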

For the notion of the 3-Sasakian quotient, we refer to \cite{boyer2008sasakian}.
\begin{proof}
By Proposition \ref{S prime is connected}, we know that $Z^\p$ is connected. So by \cite[Lemma 5.1]{ET1997hamiltoniantorus}, we know that $\bar{x}_1^{-1}(0)$ is connected in $Z^\p$, so $S^\p\cap\{x_1 = 0\} = \mu_S^{-1}(0)$ is connected.
\end{proof}
\begin{remark}
I think of the above corollary as a 3-Sasakian version of the Atiyah connectedness theorem \cite[Theorem 1]{atiyah1982convexity}.
\end{remark}

\subsection{The hyperk\"ahler quotient of $M$ by $\mathbb{S}^1$}\label{subsection the hk quotients of M}
From now on we assume that the $\mathbb{S}^1_T$-action is locally free. Since $S$ is compact, there exists $C>1$ such that $C^{-1}\rho^2\leq |T|^2_{g_0} \leq C\rho^2$. In this case,
all the hyperk\"ahler quotients of $M$ with respect to $\mathbb{S}^1_T$ are orbifolds.

It is known that the hyperk\"ahler quotient $M_0 = \mu^{-1}(0)/\mathbb{S}^1$ is also a hyperk\"ahler cone. Fix $x_1>0$, denote by $M_{x_1,0,0} = \mu^{-1}(x_1,0,0)/\mathbb{S}^1$ the
hyperk\"ahler quotient of $M$ with respect to $\mathbb{S}^1$ at $(x_1,0,0)$. Note that $\dim_\mathbb{R}\mu^{-1}(x_1,0,0) = 4n - 3$ hence $\dim_{\mathbb{R}}M_{x_1,0,0} = 4n-4$, so we are mainly interested in the case where $n\geq 2$.

Let $p_{x_1,0,0}: \mu^{-1}(x_1,0,0) \rightarrow M_{x_1,0,0}$ be the quotient map. Let $m\in \mu^{-1}(x_1,0,0)$, one may identify $T_{p_{x_1,0,0}(m)}M_{x_1,0,0}$ with $\{T\}^\perp\subset T_m\mu^{-1}(x_1,0,0)$ by $dp_{x_1,0,0}$, then the complex structures on $M_{x_1,0,0}$ are restrictions of $I_j$ to $\{T\}^\perp$. Denote by $\omega^\p_{1,x_1,0,0},\omega^\p_{2,x_1,0,0},\omega^\p_{3,x_1,0,0}$ the quotient K\"ahler forms on $M_{x_1,0,0}$, then we can get $p_{x_1,0,0}^*\omega^\p_{j,x_1,0,0} = i^*_{\mu^{-1}(x_1,0,0)}\omega_j$, where $i_{\mu^{-1}(x_1,0,0)}$ is the inclusion map.

The aim of this subsection is to show that $M_{x_1,0,0}$ is a resolution of $M_0$, here by resolution we
mean that the conic singularity of $M_0$ at $0$ is resolved to orbifold singularity.

Recall Subsection \ref{subsection of complex structure} that one can always complexify the action generated by $T$. Now $T$ generates an $\mathbb{S}^1$-action, so the
$I_1-$complexification gives a $\mathbb{C}^*$-action on $M$, characterized by
\begin{align}
\frac{d}{dt}(e^{it}\cdot m) &= T(e^{it}\cdot m),\\
\frac{d}{d\tau}(e^\tau\cdot m) &= -I_1T(e^\tau\cdot m).\label{S1 complexification}
\end{align}
Here $m\in M$, $t,\tau\in\mathbb{R}$ so $e^{it}\in\mathbb{S}^1\subset \mathbb{C}^*$ and $e^\tau\in \mathbb{R}_+\subset \mathbb{C}^*$.
Under this new notation, \eqref{ODE system1} and \eqref{ODE system2} becomes
\begin{align}\label{ODE new system1}
\frac{d}{d\tau}(\rho^2(e^\tau\cdot m)) &= 4x_1(e^\tau\cdot m),\\
\frac{d}{d\tau}(x_1(e^\tau\cdot m)) &= |T|^2_{g_0}(e^\tau\cdot m).\label{ODE new system2}
\end{align}
From the above differential equations and $C^{-1}\rho^2\leq |T|^2_{g_0} \leq C\rho^2$, it follows easily that
\begin{lemma}\label{lemma ODE infinity}
Fix $m\in M$, then the function $x_1(e^\tau\cdot m)$ is strictly increasing for $\tau\in \mathbb{R}$. If $x_1(m) \geq 0$, then $\rho^2(e^\tau\cdot m)$ is strictly increasing for
$\tau > 0$, $\lim_{\tau\rightarrow +\infty}x_1(e^\tau\cdot m) = +\infty$ and $\lim_{\tau\rightarrow +\infty}\rho^2(e^\tau\cdot m) = +\infty$. Similarly, if $x_1(m)\leq 0$, then
$\rho^2(e^\tau\cdot m)$ is strictly decreasing for $\tau < 0$, $\lim_{\tau\rightarrow -\infty}x_1(e^\tau\cdot m) = -\infty$ and $\lim_{\tau\rightarrow -\infty}\rho^2(e^\tau\cdot m)
= +\infty$.
\end{lemma}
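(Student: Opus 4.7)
The plan is to analyze the coupled first-order system \eqref{ODE new system1}--\eqref{ODE new system2} together with the two-sided pinching $C^{-1}\rho^2 \leq |T|^2_{g_0} \leq C\rho^2$ afforded by local freeness of the $\mathbb{S}^1$-action on the compact link $S$. Writing $f(\tau) = \rho^2(e^\tau\cdot m)$ and $g(\tau) = x_1(e^\tau\cdot m)$, the system becomes $f' = 4g$ and $g' = |T|^2_{g_0}(e^\tau\cdot m)$. Since the complexified $\mathbb{C}^*$-action is globally defined on $M$ and the cone vertex does not lie in $M$, the whole orbit stays in $\{\rho>0\}$, so $|T|^2_{g_0} > 0$ along it; hence $g' > 0$ everywhere, and the strict monotonicity of $x_1(e^\tau\cdot m)$ on $\mathbb{R}$ is immediate.

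For the case $x_1(m) \geq 0$, strict monotonicity of $g$ forces $g(\tau) > g(0) \geq 0$ for all $\tau > 0$, so $f'(\tau) = 4g(\tau) > 0$ there and $\rho^2$ is strictly increasing. To obtain the two divergences I would introduce the Lyapunov-type combination $h(\tau) = f(\tau) + 2\sqrt{C}\,g(\tau)$ and estimate
\begin{align*}
h'(\tau) = 4g(\tau) + 2\sqrt{C}\,|T|^2_{g_0}(e^\tau\cdot m) \;\geq\; 4g(\tau) + 2C^{-1/2} f(\tau) \;=\; 2C^{-1/2}\,h(\tau).
\end{align*}
Since $h(0) = \rho^2(m) + 2\sqrt{C}\,x_1(m) \geq \rho^2(m) > 0$, Gronwall yields $h(\tau) \geq \rho^2(m)\,e^{2C^{-1/2}\tau}$, so $h \to +\infty$. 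Because $g$ is nondecreasing, a bounded $g$ would force $f$ to grow at most linearly, contradicting the exponential divergence of $h$; hence $g \to +\infty$, and then $f' = 4g \to +\infty$ forces $f\to +\infty$ as well.

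The case $x_1(m) \leq 0$ I would then deduce by the time-reversal symmetry $\tilde\tau = -\tau$, $(\tilde f(\tilde\tau), \tilde g(\tilde\tau)) = (f(-\tilde\tau),\,-g(-\tilde\tau))$: the pair $(\tilde f, \tilde g)$ satisfies the identical system $\tilde f' = 4\tilde g$, $\tilde g' = |T|^2_{g_0}$ with the same pinching in terms of $\tilde f$, and now $\tilde g(0) = -x_1(m) \geq 0$. Applying the previous step to $(\tilde f, \tilde g)$ on $\tilde\tau \geq 0$ translates directly to the announced strict decrease of $\rho^2$ on $\tau < 0$ together with $x_1(e^\tau\cdot m) \to -\infty$ and $\rho^2(e^\tau\cdot m) \to +\infty$ as $\tau \to -\infty$. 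The main step --- really the only nontrivial one --- is the Lyapunov comparison for $h$; the rest is bookkeeping plus the earlier observation that completeness of the $\mathbb{C}^*$-action and positivity of $\rho$ exclude any degenerate behavior of the ODE system.
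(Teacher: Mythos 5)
Your proof is correct and follows exactly the route the paper intends: the paper states the lemma as an easy consequence of the ODE system \eqref{ODE new system1}--\eqref{ODE new system2} together with the pinching $C^{-1}\rho^2\leq|T|^2_{g_0}\leq C\rho^2$, and offers no further detail. Your Gronwall comparison for $h=f+2\sqrt{C}\,g$ and the time-reversal reduction are a clean, correct way of filling in the divergence claims the paper leaves implicit.
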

It follows that for any $m_0\in \mu^{-1}(0)$, there exists a unique $\tau(m_0) > 0$ such that $e^{\tau(m_0)}\cdot m_0 \in \mu^{-1}(x_1,0,0)$. And we define $\psi_{x_1,0,0}:
\mu^{-1}(0) \rightarrow \mu^{-1}(x_1,0,0)$ to be the map sending $m_0$ to $e^{\tau(m_0)}\cdot m_0$.

Since $L_{I_1T}T = 0$ and $L_{I_1T}I_1 = 0$, $\psi_{x_1,0,0}$ is $\mathbb{S}^1$-equivariant and hence descends to an $I_1-$holomorphic map $\psi_{x_1,0,0}: M_0\rightarrow
M_{x_1,0,0}$. By Lemma \ref{I1T preserves
omega23} we have $\psi_{x_1,0,0}^*(\omega^\p_{c,x_1,0,0}) = \omega^\p_{c,0,0,0}$, where $\omega^\p_{c,x_1,0,0} = \omega^\p_{2,x_1,0,0} + i\omega^\p_{3,x_1,0,0}$, so $\psi_{x_1,0,0}$
preserves the complex holomorphic form.

Observe that according to Lemma \ref{lemma ODE infinity}, $\psi_{x_1,0,0}: \mu^{-1}(0)\rightarrow \mu^{-1}(x_1,0,0)$ is injective. Define $ED_{x_1,0,0} = \mu^{-1}(x_1,0,0)\setminus
\op{Im}(\psi_{x_1,0,0})$ as the complement of the image, and define $\phi_{x_1,0,0}: \mu^{-1}(x_1,0,0)\rightarrow\mu^{-1}(0)\cup\{0\}$ to be an extension of $\psi_{x_1,0,0}^{-1}$
by sending $ED_{x_1,0,0}$ to $0$, where $0$ is the vertex of the cone, so $\mu^{-1}(0)\cup\{0\}$ is the metric completion of $\mu^{-1}(0)$. The definition of $ED_{x_1,0,0}$ and
Lemma \ref{lemma ODE infinity} implies the following description of $ED_{x_1,0,0}$:
\begin{proposition}\label{proposition ED flows to 0}
For any $m\in \mu^{-1}(x_1,0,0)$, $m\in ED_{x_1,0,0}$ if and only if $\lim_{\tau\rightarrow -\infty}e^\tau\cdot m = 0$.
\end{proposition}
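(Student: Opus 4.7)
The plan is to analyze the backward trajectory $\tau\mapsto e^\tau\cdot m$ using the ODE system \eqref{ODE new system1}--\eqref{ODE new system2}, together with the uniform bound $C^{-1}\rho^2\leq |T|^2_{g_0}\leq C\rho^2$ available on the hyperk\"ahler cone. The crucial preliminary observation is that the complexified $\mathbb{R}_+$-flow preserves both $x_2$ and $x_3$: this follows from $L_{I_1T}\omega_j=0$ for $j=2,3$ (Lemma \ref{I1T preserves omega23}) together with $dx_j=-\iota_T\omega_j$. Hence the entire orbit through a point $m\in\mu^{-1}(x_1,0,0)$ remains in the locus $\{x_2=x_3=0\}$.

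For the direction $m\in\op{Im}(\psi_{x_1,0,0})\Rightarrow e^\tau\cdot m\not\to 0$, I would write $m=e^{\tau(m_0)}\cdot m_0$ with $m_0\in\mu^{-1}(0)$ and $\tau(m_0)>0$, and then apply the second half of Lemma \ref{lemma ODE infinity} to $m_0$ (permissible since $x_1(m_0)=0$) to obtain $\rho^2(e^\tau\cdot m_0)\to+\infty$ as $\tau\to-\infty$; a time shift transfers this conclusion to the orbit through $m$.

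For the converse, I would assume $m\in ED_{x_1,0,0}$ and exploit the strict monotonicity of $\tau\mapsto x_1(e^\tau\cdot m)$ provided by Lemma \ref{lemma ODE infinity} to show that this function stays strictly positive for all $\tau\leq 0$. Indeed, if it vanished at some $\tau_0<0$, then together with $x_2=x_3=0$ at that point we would have $e^{\tau_0}\cdot m\in\mu^{-1}(0)$, and the uniqueness of $\tau(\,\cdot\,)$ combined with the strict monotonicity would force $m=\psi_{x_1,0,0}(e^{\tau_0}\cdot m)$, contradicting $m\in ED_{x_1,0,0}$. Once positivity of $x_1$ on the backward half-orbit is in hand, \eqref{ODE new system1} makes $\tau\mapsto\rho^2(e^\tau\cdot m)$ strictly increasing, so it decreases to some limit $L\geq 0$ as $\tau\to-\infty$.

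The only step requiring real care is excluding $L>0$. If $L>0$, then $|T|^2_{g_0}\geq C^{-1}\rho^2\geq C^{-1}L$ throughout the backward orbit, and substituting into \eqref{ODE new system2} gives $\tfrac{d}{d\tau}x_1\geq C^{-1}L$, which drives $x_1(e^\tau\cdot m)\to-\infty$ as $\tau\to-\infty$ and contradicts the positivity just established. Hence $L=0$, i.e.\ $e^\tau\cdot m\to 0$ in the metric completion of $M$, finishing the proof.
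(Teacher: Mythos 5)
Your proposal is correct and follows essentially the same route as the paper: characterize membership in $ED_{x_1,0,0}$ by strict positivity of $x_1$ along the backward orbit (using the monotonicity from Lemma \ref{lemma ODE infinity} and the fact that the $(-I_1T)$-flow preserves $x_2,x_3$), and then use the bound $|T|^2_{g_0}\geq C^{-1}\rho^2$ in \eqref{ODE new system2} to force $\rho^2\to 0$. The only cosmetic difference is that you re-derive the contradiction that the paper outsources to Lemma \ref{lemma I_1T-orbit behavior}, and you spell out the converse direction that the paper leaves as "similar".
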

\begin{proof}
By Lemma \ref{lemma ODE infinity}, we know that $m\in ED_{x_1,0,0}$ if and only if $x_1(e^\tau\cdot m) > 0$ for any $\tau\in\mathbb{R}$. But by Lemma \ref{lemma I_1T-orbit behavior}, we know that $\lim_{\tau\rightarrow -\infty}x_1(e^\tau\cdot m) = 0$, so by equation \eqref{ODE new system2} we know that $\lim_{\tau\rightarrow -\infty}\rho^2(e^\tau\cdot m) = 0$, so $\lim_{\tau\rightarrow -\infty}e^\tau\cdot m = 0$. The converse can be proved in a similar way.
\end{proof}

\begin{lemma}\label{phi_x1,0,0 is continuous}
The map $\phi_{x_1,0,0}: \mu^{-1}(x_1,0,0)\rightarrow\mu^{-1}(0)\cup\{0\}$ is continuous.
\end{lemma}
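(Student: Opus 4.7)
The plan is to establish continuity by a case analysis on the location of the limit point, and then deduce compactness of $ED_{x_1,0,0}$ from continuity plus a direct $\rho$-bound coming from the ODE system \eqref{ODE new system1}--\eqref{ODE new system2}.

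First, at a point $m \notin ED_{x_1,0,0}$, the definition provides a unique $\tau_0 < 0$ with $x_1(e^{\tau_0}\cdot m) = 0$, and $\phi_{x_1,0,0}(m) = e^{\tau_0}\cdot m$. Since $T$ is non-vanishing on $S$ and hence on $M$, equation \eqref{ODE new system2} gives $\frac{d}{d\tau}x_1(e^\tau\cdot m)|_{\tau_0} = |T|^2_{g_0}(e^{\tau_0}\cdot m) > 0$. The implicit function theorem applied to $(m^\p, \tau) \mapsto x_1(e^\tau \cdot m^\p)$ then yields, for $m^\p$ in a neighborhood of $m$, a smoothly varying $\tau(m^\p)$ near $\tau_0$ with $x_1(e^{\tau(m^\p)}\cdot m^\p) = 0$. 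Hence $\phi_{x_1,0,0}(m^\p) = e^{\tau(m^\p)}\cdot m^\p$ depends continuously on $m^\p$.

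The main difficulty is at a point $m \in ED_{x_1,0,0}$, where we must show that if $m_n \to m$ then $\phi_{x_1,0,0}(m_n) \to 0$ in the metric completion. For those $n$ with $m_n \in ED_{x_1,0,0}$ the claim is trivial; otherwise write $\phi_{x_1,0,0}(m_n) = e^{\tau_n}\cdot m_n$ with $\tau_n < 0$ and $x_1(e^{\tau_n}\cdot m_n) = 0$. The first step is to show $\tau_n \to -\infty$: otherwise a subsequence would converge to some finite $\tau_\infty$, and continuity of the flow and of $x_1$ would force $x_1(e^{\tau_\infty}\cdot m) = 0$, contradicting Proposition \ref{proposition ED flows to 0} (which says $x_1$ stays strictly positive along the orbit of $m$). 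The second step, the key one, is to show $\rho(e^{\tau_n}\cdot m_n) \to 0$. By \eqref{ODE new system1}, $\rho^2$ is monotone non-decreasing along the forward $-I_1T$-flow on the interval $[\tau_n, 0]$ where $x_1 \geq 0$. Given $\epsilon > 0$, choose $T_\epsilon < 0$ with $\rho(e^{T_\epsilon}\cdot m) < \epsilon/2$, which is possible because $e^\tau\cdot m \to 0$ as $\tau \to -\infty$. For $n$ large one has $\tau_n < T_\epsilon$ (by the first step) and $\rho(e^{T_\epsilon}\cdot m_n) \to \rho(e^{T_\epsilon}\cdot m) < \epsilon/2$ by continuity of the flow on the compact interval $[T_\epsilon, 0]$; monotonicity then gives $\rho(\phi_{x_1,0,0}(m_n)) = \rho(e^{\tau_n}\cdot m_n) \leq \rho(e^{T_\epsilon}\cdot m_n) < \epsilon$ eventually.

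For the compactness of $ED_{x_1,0,0}$: closedness in $\mu^{-1}(x_1,0,0)$ is immediate from the continuity just established, since $ED_{x_1,0,0} = \phi_{x_1,0,0}^{-1}(\{0\})$. For boundedness in $\rho$, observe that for any $m \in ED_{x_1,0,0}$, along its backward orbit one has $\lim_{\tau \to -\infty}\rho = 0$ and $\lim_{\tau \to -\infty}x_1 = 0$, while $\frac{d\rho^2}{d\tau} = 4x_1$ and $\frac{dx_1}{d\tau} = |T|^2_{g_0} \geq C^{-1}\rho^2$. Dividing yields $\frac{d(\rho^2)}{dx_1} \leq 4C\frac{x_1}{\rho^2}$, so $\rho^2\,d(\rho^2) \leq 4C\,x_1\,dx_1$; integrating from the vertex to $m$ gives $\rho^4(m) \leq 4C\,x_1^2$, hence $\rho(m) \leq (4C)^{1/4}\sqrt{x_1}$. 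Since $M = C(S) = \mathbb{R}_+ \times S$ with $S$ compact, the set $\{m \in M : \rho(m) \leq R\}$ is compact for every $R$, so $ED_{x_1,0,0}$ is closed inside a compact subset of $M$, hence compact.

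The main obstacle is the convergence $\rho(e^{\tau_n}\cdot m_n) \to 0$ in the case $m \in ED_{x_1,0,0}$, $m_n \notin ED_{x_1,0,0}$, $\tau_n \to -\infty$: here the flow time is unbounded, so continuous dependence on initial conditions is not directly applicable to the endpoints $e^{\tau_n}\cdot m_n$; the trick is to combine finite-time continuity (at the auxiliary time $T_\epsilon$) with the monotonicity of $\rho$ along the flow.
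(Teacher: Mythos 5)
Your proof is correct and rests on essentially the same mechanism as the paper's: flow backward a fixed finite amount so that $\rho<\epsilon$, use continuity of the finite-time flow, and then use monotonicity of $\rho^2$ (valid since $x_1\geq 0$ along the relevant stretch) to control the remaining, possibly unbounded, backward flow --- the paper merely phrases this via the diffeomorphism onto the intermediate level set $\mu^{-1}(x_1^\epsilon,0,0)$ instead of an auxiliary time $T_\epsilon$. Your explicit bound $\rho^4(m)\leq 4Cx_1^2$ on $ED_{x_1,0,0}$ also supplies the boundedness needed for compactness, which the paper leaves implicit; just note additionally that $\rho$ is bounded below away from $0$ on $\mu^{-1}(x_1,0,0)$ (since $2x_1\leq C\rho^2$ there), so that $ED_{x_1,0,0}$ sits in a compact annulus of $M$ rather than in the non-compact set $\{\rho\leq R\}$.
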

\begin{proof}
It suffices to show the continuity of $\phi_{x_1,0,0}$ on $ED_{x_1,0,0}$.
Fix any $m\in ED_{x_1,0,0}$. For any $\varepsilon > 0$, the condition $\rho < \varepsilon$ defines a neighbourhood of $\phi(m) = 0$ in $\mu^{-1}(0)\cup \{0\}$. By Proposition
\ref{proposition ED flows to 0}, there exists $\tau_\varepsilon < 0$ such that $\rho^2(e^{\tau_\varepsilon}\cdot m) < \varepsilon^2$. Let $x_1^\varepsilon = x_1(e^{\tau_\varepsilon}\cdot m)$, then
$e^{\tau_\varepsilon}\cdot m \in \mu^{-1}(x_1^\varepsilon,0,0)$. Note that by Lemma \ref{lemma ODE infinity}, we have $0<x_1^\varepsilon < x_1$. Now $U_\varepsilon = \{\rho<\varepsilon\}\cap
\mu^{-1}(x_1^\varepsilon,0,0)$ is an open neighbourhood of $e^{\tau_\varepsilon}\cdot m$ in $\mu^{-1}(x_1^\varepsilon,0,0)$. By Lemma \ref{lemma ODE infinity}, for any $m^\p\in U_\varepsilon$,
we have $\rho^2(\phi_{x_1^\varepsilon,0,0}(m^\p)) < \rho^2(m^\p) < \varepsilon^2$.

Define $\Phi: \mu^{-1}(x_1,0,0)\rightarrow \mu^{-1}(x_1^\varepsilon,0,0)$ by $x\mapsto e^{\tau(x)}\cdot x$, where $\tau(x)$ is determined by $e^{\tau(x)}\cdot x \in
\mu^{-1}(x_1^\varepsilon,0,0)$. By Lemma \ref{lemma ODE infinity}, the map $\Phi$ is a well-defined diffeomorphism. And we have $\phi_{x_1,0,0} = \phi_{x_1^\varepsilon,0,0}\circ\Phi$,
$\Phi(m) = e^{\tau_\varepsilon}\cdot m$. So $\Phi^{-1}(U_\varepsilon)$ is a neighbourhood of $m$ in $\mu^{-1}(x_1,0,0)$ and $\phi_{x_1,0,0}(\Phi^{-1}(U_\varepsilon)) \subset \{\rho <
\varepsilon\}\cap (\mu^{-1}(0)\cap(0))$. This shows that $\phi_{x_1,0,0}$ is continuous at $m$.
\end{proof}

By Proposition \ref{Sp(1)-equivariance, generalized}, the $\mathbb{S}^1_{\xi_1}$-action preserves $\mu^{-1}(x_1,0,0)$. Since $[\xi_1,T] = 0$, the $\mathbb{S}^1_{\xi_1}$-action descends to an $\mathbb{S}^1$-action on $M_{x_1,0,0}$, generated by
\begin{align}
\bar{\xi}_1 = dp_{x_1,0,0}(\xi_1 - \frac{2x_1}{|T|^2}T).
\end{align}
Since $L_T(\frac{1}{2}\rho^2) = 0$, the function $\frac{1}{2}\rho^2$ descends to a function on $M_{x_1,0,0}$, denoted by $\frac{1}{2}\bar\rho^2$. Since $\frac{1}{2}\rho^2$ is the moment map of $\xi_1$ with respect to $\omega_1$ on $M$, one easily verifies that $\frac{1}{2}\bar\rho^2$ is a moment map of $\bar{\xi}_1$ with respect to $\omega^\p_{1,x_1,0,0}$ on $M_{x_1,0,0}$, and that $\op{grad}_{M_{x_1,0,0}}(\frac{1}{2}\bar\rho^2) = -I_1\bar\xi_1 = dp_{x_1,0,0}(\rho\frac{\partial}{\partial \rho} + \frac{2x_1}{|T|^2}I_1T) = dp_{x_1,0,0}(\op{grad}_{\mu^{-1}(x_1,0,0)}(\frac{1}{2}\rho^2))$.

By \cite[Lemma 5.3]{ET1997hamiltoniantorus} again, $\frac{1}{2}\bar\rho^2$ is a Morse-Bott function on $M_{x_1,0,0}$, and its critical suborbifolds are K\"ahler with even index.

Let $P: M\rightarrow S$ be defined by $m\mapsto \frac{1}{\rho(m)}m$, the projection along the radius. Then for $X\in T_mM$, we have
\begin{align}\label{dP}
dP(X) = \frac{1}{\rho(m)}X - \frac{1}{\rho^2(m)}\frac{1}{2}d(\rho^2)(X)\frac{\partial}{\partial\rho}|_{P(m)}.
\end{align}
\begin{lemma}\label{lemma P is diffeomorphism}
The map $P:\mu^{-1}(x_1,0,0)\rightarrow S^\p_{>0}$ is a diffeomorphism, moreover we have
\begin{align}
(dP)|_m(\op{grad}_{\mu^{-1}(x_1,0,0)}(\frac{1}{2}\rho^2)) = -\frac{2x_1}{|T|^2(m)}\op{grad}_{S^\p}(x_1).
\end{align}
\end{lemma}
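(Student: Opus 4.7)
My plan is to handle the lemma in two steps: first establish that $P$ is a diffeomorphism, then compute the pushforward of the gradient.

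For the diffeomorphism, the key ingredient is the homogeneity $x_j(\lambda m) = \lambda^2 x_j(m)$ of the moment maps on the cone. If $m \in \mu^{-1}(x_1,0,0)$ then $P(m) = m/\rho(m)$ satisfies $x_1(P(m)) = x_1/\rho(m)^2 > 0$ and $x_2(P(m)) = x_3(P(m)) = 0$, so the image lies in $S^\p_{>0}$. Conversely, for $s \in S^\p_{>0}$ the equation $\mu(\lambda s) = (x_1,0,0)$ reduces to $\lambda^2 x_1(s) = x_1$, with unique positive solution $\lambda = \sqrt{x_1/x_1(s)}$, yielding a smooth inverse. Both $\mu^{-1}(x_1,0,0)$ and $S^\p_{>0}$ are smooth manifolds of dimension $4n-3$: the regularity of $\mu$ and of $(x_2,x_3)|_S$ follows from $\op{grad}_M x_j = -I_j T$, the mutual $g_0$-orthogonality of $T, I_1T, I_2T, I_3T$ recorded in Subsection \ref{subsection decomposition of the metric}, and the assumption that $T$ is non-vanishing.

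For the gradient formula I compute $\op{grad}_{\mu^{-1}(x_1,0,0)}(\frac{1}{2}\rho^2)$ by orthogonal projection. On $M$ this gradient is $\rho\frac{\partial}{\partial\rho}$, and the normal bundle of $\mu^{-1}(x_1,0,0)$ inside $M$ is spanned by the mutually $g_0$-orthogonal vectors $-I_j T$, each of squared norm $|T|^2$. The component of $\rho\frac{\partial}{\partial\rho}$ along $-I_j T$ is $g_0(\rho\frac{\partial}{\partial\rho},-I_j T)/|T|^2 = 2x_j/|T|^2$, so using $x_2=x_3=0$ on $\mu^{-1}(x_1,0,0)$ I get $\op{grad}_{\mu^{-1}(x_1,0,0)}(\frac{1}{2}\rho^2) = \rho\frac{\partial}{\partial\rho} + \frac{2x_1}{|T|^2} I_1 T$. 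Applying $dP$ via \eqref{dP}, the radial term vanishes, and $dP(I_1T|_m)$ equals the $T_sS$-component of $I_1T$ (with respect to the product splitting $T_mM = \mathbb{R}\frac{\partial}{\partial\rho} \oplus T_sS$), transported to $P(m)$.

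It remains to match this $T_sS$-component of $I_1T$ with $-\op{grad}_{S^\p}(x_1)|_{P(m)}$, which is the main subtlety. From $dx_1|_S(Y) = -g_0(I_1T,Y)$ for $Y \in T_sS$, the said $T_sS$-component equals $-\op{grad}_S(x_1|_S)$ at $P(m)$. The remaining step is to verify $\op{grad}_S(x_1|_S)|_s \in T_sS^\p$ for $s \in S^\p$, so that restricting the gradient agrees with the gradient of the restriction. Since $T_sS^\p = \ker dx_2|_s \cap \ker dx_3|_s$ inside $T_sS$, this reduces to $g_S(\op{grad}_S(x_1|_S),\op{grad}_S(x_j|_S)) = 0$ at $s$ for $j=2,3$. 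At such $s$ one has $x_2=x_3=0$, hence $g_0(\frac{\partial}{\partial\rho}, I_j T)|_s = -2x_j(s) = 0$ for $j=2,3$, so $I_2T|_s$ and $I_3T|_s$ are themselves tangent to $S$; the required orthogonality then collapses to $g_0(I_1T, I_j T) = 0$, which is precisely the quaternionic orthogonality. The main obstacle is keeping this last identification clean — there are three orthogonal decompositions in play (tangent/normal to $\mu^{-1}(x_1,0,0)$ in $M$; radial/horizontal in the cone $M = C(S)$; tangent/normal to $S^\p$ in $S$) — but once the quaternionic orthogonality is invoked at the right place the formula follows.
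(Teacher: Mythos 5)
Your proof is correct and follows essentially the same route as the paper: bijectivity of $P$ comes from the quadratic homogeneity of $x_1$ along rays (your explicit inverse $s\mapsto\sqrt{x_1/x_1(s)}\,s$ is a slight repackaging of the paper's transversality argument), and the gradient formula is obtained from \eqref{dP} exactly as the paper intends. You in fact supply details the paper leaves implicit — the orthogonal projection computation giving $\rho\frac{\partial}{\partial\rho}+\frac{2x_1}{|T|^2}I_1T$, and the quaternionic-orthogonality check that $\op{grad}_S(x_1|_S)$ is already tangent to $S^\p$ — and both are carried out correctly.
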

\begin{proof}
Since $L_{\rho\frac{\partial}{\partial \rho}}x_1 = 2x_1$, we know that $P$ is bijective.

Since $g_0(\rho\frac{\partial}{\partial \rho}, I_1T) = -2x_1\neq 0$ along $\mu^{-1}(x_1,0,0)$, we know that $\rho\frac{\partial}{\partial \rho}$ is transversal to $\mu^{-1}(x_1,0,0)$, hence $P$ is a local diffeomorhpism. In conclusion, the map $P$ is a diffeomorphism.

The second assertion follows from \eqref{dP}.
\end{proof}
\begin{remark}
By Proposition \ref{S prime plus is connected}, we know that $S^\p_{>0}$ is connected, so we deduce from the previous proposition that $\mu^{-1}(x_1,0,0)$ is connected. I think of it as a hyperk\"ahler version of the Atiyah connectedness theorem \cite[Theorem 1]{atiyah1982convexity}.
\end{remark}

Note that the coefficient $-\frac{2x_1}{|T|^2}$ is strictly negative along $\mu^{-1}(x_1,0,0)$, so we have a kind of ``duality'' between $\mu^{-1}(x_1,0,0)$ and $S^\p_{>0}$, in the following sense:

Firstly, $P$ induces a bijection $P: \op{crit}_{\mu^{-1}(x_1,0,0)}(\frac{1}{2}\rho^2) \rightarrow \op{crit}_{S^\p_{>0}}(x_1)$.

Secondly, since $T$ is parallel to $\xi_1$ along the critical sets, $P$ descends to a bijection $\bar{P}: \op{crit}_{M_{x_1,0,0}}(\frac{1}{2}\bar\rho^2)\rightarrow \op{crit}_{Z^\p_{>0}}(\bar{x}_1)$, where $Z^\p_{>0} = Z^\p\cap \{\bar{x}_1 >0 \}$. Moreover, $\bar{P}$ is in fact holomorphic.

Thirdly, the above map $\bar{P}$ induces a bijection between the critical suborbifold $C_i$ of $\frac{1}{2}\bar\rho^2$ and the critical suborbifold $D_i$ of $\bar{x}_1$ with $\bar{x}_1>0$, determined by $D_i = \bar{P}(C_i)$. Hence, it induces a bijection between critical values of $\frac{1}{2}\bar\rho^2$ and strictly positive critical values of $\bar{x}_1$, under which local minimum corresponds to local maximum. As a consequence, there are only finitely many critical values and critical suborbifolds of $\frac{1}{2}\bar\rho^2$.

Finally, suppose $\bar{P}(C_i) = D_i$ for critical suborbifolds $C_i$ and $D_i$, we have
\begin{align}
\lambda_{M_{x_1,0,0}}(C_i) = 4n - 4 - \dim_\mathbb{R}D_i - \lambda_{Z^\p}(D_i),
\end{align}
where $\lambda$ stands for the index.

Let $\psi_t$ be the gradient flow of $\frac{1}{2}\bar\rho^2$ on $M_{x_1,0,0}$, then for critical suborbifold $C_i$ of $\frac{1}{2}\bar\rho^2$, define
\begin{align}
W^+(C_i) = \{m\in M_{x_1,0,0}\mid \lim_{t\rightarrow +\infty}\psi_t(m)\in C_i\}
\end{align}
to be the stable set of $C_i$. Then by Lemma \ref{lemma codim in Zp}, we get $\op{codim}_\mathbb{C}W^+(C_i)\geq 1$.
\begin{proposition}
The set $ED_{x_1,0,0}$ is the union of $p_{x_1,0,0}^{-1}(W^+(C_i))$ where $C_i$ ranges over all critical suborbifolds of $\frac{1}{2}\bar\rho^2$. Consequently, $ED_{x_1,0,0}$ is compact.
\end{proposition}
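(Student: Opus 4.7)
The plan is to translate the statement into gradient dynamics of $x_1|_{S^\p}$ on the compact submanifold $S^\p$, using the diffeomorphism $P: \mu^{-1}(x_1,0,0) \to S^\p_{>0}$ of Lemma~\ref{lemma P is diffeomorphism} and the fact (from the proof of Proposition~\ref{S prime is connected}) that $0$ is a regular value of $x_1|_{S^\p}$.

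First I would compute $dP(-I_1T)$ at a point $(\rho,s)\in\mu^{-1}(x_1,0,0)$. Using formula~\eqref{dP} together with the homothety-invariance of $-I_1T = \op{grad}_M(x_1)$ on the cone, a short calculation gives
\begin{align*}
dP(-I_1T|_{(\rho,s)}) = \frac{1}{\rho}\op{grad}_{S^\p}(x_1)|_s.
\end{align*}
Writing $\gamma(\tau)=e^\tau\cdot m$ and $s(\tau)=P(\gamma(\tau))$, this shows that $s(\tau)$ traces the gradient orbit of $x_1|_{S^\p}$ through $P(m)$ in the positive direction, with time reparametrization $dt/d\tau=1/\rho(\tau)$. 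Combined with Lemma~\ref{lemma P is diffeomorphism}, the horizontal lift to $\mu^{-1}(x_1,0,0)$ of the forward $\psi_t$-orbit of $p_{x_1,0,0}(m)$ projects via $P$ onto the \emph{same} gradient orbit, but traversed in the \emph{backward} direction, since the scaling $-2x_1/|T|^2$ is strictly negative on $\mu^{-1}(x_1,0,0)$. Both flows are thereby encoded by the single gradient dynamics of $x_1|_{S^\p}$, with opposite time orientations.

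Next, using the homogeneity identity $x_1(\gamma(\tau))=\rho(\tau)^2 x_1(s(\tau))$ together with Lemma~\ref{lemma I_1T-orbit behavior}, the ODE system~\eqref{ODE new system1}--\eqref{ODE new system2}, and the proof of Proposition~\ref{proposition ED flows to 0}, I would establish the equivalence
\begin{align*}
m\in ED_{x_1,0,0} \iff x_1(\gamma(\tau))>0 \text{ for all } \tau\leq 0 \iff s(\tau)\in S^\p_{>0} \text{ for all } \tau\leq 0,
\end{align*}
i.e.\ the backward gradient orbit of $x_1|_{S^\p}$ from $P(m)$ never crosses $\{x_1=0\}$. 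Since $S^\p$ is compact and $0$ is a regular value, Morse--Bott theory forces this orbit either to stay in $S^\p_{>0}$ and converge to a critical suborbifold with $x_1>0$, or to cross $\{x_1=0\}$ transversally at finite gradient-time. Via the bijection $\bar{P}$ between $\op{crit}_{M_{x_1,0,0}}(\tfrac{1}{2}\bar\rho^2)$ and the positive-level components of $\op{crit}_{Z^\p}(\bar{x}_1)$ established earlier, the first alternative is exactly the condition $p_{x_1,0,0}(m)\in\bigcup_i W^+(C_i)$. The main technical point I expect is verifying the second case: when the backward gradient orbit of $x_1|_{S^\p}$ from $P(m)$ crosses $\{x_1=0\}$ transversally at finite gradient-time, one needs $\psi_t(p_{x_1,0,0}(m))$ to genuinely escape to infinity in $M_{x_1,0,0}$ (so that $p_{x_1,0,0}(m)\notin\bigcup_i W^+(C_i)$). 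This reduces to the blow-up $\rho|_{\mu^{-1}(x_1,0,0)}=\sqrt{x_1/x_1(s)}\to\infty$ as $x_1(s)\to 0^+$ combined with divergence of the time-reparametrization integral $\int(-|T|^2/2x_1)\,du$ at $x_1(s)=0$.
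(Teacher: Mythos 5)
Your proposal is correct and follows essentially the same route as the paper: both arguments use $P$ (and the sign $-2x_1/|T|^2<0$ from Lemma \ref{lemma P is diffeomorphism}) to identify the backward $(-I_1T)$-orbit characterizing $ED_{x_1,0,0}$ and the forward $\psi_t$-orbit characterizing $W^+(C_i)$ with one and the same backward gradient orbit of $x_1|_{S^\p}$, then invoke the Morse--Bott dichotomy on the compact $S^\p$ and the bijection $\bar P$ between critical sets. The paper's own proof is terser (it only sketches one inclusion and asserts the converse is similar), whereas you explicitly flag and resolve the time-reparametrization and escape-to-infinity issues; these checks are sound, including the identity $\rho=\sqrt{x_1/x_1(s)}$ on $\mu^{-1}(x_1,0,0)$.
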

\begin{proof}
Taking $m\in p_{x_1,0,0}^{-1}(W^+(C_i))$, then $m$ flows to $p_{x_1,0,0}^{-1}(C_i)$ along the flow of $\op{grad}_{\mu^{-1}(x_1,0,0)}(\frac{1}{2}\rho^2)$ as $t\rightarrow +\infty$. After projection by $P$, we know that $P(m)$ flows to $\pi^{-1}(D_i)$ along $\op{grad}_{S^\p}(x_1)$ as $t\rightarrow -\infty$, where $\bar{P}(C_i) = D_i$. Since the projection of the $(-I_1T)$-orbit of $m$ as $t\rightarrow -\infty$ to $S$ is exactly the $\op{grad}_{S^\p}(x_1)$-orbit of $P(m)$ as $t\rightarrow -\infty$, by Proposition \ref{proposition ED flows to 0} and Lemma \ref{lemma I_1T-orbit behavior}, we know that $m\in ED_{x_1,0,0}$.

So $\bigcup_{C_i}p_{x_1,0,0}^{-1}(W^+(C_i)) \subset ED_{x_1,0,0}$, and the converse can be proved in a similar way.

Now being a finite union of closed sets, we know that $ED_{x_1,0,0}$ is closed. Also, we know that $\frac{1}{2}\rho^2$ only takes finitely many values on $ED_{x_1,0,0}$, so it is also bounded. It follows that $ED_{x_1,0,0}$ is compact.
\end{proof}
It follows from the above proposition and $\op{codim}_\mathbb{C}W^+(C_i)\geq 1$ that we have $\op{codim}_\mathbb{C}p_{x_1,0,0}(ED_{x_1,0,0}) \geq 1$.

According to \cite[Theorem 1.8]{Conlon-Hein1}, the K\"ahler cone $\mu^{-1}(0)/\mathbb{S}^1$ is isomorphic to the smooth part of a normal algebraic variety $V\subset \mathbb{C}^N$ with one singular point. This gives $M_0\cup\{0\}$ a complex analytic structure. Using the Riemann extension theorem and the fact that $\op{codim}_\mathbb{C}p_{x_1,0,0}(ED_{x_1,0,0}) \geq 1$, we have
\begin{proposition}\label{proposition M_x_1,0,0 resolves M_0,0,0}
The map $\phi_{x_1,0,0}: \mu^{-1}(x_1,0,0) \rightarrow \mu^{-1}(0)\cup \{0\}$ descends to an analytic map $\phi_{x_1,0,0}: M_{x_1,0,0} \rightarrow M_0\cup\{0\}$. It is a resolution in the sense that $\phi_{x_1,0,0}$ is an isomorphism between $M_{x_1,0,0}\setminus \phi_{x_1,0,0}^{-1}(0)$ and $M_0$.
\end{proposition}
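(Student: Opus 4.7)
The plan is to establish the proposition in three steps: (i) $\phi_{x_1,0,0}$ descends to a continuous map between the orbifold quotients; (ii) on the complement of the exceptional set the descended map is a biholomorphism onto $M_0$; and (iii) the descended map is analytic across the exceptional set, via Riemann's removable singularity theorem applied inside the Conlon--Hein embedding.

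For (i), I would first note that $\phi_{x_1,0,0}$ is $\mathbb{S}^1_T$-equivariant. Since $[T,I_1T]=0$, the $\mathbb{R}_+$-part of the $I_1$-complexified action, i.e.\ the flow of $-I_1T$, commutes with the $\mathbb{S}^1_T$-action, which is precisely the equivariance of $\psi_{x_1,0,0}$ already invoked in the text. The subset $ED_{x_1,0,0}$ is $\mathbb{S}^1_T$-invariant by Proposition \ref{proposition ED flows to 0} (the characterisation $\lim_{\tau\to -\infty}e^\tau\cdot m=0$ is $\mathbb{S}^1_T$-invariant), and it is collapsed to the $\mathbb{S}^1_T$-fixed vertex $0$, so equivariance is preserved. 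Combined with Lemma \ref{phi_x1,0,0 is continuous}, $\phi_{x_1,0,0}$ descends to a continuous map $\bar\phi_{x_1,0,0}: M_{x_1,0,0}\to M_0\cup\{0\}$.

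For (ii), on $U := M_{x_1,0,0}\setminus p_{x_1,0,0}(ED_{x_1,0,0})$ the map $\bar\psi_{x_1,0,0}: M_0\to U$ is $I_1$-holomorphic: the $\mathbb{C}^*$-complexification is $I_1$-holomorphic by construction, and the parameter $\tau(m_0)$ depends real-analytically on $m_0$ by Lemma \ref{lemma ODE infinity} together with the implicit function theorem applied to $x_1(e^\tau\cdot m_0)=x_1$ (the derivative $\frac{d}{d\tau}x_1=|T|^2_{g_0}$ is nowhere zero). Its set-theoretic inverse $\bar\phi_{x_1,0,0}|_U$ is continuous, and by the holomorphic inverse function theorem it is itself $I_1$-holomorphic. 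This already yields the isomorphism $M_{x_1,0,0}\setminus \bar\phi_{x_1,0,0}^{-1}(0)\simeq M_0$ asserted in the proposition, since $\bar\phi_{x_1,0,0}^{-1}(0)=p_{x_1,0,0}(ED_{x_1,0,0})$ by construction and the injectivity of $\psi_{x_1,0,0}$.

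For (iii), I would use \cite[Theorem 1.8]{Conlon-Hein1} to embed $M_0\cup\{0\}$ as the smooth part plus singular point of a normal algebraic variety $V\subset\mathbb{C}^N$. The composition $M_{x_1,0,0}\to V\hookrightarrow\mathbb{C}^N$ is continuous everywhere and $I_1$-holomorphic off the analytic subset $p_{x_1,0,0}(ED_{x_1,0,0})$, which has $\op{codim}_\mathbb{C}\geq 1$ by the preceding proposition together with the bound $\op{codim}_\mathbb{C}W^+(C_i)\geq 1$. Riemann's removable singularity theorem applied componentwise extends each coordinate holomorphically across the exceptional set; by continuity the extended map still takes values in $V$, so $\bar\phi_{x_1,0,0}$ is analytic as a map to $M_0\cup\{0\}$. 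The main obstacle is executing the Riemann-extension step in the orbifold category: strictly one should pass to uniformising charts of $M_{x_1,0,0}$, extend there by the classical statement, and check that the extension is invariant under the local orbifold group, which is automatic from continuity and uniqueness of holomorphic extensions. Every other ingredient reduces to the already-established continuity of $\phi_{x_1,0,0}$ and the $\mathbb{C}^*$-action calculus of Subsection \ref{subsection of complex structure}.
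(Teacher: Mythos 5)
Your proposal is correct and follows essentially the same route the paper takes: the paper's own argument is precisely the one-line combination of the Conlon--Hein embedding of $M_0\cup\{0\}$ into $\mathbb{C}^N$, the bound $\op{codim}_\mathbb{C}p_{x_1,0,0}(ED_{x_1,0,0})\geq 1$, and the Riemann extension theorem applied to the continuous map $\phi_{x_1,0,0}$. Your steps (i)--(iii) simply flesh out that sketch (equivariance, holomorphicity of $\psi_{x_1,0,0}$ off the exceptional set, and the orbifold-chart caveat), all consistently with the text.
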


\subsection{The hyperk\"ahler metric of $M_{x_1,0,0}$}\label{subsection the hk metric of M_x_1,0,0}
Fix $x_1>0$, denote by $g_{x_1,0,0}$ the pull back by $\psi_{x_1,0,0}$ of the quotient hyperk\"ahler metric $g^\p_{x_1,0,0}$ on $M_{x_1,0,0}$ and denote by
$\omega_{1,x_1,0,0}$ the pull back by $\psi_{x_1,0,0}$ of the quotient hyperk\"ahler form $\omega^\p_{1,x_1,0,0}$. The aim of this subsection is to describe $g_{x_1,0,0}$ and $\omega_{1,x_1,0,0}$ in terms of objects on $M_0$.

For $m\in \mu^{-1}_c(0)$, choose $\tau(m,x_1)\in \mathbb{R}$ such that $e^{\tau(m,x_1)}\cdot m \in \mu^{-1}(x_1,0,0)$. Here, the action is the complexification of the $\mathbb{S}^1_T$-action with respect to $I_1$. Recall Lemma \ref{lemma ODE infinity} that $\tau(m,x_1)$ is well defined.

For $m\in \mu^{-1}(0)$, define
\begin{align}\label{f_x_1,0,0}
f_{x_1,0,0}(m) = \frac{1}{2}\rho^2(e^{\tau(m,x_1)}\cdot m) - 2x_1\tau(m,x_1).
\end{align}
It is clear that $f_{x_1,0,0}$ is $\mathbb{S}^1$-invariant, so we may think of it as a function on $M_0$.
\begin{proposition}\label{proposition omega_1_x_1,0,0}
We have
\begin{align}
\omega_{1,x_1,0,0} = i\partial\bar\partial_{I_1} f_{x_1,0,0} + x_1\Omega_0,
\end{align}
where $\Omega_0$ is the curvature 2-form of the $S^1$-bundle $\mu^{-1}(0)\rightarrow M_0$.
\end{proposition}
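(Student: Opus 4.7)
The plan is to reduce the identity to a computation on the $S^1$-bundle $p_0:\mu^{-1}(0)\to M_0$. Since $\psi_{x_1,0,0}$ is lifted to the $S^1$-equivariant map $\tilde\Psi:\mu^{-1}(0)\to M$ given by $\tilde\Psi(m)=e^{\tau(m,x_1)}\cdot m$, one has $p_0^*\omega_{1,x_1,0,0}=\tilde\Psi^*\omega_1$, and the proposition becomes an identity of 2-forms on $\mu^{-1}(0)$.

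First I compute $\tilde\Psi^*\omega_1$ using the potential $\omega_1=\tfrac12 dd^c_{I_1}K$ with $K=\tfrac12\rho^2$. Differentiating gives $d\tilde\Psi(X)=d\tau(X)(-I_1T)+(\phi_{\tau(m)})_*X$, and the $d\tau(X)$ term drops out against $d^c_{I_1}K$ because $(d^c_{I_1}K)(-I_1T)=dK(T)=0$ by the $T$-invariance of $K$. Hence $\tilde\Psi^*(d^c_{I_1}K)=\iota^*\phi_{\tau(m)}^*(d^c_{I_1}K)$ where $\iota:\mu^{-1}(0)\hookrightarrow M$. Cartan's formula with $\iota_{-I_1T}\omega_1=\theta$ gives $L_{-I_1T}(d^c_{I_1}K)=2\theta$, so integrating along the flow yields $\phi_\tau^*d^c_{I_1}K=d^c_{I_1}K+2\int_0^\tau \phi_s^*\theta\,ds$. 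Applying $d$ produces
\[
\tilde\Psi^*\omega_1 \;=\; p_0^*\omega^\p_{1,0,0,0}+dB,\qquad B|_m=\int_0^{\tau(m)}(\phi_s^*\theta)|_m\,ds.
\]

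Next I invoke the standard K\"ahler-reduction fact that on the hyperk\"ahler cone $M_0$ the descent $\tfrac12\bar\rho^2$ of $\tfrac12\rho^2$ is an $I_1^\p$-K\"ahler potential for $\omega^\p_{1,0,0,0}$; equivalently, $p_0^*(i\partial\bar\partial_{I_1^\p}\tfrac12\bar\rho^2)=\iota^*\omega_1$. The proposition is thus reduced to showing
\[
dB \;=\; p_0^*\!\bigl(i\partial\bar\partial_{I_1^\p}(f_{x_1,0,0}-\tfrac12\bar\rho^2)\bigr)+x_1\,d\eta_0,
\]
where $\eta_0=V\theta|_{\mu^{-1}(0)}$ is the natural connection 1-form on the bundle $p_0$ with $p_0^*\Omega_0=d\eta_0$. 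On the lift, $f_{x_1,0,0}-\tfrac12\bar\rho^2$ corresponds to $2\int_0^{\tau(m)}(x_1\circ\phi_s-x_1)\,ds$, which combined with the Leibniz rule
\[
dB \;=\; d\tau\wedge\tilde\Psi^*\theta+\int_0^{\tau(m)}\phi_s^*(d\theta)|_m\,ds
\]
and the identification $dF=2x_1 d\tau+\iota^*\phi_{\tau(m)}^*dK$ (with $F=K\circ\tilde\Psi$) allows a term-by-term comparison of horizontal and vertical components.

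The main obstacle will be the last matching step: the pullback $p_0^* i\partial\bar\partial_{I_1^\p}$ is \emph{not} the same as $\tfrac12 dd^c_{I_1}$ on $\mu^{-1}(0)$, because $I_1^\p$ on $M_0$ is only the restriction of $I_1$ to the horizontal distribution $L^\perp$, while on $\mu^{-1}(0)$ the ambient $I_1$ also moves the $T$-direction into the normal bundle. The mismatch applied to the $S^1$-invariant function $f_{x_1,0,0}$ is controlled by the vertical contribution of $d^c_{I_1}f_{x_1,0,0}$ along $T$, which by construction equals $-2x_1$ times the connection form (reflecting $L_T f=0$ and $d\tau(T)\neq 0$ on $M$-extensions). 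This vertical defect is precisely $x_1 d\eta_0$, producing the curvature correction $x_1\Omega_0$. Once this bookkeeping is carried out, the proposition follows.
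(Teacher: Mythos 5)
Your reduction is correct and sets up a legitimate alternative route: the identities $p_0^*\omega_{1,x_1,0,0}=\tilde\Psi^*\omega_1$, the vanishing of $(d^c_{I_1}K)(-I_1T)$, the Cartan computation $L_{-I_1T}(d^c_{I_1}K)=2\theta$, and the resulting formula $\tilde\Psi^*\omega_1=\iota^*\omega_1+dB$ are all fine, as is the target identity $dB=p_0^*\bigl(i\partial\bar\partial_{I_1^\p}(f_{x_1,0,0}-\tfrac12\bar\rho^2)\bigr)+x_1\,d\eta_0$. But the proof stops exactly where the proposition begins. The entire content of the statement is that the correction to $\iota^*\omega_1$ splits into a horizontal $i\partial\bar\partial$ piece plus the curvature term $x_1\Omega_0$, and you assert this (``once this bookkeeping is carried out, the proposition follows'') without carrying out either half: you never verify that the horizontal component of $B$ agrees, up to an exact term, with $\tfrac12 d^c_{I_1^\p}(f_{x_1,0,0}-\tfrac12\bar\rho^2)$, nor do you actually compute the vertical component. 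For the latter, the one computation that is forced by your setup, namely $B(T)=\int_0^{\tau(m)}\theta(T)\circ\phi_s\,ds=\int_0^{\tau(m)}\tfrac{d}{ds}\bigl(x_1\circ\phi_s\bigr)\,ds=x_1$, already shows that the vertical part of $2B$ is $+2x_1\eta$, whereas you claim the vertical defect of $d^c_{I_1}f_{x_1,0,0}$ is ``$-2x_1$ times the connection form''; with the paper's convention $d^c_{I_1}f(T)=-df(I_1T)=+2x_1$, so as stated your sign would produce $-x_1\Omega_0$. There is also an unaddressed technical point: $d^c_{I_1}$ of a function on $\mu^{-1}(0)$ is not defined, since $\mu^{-1}(0)$ is not $I_1$-invariant; one must work on the $I_1$-complex submanifold $\mu_c^{-1}(0)=\{x_2+ix_3=0\}$ and only afterwards restrict to the real hypersurface $\{x_1=0\}$, decomposing tangent vectors as $Y+aT$ with $I_1Y$ still tangent.

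For comparison, the paper short-circuits all of this by quoting Biquard's theorem, which says that on $\mu_c^{-1}(0)$ the pullback $q^*\omega_1$ under $q(m)=e^{\tau(m,x_1)}\cdot m$ admits the explicit potential $\hat K=\tfrac14\rho^2\circ q-x_1\tau$ (note the Legendre-type correction $-x_1\tau$, which is exactly what your $2\int_0^\tau(x_1\circ\phi_s-x_1)\,ds$ reconstructs); granting that, the whole proposition reduces to the one-line evaluation $d\hat K(I_1T)=-x_1$, which isolates the $x_1\eta$ term whose exterior derivative is $x_1\Omega_0$. Your flow computation is essentially a proof of that quoted potential formula from scratch, so the approach is salvageable, but as written the decisive verification is missing and the one quantitative claim you do make about it has the wrong sign.
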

\begin{proof}
Let $q:\mu_c^{-1}(0) \rightarrow \mu^{-1}(x_1,0,0)$ be the map $m\mapsto e^{\tau(m,x_1)}\cdot m$ and let $\hat{\omega}_1 = q^*i_{\mu^{-1}(x_1,0,0)}^*\omega_1$, $\hat{K}(m) = \frac{1}{4}\rho^2(e^{\tau(m,x_1)}\cdot m) - x_1\tau_m$, then by \cite[Theorem 7]{biquard2021hyperkahler} we have
$\hat{\omega}_1 = dd^c_{I_1}\hat{K}$. It is clear that $\hat{K}|_{\mu^{-1}(0)} = \frac{1}{2}f_{x_1}$.

For any $m_0 \in \mu^{-1}(0)$, let $Y + aT \in T_{m_0}\mu^{-1}(0)$ be a tangent vector, where $g_0(Y,T) = 0$ and hence $a = \eta(Y + aT)$, we have
\begin{align}
(d^c_{I_1}\hat{K})(Y + aT) = -d\hat{K}(I_1Y + aI_1T) = -d\hat{K}(I_1Y) - a d\hat{K}(I_1T).
\end{align}
For $d\hat{K}(I_1T)$, since $q(m) = e^{\tau(m,x_1)}\cdot m$ is invariant by $I_1T$, and $L_{(-I_1T)}\tau(m,x_1) = -1$, we have $d\hat{K}(I_1T) = -x_1$. Here we treat $x_1$ as a constant when differential is taken. So
\begin{align}\label{first line of proof of potential of Eguchi-Hanson, generalized}
(d^c_{I_1}\hat{K})(Y + aT) = -d\hat{K}(I_1Y) + x_1\eta(Y + aT).
\end{align}
On the other hand, we have $dp_0(Y + aT) = Y$, hence
\begin{align}\label{second line of proof of potential of Eguchi-Hanson, generalized}
p_0^*(d^c_{I_1}(\frac{1}{2}f_{x_1}))(Y + aT) = d^c_{I_1}(\frac{1}{2}f_{x_1})(Y) = d^c_{I_1}(\hat{K}|_{\mu^{-1}(0)})(Y) = -d\hat{K}(I_1Y),
\end{align}
here we note that since $g_0(Y,T) = 0$, we have $I_1Y \in T_{m_0}\mu^{-1}(0)$.

Combining \eqref{first line of proof of potential of Eguchi-Hanson, generalized} and \eqref{second line of proof of potential of Eguchi-Hanson, generalized}, let $i_{\mu^{-1}(0)}:
\mu^{-1}(0) \rightarrow \mu_c^{-1}(0)$ be the inclusion, we have
\begin{align}
i_{\mu^{-1}(0)}^*d_{I_1}^c\hat{K} = p_0^*d^c_{I_1}(\frac{1}{2}f_{x_1}) + x_1 i_{\mu^{-1}(0)}^*\eta.
\end{align}
Taking exterior derivatives, and noting that on $\mu^{-1}(0)$ we have $dx_1 = 0$, one gets
\begin{align}
i^*_{\mu^{-1}(0)}\hat{\omega}_1 = p_0^*i\partial\bar\partial_{I_1}f_{x_1} + x_1 i^*_{\mu^{-1}(0)}d\eta = p_0^*(i\partial\bar\partial_{I_1}f_{x_1} + x_1\Omega_0).
\end{align}
By the definition of K\"ahler quotient, we have
\begin{align}
i_{\mu^{-1}(x_1,0,0)}^*\omega_1 = p_{x_1,0,0}^*\omega^\p_{1,x_1,0,0}.
\end{align}
So we have
\begin{align}
i^*_{\mu^{-1}(0)}\hat{\omega}_1 &= i^*_{\mu^{-1}(0)}q^*i_{\mu^{-1}(x_1,0,0)}^*\omega_1\\
                                &= i^*_{\mu^{-1}(0)}q^*p_{x_1,0,0}^*\omega^\p_{1,x_1,0,0}\\
                                &= p_0^*\psi_{x_1,0,0}^*\omega^\p_{1,x_1,0,0}\\
                                &= p_0^*\omega_{1,x_1,0,0}.
\end{align}
Here in the third equality, we used the fact that $p_{x_1,0,0}\circ q\circ i_{\mu^{-1}(0)} = \psi_{x_1,0,0}\circ p_0$.

It follows that $p_0^*\omega_{1,x_1,0,0} = p_0^*(i\partial\bar\partial_{I_1}f_{x_1} + x_1\Omega_0)$. Since $dp_0$ is surjective, $p_0^*$ is injective, the result follows.
\end{proof}

\begin{remark}
One has $\frac{\partial}{\partial x_1}|_{x_1 = 0}f_{x_1,0,0} = 0$, so we have $\frac{\partial}{\partial x_1}|_{x_1 = 0}\omega_{1,x_1,0,0} =
\Omega_0$. It coincides with the result of \cite{zbMATH03793065}.
\end{remark}

\begin{remark}
Recall $\dim_\mathbb{R}M = 4n$. In the case $n=1$, the hyperk\"ahler quotients $M_{x_1,0,0}$ are zero-dimensional, so there is no need to consider $g_{x_1,0,0}$. In fact, this
case is already treated by the Gibbons-Hawking anasatz.

In the case $n=2$, we know that $\dim_\mathbb{R}\mu^{-1}(0) = 5$ and the orthogonal complement of $T$ in $T\mu^{-1}(0)$ is spanned by $\rho\frac{\partial}{\partial\rho},\xi_1,\xi_2,\xi_3$, which is an integrable distribution by Subsection \ref{subsection hk cone with S1 action}. So in this case $\Omega_0 = 0$ and we get an $I_1$-potential $f_{x_1,0,0}$ of $g_{x_1,0,0}$. In Example \ref{C^2n}, when $n=2$, we get a K\"ahler potential of the Eguchi-Hanson metric, which coincides with the formula given by \cite{calabi1979metriques}.

If $n\geq 3$, then $\Omega_0$ is generally non-vanishing.
\end{remark}

More generally, using the $\Sp(1)$-equivariance, for any $x\neq 0$ we may construct the map $\psi_x : \mu^{-1}(0) \rightarrow \mu^{-1}(x)$ by complexifying the $\mathbb{S}^1$-action
with respect to $I_x=\frac{1}{|x|}(x_1I_1+x_2I_2+x_3I_3)$. It sends $m\in \mu^{-1}(0)$ to $e^{\tau(m,x)}\cdot m$. Its inverse extends to a resolution of the conic singularity by sending $ED_x=\mu^{-1}(x)\setminus \op{Im}(\psi_x)$ to $0$.

Consider the following function defined on $M_0$:
\begin{align}\label{f_x}
f_x(m) = \frac{1}{2}\rho^2(e^{\tau(m,x)}\cdot m) - 2x_1\tau(m,x).
\end{align}
Let $h_x(-,-)$ be the 2-tensor $i\partial\bar\partial_{I_x}f_{x}(-,I_x-)$ and let $H_x$ be the 2-tensor $\Omega_0(-,I_x-)$.
\begin{proposition}
Let $g_x$ be the pull back by $\psi_x$ of the quotient hyperk\"ahler metric on $M_x = \mu^{-1}(x)/\mathbb{S}^1$, then we have
\begin{align}\label{abstract formula of g_x}
g_x = h_x + |x|H_x.
\end{align}
\end{proposition}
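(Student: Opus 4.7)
The plan is to reduce to Proposition \ref{proposition omega_1_x_1,0,0} via the $Sp(1)$-equivariance established in Section \ref{section Basic properties of the main construction}. First, I would recast the asserted identity as an identity of K\"ahler forms. Writing $\omega_x$ for the pullback by $\psi_x$ of the $I_x$-K\"ahler form on $M_x$, the claim $g_x = h_x + |x|\,H_x$ is equivalent to
\begin{align*}
\omega_x = i\partial\bar\partial_{I_x} f_x + |x|\,\Omega_0,
\end{align*}
since $h_x$ and $H_x$ arise from $i\partial\bar\partial_{I_x} f_x$ and $\Omega_0$ respectively by inserting $I_x$ in the second slot, which matches $g_x(-,-) = \omega_x(-,I_x-)$. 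For $x = (|x|,0,0)$ this identity is exactly Proposition \ref{proposition omega_1_x_1,0,0}.

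To pass from the special case to general $x\neq 0$, I would use an $Sp(1)$-rotation. By Proposition \ref{2-fold covering}, pick $q_x \in Sp(1)$ with $\phi(q_x)\cdot(|x|,0,0) = x$. By Propositions \ref{Sp(1) acts by isometry} and \ref{Inner left Sp(1) action} the action of $q_x$ on $M$ is an isometry of $g_0$ whose differential intertwines $I_1$ and $I_x$, and by Proposition \ref{2-fold covering} it carries $\mu^{-1}(|x|,0,0)$ onto $\mu^{-1}(x)$. Since $T$ commutes with $\rho\tfrac{\partial}{\partial\rho}$ and preserves each $I_i$, each Reeb field $\xi_i = I_i\rho\tfrac{\partial}{\partial\rho}$ commutes with $T$, so the $Sp(1)$- and $\mathbb{S}^1_T$-actions commute. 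Hence $q_x$ also intertwines the $I_1$- and $I_x$-complexifications of $\mathbb{S}^1_T$, which implies that $q_x$ conjugates $\psi_{|x|,0,0}$ to $\psi_x$ and pulls $f_x$ back to $f_{|x|,0,0}$. It also preserves the connection form $\eta$ on $\mu^{-1}(0)$ and hence $\Omega_0 = d\eta$ on $M_0$.

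Pulling back the identity $\omega_{1,|x|,0,0} = i\partial\bar\partial_{I_1} f_{|x|,0,0} + |x|\,\Omega_0$ of Proposition \ref{proposition omega_1_x_1,0,0} by $q_x$, and using $q_x^*\circ d^c_{I_1} = d^c_{I_x}\circ q_x^*$, then yields the desired identity for general $x$. The main bookkeeping obstacle is verifying that $\tau(m,x)$, defined via the $I_x$-complexification, transforms correctly under $q_x$ so that $q_x^*f_x = f_{|x|,0,0}$; this reduces to the fact that $q_x$ is an $\mathbb{R}$-equivariant biholomorphism $(M,I_1)\to (M,I_x)$ and therefore conjugates the two complexified flows. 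Once this compatibility is in place, no genuine new computation is needed beyond the special case already proved.
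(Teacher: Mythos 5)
Your proposal is correct and follows essentially the same route as the paper: reduce to the case $x=(|x|,0,0)$ of Proposition \ref{proposition omega_1_x_1,0,0} by conjugating with $q_{x/|x|}\in Sp(1)$, the only point requiring verification being the $Sp(1)$-invariance of $\Omega_0$. The paper checks this infinitesimally via $\iota_{\xi_i}\Omega_0=0$ using $L_{\xi_i}g_0=0$ and $[\xi_i,T]=0$, whereas you observe directly that $q_x$ preserves $\eta=Vg_0(T,-)$ (being a $g_0$-isometry commuting with $T$); these are the same computation in two guises.
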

\begin{proof}
One simply applies a conjugation by $q_{\frac{x}{|x|}}\in \Sp(1)$, where $q_{\frac{x}{|x|}}$ is chosen as in Proposition \ref{Inner left Sp(1) action}. And it remains to show that
$q_{\frac{x}{|x|}}^*\Omega_0 = \Omega_0$, i.e. $\Omega_0$ is invariant by the $\Sp(1)$-action. Here we note that the Reeb fields $\xi_i$ on $M$ descend to the Reeb fields on
$M_0$ and we will use the same notation $\xi_i$ for it, and consequently the $\Sp(1)$-action on $M$ descends to the $\Sp(1)$-action on $M_0$.

To show $L_{\xi_i}\Omega_0 = 0$, since $d\Omega_0 = 0$, it suffices to show $\iota_{\xi_i}\Omega_0 = 0$. For any local vector fields $X$ on $M_0$, let
$\tilde{X}$ be a horizontal lift of $X$ on $\mu^{-1}(0)$, then $g_0(\tilde{X},T) = 0$. We have $\Omega_0(\xi_i,X) = \eta([\xi_i,\tilde{X}]) = Vg_0([\xi_i,\tilde{X}],T)$.

We take a Lie derivative as follows
\begin{align}
0 = L_{\xi_i}(g_0(\tilde{X}, T)) = (L_{\xi_i}g_0)(\tilde{X}, T) + g_0([\xi_i,\tilde{X}],T) + g_0(\tilde{X},[\xi_i,T]).
\end{align}
Note that $L_{\xi_i}g_0 = 0$ and $[\xi_i,T] = 0$, we conclude that $g_0([\xi_i,\tilde{X}],T) = 0$ hence $\iota_{\xi_i}\Omega_0 = 0$.
\end{proof}

\begin{remark}
A similar argument shows that $\iota_{\rho\frac{\partial}{\partial\rho}}\Omega_0 = 0$ and $L_{\rho\frac{\partial}{\partial\rho}}\Omega_0 = 0$ (this can also be deduced from the fact
that $L_{\rho\frac{\partial}{\partial\rho}}\eta = 0$). So the subspace generated by $\rho\frac{\partial}{\partial\rho},\xi_1,\xi_2,\xi_3$ is contained in the kernel of $\Omega_0$,
hence the kernel of $H_x$ for any $0\neq x \in \mathbb{R}^3$.
\end{remark}

\begin{remark}
In the formula \eqref{f_x}, if we let $|x|\rightarrow 0$, then the potential becomes $\frac{1}{2}\rho^2$, which gives the conic metric $g_{0,0,0}$ on
$M_0$.
\end{remark}

\begin{remark}\label{remark size of ED}
Note that for $\lambda > 0$, we have $ED_{\lambda^2x} = \lambda ED_x$ by homothety, so the size of $ED_{x}$ is proportional to $\sqrt{|x|}$.
\end{remark}

\section{Twist coordinates}\label{section Twist coordinates}

As in Section \ref{section hk metrics with locally free S1 action}, we assume that the $\mathbb{S}^1$-action is locally free on the compact connected 3-Sasakian manifold $S$.

We think of the underlying space of $M$ as a double fibration. Firstly, it is the total space of an $\mathbb{S}^1$-fibration over the
quotient $M/\mathbb{S}^1$. Recall that the three moment maps $x_1,x_2,x_3$ defined by \eqref{xj} are $\mathbb{S}^1$-invariant, so the hyperk\"ahler moment map
$\mu=(x_1,x_2,x_3):M\rightarrow \mathbb{R}^3$ descends to a map $M/\mathbb{S}^1\rightarrow \mathbb{R}^3$. And in this way
we think of the quotient $M/\mathbb{S}^1$ as a fibration over $\mathbb{R}^3$ whose fiber over $q\in \mathbb{R}^3$ is
$\mu^{-1}(q)/\mathbb{S}^1$, which is the hyperk\"ahler quotient $M_q$ of $M$ with respect the $\mathbb{S}^1$-action at moment $q$. As we shall see, these two fibrations give us an
explicit system of coordinates.

More precisely, we introduce the following coordinates. Define $\Psi: \mu^{-1}(0)\times \mathbb{R}^3 \rightarrow M$ by
\begin{align}
\Psi(m,x) = \psi_x(m) = q_x\psi_{|x|,0,0}(q_x^{-1}m) = q_xe^{\tau(m,|x|)}\cdot(q_x^{-1}m).
\end{align}
Here, as before, $q_x\in \Sp(1)$ satisfies $q_x\cdot\mu^{-1}(1,0,0) = \mu^{-1}(\frac{x}{|x|})$.

At this point, we note that the image of $\Psi$ is the complement of the union $\bigcup_{x\in\mathbb{R}^3\setminus\{0\}}ED_x$, so the image of $\Psi$ is dense in $M$. Consequently, most of the
results obtained by these coordinates can be extended to $M$ by continuity.

\begin{remark}
By the definition of $\Psi$ and $\psi_x$, we see that the complex structure we use to complexify the $\mathbb{S}^1$-action depends on $\frac{x}{|x|}\in \mathbb{S}^2$. This is a similar idea for the construction of the twistor space of a hyperk\"ahler manifold. That is why we call this system of coordinates as twistor coordinates. In fact, it can be shown that $\mu^{-1}(\mathbb{S}^2)/\mathbb{S}^1$, when equipped with a good complex structure depending on $x$, is isomorphic to the twistor space of the hyperk\"ahler quotient $M_{1,0,0}$.
\end{remark}

\subsection{Calculation of the metric under twist coordinates}\label{subsection twist coordinates}
In this subsection, we will calculate the metrics $g_0$ and $g_a$ with respect to these coordinates. To simplify the notation, we will do the calculation at $x=(x_1,0,0)$ with $x_1>0$, so we may take $q_x=1$. The general case can be deduced by the $\Sp(1)$-equivariance.

We start with $\frac{\partial\Psi}{\partial x_1}$,
\begin{align}
\frac{\partial\Psi}{\partial x_1} = \frac{\partial\tau(m,x)}{\partial x_1}(-I_1T)=-VI_1T.
\end{align}
In the above expression, if it is not indicated explicitly, then the object takes value at $\Psi(m,x)$, and we will adopt this convention in this subsection.

It follows that $g_0(T,\frac{\partial\Psi}{\partial x_1})=0$, $g_0(\frac{\partial\Psi}{\partial x_1},\frac{\partial\Psi}{\partial x_1}) = V$.

For $\frac{\partial\Psi}{\partial x_2}$, we note that at $(x_1,0,0)$, we have $\frac{\partial |x|}{\partial x_2} = 0$, so $\frac{\partial \tau}{\partial x_2} = 0$. Recall
Proposition \ref{Sp(1)-equivariance, generalized}, the vector field associated with $\frac{\partial q_x}{\partial x_2}$ is $\frac{1}{2x_1}\xi_3$. So we have
\begin{align}
\frac{\partial \Psi}{\partial x_2} = \frac{1}{2x_1}\xi_3 - \frac{1}{2x_1}d\psi_{x_1,0,0}(\xi_3(m)).
\end{align}
Note that $g_0(T,\xi_j) = 2x_j$, so at $(x_1,0,0)$, we
have $g_0(T,\xi_3)=0$. On the other hand, we have $g_0(T,d\psi_{x_1,0,0}(\xi_3(m))) = g_0(T,I_1d\psi_{x_1,0,0}(\xi_2(m))) = -\omega_1(T,d\psi_{x_1,0,0}(\xi_2(m))) =
dx_1(d\psi_{x_1,0,0}(\xi_2(m))) = 0$. Hence $g_0(T,\frac{\partial\Psi}{\partial x_2}) = 0$.

And we also have
\begin{align}
g_0(\frac{\partial\Psi}{\partial x_1},\frac{\partial\Psi}{\partial x_2}) = -Vg_0(I_1T,\frac{\partial\Psi}{\partial x_2}) = -V\omega_1(T,\frac{\partial\Psi}{\partial x_2}) =
Vdx_1(\frac{\partial\Psi}{\partial x_2}) = 0.
\end{align}

Let $Y$ be any tangent vector of $\mu^{-1}(0)$ at $m$ which is orthogonal to $T$. Then we have
\begin{align}
g_0(\frac{\partial\Psi}{\partial x_2},d\Psi(Y)) &= g_0(\frac{\partial\Psi}{\partial x_2},d\psi_{x_1,0,0}(Y)) \\
                                                &= \frac{1}{2x_1}g_0(\xi_3 - d\psi_{x_1,0,0}(\xi_3(m)),d\psi_{x_1,0,0}(Y))\\
                                                &= \frac{1}{2x_1}g_0(\xi_3,d\psi_{x_1,0,0}(Y)) - \frac{1}{2x_1}g_{x_1,0,0}(\xi_3(m),Y).
\end{align}
For the first term $g_0(\xi_3,d\psi_{x_1,0,0}(Y))$, we have
\begin{align}
g_0(\xi_3,d\psi_{x_1,0,0}(Y)) = \omega_3(\rho\frac{\partial}{\partial\rho},d\psi_{x_1,0,0}(Y)).
\end{align}
We claim that this term is invariant by $I_1T$, indeed by Lemma \ref{I1T preserves omega23} we have $L_{I_1T}\omega_3 =0$. We also have $[\rho\frac{\partial}{\partial\rho},I_1T] =
0$ and by the definition of $\psi_{x_1,0,0}$ we have $L_{I_1T}(d\psi_{x_1,0,0}(Y))=0$. So $g_0(\xi_3,d\psi_{x_1,0,0}(Y)) = g_{0,0,0}(\xi_3(m),Y)$. In conclusion we have
\begin{align}
g_0(\frac{\partial\Psi}{\partial x_2},d\Psi(Y))=\frac{1}{2x_1}(g_{0,0,0}-g_{x_1,0,0})(\xi_3(m_0),Y).
\end{align}

Then we calculate that
\begin{align}
g_0(\frac{\partial\Psi}{\partial x_2},\frac{\partial\Psi}{\partial x_2}) &= \frac{1}{4x_1^2}g_0(\xi_3 - d\psi_{x_1,0,0}(\xi_3(m)),\xi_3 - d\psi_{x_1,0,0}(\xi_3(m)))\\
&=\frac{1}{4x_1^2}[\rho^2 -2\rho^2(m) + g_{x_1,0,0}(\xi_3(m),\xi_3(m))].
\end{align}

Similarly we have $\frac{\partial\Psi}{\partial x_3} = -\frac{1}{2x_1}(\xi_2-d\psi_{x_1,0,0}(\xi_2(m)))$, hence $g_0(T,\frac{\partial\Psi}{\partial
x_3})=0$, $g_0(\frac{\partial\Psi}{\partial x_1},\frac{\partial\Psi}{\partial x_3}) = 0$ and $g_0(\frac{\partial\Psi}{\partial x_3},\frac{\partial\Psi}{\partial x_3}) =
g_0(\frac{\partial\Psi}{\partial x_2},\frac{\partial\Psi}{\partial x_2})$. And a similar calculation shows that $g_0(\frac{\partial\Psi}{\partial x_2},\frac{\partial\Psi}{\partial
x_3}) = 0$.

In conclusion, let $\beta_3 = \frac{1}{2x_1}(g_{0,0,0} - g_{x_1,0,0})(\xi_3(m),-)$ and let $\beta_2 = \frac{1}{2x_1}(g_{0,0,0} - g_{x_1,0,0})(\xi_2(m),-)$, then we have
\begin{align}\label{g_0}
\Psi^*g_0 =& V(dx_1)^2 + \frac{1}{4x_1^2}[\rho^2 -2\rho^2(m) + g_{x_1,0,0}(\xi_3(m),\xi_3(m))]((dx_2)^2+(dx_3)^2)+ \\
     &+ \beta_3dx_2 - \beta_2dx_3 + g_{x_1,0,0} + \frac{1}{V}\eta^2.
\end{align}
hence by Proposition \ref{deformation} we have
\begin{align}\label{g_a}
\Psi^*g_a =&  V(dx_1)^2 + \frac{1}{4x_1^2}[\rho^2 -2\rho^2(m) + g_{x_1,0,0}(\xi_3(m),\xi_3(m))]((dx_2)^2+(dx_3)^2)+ \\
     &+ a^2\sum_{j = 1}^3(dx_j)^2 + \beta_3dx_2 - \beta_2dx_3 + g_{x_1,0,0} + \frac{1}{V+a^2}\eta^2.
\end{align}

\subsection{The asymptotic behavior}\label{subsection the asymptotic behavior}
In this subsection, we give several estimates of the objects introduced in the previous subsection. First, we start from $\tau(m,x_1)$, where $x_1>0$.
\begin{lemma}\label{lemma estimation of tau}
There exists a uniform $C>0$ such that $0 < \tau(m,x_1) \leq C\frac{x_1}{\rho^2(m)}$.
\end{lemma}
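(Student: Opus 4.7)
The plan is to read off the bound directly from the ODE system \eqref{ODE new system1}--\eqref{ODE new system2} together with the uniform comparison $C^{-1}\rho^2 \leq |T|_{g_0}^2 \leq C\rho^2$ that comes from compactness of $S$ and local freeness of the $\mathbb{S}^1$-action.

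Fix $m \in \mu^{-1}(0)$, so in particular $x_1(m) = 0$. Positivity $\tau(m,x_1) > 0$ is immediate from Lemma \ref{lemma ODE infinity}, since that lemma says $\tau \mapsto x_1(e^\tau \cdot m)$ is strictly increasing, starts at $0$, and must reach the positive value $x_1$. For the upper bound, the key monotonicity is already encoded in the same system: along the segment $\tau \in [0, \tau(m,x_1)]$ we have $x_1(e^\tau \cdot m) \geq 0$, so by \eqref{ODE new system1} the function $\rho^2(e^\tau \cdot m)$ is nondecreasing on this interval, which gives the pointwise lower bound
\begin{equation*}
\rho^2(e^\tau \cdot m) \geq \rho^2(m) \quad \text{for } \tau \in [0, \tau(m,x_1)].
\end{equation*}

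Now insert this into \eqref{ODE new system2} combined with $|T|_{g_0}^2 \geq C^{-1} \rho^2$ to obtain
\begin{equation*}
\frac{d}{d\tau} x_1(e^\tau \cdot m) = |T|_{g_0}^2(e^\tau \cdot m) \geq C^{-1} \rho^2(e^\tau \cdot m) \geq C^{-1} \rho^2(m).
\end{equation*}
Integrating from $0$ to $\tau(m,x_1)$ and using $x_1(m) = 0$ yields
\begin{equation*}
x_1 = x_1(e^{\tau(m,x_1)} \cdot m) - x_1(m) \geq C^{-1} \rho^2(m)\, \tau(m,x_1),
\end{equation*}
which gives exactly $\tau(m,x_1) \leq C\, x_1 / \rho^2(m)$ after renaming the constant.

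There is no real obstacle here: the whole argument is a two-line integration once one notices that the sign of $x_1$ along the flow controls the sign of $\frac{d}{d\tau}\rho^2$, so that $\rho^2$ cannot drop below its initial value and the lower bound $|T|^2 \geq C^{-1}\rho^2$ can be made uniform along the trajectory. The only thing worth being careful about is to record that the constant $C$ from the comparison $C^{-1}\rho^2 \leq |T|_{g_0}^2 \leq C\rho^2$ (valid because $S$ is compact and $T$ is nowhere vanishing on $S$, as recalled at the start of Section \ref{section hk metrics with locally free S1 action}) is uniform in $m$, so the final constant in the statement is likewise uniform.
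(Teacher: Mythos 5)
Your proof is correct and follows essentially the same route as the paper: integrate $\frac{d}{d\tau}x_1 = |T|^2 \geq C^{-1}\rho^2$ over $[0,\tau(m,x_1)]$, using the monotonicity of $\rho^2$ along the flow (from Lemma \ref{lemma ODE infinity}) to bound $\rho^2(e^\tau\cdot m)\geq\rho^2(m)$. Nothing is missing.
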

\begin{proof}
Recall that $\tau(m,x_1)$ is determined by $e^{\tau(m,x_1)}\cdot m \in \mu^{-1}(x_1,0,0)$ for $m\in \mu^{-1}(0)$, where the action is obtained by $I_1-$complexification of $\mathbb{S}^1_T$. By equation \ref{ODE new system2}, \ref{S1 complexification} and Lemma \ref{lemma ODE infinity}, we have
\begin{align}
x_1 = x_1(e^{\tau(m,x_1)}\cdot m)
    &= \int_0^{\tau(m,x_1)}\frac{d}{dt}(x_1(e^t\cdot m))dt
    = \int_0^{\tau(m,x_1)}|T|^2(e^t\cdot m))dt \\
    &\geq C^{-1}\int_0^{\tau(m,x_1)}\rho^2(e^t\cdot m)dt
    \geq C^{-1}\int_0^{\tau(m,x_1)}\rho^2(m)dt \\
    &= C^{-1}\rho^2(m)\tau(m,x_1).
\end{align}
\end{proof}
Next, we give an estimate of $\rho^2(e^{\tau(m,x_1)}\cdot m)$.
\begin{lemma}\label{estimation of rho^2}
There exists a uniform $C>0$ such that $0 < \rho^2(e^{\tau(m,x_1)}\cdot m) - \rho^2(m) \leq C\frac{x_1^2}{\rho^2(m)}$.
\end{lemma}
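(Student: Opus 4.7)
The plan is to prove both bounds by directly integrating the ODE system \eqref{ODE new system1}--\eqref{ODE new system2} along the $(-I_1T)$-orbit from $m\in \mu^{-1}(0)$ to $e^{\tau(m,x_1)}\cdot m \in \mu^{-1}(x_1,0,0)$.

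For the lower bound, since $x_1(m)=0$ and the action is the $I_1$-complexification of $\mathbb{S}^1_T$, Lemma \ref{lemma ODE infinity} tells us that $\rho^2(e^{\tau}\cdot m)$ is strictly increasing in $\tau>0$ (recall $\tau(m,x_1)>0$ from the previous lemma). So $\rho^2(e^{\tau(m,x_1)}\cdot m)-\rho^2(m)>0$ immediately.

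For the upper bound, I would integrate \eqref{ODE new system1}:
\begin{align*}
\rho^2(e^{\tau(m,x_1)}\cdot m)-\rho^2(m)=\int_0^{\tau(m,x_1)}4x_1(e^t\cdot m)\,dt.
\end{align*}
Since $t\mapsto x_1(e^t\cdot m)$ is strictly increasing by Lemma \ref{lemma ODE infinity}, starts at $x_1(m)=0$, and ends at $x_1$, we have $0\leq x_1(e^t\cdot m)\leq x_1$ for $t\in[0,\tau(m,x_1)]$. Therefore the integral is bounded by $4x_1\,\tau(m,x_1)$. Combining this with the estimate $\tau(m,x_1)\leq C\frac{x_1}{\rho^2(m)}$ from Lemma \ref{lemma estimation of tau} yields
\begin{align*}
\rho^2(e^{\tau(m,x_1)}\cdot m)-\rho^2(m)\leq 4C\,\frac{x_1^2}{\rho^2(m)},
\end{align*}
which is the desired bound.

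There is no real obstacle here; the result is essentially a one-line consequence of chaining \eqref{ODE new system1} with the monotonicity from Lemma \ref{lemma ODE infinity} and the already-established bound on $\tau(m,x_1)$. The only mild care needed is to note that uniformity of $C$ in $m$ follows from the uniform equivalence $C^{-1}\rho^2\leq |T|^2\leq C\rho^2$ used to prove Lemma \ref{lemma estimation of tau}, which is inherited from compactness of the link $S$.
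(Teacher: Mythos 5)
Your proof is correct and follows essentially the same route as the paper: integrate \eqref{ODE new system1}, bound the integrand by $x_1$ using the monotonicity from Lemma \ref{lemma ODE infinity}, and conclude with Lemma \ref{lemma estimation of tau}. The explicit treatment of the lower bound and the remark on uniformity of $C$ are welcome additions but change nothing of substance.
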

\begin{proof}
By equation \eqref{ODE new system1}, we have
\begin{align}
\rho^2(e^{\tau(m,x_1)}\cdot m) - \rho^2(m) &= \int_0^{\tau(m,x_1)}\frac{d}{dt}\rho^2(e^t\cdot m)dt = 4\int_0^{\tau(m,x_1)}x_1(e^t\cdot m)dt \\
                                           &\leq 4\int_0^{\tau(m,x_1)}x_1dt = 4x_1\tau(m,x_1).
\end{align}
So the result follows from Lemma \ref{lemma estimation of tau}.
\end{proof}
As a corollary of the previous two lemma, we have
\begin{corollary}
There exists a uniform $C>0$ such that
\begin{align}
|f_{x_1,0,0}(m) - f_{0,0,0}(m)| \leq C\frac{x_1^2}{\rho^2(m)}.
\end{align}
\end{corollary}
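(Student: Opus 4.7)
The plan is to observe that this corollary follows almost immediately by combining the two preceding lemmas, since $f_{0,0,0}(m) = \tfrac{1}{2}\rho^2(m)$ (as $\tau(m,0)=0$ implies $e^0\cdot m = m$), while by the definition \eqref{f_x_1,0,0} we have
\begin{align*}
f_{x_1,0,0}(m) - f_{0,0,0}(m) = \tfrac{1}{2}\bigl(\rho^2(e^{\tau(m,x_1)}\cdot m) - \rho^2(m)\bigr) - 2x_1\tau(m,x_1).
\end{align*}

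First I would apply Lemma \ref{estimation of rho^2} to bound the first term: it gives $0 < \rho^2(e^{\tau(m,x_1)}\cdot m) - \rho^2(m) \leq C x_1^2/\rho^2(m)$. Next, I would apply Lemma \ref{lemma estimation of tau} to bound the second term: since $0 < \tau(m,x_1) \leq C x_1/\rho^2(m)$, we get $0 < 2x_1\tau(m,x_1) \leq 2C x_1^2/\rho^2(m)$. Combining these via the triangle inequality yields the claimed bound with a possibly larger constant.

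There is no real obstacle here, since both key estimates are already established and the proof is just a triangle inequality. The only minor point to note is that the two terms in $f_{x_1,0,0}-f_{0,0,0}$ have opposite signs (one positive, one negative), so one cannot conclude monotonicity of $f_{x_1,0,0}$ in $x_1$ from this bound alone—but that is not required. The result only needs the absolute value estimate, which is immediate.
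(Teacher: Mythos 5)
Your proof is correct and is exactly the argument the paper intends: the paper states this result with no written proof beyond the remark that it follows from the two preceding lemmas, and your combination of Lemma \ref{lemma estimation of tau} and Lemma \ref{estimation of rho^2} with the triangle inequality, using $f_{0,0,0}(m)=\tfrac{1}{2}\rho^2(m)$, is the intended derivation. Nothing is missing.
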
\label{corollary estimate of f_x_1,0,0}
Since $[I_1T,\rho\frac{\partial}{\partial\rho}] = 0$, the $\mathbb{C}^*$-action commutes with homothety $\nu_\lambda: (\rho, \Theta)\mapsto (\lambda\rho, \Theta)$ in the coordinate system of $C(M_0) = \mathbb{R}_+\times S_0$. It follows that $\tau(\nu_\lambda(m), \lambda^2x_1) = \tau(m,x_1)$. As a consequence, we have $f_{\lambda^2x_1,0,0}(\nu_\lambda(m)) = \lambda^2f_{x_1,0,0}(m)$. In the $(\rho,\Theta)$-coordinate of $M_0$, we have
\begin{align}
f_{x_1,0,0}(\rho,\Theta) = \rho^2 f_{\rho^{-2}x_1,0,0}(1, \Theta).
\end{align}
Since $f_{x_1,0,0}$ depends real analytically on $x_1$, we may develop as follows:
\begin{align}
f_{\zeta,0,0}(1,\Theta) = \sum_{\nu\geq 0}k_\nu(\Theta)\zeta^\nu,
\end{align}
which is valid for $|\zeta| < \varepsilon_0$. Then we have
\begin{align}
f_{x_1,0,0}(\rho,\Theta) = \rho^2f_{\rho^{-2}x_1,0,0}(1,\Theta) = \sum_{\nu\geq 0}k_\nu(\Theta)\frac{x_1^\nu}{\rho^{2\nu-2}}.
\end{align}
Note that $f_{0,0,0} = \frac{1}{2}\rho^2$, so $k_0(\Theta) = \frac{1}{2}$. And by Corollary \ref{corollary estimate of f_x_1,0,0}, we have $k_1(\Theta) = 0$. So we get
\begin{align}\label{developement of f_x_1,0,0}
f_{x_1,0,0}(\rho,\Theta) = \frac{1}{2}\rho^2 + \sum_{\nu\geq 2}k_\nu(\Theta)\frac{x_1^\nu}{\rho^{2\nu-2}},
\end{align}
which is valid for $|\frac{x_1}{\rho^2}|<\varepsilon_0$.

Similarly, the homothety on $M$ gives rise to the relation $\nu_\lambda^*g_{x_1,0,0} = \lambda^2g_{\lambda^{-2}x_1,0,0}$. For $|\zeta|<\varepsilon_0$, we can develop
\begin{align}
g_{\zeta,0,0}(1,\Theta) = \sum_{\nu\geq 0}h_\nu(\Theta)\zeta^\nu,
\end{align}
where $h_\nu \in \Gamma(S_0, T^*M_0\otimes T^*M_0)$. One may think of each $h_\nu$ as a 2-tensor on $M_0$ by $\frac{1}{\rho}\nu_\rho: T_{(1,\Theta)}M_0 \rightarrow T_{(\rho,\Theta)}M_0$. Note that $\frac{1}{\rho}\nu_\rho$ is an $g_{0,0,0}-$isometry, so the norm of $h_\nu$ on $M_0$ is the same as the norm of it on $S_0$. In this way we may think of $g_{\zeta,0,0}(1,\Theta)$ as a metric on $M_0$ and for $|\frac{x_1}{\rho^2}| < \varepsilon_0$ we have
\begin{align}
g_{x_1,0,0}(\rho,\Theta) = g_{\rho^{-2}x_1,0,0}(1,\Theta) = \sum_{\nu\geq 0}h_\nu(\Theta)(\frac{x_1}{\rho^2})^\nu.
\end{align}
Comparing it with equation \eqref{developement of f_x_1,0,0} and \eqref{abstract formula of g_x}, we see that $h_0(\Theta) = g_{0,0,0}$ and $h_1(\Theta) = \rho^2H_{x_1,0,0}$. In conclusion for $|\frac{x_1}{\rho^2}| < \varepsilon_0$ we have
\begin{align}\label{developement of g_x_1,0,0}
g_{x_1,0,0}(\rho,\Theta) = g_{0,0,0} + x_1H_{x_1,0,0} + \sum_{\nu\geq 2}h_\nu(\Theta)(\frac{x_1}{\rho^2})^\nu.
\end{align}
Consequently, we have
\begin{proposition}\label{proposition asymptotic of g_x_1,0,0}
For $0<\varepsilon<\varepsilon_0$, there exists $C = C(\varepsilon) > 0$ such that for $|\frac{x_1}{\rho^2}| < \varepsilon$, we have
\begin{align}
|g_{x_1,0,0} - g_{0,0,0}|_{g_{0,0,0}} \leq C\frac{x_1}{\rho^2},\\
|g_{x_1,0,0} - g_{0,0,0} - x_1H_{x_1,0,0}|_{g_{0,0,0}} \leq C\frac{x_1^2}{\rho^4}.
\end{align}
\begin{remark}
The above argument of developing the metric $g_{x_1,0,0}$ is used in \cite{Kronheimer1} to show the ALE property of certain hyperk\"ahler quotient.
\end{remark}
\end{proposition}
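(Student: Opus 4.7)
The plan is to deduce both estimates directly from the convergent expansion
\begin{align}
g_{x_1,0,0}(\rho,\Theta) = g_{0,0,0} + x_1 H_{x_1,0,0} + \sum_{\nu\geq 2} h_\nu(\Theta)\left(\frac{x_1}{\rho^2}\right)^\nu
\end{align}
established in \eqref{developement of g_x_1,0,0}, which is valid for $|x_1/\rho^2|<\epsilon_0$. The work is to control the tail in $g_{0,0,0}$-norm; once that is done the second inequality is immediate, and the first follows by adding an estimate on the single linear term $x_1 H_{x_1,0,0}$.

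First I would record the normalization of the tensors $h_\nu$. By construction each $h_\nu$ is viewed as a 2-tensor on $M_0$ by transport through the homothety $\frac{1}{\rho}\nu_\rho$, which is a $g_{0,0,0}$-isometry; hence $|h_\nu|_{g_{0,0,0}}$ computed on $M_0$ equals its value on the link $S_0$. Since $S_0$ is compact (by Corollary \ref{corollary S_0 is connected}) and the series $g_{\zeta,0,0}(1,\Theta) = \sum_\nu h_\nu(\Theta)\zeta^\nu$ converges for $|\zeta|<\epsilon_0$, its coefficients satisfy a uniform Cauchy-type bound $\sup_{\Theta\in S_0}|h_\nu(\Theta)|_{g_{0,0,0}} \leq M\, R^{\nu}$ for some $M>0$ and any $R<1/\epsilon_0$. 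Fixing $0<\epsilon<\epsilon_0$ and picking $R$ with $\epsilon R<1$, the tail is then controlled by a geometric series:
\begin{align}
\left|\sum_{\nu\geq 2} h_\nu(\Theta)\left(\tfrac{x_1}{\rho^2}\right)^\nu\right|_{g_{0,0,0}} \leq M\sum_{\nu\geq 2}(R\epsilon)^{\nu-2}\left(\tfrac{x_1}{\rho^2}\right)^{2}\left(\tfrac{|x_1|}{\rho^2}\right)^{0} \cdot R^{2} \leq C(\epsilon)\,\frac{x_1^2}{\rho^4}.
\end{align}
This yields the second inequality of the proposition at once.

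For the first inequality, it remains to bound $|x_1 H_{x_1,0,0}|_{g_{0,0,0}}$ by $C x_1/\rho^2$. Here $H_{x_1,0,0} = \Omega_0(-,I_1-)$ is independent of $x_1$. The curvature 2-form $\Omega_0$ of the principal bundle $\mu^{-1}(0)\to M_0$ is homothety-invariant (as noted in the remark after \eqref{f_x}: $L_{\rho\partial_\rho}\Omega_0=0$), and on the cone metric $g_{0,0,0} = d\rho^2 + \rho^2 g_{S_0}$ such a horizontal 2-form has pointwise $g_{0,0,0}$-norm scaling like $\rho^{-2}$. Thus $|H_{x_1,0,0}|_{g_{0,0,0}}\leq C\rho^{-2}$ on $M_0$, and $|x_1 H_{x_1,0,0}|_{g_{0,0,0}}\leq C\,x_1/\rho^2$. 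Combining this with the tail estimate (absorbing the $x_1^2/\rho^4$ term into $x_1/\rho^2$ using $|x_1/\rho^2|<\epsilon$) gives the first inequality.

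The only mildly delicate step is justifying the uniform bound on $|h_\nu|_{g_{0,0,0}}$ across $S_0$, i.e.\ showing that the real analyticity of $\zeta\mapsto g_{\zeta,0,0}$ in a fixed disk is uniform in $\Theta$. This uses compactness of $S_0$ together with the fact that the map $(\Theta,\zeta)\mapsto g_{\zeta,0,0}(1,\Theta)$ is jointly real analytic: this joint analyticity follows from the real analyticity of the defining flow \eqref{S1 complexification} in $(m,\tau)$ and the analytic implicit function theorem applied to $\tau(m,x_1)$ via \eqref{ODE new system2}, which is where the main technical work lies. Everything else is bookkeeping on convergent power series.
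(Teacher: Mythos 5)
Your proof is correct and follows essentially the same route as the paper: the proposition is deduced directly from the expansion \eqref{developement of g_x_1,0,0} (together with $h_1(\Theta)=\rho^2 H_{x_1,0,0}$ and the homothety invariance of $\Omega_0$), with your contribution being to spell out the uniform Cauchy bounds on the coefficients $h_\nu$ that the paper leaves implicit. One small slip: the Cauchy estimate should read $\sup_\Theta|h_\nu|\leq M R^\nu$ for any $R>1/\epsilon_0$ (equivalently $|h_\nu|\leq M/r^\nu$ for any $r<\epsilon_0$), and one then chooses $r$ with $\epsilon<r<\epsilon_0$ so that the geometric series converges; this does not affect the argument.
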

And we can generalize the above trick to get the asymptotic of $g_{x_1,0,0}$.
\begin{proposition}
The resolution $\phi_{x_1,0,0}: M_{x_1,0,0} \rightarrow M_0\cup\{0\}$ makes $M_{x_1,0,0}$ an asymptotically conic Calabi-Yau manifold with tangent cone $M_0$, in the sense of \cite[Definition 1.11]{Conlon-Hein1}.
\end{proposition}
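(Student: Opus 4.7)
The plan is to verify the three conditions in the definition of an asymptotically conic Calabi-Yau manifold from Conlon--Hein: (i) existence of a diffeomorphism from an end of $M_{x_1,0,0}$ to an end of the cone $M_0$; (ii) $C^\infty$ polynomial decay of the K\"ahler structure to the conic one under this identification; and (iii) Ricci-flat K\"ahler with a Calabi-Yau cone. Condition (iii) is immediate: $M_{x_1,0,0}$ is hyperk\"ahler as a hyperk\"ahler quotient of $M$, and $M_0$ is a hyperk\"ahler cone, both in particular Calabi-Yau.

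For (i), I take $\Phi = \psi_{x_1,0,0}$ restricted to $\{\rho > R\} \subset M_0$ for $R$ large enough that $x_1/R^2 < \epsilon_0$. By Lemma~\ref{phi_x1,0,0 is continuous} the exceptional set $ED_{x_1,0,0}\subset M_{x_1,0,0}$ is compact, and $\psi_{x_1,0,0}: M_0 \to M_{x_1,0,0}\setminus ED_{x_1,0,0}$ is injective (by Lemma~\ref{lemma ODE infinity}) and a local biholomorphism, hence a diffeomorphism onto its image; for $R$ large enough the image of $\{\rho>R\}$ covers a neighbourhood of infinity in $M_{x_1,0,0}$. The complex structure identification is exact because $\psi_{x_1,0,0}$ is $I_1$-holomorphic by construction, so there is no error term for $I_1$.

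For (ii), the crucial input is the convergent power series expansion \eqref{developement of g_x_1,0,0}
\begin{align*}
g_{x_1,0,0} = g_{0,0,0} + \sum_{\nu \geq 1} h_\nu(\Theta)\left(\frac{x_1}{\rho^2}\right)^\nu,
\end{align*}
whose coefficients $h_\nu$ are smooth scale-invariant $(0,2)$-tensors on $M_0$. Since $S_0$ is compact, each $h_\nu$ together with its covariant derivatives has uniformly bounded $g_{0,0,0}$-norm on $S_0$, and by scale-invariance this propagates to a controlled $\rho$-dependence on all of $M_0$. Cauchy estimates in $\zeta = x_1/\rho^2$, applied uniformly in $\Theta$ inside a disc of radius $\epsilon_0$, bound $h_\nu$ at most geometrically in $\nu$. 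Differentiating term by term and applying the product rule (derivatives of $(x_1/\rho^2)^\nu$ gain at most $\rho^{-1}$ per differentiation, and so do derivatives of the scale-invariant $h_\nu$) yields
\begin{align*}
|\nabla^k_{g_{0,0,0}}(g_{x_1,0,0} - g_{0,0,0})|_{g_{0,0,0}} \leq \frac{C_k\, x_1}{\rho^{2+k}}
\end{align*}
on $\{\rho^2 > x_1/\epsilon\}$ for any $0<\epsilon<\epsilon_0$. The same scheme applied to $f_{x_1,0,0}$ via \eqref{developement of f_x_1,0,0} and then $i\partial\bar\partial_{I_1}$, combined with the homothety-invariance of $\Omega_0$, yields via Proposition~\ref{proposition omega_1_x_1,0,0} the analogous $C^k$ decay for $\omega_{1,x_1,0,0} - \omega_{1,0,0,0}$. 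Together with $\Phi^* I_1 = I_1$, this establishes (ii) at rate $\tau = 2$.

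The main technical obstacle is justifying that the expansion converges in $C^\infty$ rather than just $C^0$, with a radius of convergence that is uniform in $\Theta \in S_0$. This reduces to proving that $x_1 \mapsto g_{x_1,0,0}$ is a real-analytic family of smooth tensors with a positive radius of analyticity uniform in $\Theta$; this is standard in the hyperk\"ahler quotient construction, where smooth (indeed real-analytic) dependence on the moment level is guaranteed as long as the level remains regular, and the uniformity in $\Theta$ follows from compactness of $S_0$. Once this uniform $\epsilon_0>0$ is secured, Cauchy estimates deliver the $C^k$ bounds, and the verification of the asymptotically conic Calabi-Yau property is complete.
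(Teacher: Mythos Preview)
Your proof is correct and follows essentially the same approach as the paper: differentiate the convergent power series expansion \eqref{developement of g_x_1,0,0} term by term to obtain $|\nabla^k_{g_{0,0,0}}(g_{x_1,0,0}-g_{0,0,0})|_{g_{0,0,0}}\leq C_k x_1/\rho^{2+k}$. You are in fact more thorough than the paper's one-line proof, explicitly verifying the diffeomorphism at infinity, the exact matching of complex structures via the $I_1$-holomorphy of $\psi_{x_1,0,0}$, and the uniformity-in-$\Theta$ issue that underpins the termwise differentiation.
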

\begin{proof}
Fix $0<\varepsilon<\varepsilon_0$, taking covariant derivatives of equation \eqref{developement of g_x_1,0,0} with respect to $g_{0,0,0}$, one gets the development of $\nabla^kg_{x_1,0,0}$ in terms of $\frac{x_1}{\rho^2}$, then we have for $|\frac{x_1}{\rho^2}|<\varepsilon$, there exists $C_k = C_k(\varepsilon)>0$ such that
\begin{align}
|\nabla^k(g_{x_1,0,0} - g_{0,0,0})|_{g_{0,0,0}} \leq C_k\frac{x_1}{\rho^{2+k}}.\qed\qedhere
\end{align}
\end{proof}
Finally we turn to the estimation of $g_a$.
\begin{proposition}\label{proposition estimation of g_a}
Fix $0<\varepsilon<\varepsilon_0$, there exists $C = C(\varepsilon) > 0$ such that for any $(m,x)\in\mu^{-1}(0)\times\mathbb{R}^3$ with $|\frac{|x|}{\rho^2(m)}|<\varepsilon$, then at $(m,x)$ we have
\begin{align}
&|\Psi^*g_a - (a^2\sum_{j = 1}^3(dx_j)^2 + g_{0,0,0} + |x|H_x + \frac{1}{a^2}\eta^2 )|_{\Psi^*g_a} \\
&\leq C(\varepsilon) \left(\frac{1}{\rho^2(m)} + \frac{|x|}{\rho^3(m)} + \frac{|x|^2}{\rho^4(m)} \right).
\end{align}
\end{proposition}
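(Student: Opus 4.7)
The plan is to reduce by $Sp(1)$-equivariance to a single slice of the moment map, and then read off each error term directly from the explicit formula~\eqref{g_a}. By Propositions~\ref{Sp(1) acts by isometry} and~\ref{Inner left Sp(1) action} the Taub-NUT hyperk\"ahler structure, the moment maps, the quotients $M_x$, and the Reeb field $\xi_3$ all transform compatibly under the $Sp(1)$-action, and the map $\Psi$ is $Sp(1)$-equivariant by construction; so it suffices to establish the estimate at points with $x=(x_1,0,0)$, $x_1=|x|$, $I_x=I_1$. At such a point formula~\eqref{g_a} presents $\Psi^*g_a-(a^2\sum_j(dx_j)^2 + g_{0,0,0} + |x|H_x + \tfrac{1}{a^2}\eta^2)$ as a sum of five explicit terms:
\begin{align*}
&V(dx_1)^2 + \tfrac{R}{4x_1^2}\bigl((dx_2)^2+(dx_3)^2\bigr) + \beta_3\, dx_2 - \beta_2\, dx_3 \\
&\qquad\quad + (g_{x_1,0,0} - g_{0,0,0} - x_1 H_{x_1,0,0}) + \Bigl(\tfrac{1}{V+a^2} - \tfrac{1}{a^2}\Bigr)\eta^2,
\end{align*}
with $R := \rho^2(\Psi(m,x)) - 2\rho^2(m) + g_{x_1,0,0}(\xi_3(m),\xi_3(m))$.

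The ingredients I will use are: compactness of $S$ giving $V = 1/|T|^2_{g_0} \leq C/\rho^2(\Psi(m,x))$, which combined with Lemma~\ref{estimation of rho^2} and the hypothesis $|x|/\rho^2(m) < \epsilon$ upgrades to $V\leq C/\rho^2(m)$; Lemma~\ref{estimation of rho^2} itself, $\rho^2(\Psi)-\rho^2(m) = O(x_1^2/\rho^2(m))$; the two estimates of Proposition~\ref{proposition asymptotic of g_x_1,0,0}, in particular the sharper $|g_{x_1,0,0}-g_{0,0,0}-x_1H_x|_{g_{0,0,0}} \leq Cx_1^2/\rho^4$; and crucially the identity $H_x(\xi_j,-)=0$ for $j=1,2,3$, which follows from $\iota_{\xi_j}\Omega_0=0$ (observed right after the proof of~\eqref{abstract formula of g_x}).

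Term by term: $V(dx_1)^2$ has coefficient $V=O(1/\rho^2(m))$. For $R$ I first note that $\xi_3(m)\in L^\perp_m$, since $x_1(m)=x_2(m)=x_3(m)=0$ forces $g_0(\xi_3,T)$ and $g_0(\xi_3,T_i)$ to vanish at $m$; consequently $g_{0,0,0}(\xi_3,\xi_3)=|\xi_3|_{g_0}^2=\rho^2(m)$. Combined with the second-order estimate and the vanishing $H_x(\xi_3,\xi_3)=0$, this yields $g_{x_1,0,0}(\xi_3,\xi_3)-\rho^2(m) = O(x_1^2/\rho^2(m))$, while Lemma~\ref{estimation of rho^2} handles the first piece of $R$; altogether $|R/(4x_1^2)| = O(1/\rho^2(m))$. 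The covectors $\beta_j=\frac{1}{2x_1}(g_{0,0,0}-g_{x_1,0,0})(\xi_j(m),-)$ are handled analogously: subtracting the vanishing $x_1H_x(\xi_j,-)$ and applying the second-order estimate gives $|\beta_j|_{g_{0,0,0}} = O(x_1/\rho^3(m))$. The fourth term is $O(x_1^2/\rho^4(m))$ directly by Proposition~\ref{proposition asymptotic of g_x_1,0,0}; the last, $-\tfrac{V}{a^2(V+a^2)}\eta^2$, is of order $V^2/a^4 = O(1/\rho^4(m))$. Summing produces the stated bound. The main obstacle is keeping the Reeb-direction corrections in $R$ and in the $\beta_j$ at order $x_1^2$ rather than $x_1$; this is exactly what the vanishing $H_x(\xi_j,-)=0$ achieves, and without it the $\beta_j\,dx_k$ terms alone would only be $O(1/\rho(m))$, far short of what is needed.
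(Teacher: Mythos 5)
Your proof is correct and follows the same route the paper intends: its own proof is just a one-line citation of formula \eqref{g_a}, Lemma \ref{estimation of rho^2} and Proposition \ref{proposition asymptotic of g_x_1,0,0}, and your term-by-term expansion (including the reduction to $x=(x_1,0,0)$ by $Sp(1)$-equivariance) supplies exactly the details that citation suppresses. In particular you correctly isolate the one non-obvious point — that $\iota_{\xi_j}\Omega_0=0$, hence $H_x(\xi_j,-)=0$, is what lets the second-order estimate of Proposition \ref{proposition asymptotic of g_x_1,0,0} bring $R$ and the $\beta_j$ down to order $x_1^2$ — which the paper leaves implicit in the remark following \eqref{abstract formula of g_x}.
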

\begin{proof}
It is a consequence of formula \eqref{g_a}, Lemma \ref{lemma ODE infinity}, Lemma \ref{estimation of rho^2} and Proposition \ref{proposition asymptotic of g_x_1,0,0}.
\end{proof}
As a consequence we have
\begin{corollary}\label{corollary g_a is asymptotic to cone}
Recall $\dim_\mathbb{R}M = 4n$. For sufficiently large $R>0$, on the region $\{\Psi(m,x) \mid m\in \mu^{-1}(0),x\in\mathbb{R}^3,\rho(m)\geq |x|, \rho(m)\geq R\}$ we have
\begin{itemize}
  \item If $n=2$, then $|\Psi^*g_a - (a^2\sum_{j = 1}^3(dx_j)^2 + g_{0,0,0} + \frac{1}{a^2}\eta^2 )|_{\Psi^*g_a} \leq C\frac{1}{\rho_a^2}$,
  \item If $n\geq3$, then $|\Psi^*g_a - (a^2\sum_{j = 1}^3(dx_j)^2 + g_{0,0,0} + \frac{1}{a^2}\eta^2 )|_{\Psi^*g_a} \leq C\frac{1}{\rho_a}$,
\end{itemize}
for some uniform $C>0$. Here we recall that $\rho_a$ is the distance function measured by $g_a$.
\end{corollary}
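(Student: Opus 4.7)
The plan is to derive the corollary from Proposition~\ref{proposition estimation of g_a} by controlling the extra term $|x|H_x$ that appears there and by converting the right-hand side, written in terms of $\rho(m)$, into a bound in $\rho_a(\Psi(m,x))$.

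First I verify the hypothesis of Proposition~\ref{proposition estimation of g_a}. Fixing $0<\epsilon<\epsilon_0$ and taking $R \geq 1/\epsilon$, on the region under consideration one has $|x|/\rho^2(m) \leq 1/\rho(m) \leq 1/R < \epsilon$. Since $|x| \leq \rho(m)$, the error $\frac{1}{\rho^2(m)} + \frac{|x|}{\rho^3(m)} + \frac{|x|^2}{\rho^4(m)}$ is dominated by $C/\rho^2(m)$, so the proposition yields the desired estimate up to the extra term $|x|H_x$.

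Next I would bound $|x|H_x$. When $n=2$, a remark at the end of Subsection~\ref{subsection the hk metric of M_x_1,0,0} observes that the horizontal distribution in $\mu^{-1}(0) \to M_0$ is spanned by $\rho\partial/\partial\rho, \xi_1,\xi_2,\xi_3$ and is integrable, forcing $\Omega_0 = 0$; hence $H_x=0$ identically and this term vanishes. When $n \geq 3$, the same subsection notes $\iota_{\rho\partial/\partial\rho}\Omega_0 = 0$ and $L_{\rho\partial/\partial\rho}\Omega_0 = 0$, so $\Omega_0$ is a homothety-invariant 2-form on the cone $M_0$ vanishing on the radial direction. The cone scaling of $g_0$ then forces $|\Omega_0|_{g_0} \leq C/\rho^2$, and consequently $\bigl||x|H_x\bigr|_{g_0} \leq C|x|/\rho^2(m) \leq C/\rho(m)$ on our region.

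Third, to convert the bounds in $\rho(m)$ into bounds in $\rho_a$, I would use the pointwise inequality $g_a \leq g_0 + a^2\sum_j(dx_j)^2$ read off from Proposition~\ref{deformation}, so that the $g_a$-length of any path is dominated by its $(g_0 + a^2\sum_j(dx_j)^2)$-length. Testing this on the $g_0$-radial ray $r \mapsto (r,s_0)$, $r\in[0,\rho_0]$, from the origin to $\Psi(m,x)=(\rho_0,s_0)$, the homogeneity $L_{\rho\partial/\partial\rho}x_j = 2x_j$ gives $dx_j(\partial/\partial\rho) = 2r\,x_j(\Psi(m,x))/\rho_0^2$ along the ray, and a direct integration produces a length bound $\rho_0 + a|x|$. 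Combining this with the $Sp(1)$-equivariant extension of Lemma~\ref{estimation of rho^2}, namely $\rho^2(\Psi(m,x)) \leq \rho^2(m) + C|x|^2/\rho^2(m) \leq 2\rho^2(m)$ on our region, one obtains $\rho_a(\Psi(m,x)) \leq C\rho(m) + a|x| \leq C\rho(m)$. Hence $1/\rho(m) \leq C/\rho_a$ and $1/\rho^2(m) \leq C/\rho_a^2$, which converts the bounds of the previous two steps into $C/\rho_a^2$ for $n=2$ and $C/\rho_a$ for $n\geq 3$. The hardest step is this last one: the crude bound $\rho_a \leq C\rho^2$ of Proposition~\ref{estimate of rho1, generalized} is worthless at infinity, so an honest path-length computation is unavoidable; everything else is a direct assembly of results already in hand.
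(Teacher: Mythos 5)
Your proposal is correct and follows the same route as the paper: apply Proposition \ref{proposition estimation of g_a}, note that $\Omega_0=0$ when $n=2$ while $|\Omega_0|_{g_{0,0,0}}\leq C\rho^{-2}$ in general, and use that $\rho_a$ is comparable to $\rho(m)$ on the given region. The paper merely asserts the last comparability, whereas you supply the needed half ($\rho_a\leq C\rho(m)$) via the radial path-length estimate and Lemma \ref{estimation of rho^2}; this is a legitimate filling-in of detail, not a different method.
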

\begin{proof}
Based on Proposition \ref{proposition estimation of g_a}, it suffices to note that $|\Omega_0|_{g_{0,0,0}} = C\frac{1}{\rho^2(m)}$, and that on the region $\{\Psi(m,x)\mid m\in \mu^{-1}(0),x\in\mathbb{R}^3,\rho(m)\geq |x|, \rho(m)\geq R\}$, the distance function $\rho_a$ is comparable to $\rho(m)$.
\end{proof}

\begin{remark}
We note that for the case $n=1$, almost all the statements in Subsection \ref{subsection the asymptotic behavior} are vacuous. Formally, when $n=1$, the set $\mu^{-1}(0)$ is just a point, so $\rho(m) = 0$ for any $m\in \mu^{-1}(0)$.
\end{remark}

\begin{remark}
It might be confusing to compare Proposition \ref{estimate of K, generalized} with Corollary \ref{corollary g_a is asymptotic to cone}. Here we note that while the estimate in Proposition \ref{estimate of K, generalized} holds globally, the estimates in Corollary \ref{corollary g_a is asymptotic to cone} hold only on the region $\{\Psi(m,x)\mid m\in \mu^{-1}(0),x\in\mathbb{R}^3,\rho(m)\geq |x|, \rho(m)\geq R\}$. For example, in the proof of Proposition \ref{estimate of K, generalized}, we have applied Proposition \ref{estimate of rho1, generalized}, which is not optimal in the region $\{\Psi(m,x)\mid m\in \mu^{-1}(0),x\in\mathbb{R}^3,\rho(m)\geq |x|, \rho(m)\geq R\}$. However, we will use Corollary \ref{corollary g_a is asymptotic to cone} to prove the asymptotic cone of $g_a$. As one can imagine, the difficulty will be the complement of this region, which will be treated in Proposition \ref{proposition asymptotic cone singular direction}.
\end{remark}

In summary, in the generic region $\{\Psi(m,x)\mid \rho(m)\geq |x|\}$, the metric $g_a$ is asymptotic to a product metric $a^2\sum_{j = 1}^3(dx_j)^2 + g_{0,0,0} + \frac{1}{a^2}\eta^2$. In the region $\{\Psi(m,x)\mid \rho(m)\geq |x|^\alpha\}$ for some $\frac{1}{2}<\alpha \leq 1$, the metric $g_a$ is asymptotic to $a^2\sum_{j = 1}^3(dx_j)^2 + g_{x_1,0,0} + \frac{1}{a^2}\eta^2$.
But if one takes into account the special direction, for example the region
$\{\Psi(m,x)\mid |x|\geq \rho(m)\}$, into account, then one loses the control of the asymptotic behavior.

Recall that the metric $g_{x_1,0,0}$ on $M_0$ is singular at $r = 0$, corresponding to the resolution $\phi_{x_1,0,0}$. When we pass to the asymptotic cone of $g_a$, it is expected that the singular metric $g_{x_1,0,0}$ will shrink to the conic metric $g_{0,0,0}$ with one conic singularity, producing a $\{0\}\times\mathbb{R}^3$ locus of singularity inside the asymptotic cone $M_0\times\mathbb{R}^3$. This is indeed the case, as we shall see in Proposition \ref{asymptotic cone} and Example \ref{example EH blown down}.

\subsection{The volume growth of $g_a$}
It is time to show that the volume growth of $g_a$ is of order $4n-1$, here we recall that $\dim_\mathbb{R}M=4n$.
\begin{proposition}\label{volume growth}
If the 3-Sasakian manifold $S$ admits a locally free symmetry of $\mathbb{S}^1$, and let $g_a$ be the Taub-NUT deformation of $M=C(S)$. Then there exists $C>1$ such that $C^{-1}R^{4n-1} \leq Vol(g_a,B(g_a,R)) \leq CR^{4n-1}$ for any $R>1$.
\end{proposition}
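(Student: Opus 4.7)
The plan is to introduce an $Sp(1)$-invariant ``radius-like'' function comparable to $\rho_a^2$, and to evaluate the volume of its sublevel sets via the twist coordinates $\Psi$.

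\emph{Step 1: Comparing $\rho_a$ with a K\"ahler potential.} I work with $F := \rho^2 + a^2|\mu|^2$, which is $Sp(1)$-invariant by Proposition \ref{Sp(1)-equivariance, generalized}. The claim is $C^{-1}F - C \leq \rho_a^2 \leq C F$ for some $C>0$. For the upper bound on $\rho_a$, one refines the straight-line argument of Proposition \ref{estimate of rho1, generalized}: along $c(t)=tp$ we have $g_a(c'(t),c'(t)) \leq \rho^2(p) + 4t^2 a^2 |\mu(p)|^2$, hence
\begin{align*}
\rho_a(p) \leq \int_0^1 \sqrt{\rho^2(p) + 4t^2 a^2 |\mu(p)|^2}\,dt \leq \rho(p) + a|\mu(p)|,
\end{align*}
giving $\rho_a^2 \leq 2F$. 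For the reverse bound, Proposition \ref{proposition dK wedge d^cK} translates into $|\nabla_{g_a}K_1^a|_{g_a}^2 \leq C K_1^a$, so $\sqrt{K_1^a}$ is $g_a$-Lipschitz with a uniform constant. Integrating along a $g_a$-geodesic from $p$ toward the vertex of the cone (where $K_1^a \to 0$) yields $K_1^a(p) \leq C\rho_a^2(p)$; since $K_1^a$ and $F$ are mutually comparable (average $K_1^a$ over $Sp(1)$), we obtain $F \leq C\rho_a^2 + C$.

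\emph{Step 2: Volume in twist coordinates.} It therefore suffices to estimate $\operatorname{Vol}(\{F \leq R^2\})$. On the twist coordinates $\Psi:\mu^{-1}(0)\times \mathbb{R}^3 \to M$ (of dense image), Proposition \ref{proposition estimation of g_a} gives, in the regime $\rho(m) \gg |x|^{1/2}$,
\begin{align*}
\Psi^*g_a = a^2\sum_{j=1}^3(dx_j)^2 + g_{0,0,0} + \tfrac{1}{a^2}\eta^2 + \text{lower order},
\end{align*}
an approximately orthogonal product of the flat $(\mathbb{R}^3, a^2 I)$, the cone $M_0 = C(S_0)$ of real dimension $4n-4$, and the $\mathbb{S}^1$-fiber of length $\sim 2\pi/a$. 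The associated volume form, at leading order, is $C a^2\, dx_1 dx_2 dx_3 \wedge dV_{M_0}\wedge d\theta$, with $dV_{M_0} = \rho(m)^{4n-5}\,d\rho(m)\,dV_{S_0}$. By Lemma \ref{estimation of rho^2}, $\rho(\Psi(m,x))\sim\rho(m)$ whenever $\rho(m)\gtrsim |x|^{1/2}$, so $F(\Psi(m,x))\leq R^2$ is equivalent, up to multiplicative constants, to $\rho^2(m) + a^2|x|^2 \leq R^2$. Substituting $\rho(m) = Rs$ produces
\begin{align*}
\operatorname{Vol}(\{F \leq R^2\}) \sim C a^{-1} R^{4n-1} \int_0^1 s^{4n-5}(1-s^2)^{3/2}\,ds,
\end{align*}
giving both the upper bound and (via $\{F \leq R^2/C\} \subset B(g_a,R) \subset \{F\leq CR^2\}$) the lower bound of the right order.

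\emph{Main obstacle.} The delicate part is the control of the degenerate contributions: (i) the compact core where $\rho_a = O(1)$; (ii) the exceptional loci $\bigcup_{x \neq 0}ED_x$ missed by $\Psi$, which by Subsection \ref{subsection the hk quotients of M} have complex codimension at least one; and (iii) the thin region $\rho(m)\lesssim |x|^{1/2}$ where the cross terms $\beta_2\,dx_3 - \beta_3\,dx_2$ and the large factor $V \lesssim \rho(m)^{-2}$ in formula \eqref{g_a} can distort the product approximation. Contributions (i) and (ii) are obviously lower order in $R$. For (iii), the exact formula \eqref{g_a} together with Proposition \ref{proposition asymptotic of g_x_1,0,0} show that the thin slab is narrow enough that its volume contribution is at most $O(R^{4n-2})$. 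Assembling these estimates yields the matching two-sided bound.
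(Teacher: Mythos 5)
Your Step 1 and your lower bound are correct. In fact Step 1 is a genuinely different (and shorter) route to the comparability $\rho_a^2\sim K_1^a\sim \rho^2+a^2|\mu|^2$ than the paper's: the paper obtains this only later, in Lemma \ref{lemma sqrt K is comparable to radius}, by analysing the flow of $-I_1T$ region by region, whereas you deduce the Lipschitz bound on $\sqrt{K_1^a}$ directly from Proposition \ref{proposition dK wedge d^cK}. The lower bound then agrees with the paper's: on the generic region the product approximation of Corollary \ref{corollary g_a is asymptotic to cone} already supplies volume $\gtrsim R^{4n-1}$ inside $B(g_a,R)$.

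The gap is in your upper bound, at item (iii). You assert that ``formula \eqref{g_a} together with Proposition \ref{proposition asymptotic of g_x_1,0,0}'' bound the contribution of the slab $\{\rho(m)\lesssim |x|^{1/2}\}$ by $O(R^{4n-2})$, but Proposition \ref{proposition asymptotic of g_x_1,0,0} (and likewise Proposition \ref{proposition estimation of g_a}) is only valid when $|x_1|/\rho^2(m)<\epsilon$, i.e.\ precisely \emph{outside} that slab. Inside it, none of the coefficients of \eqref{g_a} are controlled by what you cite: $\frac{1}{4x_1^2}\bigl[\rho^2-2\rho^2(m)+g_{x_1,0,0}(\xi_3(m),\xi_3(m))\bigr]$ need not be of size $a^2$, the cross terms $\beta_2,\beta_3$ are not small, and $g_{x_1,0,0}$ is not comparable to $g_{0,0,0}$ near the exceptional set $ED_x$, where these coordinates degenerate. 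So the claimed $O(R^{4n-2})$ bound is unproven, and the whole upper bound rests on it. Note that the paper's upper bound deliberately avoids the twist coordinates: since $V+a^2\geq a^2$, the $\mathbb{S}^1$-fibers of $M\rightarrow M/\mathbb{S}^1$ have uniformly bounded $g_a$-length, so $\op{Vol}(B(g_a,R))$ is comparable to the volume of the corresponding region in the $(4n-1)$-dimensional quotient, which is what produces the exponent $4n-1$ without ever estimating the metric in the degenerate region. If you prefer to stay with your sublevel-set framework, a workable repair is the exact identity $dV_{g_a}=(1+a^2|T|^2)\,dV_{g_0}$ (immediate from $g_a=\bar g+(V+a^2)\sum_i dx_i^2+\frac{1}{V+a^2}\eta^2$ versus \eqref{g}), which reduces the upper bound to
\begin{align*}
\int_{\{\rho\leq CR\}\cap\{|\mu|\leq CR\}}(1+a^2\rho^2)\,dV_{g_0}\leq CR^{4n-1},
\end{align*}
using the homogeneity $|\mu|=\rho^2\,|\mu|_{S}$ and the fact that $0$ is a regular value of $\mu|_S$, so that $\op{Vol}_{g_S}\{|\mu|_S\leq\delta\}=O(\delta^3)$. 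Either way, some global input beyond the generic-region asymptotics is required, and your proposal as written does not supply it.
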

\begin{proof}
Under the coordinate system $\Psi:M_0\times \mathbb{R}^3\rightarrow M/\mathbb{S}^1$, in the generic region $\{\Psi(m,x)\mid \rho(m)\geq |x|\}$, we know that $g_a/\mathbb{S}^1$ is asymptotic to the product metric of the Euclidean metric on $\mathbb{R}^3$ and the conic metric on $M_0$ by Corollary \ref{corollary g_a is asymptotic to cone}. So
the volume growth is at least $R^{4n-1}$. On the other hand, since $V + a^2$ is bounded from below, the volume growth of $(M,g_a)$ is of the same order as
$(M/\mathbb{S}^1,g_a-\frac{1}{V + a^2}\eta^2)$. While the latter space is of ($4n-1$)-dimensional, its volume growth is at most $R^{4n-1}$, the result follows.
\end{proof}

\subsection{The asymptotic cone of $g_a$}
Recall that for any Riemannian manifold $(N,g)$, the asymptotic cone is defined to be the Gromov-Hausdorff limit of $\lambda(N,g)=(N,\lambda^2g)$ as $\lambda\rightarrow 0+$. In this
subsection we will show that the asymptotic cone of $(M,g_a)$ is $M_0\times\mathbb{R}^3$, where $M_0=\mu^{-1}(0)/\mathbb{S}^1$ and $\mathbb{R}^3$ is equipped with the Euclidean
metric.

For the moment, we fix $a = 1$. Recall that by the construction of $g_1$, $(M,g_1)=(M\times \mathbb{S}^1\times\mathbb{R}^3)/\!\!/\!\!/\mathbb{S}^1$, here the symbol $/\!\!/\!\!/$ denote a hyperk\"ahler quotient, and the $\mathbb{S}^1$-action is the diagonal action on the product of $M$ and $\mathbb{S}^1\times\mathbb{R}^3$.
So it follows that
\begin{align}
\lambda(M,g_1) = \lambda \left( (M\times \mathbb{S}^1\times\mathbb{R}^3)/\!\!/\!\!/\mathbb{S}^1 \right) = \lambda(M\times \mathbb{S}^1\times\mathbb{R}^3)/\!\!/\!\!/\mathbb{S}^1,
\end{align}
since hyperk\"ahler quotient commutes with rescaling. Note that the metric on $\lambda(M\times \mathbb{S}^1\times\mathbb{R}^3)$ is $\lambda^2g_0 + \lambda^2g_{\mathbb{S}^1} +
\lambda^2g_{\mathbb{R}^3}$, where $g_{\mathbb{S}^1},g_{\mathbb{R}^3}$ are standard Euclidean metrics. So it follows that the moment map is rescaled accordingly
\begin{align}
\mu_{\lambda(M\times \mathbb{S}^1\times\mathbb{R}^3)} = \lambda^2\mu_{M\times \mathbb{S}^1\times\mathbb{R}^3}.
\end{align}
Consider a change of coordinates $\phi_\lambda: (M\times\mathbb{S}^1_\lambda\times\mathbb{R}^3,g_0+\lambda^2g_{\mathbb{S}^1}+g_{\mathbb{R}^3}) \rightarrow \lambda(M\times
\mathbb{S}^1\times\mathbb{R}^3)$ defined by $(m,e^{i\theta},q)\mapsto (\lambda^{-1}m,e^{i\theta},\lambda^{-1}q)$. Here $\mathbb{S}^1_\lambda$ is the notation for $\mathbb{S}^1$
equipped with the metric $\lambda^2g_{\mathbb{S}^1}$. Then $\phi_\lambda$ is an isometry. Denote by $\mu_\lambda$ the pullback $\phi_\lambda^*\mu_{\lambda(M\times
\mathbb{S}^1\times\mathbb{R}^3)}$, then
\begin{align}
\mu_\lambda(m,e^{i\theta},q) = \mu(m) - \lambda q,
\end{align}
where $\mu$ is the moment map $(x_1,x_2,x_3)$ of $M$ (see Subsection \ref{subsection hk cone with S1 action}). In conclusion we have
\begin{align}
\lambda(M,g_1) = \mu_\lambda^{-1}(0)/\mathbb{S}^1.
\end{align}
Here $\mu_\lambda^{-1}(0)$ is equipped with the restriction of the metric of $(M\times\mathbb{S}^1_\lambda\times\mathbb{R}^3,g_0+\lambda^2g_{\mathbb{S}^1}+g_{\mathbb{R}^3})$.

Now we want to show that the Gromov-Hausdorff limit of $\mu_\lambda^{-1}(0)/\mathbb{S}^1$ as $\lambda\rightarrow 0+$ is $(\mu^{-1}(0)/\mathbb{S}^1)\times\mathbb{R}^3$. Consider the
map from $M_0\times \mathbb{R}^3$ to $(M,\lambda^2 g_1)$ defined in twist coordinates by $(m,q)\mapsto \Psi(\lambda^{-1}m,\lambda^{-1}q)$, note that this map is only well-defined up
to $\mathbb{S}^1$-action on $M$, but since the length of $\mathbb{S}^1$-orbit is controlled by $\lambda$, it does not affect the asymptotic cone. We will show that this map is a
Gromov-Hausdorff map. More precisely, we want to show that for any $\varepsilon>0, R>0$, there exists $\lambda_0 > 0$ such that for any $m_1,m_2 \in M_0, q_1,q_2\in \mathbb{R}^3$
with $\rho(m_1),\rho(m_2),|q_1|,|q_2|<R$ and $\lambda < \lambda_0$, we have
\begin{align}
\Bigg\lvert \lambda d_{g_1}(\Psi(\lambda^{-1}m_1,\lambda^{-1}q_1),\Psi(\lambda^{-1}m_2,\lambda^{-1}q_2)) - \sqrt{d_{M_0}(m_1,m_2)^2 + |q_1-q_2|^2} \Bigg\rvert < \varepsilon.
\end{align}

If we suppose that $\rho(m_1),\rho(m_2) > d_0$ for some $d_0 > 0$, then the above inequality follows from Corollary \ref{corollary g_a is asymptotic to cone}. To treat the case where
$m_1,m_2$ are close to the vertex, we will prove the corresponding inequality before taking the quotient by $\mathbb{S}^1$, taking into account the identification between
$(M,\lambda^2g_1)$ and $\mu_\lambda^{-1}(0)/\mathbb{S}^1$.

\begin{proposition}\label{proposition asymptotic cone singular direction}
For any $\varepsilon > 0, R_0 > 0$, there exist $d_0 >0,\lambda_0>0$ such that for any $m_1,m_2\in M_0, q_1,q_2\in \mathbb{R}^3, \theta_1,\theta_2\in \mathbb{R}$ with $\rho(m_i)<d_0, |q_i|< R_0,\lambda<\lambda_0$, the following inequality holds:
\begin{align}
\Bigg|d_{\mu_\lambda^{-1}(0)}((\Psi(m_1, \lambda q_1),e^{i\theta_1},q_1),(\Psi(m_2, \lambda q_2),e^{i\theta_2},q_2)) - |q_1-q_2|\Bigg| < \varepsilon.
\end{align}
\end{proposition}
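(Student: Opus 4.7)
The plan is to establish upper and lower bounds on the distance separately.

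The lower bound is immediate: $\mu_\lambda^{-1}(0)\subset M\times\mathbb{S}^1_\lambda\times\mathbb{R}^3$ inherits the restriction of the product metric $g_0+\lambda^2g_{\mathbb{S}^1}+g_{\mathbb{R}^3}$, so the projection to the $\mathbb{R}^3$-factor is $1$-Lipschitz, giving $d_{\mu_\lambda^{-1}(0)}(\cdot,\cdot)\geq|q_1-q_2|$.

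For the upper bound I would identify $\mu_\lambda^{-1}(0)$ with the graph $\{q=\mu/\lambda\}$ over $M\times\mathbb{S}^1$, so that the induced metric becomes $g_0+\lambda^2 d\theta^2+\lambda^{-2}\sum_i(dx_i)^2$. Decomposing a tangent $\dot m\in TM$ orthogonally as $\dot m=Y+X$ with $Y\in\ker d\mu=L^\perp\oplus\mathbb{R}T$ and $X=\sum a_iT_i$, and writing $v:=d\mu(X)\in\mathbb{R}^3$, a direct computation gives the squared line element
\begin{align*}
|Y|_{g_0}^2+(V+\lambda^{-2})|v|^2+\lambda^2\dot\theta^2.
\end{align*}
The key observation is that on any region where $\rho\geq\rho_{\min}$, one has $V\leq C\rho_{\min}^{-2}$, so when $\lambda\ll\rho_{\min}$ the coefficient $V+\lambda^{-2}$ is essentially $\lambda^{-2}$, and a path moving only in the $v$-direction contributes essentially its Euclidean length in $\mathbb{R}^3$.

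I would then construct the path in three stages. Pick $\rho_{\min}$ small enough that the angular terms contribute at most $\epsilon/3$. First, climb from $\tilde m_i:=\Psi(m_i,\lambda q_i)$ to a point at radius $\approx\rho_{\min}$ along the horizontal lift $Y_{\mathrm{rad}}:=\rho\partial_\rho-\sum(2x_j/|T|^2)T_j$ of $\rho\partial_\rho$ into $\ker d\mu$, which keeps $\mu$ (and hence $q$) fixed; Lemma \ref{lemma 4x_1^2V} gives $|Y_{\mathrm{rad}}|_{g_0}^2=\rho^2-4|x|^2/|T|^2$, comparable to $\rho^2$ once $|x|\ll\rho^2$, and an ODE estimate bounds the climb length by $O(\rho_{\min})$. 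Next, traverse horizontally at radius $\approx\rho_{\min}$ from the climb-endpoint above $m_1$ to that above $m_2$, simultaneously interpolating $q$ linearly from $q_1$ to $q_2$ and $\theta$ linearly; compactness of the link $S$ together with connectedness of the 3-Sasakian quotient $S_0$ (Corollary \ref{corollary S_0 is connected}) bound the $g_0$-length of this traversal by $O(\rho_{\min})$ in the $Y$-direction, while the $v$-contribution is $(1+O((\lambda/\rho_{\min})^2))|q_1-q_2|$. Finally, descend symmetrically to $\tilde m_2$. Choosing $d_0\ll\rho_{\min}$ and $\lambda_0$ small enough (compared to $\rho_{\min}$, $\rho_{\min}^2/R_0$, and $\epsilon$) yields total length $\leq|q_1-q_2|+\epsilon$.

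The main obstacle is the interaction between the vertex singularity of $M$ and the blow-up of $V=|T|_{g_0}^{-2}$. The natural homothetic climb $\nu_s$ rescales $\mu$ by $s^2$, which, with $\rho(\tilde m_i)$ possibly much smaller than $\rho_{\min}$, would force $q$ to swing out by $s^2=(\rho_{\min}/\rho(\tilde m_i))^2$ and cost $O(|q_i|)\cdot s^2$ of Euclidean length in $\mathbb{R}^3$. Climbing along $Y_{\mathrm{rad}}$ avoids this but requires $|x|^2/|T|^2\ll\rho^2$, where Lemma \ref{lemma 4x_1^2V} together with the uniform bound $|T|_{g_0}^2\geq C^{-1}\rho^2$ (locally free $\mathbb{S}^1$-action on compact $S$) is essential. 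A secondary technicality is that, as $\rho(m_i)\to 0$, the point $\tilde m_i=\Psi(m_i,\lambda q_i)$ itself is pushed up to $\rho(\tilde m_i)\sim\sqrt{\lambda|q_i|}$ (reflecting the size of the end divisor $ED_{\lambda q_i}$), so one needs the further scale separation $\lambda_0\ll\rho_{\min}^2/R_0$ to guarantee $\rho(\tilde m_i)\leq\rho_{\min}$ and hence to be able to start the climb below the target radius.
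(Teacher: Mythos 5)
Your proof is correct and follows essentially the same route as the paper's: a three-stage broken path (radial climb at fixed moment value, traversal interpolating $q$ linearly at a safe radius, radial descent), controlled by exactly the paper's estimates $g_x(\rho\frac{\partial}{\partial\rho},\rho\frac{\partial}{\partial\rho})=\rho^2-4|x|^2V\le C\rho^2$ (Lemma \ref{lemma H(m,x)}), the bound $V\le C/\rho^2$ on the moment-direction coefficient, and the scale separation $\lambda_0\ll\rho_{\min}^2/R_0$. The differences are cosmetic --- you make the lower bound via the $1$-Lipschitz projection to $\mathbb{R}^3$ explicit, work upstairs on the graph $\{q=\mu/\lambda\}$, and use a fixed intermediate radius $\rho_{\min}$ instead of the paper's $\sqrt{|x|/\epsilon'}$ --- apart from a sign slip: the horizontal lift annihilated by $d\mu$ is $\rho\frac{\partial}{\partial\rho}+\sum_j\frac{2x_j}{|T|^2}T_j$.
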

\begin{proof}
As we shall see in Lemma \ref{lemma H(m,x)}, the distance of $m\in M_0$  measured by the metric $g_x$ to $0$ is bounded by $H(m,|x|)$ and
$H(m,|x|)\leq C\rho(m)$. Recall Remark \ref{remark size of ED} that the size of $ED_x$ is controlled by $C\sqrt{|x|}$ for some uniform $C>0$. Now, let $x_i = \lambda q_i\in \mathbb{R}^3$ be the
hyperk\"ahler moment of $\Psi(m_i, \lambda q_i)$, then $|x_i|\leq \lambda_0R_0$.

Now we join $(\Psi(m_1, x_1),e^{i\theta_1},q_1),(\Psi(m_2, x_2),e^{i\theta_2},q_2)$ in three steps. Without loss of generality, we may assume that $|x_1|\geq
|x_2|$

Fix $\varepsilon^\p > 0$ sufficiently small. Firstly, we move $m_1$ along the radius of $M_0$ to $m_1^\p$ such that $\frac{|x_1|}{\rho^2(m_1^\p)} = \varepsilon^\p$ while keeping $x_1$ unchanged, its length is controlled by $H(m_1, |x_1|) + H(m_1^\p, |x_1|)\leq C(\rho(m_1) + \rho(m_1^\p)) = C(\rho(m_1) + \sqrt\frac{|x_1|}{\varepsilon^\p})$, which is bounded by $C(d_0 + \frac{1}{\sqrt{\varepsilon^\p}}\sqrt{\lambda_0R_0})$.

Secondly, we keep $m_1^\p$ unchanged and move $x_1$ to $x_2$ along the line segment. In the factor $M$, by formula \eqref{g_0} and Proposition \ref{proposition asymptotic of g_x_1,0,0}, for $|\frac{x_1}{\rho^2(m)}| \leq \varepsilon^\p$, the tensor $g_0$ in the $x-$direction at $\Psi(m,x)$ is bounded by
$\frac{C(\varepsilon^\p)}{\rho^2(m)}\sum_{i = 1}^3(dx_i)^2$. So the length of the segment from $\Psi(m_1^\p, x_1)$ to $\Psi(m_1^\p, x_2)$ in $M$ is bounded by $\frac{\sqrt{C(\varepsilon^\p)}}{\rho(m_1^\p)}|x_1-x_2| \leq 2\frac{\sqrt{C(\varepsilon^\p)}}{\rho(m_1^\p)}|x_1| = 2\sqrt{C(\varepsilon^\p) \varepsilon^\p |x_1|} \leq 2\sqrt{C(\varepsilon^\p)\varepsilon^\p}\sqrt{\lambda_0R_0}$.
Here we note that one must apply the formula of $g_0$ instead of $g_1$, since we have not yet taken the quotient by $\mathbb{S}^1$. In the factor of $\mathbb{R}^3$, the length is $|q_1-q_2|$, since we chose the line segment between them.

Finally, we keep $x_2$ unchanged and move $m_1^\p$ to $m_2$, the length is bounded by $H(m_2, |x_2|) + H(m_1^\p, |x_2|) + C\sqrt{|x_2|}\leq C(d_0 + (1 + \frac{1}{\sqrt{\varepsilon^\p}})\sqrt{\lambda_0R_0})$.

In addition, there might be a difference between $\theta_1$ and $\theta_2$, but in the factor $\mathbb{S}_\lambda^1$, its length is controlled by $2\pi\lambda < 2\pi\lambda_0$.

So the result follows by choosing $d_0$ and $\lambda_0$ sufficiently small such that $C(d_0 + \frac{1}{\sqrt{\varepsilon^\p}}\sqrt{\lambda_0R_0}) + 2\sqrt{C(\varepsilon^\p)\varepsilon^\p}\sqrt{\lambda_0R_0} + C(d_0 + (1 + \frac{1}{\sqrt{\varepsilon^\p}})\sqrt{\lambda_0R_0}) + 2\pi\lambda_0 < \varepsilon$.
\end{proof}

\begin{lemma}\label{lemma H(m,x)}
For $x\neq 0,x\in\mathbb{R}^3$, let $H(m, |x|)$ be the length of the segment from the vertex to $m\in M_0$ with respect to $g_x$. Then we have
\begin{align}
H(m, |x|) = \sqrt{|x|}H(\frac{1}{\sqrt{|x|}}m, 1),
\end{align}
and there exists a uniform $C>0$ such that $H(m, |x|) \leq C\rho(m)$.
\end{lemma}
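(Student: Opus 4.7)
The plan is to prove the scaling identity first, and then use it to reduce the inequality to the case $|x|=1$.

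For the scaling identity, the key ingredient is the relation $\nu_\lambda^* g_{\lambda^2 y} = \lambda^2 g_y$ recorded at the beginning of the subsection, where $\nu_\lambda\colon(\rho,\Theta)\mapsto(\lambda\rho,\Theta)$ denotes the homothety on $M_0$. Setting $\lambda = \sqrt{|x|}$ and $y = x/|x|$, this yields $\nu_{\sqrt{|x|}}^*g_x = |x|\,g_{x/|x|}$, so $\nu_{1/\sqrt{|x|}}$ identifies the radial segment from $0$ to $m$ in $(M_0,g_x)$ with the radial segment from $0$ to $m/\sqrt{|x|}$ in $(M_0,g_{x/|x|})$, and scales lengths by a factor $1/\sqrt{|x|}$. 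Combined with the $Sp(1)$-equivariance of Proposition \ref{Inner left Sp(1) action}, which lets one rotate $x/|x|$ to a reference unit vector while correspondingly rotating $\Theta$, this gives $H(m,|x|) = \sqrt{|x|}\,H(m/\sqrt{|x|},1)$.

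For the uniform bound, by the scaling identity and the fact that $\rho(m/\sqrt{|x|}) = \rho(m)/\sqrt{|x|}$, it suffices to show $H(m,1) \le C\rho(m)$ for all $m\in M_0$. I would write $H(m,1) = \int_0^{\rho(m)}\|\partial_\rho\|_{g_{(1,0,0)}}\big|_{(s,\Theta_m)}\,ds$ and bound the integrand uniformly by splitting at a threshold $R_0 = 1/\sqrt{\epsilon_0}$. On the outer region $s\ge R_0$, Proposition \ref{proposition asymptotic of g_x_1,0,0} together with the identity $H_{(1,0,0)}(\partial_\rho,\partial_\rho) = \Omega_0(\partial_\rho,I_1\partial_\rho) = 0$ (which follows from $\iota_{\rho\partial_\rho}\Omega_0 = 0$) gives $g_{(1,0,0)}(\partial_\rho,\partial_\rho) = 1 + O(s^{-4})$, bounded by $2$ there.

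On the inner region $s\le R_0$, I would exploit the fact that the embedding $\psi_{(1,0,0)}\colon M_0\to M_{(1,0,0)}$ extends continuously to the cone completion $M_0\cup\{0\}$, sending the vertex into the compact exceptional set $\phi_{(1,0,0)}^{-1}(0)$ of the smooth hyperk\"ahler orbifold $M_{(1,0,0)}$. A direct differentiation of $\psi_{(1,0,0)}(m) = e^{\tau(m,1)}\cdot m$, using the ODE $\tfrac{d\tau}{d\rho} = -2/(\rho|T|^2(\psi))$ derived from $x_1(e^{\tau}m) = 1$, yields the identity
\begin{equation*}
\|d\psi_{(1,0,0)}(\rho\partial_\rho)\|^2_{g^\p_{(1,0,0)}} = \rho^2(\psi) - 4/|T|^2(\psi);
\end{equation*}
on the exceptional set the vector $T$ aligns with the Reeb field $\xi_1$ (since $ED_{(1,0,0)}$ is the stable manifold of the vertex under the $I_1T$-flow and on it $\rho^2|T|^2 = 4$), so the right-hand side vanishes there at the rate $O(\rho^2(m))$, yielding a uniform bound on the integrand. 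Combining the two regions gives $H(m,1) \le C\rho(m)$. The main technical obstacle is the inner-region estimate, where one must quantify the alignment of $T$ with $\xi_1$ near $\phi_{(1,0,0)}^{-1}(0)$ to ensure the vanishing of $\rho^2(\psi)-4/|T|^2(\psi)$ at the appropriate rate.
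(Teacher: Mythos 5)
Your scaling identity and your outer-region estimate are correct and coincide with the paper's own argument (the homothety relation $\nu_\lambda^*g_{x_1,0,0}=\lambda^2g_{\lambda^{-2}x_1,0,0}$ plus $Sp(1)$-equivariance for the identity, Proposition \ref{proposition asymptotic of g_x_1,0,0} for $\rho(m)$ large), and your formula $\|d\psi_{(1,0,0)}(\rho\partial_\rho)\|^2=\rho^2(\psi)-4/|T|^2(\psi)$ is exactly the identity $g_x(\rho\tfrac{\partial}{\partial\rho},\rho\tfrac{\partial}{\partial\rho})=\rho^2-4|x|^2V$ that the paper invokes. The genuine gap is in the inner region, and the two claims you make there are both false. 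First, $\rho^2|T|^2=4x_1^2$ is the equality case of the Cauchy--Schwarz inequality behind Lemma \ref{lemma 4x_1^2V} (applied to $2x_1=-g_0(\rho\tfrac{\partial}{\partial\rho},I_1T)$); it holds precisely where $T$ is parallel to $\xi_1$, i.e.\ on the critical sets $p_{x_1,0,0}^{-1}(C_i)$, whereas $ED_{x_1,0,0}$ is the union of their \emph{stable} sets $p_{x_1,0,0}^{-1}(W^+(C_i))$ and is strictly larger in general. Concretely, for $\mathbb{H}^2$ with the locally free action of weights $(1,2)$ (Example \ref{example C^2n generalized}) one finds $ED_{x_1,0,0}=\{w=0\}\cap\mu^{-1}(x_1,0,0)$, and at a point there with $z_1,z_2\neq0$ one computes $\rho^2|T|^2-4x_1^2=|z_1|^2|z_2|^2>0$.

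Second, and more damagingly, the rate $O(\rho^2(m))$ that your argument requires is not available. Along a fixed radial ray $(s,\Theta)$ the point $\psi_{(1,0,0)}(s,\Theta)$ does converge to a critical point, where the quantity vanishes; but in the same weighted example with generic $\Theta$ the constraint $x_1(e^\tau\cdot(s,\Theta))=1$ forces $e^{4\tau}s^2\to\mathrm{const}$, hence $|z_1(\psi)|^2\asymp s$ while $|z_2(\psi)|^2\asymp 1$, so that $\rho^2(\psi)-4V(\psi)\asymp s$ — first order in $s$, not second. The integrand $\|\partial_\rho\|_{g_{(1,0,0)}}\asymp s^{-1/2}$ is then unbounded near the vertex, and the length of the radial segment is $\asymp\sqrt{\rho(m)}$, which for small $\rho(m)$ is not $O(\rho(m))$. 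So the strategy of uniformly bounding the integrand on the inner region cannot close as written. You have correctly isolated the delicate step — the paper's own proof dispatches it in one sentence by asserting that $\rho^2-4|x|^2V$ is bounded and tends to $0$, which gives boundedness of $g_x(\rho\tfrac{\partial}{\partial\rho},\rho\tfrac{\partial}{\partial\rho})$ but likewise not the quadratic vanishing needed for $\|\partial_\rho\|_{g_x}\leq C$ — but the argument you sketch to fill it does not go through, and the example above suggests that only a weaker bound such as $H(m,1)\leq C\max(\sqrt{\rho(m)},\rho(m))$ can be expected in general (which would still suffice for the application in the asymptotic-cone proposition).
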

\begin{proof}
First, we note that by the $\Sp(1)$-action, the distance in question depends only on $|x|$, and we may assume that $x = (x_1,0,0)$ for $x_1>0$. Next by the relation $\nu_\lambda^*g_{x_1,0,0} = \lambda^2g_{\lambda^{-2}x_1,0,0}$, we have $H(m, |x|) = \sqrt{|x|}H(\frac{1}{\sqrt{|x|}}m, 1)$. Then, by Proposition \ref{proposition asymptotic of g_x_1,0,0}, there exists $C > 0$ such that for $\rho(m) > 1$, $H(m,1)\leq C\rho(m)$. Finally, we note that $g_x(\rho\frac{\partial}{\partial \rho}, \rho\frac{\partial}{\partial \rho}) = \rho^2 - 4|x|^2V$ by formula \eqref{abstract formula of g_x}. This function is bounded and continuous as $\rho(m)\rightarrow 0$, in fact, it converges to $0$. This shows that $H(m,1)$ can be bounded by $C\rho(m)$ by some $C > 0$ when $\rho(m)$ is small, which finishes the proof.
\end{proof}

It follows that the Gromov-Hausdorff limit of $\mu_\lambda^{-1}(0)$ as $\lambda\rightarrow 0+$ is $\mu^{-1}(0)\times\mathbb{R}^3$. Then by \cite[Theorem 2.1]{zbMATH04067795}, we may
take the quotient by $\mathbb{S}^1$ to get
\begin{align}
\lim_{\lambda\rightarrow 0+}\mu_\lambda^{-1}(0)/\mathbb{S}^1=(\mu^{-1}(0)\times\mathbb{R}^3)/\mathbb{S}^1 = M_0\times \mathbb{R}^3.
\end{align}
Recall that $\lambda(M,g_1) = \mu_\lambda^{-1}(0)/\mathbb{S}^1$, so we have proved that
\begin{proposition}\label{asymptotic cone}
The asymptotic cone of $(M,g_1)$ is $M_0\times \mathbb{R}^3$.
\end{proposition}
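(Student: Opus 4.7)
The plan is to leverage the description $(M,g_1)\cong (M\times\mathbb{H})/\!\!/\!\!/\mathbb{S}^1$ and the fact that hyperk\"ahler quotients commute with the homothety $\lambda$, so that rescaling can be realized on the ambient product before taking the quotient. Concretely, I would rewrite $\lambda(M,g_1)$ as $\mu_\lambda^{-1}(0)/\mathbb{S}^1$ inside $(M\times\mathbb{S}^1_\lambda\times\mathbb{R}^3,g_0+\lambda^2g_{\mathbb{S}^1}+g_{\mathbb{R}^3})$, where $\mu_\lambda(m,e^{i\theta},q)=\mu(m)-\lambda q$, and then produce an explicit map realizing Gromov--Hausdorff convergence. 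The natural candidate, in twist coordinates, is the map $F_\lambda:M_0\times\mathbb{R}^3\to(M,\lambda^2g_1)$ defined (up to the small $\mathbb{S}^1$-orbit) by $(m,q)\mapsto\Psi(\lambda^{-1}m,\lambda^{-1}q)$.

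Next I would split the distance estimate $|\lambda d_{g_1}(F_\lambda(m_1,q_1),F_\lambda(m_2,q_2))-\sqrt{d_{M_0}(m_1,m_2)^2+|q_1-q_2|^2}|<\varepsilon$ into two regimes. First, when both $\rho(m_i)$ are bounded below by some $d_0>0$, Corollary \ref{corollary g_a is asymptotic to cone} gives the pointwise asymptotic $g_a\approx a^2\sum(dx_i)^2+g_{0,0,0}+\tfrac{1}{a^2}\eta^2$, and comparing geodesic lengths with those of the product metric directly yields the bound for $\lambda$ small. Second, the delicate case is when one or both $\rho(m_i)$ are small: here $M_0$ has a conic singularity and $g_{x_1,0,0}$ on $M_0$ is singular, so one cannot work directly with the quotient distance. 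My strategy is to prove the estimate upstairs in $\mu_\lambda^{-1}(0)$ before descending. Fix $\varepsilon'>0$ small and construct a three-segment path joining $(\Psi(m_1,\lambda q_1),e^{i\theta_1},q_1)$ to $(\Psi(m_2,\lambda q_2),e^{i\theta_2},q_2)$: move $m_1$ radially in $M_0$ to a point $m_1'$ with $|x_1|/\rho^2(m_1')=\varepsilon'$, then slide $x_1\to x_2$ keeping $m_1'$ fixed, then move $m_1'$ to $m_2$.

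To bound each piece I would use two ingredients. For the radial motions, Lemma \ref{lemma H(m,x)} shows that the distance $H(m,|x|)$ from the vertex on $(M_0,g_x)$ satisfies $H(m,|x|)\leq C\rho(m)$ uniformly; together with the homothety relation $H(m,|x|)=\sqrt{|x|}\,H(|x|^{-1/2}m,1)$ and the bound $|x_i|\leq\lambda_0 R_0$, this makes the first and third segments as small as desired by taking $d_0,\lambda_0\to 0$ (the fact that $ED_x$ has diameter $O(\sqrt{|x|})$ is absorbed in the same bound). For the middle segment at fixed $m_1'$, formula \eqref{g_0} and Proposition \ref{proposition asymptotic of g_x_1,0,0} imply that in the $x$-direction $g_0$ is bounded by $C(\varepsilon')\rho(m_1')^{-2}\sum(dx_i)^2$ on the region $|x|/\rho^2(m)\leq\varepsilon'$, giving an $M$-length controlled by $C(\varepsilon')\sqrt{\varepsilon'|x_1|}$; the accompanying $\mathbb{R}^3$-length is exactly $|q_1-q_2|$ because we chose a straight segment, and the $\mathbb{S}^1_\lambda$-discrepancy is at most $2\pi\lambda_0$. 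Summing yields the claimed inequality upstairs for $d_0,\lambda_0$ small.

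Finally, once the distance estimate for $\mu_\lambda^{-1}(0)$ is established, it follows that $\mu_\lambda^{-1}(0)\to \mu^{-1}(0)\times\mathbb{R}^3$ in the pointed Gromov--Hausdorff sense as $\lambda\to 0^+$. Since the $\mathbb{S}^1$-action on the limit is the product of the $\mathbb{S}^1$-action on $\mu^{-1}(0)$ and the trivial action on $\mathbb{R}^3$, the equivariant GH-quotient result of \cite[Theorem 2.1]{zbMATH04067795} lets me pass to the quotient, giving $\lambda(M,g_1)=\mu_\lambda^{-1}(0)/\mathbb{S}^1\to(\mu^{-1}(0)/\mathbb{S}^1)\times\mathbb{R}^3=M_0\times\mathbb{R}^3$. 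The main obstacle is the near-vertex regime described above, where the asymptotic-cone picture of $g_{x_1,0,0}$ as a partial resolution becomes fully degenerate; the three-segment construction is precisely what transfers the control from the smooth Taub-NUT geometry on $M$ to the singular quotient $M_0\times\mathbb{R}^3$.
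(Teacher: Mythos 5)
Your proposal is correct and follows essentially the same route as the paper: the identification $\lambda(M,g_1)=\mu_\lambda^{-1}(0)/\mathbb{S}^1$, the map $(m,q)\mapsto\Psi(\lambda^{-1}m,\lambda^{-1}q)$, the two-regime split with the three-segment path upstairs for the near-vertex case (using Lemma \ref{lemma H(m,x)}, the $O(\sqrt{|x|})$ size of $ED_x$, and the $x$-direction bound from \eqref{g_0} and Proposition \ref{proposition asymptotic of g_x_1,0,0}), and finally the equivariant Gromov--Hausdorff quotient theorem. No substantive differences.
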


We fix $a = 1$ in the above discussion; here the point is that we need the Taub-NUT deformation $g_1$ to be obtained by an $\mathbb{S}^1-$hyperk\"ahler quotient. In fact, this holds for arbitrary $a>0$: it suffices to note that in the proof of Proposition \ref{deformation}, if $T$ generates an $\mathbb{S}^1$-action, then the vector field $Z = aT + \frac{\partial}{\partial q_0}$ on $M\times \mathbb{H}$ descends to $M\times (\mathbb{R}/(2\pi a^{-1}\mathbb{Z})\times \mathbb{R}^3)$ and generates an $\mathbb{S}^1$-action. So all the results obtained for $g_1$ still hold for $g_a$. In particular, we have shown that

\begin{proposition}\label{asymptotic cone of g_a}
Suppose that the 3-Sasakian manifold $S$ admits a locally free symmetry of $\mathbb{S}^1$. For any $a>0$, let $g_a$ be the Taub-NUT deformation of $M=C(S)$. Then the asymptotic cone of $(M,g_a)$ is $M_0\times \mathbb{R}^3$.
\end{proposition}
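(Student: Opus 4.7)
The plan is to reduce Proposition \ref{asymptotic cone of g_a} to the already-proven case $a = 1$ treated in Proposition \ref{asymptotic cone}. The key observation, already noted in the remark preceding the statement, is that when $T$ generates an $\mathbb{S}^1$-action on $M$, the vector field $Z = aT + \partial/\partial q_0$ on $M \times \mathbb{H}$ has $(2\pi/a)$-periodic orbits in the $M$-factor (since $T$ is $2\pi$-periodic). Therefore the $\mathbb{R}$-action defining the Taub-NUT deformation descends to a free $\mathbb{S}^1$-action on $M \times \mathbb{S}^1_{a^{-1}} \times \mathbb{R}^3$, where $\mathbb{S}^1_{a^{-1}}$ denotes the circle of length $2\pi/a$, and we get
\begin{align*}
(M, g_a) \;=\; (M \times \mathbb{S}^1_{a^{-1}} \times \mathbb{R}^3) /\!\!/\!\!/ \mathbb{S}^1.
\end{align*}

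With this realization in hand, the argument of Proposition \ref{asymptotic cone} transfers essentially verbatim. Since hyperk\"ahler quotient commutes with rescaling, $\lambda(M, g_a) = \lambda(M \times \mathbb{S}^1_{a^{-1}} \times \mathbb{R}^3) /\!\!/\!\!/ \mathbb{S}^1$. Applying the isometric change of coordinates $\phi_\lambda(m, e^{i\theta}, q) = (\lambda^{-1}m, e^{i\theta}, \lambda^{-1}q)$ rewrites this as the $\mathbb{S}^1$-hyperk\"ahler quotient of $(M \times \mathbb{S}^1_{\lambda a^{-1}} \times \mathbb{R}^3,\; g_0 + \lambda^2 g_{\mathbb{S}^1_{a^{-1}}} + g_{\mathbb{R}^3})$ at the zero level of $\mu_\lambda(m, e^{i\theta}, q) = a\mu(m) - \lambda q$ (using $L_{\rho\partial/\partial\rho}\mu = 2\mu$ to track the rescaling). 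The circle factor $\mathbb{S}^1_{\lambda a^{-1}}$ has length $2\pi\lambda/a \to 0$, so it collapses uniformly as $\lambda \to 0^+$, exactly as in the $a = 1$ case.

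It then remains to establish Gromov-Hausdorff convergence $\mu_\lambda^{-1}(0) \to \mu^{-1}(0) \times \mathbb{R}^3$, after which \cite[Theorem 2.1]{zbMATH04067795} lets us pass convergence through the $\mathbb{S}^1$-quotient and identify the asymptotic cone with $M_0 \times \mathbb{R}^3$. This convergence is proved by the same strategy as for $a = 1$: realize the map via twist coordinates $(m, q) \mapsto \Psi(\lambda^{-1}m, \lambda^{-1}q)$, then control distances. Away from the singular locus $\{0\} \times \mathbb{R}^3$ one invokes Corollary \ref{corollary g_a is asymptotic to cone} directly; near the vertex one applies the three-step path construction together with Lemma \ref{lemma H(m,x)} and the scaling bound $|ED_x| \leq C\sqrt{|x|}$. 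The main point requiring care is verifying that the constants appearing in these estimates are independent of $a$ in the relevant regime. Since they originate from the compactness of $S$ and from the explicit formulas of Subsection \ref{subsection twist coordinates} (which depend on $V + a^2$ in a controlled way as $\rho \to \infty$), this uniformity is automatic, and no new obstruction arises relative to the $a = 1$ case.
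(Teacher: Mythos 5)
Your proposal is correct and follows essentially the same route as the paper: the paper also proves the case $a=1$ first and then observes that $Z=aT+\partial/\partial q_0$ descends to $M\times(\mathbb{R}/(2\pi a^{-1}\mathbb{Z}))\times\mathbb{R}^3$, realizing $g_a$ as an $\mathbb{S}^1$-hyperk\"ahler quotient so that the rescaling argument transfers verbatim. Your extra bookkeeping (the rescaled moment map $a\mu(m)-\lambda q$ and the collapsing circle of length $2\pi\lambda/a$) is consistent with the paper's computation.
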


\begin{remark}
Recall that Proposition \ref{estimate of K, generalized} states that the sectional curvature of $g_a$ is bounded by $\frac{C}{\rho_a}$, Proposition \ref{volume growth} states that the
volume growth of $g_a$ is of order $(4n-1)$ and Proposition \ref{asymptotic cone of g_a} implies that the asymptotic cone of $g_a$ is ($4n-1$)-dimensional. It is in this sense that I call $g_a$ an ALF metric.
\end{remark}

\section{ALF Calabi-Yau metrics on crepant resolutions}\label{section ALF CY metrics on crepant resolutions}
In this section, we will prove the existence of ALF Calabi-Yau metrics on certain crepant resolutions of hyperk\"ahler cones. We first explain the construction of the approximately Ricci-flat metric, then we will show how to get the Calabi-Yau metric by applying a Tian-Yau's method.

The main theorem of this section is the following result:
\begin{theorem}\label{theorem main theorem}
(i) Let $S$ be a compact connected 3-Sasakian manifold of dimension $4n-1$ ($n\geq 1$) admitting a locally free $\mathbb{S}^1$-symmetry. Let $M$ be the hyperk\"ahler cone over $S$, then for any $a > 0$, the Taub-NUT deformation $(g_a, I_i^a,\omega_i^a)$ with respect to this $\mathbb{S}^1$-symmetry is an ALF hyperk\"ahler metric on $M$ in the sense that the sectional curvature $K_{g_a}$ is bounded by $\frac{C}{\rho_a}$ for some $C>0$, the volume growth of $g_a$ is of order $4n-1$ and its asymptotic cone is a metric cone of dimension $4n-1$. Here $\rho_a$ is the distance measured by $g_a$.

(ii) Assume furthermore that $n\geq 2$ and there is a finite group $\Gamma$ acting on $S$ whose extension to $M$ satisfies:
\begin{itemize}
  \item The group $\Gamma$ acts $I_1$-holomorphically;
  \item The group $\Gamma$ preserves the deformed $I_1-$potential $K_{\mathrm{ALF}}^a$ (see Cor \ref{corollary K_ALF^a}).
\end{itemize}
Let $\pi: Y\rightarrow M/\Gamma$ be an $I_1-$holomorphic crepant resolution, then for any compactly supported K\"ahler class of $Y$ and any $c > 0$, there exists an ALF Calabi-Yau metric $\omega$ in this class which is asymptotic to $c\omega_1^a$ near the infinity. More precisely, we have
\begin{align}
|\nabla^k(\omega - c\pi^*(i\partial\bar\partial K_{\mathrm{ALF}}^a))|_\omega \leq C(k,\varepsilon)(1 + \rho_\omega)^{-4n + 3 + \varepsilon},
\end{align}
where $\varepsilon > 0$ is sufficiently small, $\rho_\omega$ is the distance from a point in $Y$ measured by $\omega$ and $k\geq 0$.
\end{theorem}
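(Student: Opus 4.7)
Part~(i) was already established by Propositions~\ref{estimate of K, generalized}, \ref{volume growth}, and \ref{asymptotic cone of g_a}, so the work is entirely in part~(ii), which I approach by the non-compact Calabi-Yau scheme of Tian-Yau \cite{zbMATH04186562, zbMATH00059791}. The first step is to fix a convenient complex model: by Proposition~\ref{complex symplectic structure} and Corollary~\ref{corollary K_ALF^a} I identify $(M, I_1^a)$ with $(M, I_1)$ so that the deformed K\"ahler form $\omega_1^a$ becomes $i\partial\bar\partial K_{ALF}^a$ on the same complex manifold $M$ that $Y$ resolves. Given a representative $\omega_Y$ of the compactly supported K\"ahler class, outside a large compact set $\omega_Y = i\partial\bar\partial u$ for some smooth real $u$; with $v := c\pi^*K_{ALF}^a - u$ and a cutoff $\chi$ equal to $1$ far from the exceptional set and $0$ near it, set
\begin{equation*}
\hat\omega := \omega_Y + i\partial\bar\partial(\chi v).
\end{equation*}
Then $\hat\omega$ lies in the prescribed class, equals $\omega_Y$ on a large compact set and $c\pi^*(i\partial\bar\partial K_{ALF}^a)$ outside a larger compact set; its positivity on the interpolation annulus follows from Corollary~\ref{corollary dd^cK^alpha>0} once $R$ is large enough to make the $C^2$-norm of $\chi v$ small relative to $c\pi^*(i\partial\bar\partial K_{ALF}^a)$.

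The crepant resolution together with the $\Gamma$-invariant holomorphic symplectic form $(\omega_2 + i\omega_3)^n$ on $M$ produces a nowhere vanishing $I_1$-holomorphic volume form $\Omega_Y$ on $Y$. Since $c\pi^*(i\partial\bar\partial K_{ALF}^a)$ is hyperk\"ahler, hence Ricci-flat, the function
\begin{equation*}
F := \log\Bigl(C_0\, \Omega_Y\wedge\overline{\Omega_Y}/\hat\omega^{2n}\Bigr)
\end{equation*}
is compactly supported for the unique constant $C_0$ making $\int_Y(e^F - 1)\hat\omega^{2n} = 0$. Solving the Monge-Amp\`ere equation $(\hat\omega + i\partial\bar\partial\phi)^{2n} = e^F \hat\omega^{2n}$ then yields a Calabi-Yau metric $\omega := \hat\omega + i\partial\bar\partial\phi$ in the prescribed class and asymptotic to $c\pi^*(i\partial\bar\partial K_{ALF}^a)$. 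For the existence of $\phi$ I appeal to Hein's theorem from \cite{Heinthesis}; its hypotheses---volume growth of order $4n-1$, appropriate curvature decay, and the associated Sobolev inequality---all hold for $\hat\omega$ because they are properties of $g_a$ (Propositions~\ref{estimate of K, generalized}, \ref{volume growth}, and \ref{asymptotic cone of g_a}) up to a compact perturbation.

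The main obstacle is extracting the sharp decay bound $-4n + 3 + \epsilon$ for $\omega - c\pi^*(i\partial\bar\partial K_{ALF}^a)$. This rate matches the Green's function decay $\rho_\omega^{-(4n-3)}$ on an ALF manifold of volume growth $4n-1$, with an $\epsilon$ loss coming from the threshold indicial behavior of the weighted spaces Hein uses. Because $F$ is compactly supported, Hein's framework provides a $\phi$ decaying two orders faster than this, and differentiating twice gives the claimed pointwise bound on $\omega - \hat\omega$, and hence on $\omega - c\pi^*(i\partial\bar\partial K_{ALF}^a)$ since these agree outside a compact set. The remaining nontrivial work is the bootstrap from weighted $C^0$-decay of $\phi$ to full $C^k$ weighted decay: the linearized Monge-Amp\`ere operator at infinity equals the Laplacian of $c\pi^*(i\partial\bar\partial K_{ALF}^a)$ up to terms that decay in the same weighted scale, and Hein's weighted Schauder estimates supply the required higher regularity.
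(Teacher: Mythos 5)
Part (i) and the overall Tian--Yau/Hein strategy for part (ii) match the paper, but your gluing step contains a genuine gap. You set $\hat\omega=\omega_Y+i\partial\bar\partial(\chi v)$ with $v=c\pi^*K_{ALF}^a-u$ and claim positivity on the transition annulus because "$R$ large makes the $C^2$-norm of $\chi v$ small relative to $c\pi^*(i\partial\bar\partial K_{ALF}^a)$." This cannot be arranged. First, $v$ contains $cK_{ALF}^a$ itself: on the annulus $\{K_{ALF}\sim R\}$ the cutoff error terms $i\partial\chi\wedge\bar\partial(cK_{ALF})$ and $cK_{ALF}\,i\partial\bar\partial\chi$ are, by Proposition \ref{proposition dK wedge d^cK}, of size comparable to $c\,i\partial\bar\partial K_{ALF}$ uniformly in $R$ (not $o(1)$), and they have no sign. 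Second, the potential $u$ of $\omega_Y$ outside a compact set comes with no growth control, so $u\,i\partial\bar\partial\chi$ and $i\partial\chi\wedge\bar\partial u$ cannot be dominated either. Third, and crucially, $c>0$ is prescribed in the statement, so you cannot absorb these errors by enlarging the coefficient of the good term at infinity. This is exactly the difficulty that Lemmas \ref{lemma h_alpha} and \ref{lemma hat omega} are built to circumvent: one inserts an auxiliary strictly plurisubharmonic potential $h_\alpha=(K_{ALF})^\alpha$ with $\alpha_0<\alpha<1$ (Corollary \ref{corollary dd^cK^alpha>0}), adds $C\,i\partial\bar\partial\bigl((1-\zeta_S)h_\alpha\bigr)$ with $C$ \emph{large} to dominate the uncontrolled negativity on the first transition annulus, and then kills this term at a much larger scale $S$, where the estimate $|i\partial\bar\partial((1-\zeta_S)h_\alpha)|_{i\partial\bar\partial h_1}\leq C''S^{-(1-\alpha)}$ (a consequence of Proposition \ref{proposition dK wedge d^cK} and the strictly sublinear growth of $h_\alpha$ relative to $h_1=K_{ALF}$) makes the second transition error genuinely small compared with $c\,i\partial\bar\partial K_{ALF}$. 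Without this intermediate potential and double cutoff, positivity of $\hat\omega$ is not established.

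Two smaller points. The normalization "$C_0$ making $\int_Y(e^F-1)\hat\omega^{2n}=0$" is both unnecessary in Hein's theorem for $SOB(\beta)$ with $\beta>2$ and incompatible with your requirement that $F$ be compactly supported: the constant is already forced by matching $\Omega_Y\wedge\bar\Omega_Y$ to $(\hat\omega/c)^{2n}$ at infinity (via $\Phi_a^*\Omega_M=\Omega_M$ from Proposition \ref{complex symplectic structure}), and there is no remaining freedom to impose the integral condition. Also, the inputs to Hein's theorem are not quite "properties of $g_a$ up to compact perturbation" for free: the $SOB(4n-1)$ conditions (connectivity of annuli and the volume lower bound at \emph{every} far point, including the special region $\{\rho(m)\leq|x|\}$) and the existence of the weight function $h_{1/2}\sim 1+\hat\rho$ with $|\nabla h_{1/2}|+h_{1/2}|\Delta h_{1/2}|$ bounded, which is what actually produces the decay $|u|\leq C(\epsilon)\rho_\omega^{-4n+3+\epsilon}$, each require separate arguments (Propositions \ref{proposition SOB(4n-1)}, \ref{proposition quasi-atlas}, Lemmas \ref{lemma bound on laplacian of sqrt K}, \ref{lemma sqrt K is comparable to radius}). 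Finally, the solution $\phi$ decays at the rate $-4n+3+\epsilon$ itself, not two orders faster; the derivative bounds at the same rate come from Schauder estimates on the unit-scale quasi-atlas, with no gain of decay upon differentiation.
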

Note that the first part of the theorem is nothing more than a brief summary of the results of the previous sections.

\subsection{Asymptotic ALF Calabi-Yau metric}\label{subsection asymptotic ALF CY metric}
In this subsection we will construct an asymptotic ALF Calabi-Yau metric by gluing. First we note that by the assumption of Theorem \ref{theorem main theorem}, the composition $\Phi_a^{-1}\circ \pi: Y \rightarrow (M/\Gamma,I_1^a)$ is also a crepant resolution.

\begin{lemma}\label{lemma h_alpha}
Assumption as in Theorem \ref{theorem main theorem}, for $\alpha > \alpha_0$ (where $\alpha_0$ is defined in Corollary \ref{corollary dd^cK^alpha>0}), there exists a smooth plurisubharmonic function $h_\alpha$ on $Y$ which is strictly plurisubharmonic and equal to $(K_{\mathrm{ALF}})^\alpha$ outside a compact set, where $K_{\mathrm{ALF}} = K_{\mathrm{ALF}}^a\circ \pi = K_1^a\circ \Phi_a^{-1}\circ \pi$ for a fixed $a>0$.
\end{lemma}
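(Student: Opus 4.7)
\emph{Plan.} The strategy is to glue $(K_{ALF})^\alpha$, which is smooth and strictly plurisubharmonic on $Y\setminus E$ (with $E=\pi^{-1}(\op{sing}(M/\Gamma))$ the compact exceptional set) but generally not smooth on $E$, with a locally defined auxiliary strictly plurisubharmonic function near $E$, using a regularized maximum. To verify the starting point: since $\Phi_a\colon (M,I_1^a)\to (M,I_1)$ is a biholomorphism by Proposition \ref{complex sturcture}, the inequality $dd^c_{I_1^a}(K_1^a)^\alpha>0$ from Corollary \ref{corollary dd^cK^alpha>0} transports under $(\Phi_a^{-1})^*$ to $dd^c_{I_1}(K_{ALF}^a)^\alpha>0$ on $(M,I_1)$. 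By the $\Gamma$-invariance of $K_{ALF}^a$ and the $I_1$-holomorphy of $\Gamma$ this descends to $M/\Gamma$; pulling back by the crepant resolution $\pi$ yields strict plurisubharmonicity on $Y\setminus E$. Note that $K_{ALF}$ vanishes on $E$ (since $K_1^a$ vanishes at the vertex of the cone), so $(K_{ALF})^\alpha$ extends continuously by $0$ on $E$ but is generally not $C^\infty$ there.

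Next I would construct an auxiliary function. Choose relatively compact open neighborhoods $U\Subset V$ of $E$ in $Y$, with $V$ a Stein tube around $E$ on which the given K\"ahler form $\omega_Y$ admits a smooth strictly plurisubharmonic potential $\phi$ with $i\partial\bar\partial\phi=\omega_Y|_V$. Let $\Delta:=\sup_V\phi-\inf_V\phi$ and $\delta:=\min_{\partial V}(K_{ALF})^\alpha>0$. Picking $\lambda>0$ with $\lambda\Delta<\delta$ and then a constant $C$ in the non-empty interval $(\lambda\sup_V\phi-\delta,\ \lambda\inf_E\phi)$, the function $v_0:=\lambda\phi-C$ satisfies $v_0>0=(K_{ALF})^\alpha$ on $E$ and $v_0<(K_{ALF})^\alpha$ near $\partial V$. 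I would then extend $v_0$ smoothly (not necessarily plurisubharmonically) to a function $v$ on all of $Y$, arranging $v<(K_{ALF})^\alpha$ outside $V$; this is possible because $(K_{ALF})^\alpha$ is uniformly positive and, indeed, tends to infinity there.

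Finally I would set
\[
h_\alpha:=\max\nolimits_\epsilon\bigl((K_{ALF})^\alpha,\,v\bigr),
\]
the regularized maximum with $\epsilon$ small enough that the transition region $\{|(K_{ALF})^\alpha-v|<\epsilon\}$ lies in $V\setminus E$, where both $(K_{ALF})^\alpha$ and $v=\lambda\phi-C$ are smooth and strictly plurisubharmonic. Then $h_\alpha$ is smooth and plurisubharmonic on $Y$: outside $V$ it equals $(K_{ALF})^\alpha$ (strictly plurisubharmonic), on a neighborhood of $E$ it equals $v$ (smooth and plurisubharmonic), and in the transition annulus the regularized maximum of two smooth plurisubharmonic functions is smooth and plurisubharmonic. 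In particular, $h_\alpha$ agrees with $(K_{ALF})^\alpha$ outside the compact set $\overline{V}$, which is the required property.

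The main obstacle is securing a smooth strictly plurisubharmonic potential $\phi$ of $\omega_Y$ on a neighborhood of $E$ satisfying the requisite boundedness of $\Delta$. Since $Y$ is a crepant resolution of the affine cone $M/\Gamma$, it is holomorphically convex with $E$ as its maximal compact analytic subset, so standard results on $1$-convex K\"ahler manifolds furnish an arbitrarily small Stein tubular neighborhood $V$ of $E$ on which the $dd^c$-lemma produces $\phi$. All other steps are level-set bookkeeping once such a $V$ and $\phi$ are fixed.
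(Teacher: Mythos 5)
There is a genuine gap, and it sits exactly where you flagged the ``main obstacle.'' Your construction requires a neighborhood $V$ of the exceptional set $E=\pi^{-1}(0)$ that is Stein and on which $\omega_Y$ has a smooth strictly plurisubharmonic potential $\phi$. Neither exists in the situations the lemma covers: $E$ is a compact analytic subset of positive dimension (e.g.\ $E=\mathbb{CP}^{2n-1}$ for $Y=\mathcal{K}_{\mathbb{CP}^{2n-1}}$), so no neighborhood of $E$ is Stein, and since $[\omega_Y]$ restricts to a nonzero K\"ahler class on $E$ (indeed $\int_C\omega_Y>0$ for compact curves $C\subset E$), the equation $\omega_Y|_V=i\partial\bar\partial\phi$ has no solution on any such $V$. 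More fundamentally, no smooth \emph{strictly} plurisubharmonic function can exist on a neighborhood of $E$ at all: restricting it to a compact curve in $E$ and evaluating $i\partial\bar\partial$ at a maximum point gives a contradiction. So the function $v_0=\lambda\phi-C$ cannot be produced, and the stronger conclusion you aim for (strict plurisubharmonicity of $h_\alpha$ everywhere) is unattainable; the lemma has to be read as asserting that $h_\alpha$ is plurisubharmonic on $Y$ and is strictly plurisubharmonic \emph{and} equal to $(K_{ALF})^\alpha$ only outside a compact set.

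The fix is to replace your strictly plurisubharmonic competitor by a constant, which degenerates your regularized maximum into the paper's one-line argument: take $\psi:\mathbb{R}\to\mathbb{R}$ smooth with $\psi^\p,\psi^{\p\p}\geq 0$, $\psi\equiv 3$ on $(-\infty,2)$ and $\psi(t)=t$ for $t>4$, and set $h_\alpha=\psi((K_{ALF})^\alpha)$. Near $E$, where $(K_{ALF})^\alpha<2$, this is the constant $3$, hence smooth and (weakly) plurisubharmonic; on $Y\setminus E$ the identity \eqref{dd^c of composition} together with Corollary \ref{corollary dd^cK^alpha>0} gives $dd^c h_\alpha\geq 0$, with equality to $(K_{ALF})^\alpha$ and strict positivity on $\{(K_{ALF})^\alpha>4\}$, which is the complement of a compact set. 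Your preliminary observations (transporting Corollary \ref{corollary dd^cK^alpha>0} through $\Phi_a$, descending by $\Gamma$, pulling back by $\pi$, and noting $K_{ALF}=0$ on $E$) are correct and are exactly what makes this composition argument work.
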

\begin{proof}
Let $\psi:\mathbb{R}\rightarrow\mathbb{R}$ be a smooth function such that $\psi^\p,\psi^{\p\p}\geq 0$, $\psi(t) = 3$ if $t<2$ and $\psi(t) = t$ if $t > 4$. Define $h_\alpha = \psi((K_{\mathrm{ALF}})^\alpha)$, then the lemma follows from \eqref{dd^c of composition} and Corollary \ref{corollary dd^cK^alpha>0}.
\end{proof}

\begin{lemma}\label{lemma hat omega}
Assumption as in Theorem \ref{theorem main theorem}. Fix $a>0$, for any compactly supported K\"ahler class $[\omega_Y]\in H^2_c(Y,\mathbb{R})$ represented by the K\"ahler form $\omega_Y$ and any $c>0$, there exists a K\"ahler form $\hat{\omega}$ on $Y$ in the same class of $\omega_Y$ and $\hat{\omega} = c(\Phi_a^{-1}\circ \pi)^*\omega_1^a$ outside a compact set.
\end{lemma}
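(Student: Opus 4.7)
The plan is to glue $\omega_Y$ on a compact core of $Y$ to the model $(1,1)$-form $c(\Phi_a^{-1}\circ\pi)^*\omega_1^a$ on the end by adding an $i\partial\bar\partial$-potential supported in a transition annulus. Set $K := K_1^a\circ\Phi_a^{-1}\circ\pi$; by Corollary~\ref{corollary K_ALF^a} this is a smooth $\Gamma$-invariant function pulling back to a smooth function on $Y$, and $c\,i\partial\bar\partial_{I_1}K = c(\Phi_a^{-1}\circ\pi)^*\omega_1^a$ globally on $Y$. This form is closed, $d$-exact (with primitive $\tfrac{c}{2}d^cK$), smooth, and K\"ahler off the exceptional set of $\pi$.

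First I would produce, on the end $E = \{K > R_0\}$ (for $R_0$ large enough that $E$ misses the exceptional set), a smooth real function $\psi$ satisfying
\[
\omega_Y - c(\Phi_a^{-1}\circ\pi)^*\omega_1^a \;=\; i\partial\bar\partial\psi \quad\text{on } E.
\]
The left-hand side is a $d$-closed real $(1,1)$-form, and it is $d$-exact on $E$ because the model form is globally exact on $Y$ while $\omega_Y$ is exact outside a compact set by virtue of $[\omega_Y]\in H^2_c(Y,\mathbb{R})$. Since $E$ is $\pi$-biholomorphic to a neighborhood of infinity in the K\"ahler cone $M/\Gamma$, whose smooth locus is quasi-affine and hence Stein, the Dolbeault cohomology $H^{0,1}_{\bar\partial}(E)$ vanishes and the $\partial\bar\partial$-lemma for $d$-exact real $(1,1)$-forms applies, producing $\psi$.

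Next, pick $R \gg R_0$ so that $\{K \le R\}$ contains the exceptional set of $\pi$, and a cutoff $\chi\colon Y\to[0,1]$ equal to $1$ on $\{K \le R\}$ and $0$ on $\{K \ge 2R\}$. Define
\[
\hat\omega \;:=\; \omega_Y - i\partial\bar\partial\bigl((1-\chi)\psi\bigr).
\]
Then $\hat\omega$ is a closed smooth real $(1,1)$-form in the class $[\omega_Y]$, equal to $\omega_Y$ on $\{K \le R\}$ (K\"ahler by assumption) and equal to $\omega_Y - i\partial\bar\partial\psi = c(\Phi_a^{-1}\circ\pi)^*\omega_1^a$ on $\{K \ge 2R\}$ (K\"ahler since this region is off the exceptional set).

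The main obstacle is positivity of $\hat\omega$ on the transition annulus $A = \{R < K < 2R\}$. Expanding,
\[
\hat\omega = \chi\,\omega_Y + (1-\chi)\,c(\Phi_a^{-1}\circ\pi)^*\omega_1^a + i\partial\chi\wedge\bar\partial\psi + i\partial\psi\wedge\bar\partial\chi + \psi\,i\partial\bar\partial\chi,
\]
the first two terms form a convex combination of two K\"ahler forms on $A$ with a quantitative positive lower bound, while the remaining three cutoff errors are controlled by $|d\chi|, |\nabla^2\chi|, |\psi|, |d\psi|$. The hard part is the asymptotic control on $\psi$: by first modifying the representative $\omega_Y$ in its compactly supported K\"ahler class so that on the end it is already close to a scalar multiple of $\pi^*\omega_1^a$ (which is available since the class is compactly supported and $c\pi^*\omega_1^a$ is globally exact), one arranges that $\psi$ and $d\psi$ have suitable decay on $E$. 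Then choosing $R$ large and spreading $\chi$ over the long annulus $A$ so that $|d^k\chi|$ decay appropriately, the error terms are absorbed by the K\"ahler lower bound of the convex combination, giving $\hat\omega > 0$ globally. This quantitative decay estimate for $\psi$ in the transition region is the essential technical point.
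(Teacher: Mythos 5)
Your overall gluing strategy (interpolating potentials with a cutoff) is the right family of ideas, but the specific scheme has a genuine gap at exactly the point you flag as ``the essential technical point,'' and the remedy you propose for it does not work. On the end, your $\psi$ is (up to a pluriharmonic term) $-v - cK$ where $K = K_{ALF}$ and $\omega_Y = -i\partial\bar\partial v$; the dominant part $-cK$ grows like the potential of the model metric itself. If you take $\chi = \chi_0(K/S)$ spread over the annulus $\{S \le K \le 2S\}$, then $\psi\, i\partial\bar\partial\chi$ contains the term $-c\chi_0''\,\frac{K}{S^2}\,i\partial K\wedge\bar\partial K$, and by Proposition \ref{proposition dK wedge d^cK} one has $i\partial K\wedge\bar\partial K \le C K\, i\partial\bar\partial K$, so on $K\sim S$ this error is \emph{comparable to} $i\partial\bar\partial K$, i.e.\ of order one relative to the model K\"ahler form, no matter how large $S$ is or how wide the annulus. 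The same is true of the cross terms $i\partial\chi\wedge\bar\partial\psi$. So the errors are never absorbed by the convex combination, and positivity on the transition annulus is not established. The proposed fix --- modifying the representative of $[\omega_Y]$ so that it is already close to a multiple of the model at infinity --- is circular: modifying within the compactly supported class means adding $i\partial\bar\partial$ of a compactly supported function, which cannot change $\omega_Y$ outside a compact set at all; and adding the globally exact form $c\pi^*\omega_1^a$ leaves you with the same problem of killing the original $\omega_Y$ at infinity.

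The paper's proof avoids this by a two-scale construction: it adds $c\,i\partial\bar\partial h_1$ \emph{globally} (the model potential is never cut off), kills $\omega_Y$ at infinity by cutting off its primitive $v$ on a \emph{fixed} annulus $\{2R\le K\le 3R\}$ where the error is dominated by adding a large constant $C$ times $i\partial\bar\partial h_\alpha$ with $h_\alpha = K^\alpha$, $\alpha_0<\alpha<1$ strictly psh (Corollary \ref{corollary dd^cK^alpha>0}), and then cuts off that auxiliary term on a much larger annulus $\{2SR\le K\le 3SR\}$, where the cutoff error is $O(S^{-(1-\alpha)})$ relative to $i\partial\bar\partial h_1$ --- precisely because one cuts off $K^\alpha$ rather than $K$. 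That intermediate power $\alpha<1$, together with the separation of scales $R\ll SR$, is the missing idea; without it the single-cutoff interpolation between $\omega_Y$ and $c(\Phi_a^{-1}\circ\pi)^*\omega_1^a$ does not close.
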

\begin{proof}
Since $[\omega_Y]\in H^2_c(Y,\mathbb{R})$, by \cite[Corollary A.3]{Conlon-Hein1}, there exists a smooth real function $v$ such that $\omega_Y = -i\partial\bar\partial v$ when $K_{\mathrm{ALF}} > R$ for $R$ sufficiently large. Here we note that by Lemma \ref{lemma h_alpha}, $Y$ is clearly 1-convex. Also $\dim_\mathbb{C}Y>2$ by assumption that $\dim_\mathbb{R}(S) = 4n-1$ and $n\geq 2$. So indeed we can apply \cite[Corollary A.3]{Conlon-Hein1}.

Fixing $\alpha \in (\alpha_0, 1)$, we can assume (by enlarging $R$ if necessary) that $h_\alpha = (K_{\mathrm{ALF}})^\alpha$ and $h_1 = K_{\mathrm{ALF}}$ when $K_{\mathrm{ALF}}>R$ where $h_\alpha,h_1$ are defined in Lemma \ref{lemma h_alpha}.

Fix a cutoff function $\chi: \mathbb{R}\rightarrow [0,1]$ satisfying $\chi(s) = 0$ if $s<2R$ and $\chi(s) = 1$ if $s>3R$. Define $\zeta: Y \rightarrow \mathbb{R}$ by $\zeta(y) = \chi(K_{\mathrm{ALF}}(y))$. For $S>2$, define $\zeta_S(y) = \chi(\frac{K_{\mathrm{ALF}}(y)}{S})$. Note that $0<2R<3R<2SR<3SR<+\infty$.

Given $c>0$, we construct
\begin{align}
\hat{\omega} = \omega_Y + i\partial\bar\partial(\zeta v) + C i\partial\bar\partial((1-\zeta_S)h_\alpha) + ci\partial\bar\partial h_1,
\end{align}
with $C$ and $S$ to be determined. It is clear that $\hat{\omega}$ is in the same cohomology class as $\omega_Y$.

On the region $\{K_{\mathrm{ALF}} < 2R \}$, $\hat{\omega} = \omega_Y + Ci\partial\bar\partial h_\alpha + ci\partial\bar\partial h_1 \geq \omega_Y > 0$.

On the region $\{3R < K_{\mathrm{ALF}} < 2SR\}$, $\hat{\omega} = Ci\partial\bar\partial h_\alpha + ci\partial\bar\partial h_1 > 0$.

On the region $\{3SR < K_{\mathrm{ALF}}\}$, $\hat{\omega} = ci\partial\bar\partial h_1 = c(\Phi_a^{-1}\circ \pi)^*\omega_1^a > 0$.

On the region $\{2R \leq K_{\mathrm{ALF}} \leq 3R \}$, $\hat{\omega} = \omega_Y + i\partial\bar\partial(\zeta v) + Ci\partial\bar\partial h_\alpha + ci\partial\bar\partial h_1 > 0$ if $C$ is made large enough.

Finally on the region $\{2SR\leq K_{\mathrm{ALF}} \leq 3SR \}$, $\hat{\omega} = C i\partial\bar\partial((1-\zeta_S)h_\alpha) + ci\partial\bar\partial h_1$. By Proposition \ref{proposition dK wedge d^cK}, we know that
\begin{align}
|dK_{\mathrm{ALF}}|_{i\partial\bar\partial K_{\mathrm{ALF}}}, |d^cK_{\mathrm{ALF}}|_{i\partial\bar\partial K_{\mathrm{ALF}}} \leq C^\p(K_{\mathrm{ALF}})^{\frac{1}{2}}.
\end{align}
After some simple derivation, it follows that $|i\partial\bar\partial((1-\zeta_S)h_\alpha)|_{i\partial\bar\partial h_1} \leq C^{\p\p}S^{-(1-\alpha)}$ on this region, so $\hat{\omega} > 0$ if $S$ is made large enough.
\end{proof}

\subsection{Hein's package}

To obtain the Calabi-Yau metric of Theorem \ref{theorem main theorem}, we want to apply the Tian-Yau method \cite{zbMATH04186562, zbMATH00059791}, as precised by the thesis of Hans-Joachim Hein \cite{Heinthesis}. In order to state the result we need, we will begin with some definitions given in \cite{Heinthesis}.

First we recall the definition of \text{SOB} property.
\begin{definition}\label{definition of SOB}
A complete noncompact Riemannian manifold $(N,g)$ of dimension $m>2$ is called $\op{SOB}(\beta)$, ($\beta\in\mathbb{R_+}$) if there exists $x_0\in N$ and $C\geq 1$ such that
$A(x_0,s,t)=\{s\leq r(x)\leq t\}$ is connected for all $t>s>C$, $Vol(g,B(x_0,s))\leq Cs^\beta$ for all $s\geq C$, and $Vol(g,B(x,(1-\frac{1}{C})r(x)))\geq \frac{1}{C}r(x)^\beta$ and
$\op{Ric}(x)\geq -Cr(x)^2$ if $r(x)=d(x_0,x)\geq C$.
\end{definition}
\begin{remark}\label{remark connectivity condition in SOB}
It is observed by \cite[Section 7]{zbMATH07131296} that to apply Proposition \ref{hein's thesis} it suffices to check the following property instead of the connectivity of $A(x_0,s,t)$: For sufficiently large $D$, any two points $m_1,m_2\in N$ with $r(m_i)=D$ can be joined by a curve of length at most $CD$, lying in the annulus $A(x_0,C^{-1}D,CD)$, for a uniform constant $C$.
\end{remark}

\begin{proposition}\label{proposition SOB(4n-1)}
The approximately Calabi-Yau metric $(Y, \hat\omega)$ constructed in Lemma \ref{lemma hat omega} is $\op{SOB}(4n-1)$ in the sense of Remark \ref{remark connectivity condition in SOB}.
\end{proposition}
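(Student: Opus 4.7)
The plan is to verify the four SOB conditions (with the weaker connectivity of Remark \ref{remark connectivity condition in SOB}) by reducing to geometric properties of $(M,g_a)$ established in the previous sections. By Lemma \ref{lemma hat omega}, outside a compact set $K\subset Y$ one has $\hat\omega = c(\Phi_a^{-1}\circ\pi)^*\omega_1^a$, and the hypothesis that $\Gamma$ preserves $K_{ALF}^a$ ensures that $(\Phi_a^{-1})^*g_a$ descends to $M/\Gamma$. Consequently $(Y\setminus K,\hat\omega)$ is isometric to an open subset of $(M/\Gamma, cg_a)$ complementary to a compact set via $\Phi_a^{-1}\circ\pi$. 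Since all geometric quantities are bounded on $K$, the SOB conditions only need to be checked at infinity.

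The volume upper bound $\op{Vol}(B(R))\leq CR^{4n-1}$ is exactly Proposition \ref{volume growth}, unaffected by the finite quotient by $\Gamma$ and by rescaling by $c$. The Ricci lower bound is immediate: $g_a$ is hyperk\"ahler hence Ricci-flat outside $K$, while $\op{Ric}$ is bounded on $K$, so $\op{Ric}(\hat\omega)\geq -C_0$ globally, which is much stronger than the SOB requirement. For the local non-collapsing $\op{Vol}(B(x,(1-1/C)r(x)))\geq C^{-1}r(x)^{4n-1}$, the plan is to use the twist coordinates of Section \ref{section Twist coordinates}: writing $x=\Psi(m,q)$, on the generic region $\{\rho(m)\geq|q|\}$ Corollary \ref{corollary g_a is asymptotic to cone} shows $g_a$ is close to the product $a^2\sum(dx_j)^2 + g_{0,0,0} + a^{-2}\eta^2$ of a $(4n-4)$-dimensional cone with $\mathbb{R}^3$ and a circle factor of bounded length, giving the desired lower bound directly from the product structure; on the exceptional region $\{\rho(m)<|q|\}$ one invokes the $Sp(1)$-equivariance of Proposition \ref{Sp(1) acts by isometry} together with the fact that $ED_q$ has diameter $\sim\sqrt{|q|}$, combined with the $\mathbb{R}^3$-directions of size $\sim|q|$ to retain the $r^{4n-1}$ bound.

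The main obstacle is the connectivity condition of Remark \ref{remark connectivity condition in SOB}. Given $y_1,y_2\in Y$ with $r(y_i)\sim D$, I plan to transfer to $(M/\Gamma,cg_a)$, write the points in twist coordinates as $(m_i,q_i)$, and construct a three-leg path patterned on the argument that precedes Lemma \ref{lemma H(m,x)}. In the first leg, slide each $m_i$ along the radius of $M_0$ to some $m_i'$ with $\rho(m_i')^2$ comparable to $|q_i|$ (if $\rho(m_i)$ is small); the length is controlled by Lemma \ref{lemma H(m,x)} by $C\sqrt{|q_i|}\leq C\sqrt{D}$. In the second leg, keep $m_1'$ fixed and join $q_1$ to $q_2$ by a straight segment in $\mathbb{R}^3$; its length is bounded via Proposition \ref{proposition estimation of g_a} by $C|q_1-q_2|\leq CD$. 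In the third leg, move $m_1'$ to $m_2'$ inside $M_0$, of length $\leq C\sqrt{D}$. The total length is $\leq CD$, and by Proposition \ref{estimate of rho1, generalized} together with the asymptotic description of $g_a$ in the generic region, each intermediate point stays at distance comparable to $D$ from the basepoint, so the path lies in an annulus $\{C^{-1}D\leq r\leq CD\}$, completing the verification.
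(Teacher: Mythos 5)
Your overall structure---reducing to $(M,g_a)$ outside a compact set, getting the volume bounds from the twist coordinates and the Ricci bound from hyperk\"ahlerness---matches the paper. The gap is in the connectivity argument. Your three-leg path is modelled on the proof of the asymptotic-cone proposition, but that proof only needs to control lengths, whereas Remark \ref{remark connectivity condition in SOB} also requires the curve to stay in the annulus $A(x_0,C^{-1}D,CD)$, and your path does not. Concretely: take $m_1=m_2$ near the vertex of $M_0$ and $q_2=-q_1$ with $|q_1|=D$, so both endpoints lie in the special region with $\rho_a\sim aD$. After your first leg, $\rho(m_i')\sim\sqrt{D}$, and your second leg is the straight segment from $q_1$ to $q_2$, which passes through $0\in\mathbb{R}^3$. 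At that moment the point is $\Psi(m_1',0)=m_1'$, which lies in the generic region where $\rho_a\sim\rho(m_1')\sim\sqrt{D}\ll C^{-1}D$, so the curve leaves the annulus. Proposition \ref{estimate of rho1, generalized}, which you invoke to keep intermediate points at distance comparable to $D$, is only the upper bound $\rho_a\leq C\rho^2$ and cannot rule this out.

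The paper's route avoids this: instead of stopping at $\rho(m_i')^2\sim|q_i|$, it pushes each special-region point radially all the way out to $\rho(m_i')=|q_i|\sim D$ (length at most $H(m_i',|q_i|)\leq C|q_i|\leq CD$ by Lemma \ref{lemma H(m,x)}, with $\rho_a\sim|q_i|\sim D$ throughout since $|q_i|$ is held fixed), and then, with both points in the generic region at radius $\sim D$, joins them using that $g_a$ is there asymptotic to the cone $M_0\times\mathbb{R}^3$ whose link is connected. That connectedness comes from Corollary \ref{corollary S_0 is connected}, which you never invoke; without it your third leg ``move $m_1'$ to $m_2'$ inside $M_0$'' is not even available, since $m_1'$ and $m_2'$ could a priori lie in different components. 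So the connectivity step needs both a different path (stay at $|x|\sim D$ rather than crossing $x=0$) and the explicit input of Corollary \ref{corollary S_0 is connected}.
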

\begin{proof}
By the gluing construction of $\hat\omega$, it suffices to show the $\op{SOB}(4n-1)$ property for the Taub-NUT deformation $(M,g_a)$,

One inspects the conditions in Definition \ref{definition of SOB}. Proposition \ref{proposition estimation of g_a} guarantees the properties concerning the volume growth of order $4n-1$, since $g_a$ is hyperk\"ahler, the property concerning Ricci curvature is satisfied. Finally we need to show the connectivity condition in Remark \ref{remark connectivity condition in SOB}.

We note that by Corollary \ref{corollary S_0 is connected}, the link $S_0$ of $M_0$ is connected, so the link of $M_0\times \mathbb{R}^3$ is also connected.

For any two points $\Psi(m_i,x_i)\in M$ ($m_i\in \mu^{-1}(0),x_i\in\mathbb{R}^3$) with distance $\rho_a(\Psi(m_i,x_i))=D$, if both are in the generic region $\{\Psi(m,x)\mid m\in \mu^{-1}(0),x\in \mathbb{R}^3,\rho(m)\geq |x|\}$, then since the metric $g_a$ is asymptotic to a cone with connected link, we know that they can be joined by a curve in the annulus $B(0,CD)\setminus B(0,C^{-1}D)$ of length at most $CD$, for a sufficiently large uniform $C>0$. Now if one point $\Psi(m_i,x_i)$ lies in the special region $\{\Psi(m,x)\mid m\in \mu^{-1}(0),x\in \mathbb{R}^3,\rho(m)\leq |x|\}$, then it suffices to join it to the generic region by moving $m_i$ to $m_i^\p$ along the radius of $\mu^{-1}(0)$ until $\rho(m_i^\p) = |x_i|$ while keeping $x_i$ fixed. The length of this curve is bounded by $H(m_i^\p, |x_i|) \leq C|x_i|$. Note that for $m\in \{\Psi(m,x)\mid m\in \mu^{-1}(0),x\in \mathbb{R}^3,\rho(m)\leq |x|\}$, by \eqref{g_a}, we have $a|x| \leq \rho_a(\Psi(m,x)) \leq C|x|$ for some $C>0$, so the result follows.
\end{proof}

Next we recall the definition of Quasi-atlas.
\begin{definition}
Let $(N,\omega_0,g_0)$ be a complete K\"ahler manifold. A $C^{k,\alpha}$ quasi-atlas for $(N,\omega_0,g_0)$ is a collection $\{\Phi_x\mid x\in A\}$, $A\subset N$, of holomorphic local
diffeomorphisms $\Phi_x: B\rightarrow N$, $\Phi_x(0)=x$, from $B=B(0,1)\subset \mathbb{C}^m$ into $N$ which extend smoothly to the closure $\bar{B}$, and such that there exists
$C\geq 1$ with $\op{inj}_{\Phi_x^*g_0}\geq \frac{1}{C}$, $\frac{1}{C}g_{\mathbb{C}^m}\leq \Phi_x^*g_0\leq Cg_{\mathbb{C}^m}$, and
$||\Phi_x^*g_0||_{C^{k,\alpha}(B,g_{\mathbb{C}^m})}\leq C$ for all $x\in A,$ and such that for all $y\in N$ there exists $x\in A$ with $y\in \Phi_x(B)$ and
$d_{g_0}(y,\partial\Phi_x(B))\geq \frac{1}{C}$.
\end{definition}
\begin{proposition}\label{proposition quasi-atlas}
The K\"ahler metric $(Y,\hat\omega)$ admits a $C^{k,\alpha}$ quasi-atlas for any $k,\alpha$.
\end{proposition}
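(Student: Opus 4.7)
The plan is to verify that $(Y,\hat\omega)$ has bounded geometry outside a compact set, and then invoke a standard construction of uniform holomorphic coordinate charts. First I would reduce the problem: since $\hat\omega=c(\Phi_a^{-1}\circ\pi)^*\omega_1^a$ outside a compact set $K\subset Y$ by Lemma \ref{lemma hat omega}, and a finite family of holomorphic charts on any relatively compact open set trivially satisfies the quasi-atlas conditions, it suffices to construct the quasi-atlas on the ALF end of $Y$. Since $\Phi_a$ is an $I_1$-biholomorphism by Proposition \ref{complex sturcture} and $\Gamma$ is finite, the ALF end of $Y$ is covered (at uniformly bounded local degree) by the ALF end of $(M,g_a)$, so the problem reduces to building a uniform quasi-atlas on the region $\{\rho_a\geq R\}\subset M$ for some large $R$.

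Next I would establish bounded geometry on this region. The bound $|K_{g_a}|\leq C/\rho_a$ from Proposition \ref{estimate of K, generalized} gives arbitrarily small sectional curvature at infinity. Refining the tensorial estimates behind that proposition — and using the explicit asymptotic formula of Proposition \ref{proposition estimation of g_a} together with the $C^k$-controlled expansion of $g_{x_1,0,0}$ from Proposition \ref{proposition asymptotic of g_x_1,0,0} — gives uniform bounds on all covariant derivatives $|\nabla^k R_{g_a}|_{g_a}$ on $\{\rho_a\geq R\}$. Combined with the non-collapsing lower bound $\op{Vol}(B_{g_a}(y,1))\geq c>0$ for $\rho_a(y)\geq R$, which comes from the $\op{SOB}(4n-1)$ property of Proposition \ref{proposition SOB(4n-1)} (the $\mathbb{S}^1$-fiber does not collapse in the ALF limit), Cheeger's injectivity radius theorem yields a uniform lower bound $\op{inj}_{g_a}(y)\geq \iota_0>0$ on this region.

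With bounded geometry in hand I would construct the charts. At each $y$ with $\rho_a(y)\geq R$, the holomorphic exponential map (or equivalently, Hörmander-type $L^2$ solution of $\bar\partial$ on a small Riemannian normal ball, applied to produce integrable complex coordinates for the parallel $I_1^a$-structure) yields a holomorphic map $\Phi_y:B(0,1)\subset\mathbb{C}^{2n}\to M$ with $\Phi_y(0)=y$, such that $\Phi_y^*g_a$ is uniformly comparable to the Euclidean metric and satisfies uniform $C^{k,\alpha}$ bounds. The uniformity of these bounds across all $y$ is exactly the bounded geometry statement from the previous paragraph. Descending to $M/\Gamma$ and then pulling back through the resolution $\pi$, which is a biholomorphism away from the exceptional set and hence over the entire ALF end, gives the required charts on $Y$.

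The main obstacle is the uniform injectivity radius lower bound on $\{\rho_a\geq R\}$, because $g_a$ has curvature decaying to zero yet volume growth of reduced order $4n-1$, which is precisely the regime in which injectivity radius could in principle collapse. The argument I sketched avoids this by noting that the $4n-1$ dimensional volume growth is realized by the ALF geometry $a^2\sum(dx_i)^2+g_{0,0,0}+a^{-2}\eta^2$ of Corollary \ref{corollary g_a is asymptotic to cone}, for which unit balls have volume bounded below (the $\mathbb{S}^1$-fiber has asymptotic length $2\pi/a$, not zero), so no collapsing occurs. Once the uniform injectivity radius is in place, the construction of the $C^{k,\alpha}$ quasi-atlas is standard and the remaining verifications are routine.
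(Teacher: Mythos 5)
Your proposal follows the same route as the paper's proof, which consists of exactly two sentences (reduce to the model metric $(M,g_a,\omega_1^a)$ near infinity, then invoke Proposition \ref{estimate of K, generalized} and K\"ahlerness); you simply supply the substance — injectivity radius bound, derivative estimates, construction of uniform holomorphic charts — that the paper leaves implicit. One caution on attribution: the $\op{SOB}(4n-1)$ property of Proposition \ref{proposition SOB(4n-1)} does not by itself give a unit-scale volume lower bound (with $\op{Ric}\geq 0$, Bishop--Gromov only forces $\op{Vol}(B(y,1))\gtrsim r(y)^{-1}$ from the $\op{SOB}$ volume condition), so the non-collapsing genuinely rests on your parenthetical observation that the $\mathbb{S}^1$-fibre length is asymptotically $2\pi/a$ and hence bounded below on the end, not on Proposition \ref{proposition SOB(4n-1)}.
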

\begin{proof}
It suffices to prove the result for the model metric $(M,g_a,\omega_1^a)$ near infinity. This is a consequence of Proposition \ref{estimate of K, generalized}, \cite[Lemma 4.3]{Heinthesis} and the fact that $g_a$ is Ricci-flat K\"ahler.
\end{proof}

The following result of \cite{Heinthesis} will be used to show the existence of a Calabi-Yau metric $\omega$ on $Y$ modeled on $c\omega_1^a$.

\begin{proposition}\label{hein's thesis}
Let $(N,\omega_0,g_0)$ be a complete noncompact K\"ahler manifold of complex dimension $m$ with a $C^{3,\alpha}$ quasi-atlas which satisfies $\op{SOB}(\beta)$ for some $\beta>2$. Let
$f\in C^{2,\alpha}(N)$ satisfies $|f|\leq Cr^{-\mu}$ on $\{r>1\}$ for some $\beta>\mu>2$. Then there exist
$\bar{\alpha}\in (0,\alpha]$ and $u\in C^{4,\bar{\alpha}}$ such that $(\omega_0+i\partial\bar\partial u)^m=e^f\omega_0^m$ and that $\omega_0+i\partial\bar\partial u$ is a K\"ahler form uniformly equivalent to $\omega_0$. If in addition $f\in C^{k,\bar\alpha}_{loc}(N)$ for some $k\geq 3$, then all such solutions $u$ belong to
$C^{k+2,\bar\alpha}_{loc}(N)$.

Moreover, if there is a function $\tilde\rho$ on $N$ comparable to $1+d_{g_0}(x_0,-)$ for some $x_0\in N$, and $\tilde\rho$ satisfies $|\nabla\tilde\rho| +
\tilde\rho|dd^c\tilde\rho|\leq C$ for some $C>0$, then we have the decay estimate $|u|\leq C(\varepsilon)r^{2-\mu+\varepsilon}$ for any sufficiently small $\varepsilon>0$.
\end{proposition}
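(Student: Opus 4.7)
The plan is to solve $(\omega_0 + i\partial\bar\partial u)^m = e^f \omega_0^m$ by a continuity method along the path $(\omega_0 + i\partial\bar\partial u_t)^m = e^{tf}\omega_0^m$, $t\in[0,1]$, starting from $u_0 \equiv 0$. The natural functional setting is a weighted H\"older space $C^{2,\alpha}_\delta(N)$ adapted to the decay exponent, with weight near $\delta = 2 - \mu + \epsilon$ for small $\epsilon>0$. At each $t$, openness reduces to invertibility of $\Delta_{\omega_t}: C^{2,\alpha}_\delta \to C^{0,\alpha}_{\delta-2}$, which is a standard Fredholm package on SOB manifolds provided the weight lies outside the indicial set; the gap $\beta > \mu > 2$ ensures a usable range. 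Closedness, i.e.\ uniform a priori estimates along the whole path, carries the real content.

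For the $C^0$ estimate I would follow Tian--Yau's integration-by-parts setup followed by a Moser iteration. The $SOB(\beta)$ property, combined with the uniform charts of the quasi-atlas, yields a global Sobolev inequality of Euclidean type at scales larger than one (cover $N$ by quasi-atlas balls and sum local Sobolev contributions using the polynomial volume bound). Testing the equation against $|u_t|^{p-2}u_t$, exploiting the concavity of $\log\det$, and applying Sobolev and H\"older inequalities produces an iteration that closes because $\beta > \mu > 2$ makes $\int_N |f|^q \omega_0^m$ finite for a suitable $q$. The $C^2$ estimate then comes from Yau's computation of $\Delta_{\omega_t}\log\operatorname{tr}_{\omega_0}\omega_t$, where the non-compact maximum principle is replaced by Omori--Yau (legitimate under the Ricci lower bound $\operatorname{Ric}\geq -Cr^2$ built into SOB). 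With $\omega_t$ uniformly equivalent to $\omega_0$, the equation becomes uniformly elliptic in each quasi-atlas chart; Evans--Krylov plus Schauder bootstrap in those charts then deliver $C^{2,\bar\alpha}$ and, under the higher regularity hypothesis on $f$, $C^{k+2,\bar\alpha}_{\mathrm{loc}}$.

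For the decay estimate I would build explicit barriers from $\tilde\rho$. Set $\varphi = A\tilde\rho^{\gamma}$ with $\gamma = 2-\mu+\epsilon<0$. The hypothesis $|\nabla\tilde\rho|+\tilde\rho|\Delta\tilde\rho|\leq C$ gives
\begin{equation*}
\Delta_{\omega_0}\varphi = A\gamma(\gamma-1)\tilde\rho^{\gamma-2}|\nabla\tilde\rho|^2 + A\gamma\tilde\rho^{\gamma-1}\Delta\tilde\rho = O(\tilde\rho^{-\mu+\epsilon}),
\end{equation*}
with a controlled leading sign because $\gamma(\gamma-1)>0$. Linearising the Monge--Amp\`ere equation at the solution $u$, comparing $\pm u$ to $\pm\varphi$ on an exhaustion $\{r\leq R_j\}$ via the maximum principle, and letting $R_j\to\infty$ forces $|u|\leq C(\epsilon)r^{2-\mu+\epsilon}$. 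The step I expect to be the main obstacle is the $C^0$ a priori bound: in the non-compact setting one loses the classical Yau trick of passing to a global maximum, the Moser iteration must be tuned so that the volume growth $\beta$ and the decay rate $\mu$ interact only through the sharp condition $\mu>2$, and one must assemble a global Sobolev inequality from the quasi-atlas with constants uniform in~$t$. Everything else --- Evans--Krylov, Schauder bootstrap, and the barrier argument for decay --- is comparatively mechanical once the $C^0$ bound is in hand.
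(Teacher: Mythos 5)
The paper does not actually prove this proposition: it is quoted from Hein's thesis \cite{Heinthesis} and used as a black box, so your sketch must be measured against Hein's own argument, whose broad outline (continuity path $e^{tf}$, a Moser-type $C^0$ estimate exploiting $SOB(\beta)$, local Evans--Krylov/Schauder regularity in quasi-atlas charts) you reproduce correctly. But two of your steps would fail as stated. First, the ``global Sobolev inequality of Euclidean type'' cannot be assembled by summing quasi-atlas contributions, and in fact cannot hold at all: a Sobolev inequality with the Euclidean exponent of the real dimension $2m$ forces $\op{Vol}(B(x_0,s))\geq cs^{2m}$ for all large $s$, whereas $SOB(\beta)$ only provides growth of order $s^\beta$ with $\beta$ possibly strictly smaller than $2m$ --- and the ALF case $\beta=4n-1$ in real dimension $4n$, which is precisely the case this paper needs, is of this kind. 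The actual technical heart of Hein's proof is a \emph{weighted} Sobolev inequality in the style of Minerbe, valid under $SOB(\beta)$, $\beta>2$, together with a correspondingly weighted iteration; the interplay of $\beta$ and $\mu$ is more delicate than ``$\mu>2$ makes $\int|f|^q$ finite''. Second, your openness step invokes weighted Fredholm theory ``outside the indicial set'', but an $SOB(\beta)$ manifold has no asymptotic model at infinity, hence no indicial roots and no such Fredholm package; Hein instead proves a bespoke linear existence result for $\Delta u=h$ with decay $r^{2-\mu}$ when $|h|\leq Cr^{-\mu}$ (via Green's function estimates that again rest on the weighted Sobolev inequality) and runs the method in the unweighted H\"older spaces furnished by the quasi-atlas.

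The decay step is also genuinely gapped. Your own formula $\Delta_{\omega_0}(A\tilde\rho^{\gamma}) = A\gamma(\gamma-1)\tilde\rho^{\gamma-2}|\nabla\tilde\rho|^2 + A\gamma\tilde\rho^{\gamma-1}\Delta\tilde\rho$ shows the problem: the hypotheses give only \emph{upper} bounds --- there is no positive lower bound on $|\nabla\tilde\rho|$ and no sign on $\Delta\tilde\rho$ --- so this expression has no sign, $A\tilde\rho^\gamma$ is neither a supersolution nor a subsolution of the linearized equation, and the comparison cannot even begin. Moreover the linearization of the Monge--Amp\`ere difference $e^f-1$ is the Laplacian of a metric averaged between $\omega_0$ and $\omega_0+i\partial\bar\partial u$, so you would additionally need control of $\Delta_{\omega_u}\tilde\rho$, which is not among the hypotheses; and letting $R_j\to\infty$ in the maximum principle requires the boundary inequality $|u|\leq A\tilde\rho^\gamma$ on $\{r=R_j\}$, i.e.\ it presupposes that $u\to 0$ at infinity, which is exactly what is to be proved --- the argument is circular. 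The hypothesis $|\nabla\tilde\rho|+\tilde\rho|\Delta\tilde\rho|\leq C$ is tailored not for pointwise barriers but for integration by parts: in Hein's proof the decay comes from a weighted Moser/De Giorgi iteration in which these upper bounds control precisely the error terms generated by differentiating the weight $\tilde\rho^{\delta}$.
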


\begin{lemma}\label{lemma bound on laplacian of sqrt K}
Let $w = (K_1^a)^\frac{1}{2}$, then $|\nabla w| + w|dd^c_{I_1^a} w|$ is bounded on $(M,g_a)$.
\end{lemma}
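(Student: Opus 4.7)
My proof plan rests on two ingredients derived from earlier in the excerpt, combined by the chain rule for Laplace--Beltrami: a pointwise gradient bound $|\nabla K_1^a|_{g_a}^2 \leq C K_1^a$, and the fact that $\Delta_{g_a} K_1^a$ is a (bounded) constant.

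First I would revisit the proof of Proposition \ref{proposition dK wedge d^cK}; it actually establishes the stronger pointwise inequality
\[ (dK_1^a(X))^2 + (d^c_{I_1^a}K_1^a(X))^2 \leq C\, K_1^a\, g_a(X,X) \]
for every tangent vector $X$. Taking $X = \nabla_{g_a} K_1^a$ and discarding the $d^c$-term yields $|\nabla_{g_a} K_1^a|_{g_a}^4 \leq C\, K_1^a\, |\nabla_{g_a} K_1^a|_{g_a}^2$, and hence $|\nabla_{g_a} K_1^a|_{g_a}^2 \leq C K_1^a$. Since $dw = \tfrac{1}{2}(K_1^a)^{-1/2}\, dK_1^a$, this immediately gives $|\nabla w|_{g_a}^2 \leq C/4$, which handles the first summand of the lemma.

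Next I would use Proposition \ref{K_1^1}: $K_1^a$ is an $I_1^a$-K\"ahler potential of $g_a$, i.e.\ $i\partial\bar\partial_{I_1^a} K_1^a = \omega_1^a$. Tracing against $\omega_1^a$ gives $\op{tr}_{\omega_1^a}(i\partial\bar\partial_{I_1^a} K_1^a) = 2n$, so with the K\"ahler identity $\Delta f = 2\op{tr}_{\omega}(i\partial\bar\partial f)$ we obtain that $\Delta_{g_a} K_1^a$ is the constant $4n$. Applying the chain rule for Laplace--Beltrami to $w = (K_1^a)^{1/2}$ then gives
\[ w\, \Delta w = \tfrac{1}{2}\, \Delta K_1^a \;-\; \tfrac{1}{4}\, (K_1^a)^{-1}\, |\nabla K_1^a|_{g_a}^2, \]
whose right-hand side is bounded by the two ingredients above. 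I do not expect a serious obstacle here; the only point worth double-checking is the normalization convention linking $\Delta_{g_a}$ to $\op{tr}_{\omega_1^a}(i\partial\bar\partial\cdot)$, but this affects only the explicit constant and not the boundedness.
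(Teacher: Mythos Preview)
Your proof is correct and matches the paper's approach: both use Proposition~\ref{proposition dK wedge d^cK} to bound $|\nabla w|$ via $dw = \tfrac12 (K_1^a)^{-1/2}\,dK_1^a$, and both handle $w\,\Delta w$ by expanding $dd^c_{I_1^a}(K_1^a)^{1/2}$ with the chain rule \eqref{dd^c of composition} and tracing against $\omega_1^a$ (the paper phrases this in the complex form $\Delta_{\omega_1^a}w = \frac{1}{2n}\frac{(\omega_1^a)^{2n-1}\wedge dd^c w}{(\omega_1^a)^{2n}}$, while you write out the real Laplacian chain rule explicitly, but the content is identical).
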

\begin{proof}
First we calculate $dw = \frac{1}{2}(K_1^a)^{-\frac{1}{2}}dK_1^a$, so by Proposition \ref{proposition dK wedge d^cK} we have $|dw|\leq C$.
Next for $dd^cw$, applying formula \eqref{dd^c of composition} we have
\begin{align}
dd^c_{I_1^a}(K_1^a)^\frac{1}{2} = \frac{1}{2}(K_1^a)^{-\frac{3}{2}}\left[-\frac{1}{2}dK_1^a\wedge d^c_{I_1^a}K_1^a + K_1^add^c_{I_1^a}K_1^a \right].
\end{align}
So it follows that $w|dd^c_{I_1^a}w|$ is also bounded.
\end{proof}

\begin{lemma}\label{lemma sqrt K is comparable to radius}
The function $(K_1^a)^{\frac{1}{2}}$ is comparable to $\rho_a$ outside of a compact set, where $\rho_a$ is the radius measured by $g_a$.
\end{lemma}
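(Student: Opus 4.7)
The plan is to establish the two comparison inequalities $(K_1^a)^{1/2} \leq C_1\rho_a$ and $\rho_a \leq C_2(K_1^a)^{1/2}$ separately on the complement of a compact set.

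The upper bound is immediate from Lemma \ref{lemma bound on laplacian of sqrt K}: since $|\nabla (K_1^a)^{1/2}|_{g_a}$ is bounded, $(K_1^a)^{1/2}$ is uniformly Lipschitz with respect to $g_a$. Since $K_1^a$ vanishes at the vertex of the cone (added at infinity of $(M,d_{g_a})$ by Proposition \ref{estimate of rho1, generalized}), integrating along a minimizing $g_a$-geodesic from $0$ to $p$ gives $(K_1^a)^{1/2}(p) \leq C\rho_a(p)$ for every $p \in M$.

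For the lower bound I would exploit the explicit formula $2K_1^a = \rho^2 + a^2(2x_1^2 + x_2^2 + x_3^2)$ from Proposition \ref{K_1^1}, which gives the pointwise bound $(K_1^a)^{1/2} \geq \max(\rho, a|x|)/\sqrt{2}$ where $|x|^2 = x_1^2 + x_2^2 + x_3^2$. It is therefore enough to show $\rho_a \leq C\max(\rho, a|x|)$ outside a compact set. Using the twist coordinates $\Psi(m,x)$ of Section \ref{section Twist coordinates} together with $Sp(1)$-equivariance (Proposition \ref{Sp(1) acts by isometry}) one reduces to $x = (x_1, 0, 0)$ and splits into the generic region $\{\rho(m) \geq |x|\}$ and the special region $\{|x| \geq \rho(m)\}$. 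On the generic region, Lemma \ref{estimation of rho^2} yields $\rho(\Psi(m,x)) \sim \rho(m)$ and Corollary \ref{corollary g_a is asymptotic to cone} yields $\rho_a(\Psi(m,x)) \sim \rho(m)$, so $\rho_a \leq C\rho \leq C\sqrt{2}\,(K_1^a)^{1/2}$. On the special region (intersected with $\rho(m) \geq 1$), the inequality $a|x| \leq \rho_a \leq C|x|$ noted in the proof of Proposition \ref{proposition SOB(4n-1)} gives $\rho_a \sim |x|$, and hence $\rho_a \leq C|x| \leq (C\sqrt{2}/a)(K_1^a)^{1/2}$.

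The main subtlety is the subregion where $\rho(m) < 1$ or the point lies on the exceptional set $\bigcup_{x\neq 0} ED_x$, where the twist coordinate parametrization either fails or degenerates. Since $(K_1^a)^{1/2}$ and $\rho_a$ are both continuous on $M$, and since the region covered by the two cases above is a dense open subset of the complement of a compact set, the inequality $\rho_a \leq C(K_1^a)^{1/2}$ extends by continuity to the whole complement. This completes the comparison.
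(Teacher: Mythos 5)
Your proof is correct, and for one of the two inequalities it takes a genuinely different (and arguably cleaner) route than the paper. For the bound $\rho_a \leq C(K_1^a)^{1/2}$ you do essentially what the paper does: split into the generic region $\{\rho(m)\geq |x|\}$, where Corollary \ref{corollary g_a is asymptotic to cone} gives $\rho_a \sim \rho$, and the special region, where the estimate $a|x|\leq \rho_a\leq C|x|$ from the proof of Proposition \ref{proposition SOB(4n-1)} together with $K_1^a \geq \tfrac{a^2}{2}|x|^2$ suffices. The difference is in the reverse bound $(K_1^a)^{1/2}\leq C\rho_a$. On the special region this is the nontrivial half for the paper, because $K_1^a$ contains the term $\tfrac{1}{2}\rho^2$ and one must rule out $\rho(\Psi(m,x))\gg |x|$ there; the paper does this by an ODE argument along the $(-I_1T)$-flow, using Lemma \ref{lemma 4x_1^2V} to show $\rho(\Psi(m,x))\leq C|x|$. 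You bypass this entirely by observing that Lemma \ref{lemma bound on laplacian of sqrt K} (ultimately Proposition \ref{proposition dK wedge d^cK}) makes $(K_1^a)^{1/2}$ uniformly Lipschitz for $g_a$, and since $K_1^a\to 0$ at the vertex, which lies at finite $g_a$-distance by Proposition \ref{estimate of rho1, generalized}, integration along a near-minimizing path gives $(K_1^a)^{1/2}\leq C\rho_a$ globally. This avoids the flow computation and the case analysis for that half of the statement, at the cost of invoking the gradient estimate (which is in any case already established and used immediately afterwards to verify Hein's hypotheses). The density/continuity remark handling the exceptional sets $ED_x$ is a legitimate way to close the argument, since both functions are continuous on $M$ and the image of $\Psi$ is dense.
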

\begin{proof}
Recall that $K_1^a = \frac{1}{2}\rho^2 + a^2x_1^2 + \frac{a^2}{2}(x_2^2 + x_3^2)$. So on the generic region $\{\Psi(m,x)\mid m\in \mu^{-1}(0),x\in \mathbb{R}^3,\rho(m)\geq |x|\}$, the function $K_1^a$ is comparable to $\rho_a$ outside a compact set by Corollary \ref{corollary g_a is asymptotic to cone}.

As for the special region $\{\Psi(m,x)\mid m\in \mu^{-1}(0),x\in \mathbb{R}^3,\rho(m)\leq |x|\}$, by the proof of Proposition \ref{proposition SOB(4n-1)}, we know that $\rho_a(\Psi(m,x))$ is comparable to $|x|$ outside a compact set and it is clear that $K_1^a(\Psi(m,x))\geq \frac{a^2}{2}|x|^2$, so it suffices to show that $\rho(\Psi(m,x))\leq C|x|$.

Using $\Sp(1)$-equivariance, we may assume that $x = (x_1,0,0)$ with $x_1>0$, so $\Psi(m,x) = e^{\tau(m,x_1)}\cdot m$ (see Subsection \ref{subsection the hk metric of M_x_1,0,0} for notation). By \eqref{ODE new system1} and \eqref{ODE new system2}, for $\tau\in\mathbb{R}$, $m\in \mu^{-1}(0)$, we have
\begin{align}
\frac{d}{d\tau}(\frac{x_1}{\rho}(e^\tau\cdot m)) = \frac{\rho^2|T|^2 - 2x_1^2}{\rho^3}(e^\tau\cdot m).
\end{align}
By Lemma \ref{lemma 4x_1^2V}, we have
\begin{align}
\frac{d}{d\tau}(\frac{x_1}{\rho}(e^\tau\cdot m)) \geq \frac{|T|^2}{2\rho}(e^\tau\cdot m) \geq C \rho(e^\tau\cdot m).
\end{align}
Now $\rho(\Psi(m,x))$ is the $g_0-$length of the radius from $0$ to $\Psi(m,x)$. We may join $0$ and $\Psi(m,x)$ by first join $0$ and $m\in \mu^{-1}(0)$ along radius, then join $m$ and $\Psi(m,x)$ by the curve $e^t\cdot m$ for $t\in[0,\tau(m,x_1)]$. It follows that
\begin{align}
\rho(\Psi(m,x)) \leq \rho(m) + \int_0^{\tau(m,x_1)}|T|(e^t\cdot m)dt.
\end{align}
Since $|T|\leq C\rho$, we find that
\begin{align}
\rho(\Psi(m,x)) &\leq \rho(m) + C\int_0^{\tau(m,x_1)}\frac{d}{dt}(\frac{x_1}{\rho}(e^t\cdot m))dt \\
&= \rho(m) + C\frac{x_1}{\rho(\Psi(m,x))}.
\end{align}
Since we are in the complement a compact set of the special region characterized by $\rho(m)\leq |x|$, so it follows that $\rho(e^{\tau(m,x_1)}\cdot m)\leq Cx_1$ for some $C > 1$. And more generally $\rho(\Psi(m,x))\leq C|x|$.
\end{proof}

\begin{corollary}\label{corollary weight function}
The smooth function $h_{\frac{1}{2}}$ defined in Lemma \ref{lemma h_alpha} is comparable to $1 + \hat\rho$ and $|\nabla h_{\frac{1}{2}}| + h_{\frac{1}{2}}|dd^c h_{\frac{1}{2}}|$ is bounded on $(Y,\hat\omega)$. Here $\hat\rho$ is the distance from some point $y_0\in Y$ measured by $\hat\omega$.
\end{corollary}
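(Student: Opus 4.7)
The plan is to reduce the statement to the already-established Lemmas \ref{lemma bound on laplacian of sqrt K} and \ref{lemma sqrt K is comparable to radius} by exploiting the fact that outside a compact set the geometry on $Y$ is literally the pullback of the model geometry on $(M,g_a)$, so both the comparability estimate and the derivative bounds transfer through the crepant resolution.

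First, by the construction in Lemma \ref{lemma h_alpha} applied with $\alpha=\tfrac12$, and by the gluing construction in Lemma \ref{lemma hat omega}, there exists a compact set $K\subset Y$ such that outside $K$ one has simultaneously
\begin{align*}
h_{\frac12} \;=\; (K_{ALF})^{\frac12} \;=\; \bigl((\Phi_a^{-1}\circ\pi)^{\ast} K_1^a\bigr)^{\frac12}, \qquad \hat\omega \;=\; c\,(\Phi_a^{-1}\circ\pi)^{\ast}\omega_1^a,
\end{align*}
and the map $\Phi_a^{-1}\circ\pi : Y\setminus K \to (M/\Gamma)\setminus K'$ is a biholomorphism onto its image. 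Since $\Gamma$ acts by isometries of $g_a$ and preserves $K_1^a$ (by the hypothesis of Theorem \ref{theorem main theorem}(ii)), all the data involved descend unambiguously from $M$ to $M/\Gamma$. Consequently, on $Y\setminus K$ the function $h_{\frac12}$ is the pullback of $w=(K_1^a)^{\frac12}$ and the metric $\hat g$ associated to $\hat\omega$ is $c$ times the pullback of $g_a$.

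Next I handle comparability. Lemma \ref{lemma sqrt K is comparable to radius} gives $C^{-1}\rho_a\le w\le C\rho_a$ on the complement of a compact set of $M$. The distance function $\hat\rho$ on $Y$ coincides, outside a larger compact set, with $\sqrt{c}$ times the pullback of $\rho_a$ (choose any $y_0\in K$, triangle inequality absorbs the bounded correction coming from the compact piece). Therefore, $h_{\frac12}$ is comparable to $1+\hat\rho$ on $Y\setminus K$. On the compact set $K$, $h_{\frac12}$ is smooth and strictly positive and $1+\hat\rho$ is bounded above and below by positive constants, so comparability holds there trivially. Combining the two regions proves the first assertion.

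Finally I handle the derivative bounds. On $Y\setminus K$, the identities $|\nabla h_{\frac12}|_{\hat g}=c^{-\frac12}|\nabla w|_{g_a}\circ(\Phi_a^{-1}\circ\pi)$ and $|\Delta_{\hat g} h_{\frac12}|=c^{-1}|\Delta_{g_a} w|\circ(\Phi_a^{-1}\circ\pi)$ show that Lemma \ref{lemma bound on laplacian of sqrt K} transfers to a uniform bound on $|\nabla h_{\frac12}|+h_{\frac12}|\Delta h_{\frac12}|$ there. On the compact set $K$, $h_{\frac12}$ is a smooth function with bounded derivatives, so the same quantity is bounded by compactness. The step that requires the most care is keeping track of the various compact regions in the gluing and of the constant $c$, but no new analytic input is needed beyond the two lemmas just cited; this is the only mild obstacle.
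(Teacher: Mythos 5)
Your proposal is correct and follows essentially the same route as the paper: the paper's proof simply observes that $\Phi_a^{-1}\circ\pi:(Y,\hat\omega)\to(M/\Gamma,\omega_1^a)$ is a holomorphic isometry outside a compact set and invokes Lemma \ref{lemma bound on laplacian of sqrt K} and Lemma \ref{lemma sqrt K is comparable to radius}. Your additional bookkeeping of the constant $c$ and the compact regions is consistent with, and slightly more explicit than, the paper's one-line argument.
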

\begin{proof}
Note that $\Phi_a^{-1}\circ \pi: (Y,\hat\omega)\rightarrow (M/\Gamma, \omega_1^a)$ is a holomorphic isometry outside a compact set, so Lemma \ref{lemma bound on laplacian of sqrt K} and Lemma \ref{lemma sqrt K is comparable to radius} imply the result.
\end{proof}

Now we can give a proof of the main theorem.
\begin{proof}[Proof of Theorem \ref{theorem main theorem}]
The first part is a brief summary of Proposition \ref{estimate of K, generalized}, \ref{volume growth} and \ref{asymptotic cone}.

Regarding the construction of $\omega$, by Proposition \ref{proposition SOB(4n-1)} and Proposition \ref{proposition quasi-atlas} we can apply Proposition \ref{hein's thesis} to
$(Y,\hat\omega)$ and its Ricci potential $\hat{f}$ with $\beta=4n-1$. Here
\begin{align}
\hat{f} = \log\frac{\Omega_Y\wedge\bar\Omega_Y}{(\hat\omega/c)^{2n}},
\end{align}
and $\Omega_Y = \pi^*\Omega_M$, $\Omega_M = (\omega_2 + i\omega_3)^n$ are the corresponding holomorphic volume forms.

Note that by Proposition \ref{complex symplectic structure}, we have $\Phi_a^*\Omega_M = \Omega_M$. It follows that $\hat f$ is compactly supported on $Y$. In particular, $|\hat f|\leq C\hat{\rho}^{-\mu}$ for any $2 < \mu < 4n-1$. By Proposition \ref{hein's thesis}, we get a smooth solution $u\in C^{4,\bar\alpha}(Y)$ of the Monge-Amp\`{e}re equation
\begin{align}
(\hat\omega + i\partial\bar\partial u)^{2n} = e^{\hat f}\hat\omega^{2n}.
\end{align}
Let $\omega = \hat\omega + i\partial\bar\partial u$, then $\omega$ is Calabi-Yau and uniformly equivalent to $\hat\omega$. Moreover, by the second part of Proposition \ref{hein's thesis}. we have $|u|\leq C(\varepsilon)\rho_\omega^{-4n+3+\varepsilon}$ for $\varepsilon>0$ sufficiently small.

If we think of $\omega$ as given, then the Monge-Amp\`{e}re equation can be written as
\begin{align}
(e^{\hat f} - 1)\hat\omega^{2n} = i\partial\bar\partial u \wedge \sum_{k = 1}^{2n-1}\omega^k\wedge \hat\omega^{2n-1-k},
\end{align}
and it can be thought as an elliptic equation of $u$. Outside a compact set, the left hand side is zero so by Schauder estimates on each chart of the Quasi-atlas outside this compact set, we find that $|\nabla^k u|_\omega \leq C(k,\varepsilon) \rho_\omega^{-4n+3+\varepsilon}$ for $k\geq 0$.
\end{proof}

\section{Examples of higher-dimensional ALF Calabi-Yau metrics}\label{section Examples}
In this section we will apply Theorem \ref{theorem main theorem} to get many ALF Calabi-Yau metrics. First we start with the Taubian-Calabi metric on $\mathbb{C}^{2n}$ and an ALF metric on the total space of the canonical bundle $\mathcal{K}_{\mathbb{CP}^{2n-1}}$ of $\mathbb{CP}^{2n-1}$ asymptotic to it. Then we will give two types of generalizations. One concerns the crepant resolutions of isolated singularity $\mathbb{C}^{2n}/\Gamma$. The other is about the homogeneous 3-Sasakian manifolds. More precisely, we will show that for homogeneous 3-Sasakian manifold associated to group $\Sp(n), (n\geq 1)$, $\SU(m), (m\geq 3)$, $\SO(l), (l\geq 5)$ and $G_2$, it admits locally free $\mathbb{S}^1$-symmetry, so there exist ALF hyperk\"ahler metrics on its corresponding hyperk\"ahler cone, and Calabi-Yau metrics on the canonical bundle of its twistor space. Some non-homogeneous examples are also discussed briefly.

\subsection{The Taubian-Calabi metric}
Recall that in Example \ref{C^2n} we have a $\mathbb{S}^1$-symmetry of $\mathbb{C}^{2n}$, its Taub-NUT deformation is known as the Taubian-Calabi metric. Let $\Gamma=\mathbb{Z}_{2n}$ be the cyclic group acting on $\mathbb{C}^{2n}$ generated by $(z,w)\mapsto (\zeta_{2n}z,\zeta_{2n}w)$, where $\zeta_{2n} = e^{\frac{\pi i}{n}}$, $z,w\in \mathbb{C}^n$. One verifies that $\Gamma$ commutes with the $\mathbb{S}^1$-symmetry so $\Phi_a$ is $\Gamma$-invariant, and that $\Gamma$ preserves $x_2^2 + x_3^2 = |z_1w_1+\dots z_nw_n|^2$. Now $\mathbb{C}^{2n}/\Gamma$ admits a crepant resolution $\pi: \mathcal{K}_{\mathbb{CP}^{2n-1}}\rightarrow \mathbb{C}^{2n}/\Gamma$. Thanks to the Calabi metric constructed in \cite{calabi1979metriques}, $c_1(\mathbb{CP}^{2n-1})$ is a compactly supported K\"ahler class of $\mathcal{K}_{\mathbb{CP}^{2n-1}}$. Applying Theorem \ref{theorem main theorem}, we get
\begin{proposition}\label{proposition Taubian-Calabi}
The Taubian-Calabi metric on $\mathbb{C}^{2n}$ is ALF, and there are ALF Calabi-Yau metrics on $\mathcal{K}_{\mathbb{CP}^{2n-1}}$ asymptotic to the Taubian-Calabi metric.

More precisely, for $n\geq 2$, parameterized by $a>0$, we get a family of Taubian-Calabi metric $g_a$ on $\mathbb{C}^{2n}$.
For the ALF Calabi metrics $\omega$ on $\mathcal{K}_{\mathbb{CP}^{2n-1}}$, they are parameterized by $a>0,c>0$ and $\varepsilon>0$ in the sense that $\omega$ is asymptotic to $c\omega_1^a$ and $[\omega] = \varepsilon c_1(\mathbb{CP}^{2n-1}) \in H^2_c(\mathcal{K}_{\mathbb{CP}^{2n-1}},\mathbb{R})$.
\end{proposition}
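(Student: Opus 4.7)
The plan is to apply Theorem \ref{theorem main theorem} directly, taking $S = \mathbb{S}^{4n-1}$ with the standard 3-Sasakian structure (viewing $\mathbb{C}^{2n} \cong \mathbb{H}^n$ as its hyperk\"ahler cone), and taking the locally free $\mathbb{S}^1$-symmetry to be the one described in Example \ref{C^2n}; freeness of this action on $\mathbb{S}^{4n-1}$ is read off the stabilizer computation at any $(z,w) \neq 0$. Part (i) of the theorem then immediately yields the first claim: the Taub-NUT deformations $g_a$, which by definition are the Taubian-Calabi metrics, are ALF for every $a > 0$.

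For the second claim I will verify the hypotheses of Theorem \ref{theorem main theorem}(ii) for the group $\Gamma = \mathbb{Z}_{2n}$ acting by $(z, w) \mapsto (\zeta_{2n} z, \zeta_{2n} w)$. The $I_1$-holomorphicity is immediate since the action is $\mathbb{C}$-linear for the standard complex structure on $\mathbb{C}^{2n}$, and $\Gamma$ commutes with the $\mathbb{S}^1$-symmetry by direct inspection of the coordinate formulas. Invariance of $K_1^a = \tfrac{1}{2}\rho^2 + a^2 x_1^2 + \tfrac{a^2}{2}(x_2^2 + x_3^2)$ under $\Gamma$ is then read off \eqref{x_1 of C2n} and \eqref{x_2 x_3 of C2n}: $\rho^2$ and $x_1$ are manifestly preserved, while $x_2 + i x_3$ gets multiplied by $\zeta_{2n}^2$, so $x_2^2 + x_3^2$ is preserved as well. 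Because $\Gamma$ commutes with the generator $T$ and with $I_1$, the flow map $\Phi_a$ of \eqref{Phia} is $\Gamma$-equivariant, so $K_{ALF}^a = (\Phi_a^{-1})^* K_1^a$ inherits $\Gamma$-invariance.

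Next I would identify $\pi : \mathcal{K}_{\mathbb{CP}^{2n-1}} \to \mathbb{C}^{2n}/\mathbb{Z}_{2n}$ as the standard $I_1$-holomorphic crepant resolution obtained by blowing up the origin: the exceptional divisor is a copy of $\mathbb{CP}^{2n-1}$ with normal bundle $\mathcal{O}(-2n)$, which coincides with $\mathcal{K}_{\mathbb{CP}^{2n-1}}$, and crepancy follows from $\zeta_{2n}^{2n} = 1$ preserving the holomorphic volume form $dz_1 \wedge \cdots \wedge dz_{2n}$. Calabi's construction \cite{calabi1979metriques} exhibits a Ricci-flat K\"ahler metric on $\mathcal{K}_{\mathbb{CP}^{2n-1}}$ in a positive multiple of $c_1(\mathbb{CP}^{2n-1})$, and this class lifts to compactly supported cohomology because the link $\mathbb{S}^{4n-1}/\mathbb{Z}_{2n}$ at infinity has $H^2 = 0$ for $n \geq 1$, so the long exact sequence forces $H^2_c(\mathcal{K}_{\mathbb{CP}^{2n-1}};\mathbb{R}) \to H^2(\mathcal{K}_{\mathbb{CP}^{2n-1}};\mathbb{R})$ to be surjective. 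With all hypotheses in place, Theorem \ref{theorem main theorem}(ii) produces, for each triple $(a, c, \epsilon)$, the desired ALF Calabi-Yau metric $\omega$ asymptotic to $c\omega_1^a$ and lying in $\epsilon c_1(\mathbb{CP}^{2n-1})$.

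Since the proposition is essentially a specialization of the main theorem, there is no genuinely hard step; the subtlest point worth writing out carefully is the $\Gamma$-invariance of $K_{ALF}^a$, which requires propagating invariance of $K_1^a$ through the $\mathbb{C}^*$-flow $\Phi_a$ using that $\Gamma$ commutes with the defining $\mathbb{S}^1$-symmetry. Everything else is a matching of known structures.
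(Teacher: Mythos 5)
Your proposal is correct and follows essentially the same route as the paper: identify the Taubian--Calabi metric as the Taub-NUT deformation of Example \ref{C^2n}, check that $\Gamma=\mathbb{Z}_{2n}$ commutes with the $\mathbb{S}^1$-symmetry and preserves $\rho^2$, $x_1$ and $x_2^2+x_3^2$ (hence $K_{ALF}^a$), identify $\mathcal{K}_{\mathbb{CP}^{2n-1}}$ as the crepant resolution carrying the Calabi class, and invoke Theorem \ref{theorem main theorem}. The only (harmless) difference is that you justify compact support of $\epsilon c_1(\mathbb{CP}^{2n-1})$ by the long exact sequence and the vanishing of $H^2$ of the link, whereas the paper simply cites the Calabi metric.
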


\begin{remark}\label{remark unit length}
A special feature of Example \ref{C^2n} is that $|T|^2 = \rho^2$, where $T$ is the generator of the $\mathbb{S}^1$-symmetry. In this case the ODE system \eqref{ODE new system1} \eqref{ODE new system2} can be resolved explicitly. As a consequence, most calculations in Subsection \ref{section Twist coordinates} can be made more explicit.
\end{remark}

\subsection{ALF Calabi-Yau metrics on crepant resolution of isolated singularity}
Here we give a generalization of Proposition \ref{proposition Taubian-Calabi}:
\begin{theorem}\label{theorem isolated singularity}
Let $T$ be the generator of a locally free $\mathbb{S}^1$-symmetry of hyperk\"ahler cone $\mathbb{C}^{2n} = \mathbb{H}^n$ for $n\geq 1$, with hyperk\"ahler moment maps $x_1,x_2,x_3$ defined by formula \eqref{xj}. Given $a>0$, the Taub-NUT deformation $(g_a,I_i^a,\omega_i^a)$ is an ALF hyperk\"ahler metric.

Assume that $n\geq 2$ and there is a finite group $\Gamma\subset \SU(2n)$ such that
\begin{itemize}
  \item For all $\gamma \in \Gamma$, $\gamma T = \pm T$;
  \item The $\Gamma$-action preserves $x_2^2 + x_3^2$;
  \item There exists an $I_1-$holomorphic crepant resolution $\pi: Y\rightarrow \mathbb{C}^{2n}/\Gamma$ of the isolated singularity $\mathbb{C}^{2n}/\Gamma$.
\end{itemize}
Then for any K\"ahler class in $Y$, any $a>0,c>0$, there exists an ALF Calabi-Yau metric in this class asymptotic to $c\omega_1^a$.
\end{theorem}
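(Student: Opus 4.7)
My plan is to deduce Theorem \ref{theorem isolated singularity} from Theorem \ref{theorem main theorem} by verifying the hypotheses of both parts in the setting $M = \mathbb{C}^{2n}$. The first assertion is immediate from Theorem \ref{theorem main theorem}(i): the unit sphere $\mathbb{S}^{4n-1}$ is a compact connected 3-Sasakian manifold whose hyperkähler cone is $\mathbb{C}^{2n}$, and the locally free $\mathbb{S}^1$-symmetry is given by hypothesis.

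For the second assertion, I would verify the two conditions on $\Gamma$ appearing in Theorem \ref{theorem main theorem}(ii). The first, $I_1$-holomorphicity of the $\Gamma$-action, is immediate from $\Gamma \subset SU(2n)$. The content lies in the second condition, namely $\Gamma$-invariance of the deformed potential $K_{ALF}^a = (\Phi_a^{-1})^* K_1^a$, which I would establish in two substeps. First, the potential $K_1^a = \tfrac{1}{2}\rho^2 + a^2 x_1^2 + \tfrac{a^2}{2}(x_2^2 + x_3^2)$ from Proposition \ref{K_1^1} is itself $\Gamma$-invariant: $\rho^2$ is preserved because $\Gamma \subset SU(2n)$ acts by Euclidean isometries, $x_2^2 + x_3^2$ is preserved by hypothesis, and for $x_1^2$ the defining relation $dx_1 = -\iota_T \omega_1$ combined with $\gamma^*\omega_1 = \omega_1$ and $\gamma T = \pm T$ yields $\gamma^* x_1 = \mp x_1$ (no integration constant, by homogeneity of degree $2$). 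Second, $\Phi_a$ is $\Gamma$-equivariant: recall $\Phi_a(m) = (a^2 x_1(m)) \cdot m$, where the $\mathbb{R}$-action is the flow of $-I_1 T$. When $\gamma T = T$, $\gamma$ commutes with this flow by $I_1$-holomorphicity and $\gamma^* x_1 = x_1$, so $\Phi_a(\gamma m) = \gamma\,\Phi_a(m)$ directly. When $\gamma T = -T$, the pushforward $\gamma_*(-I_1 T) = -I_1(\gamma_* T) = I_1 T$ reverses the flow direction while simultaneously $\gamma^* x_1 = -x_1$, and the two sign flips cancel to give the same equivariance. Combining the two substeps yields the $\Gamma$-invariance of $K_{ALF}^a$.

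To finish, I would check that every Kähler class on $Y$ admits a compactly supported representative, which is the formulation actually needed for Theorem \ref{theorem main theorem}(ii). Since $\Gamma$ acts freely off the origin, the complement $Y \setminus E$ of the exceptional set is homotopy equivalent to the spherical space form $\mathbb{S}^{4n-1}/\Gamma$, whose real cohomology vanishes in degree $2$ (as $4n-1 \geq 3$). Hence every class in $H^2(Y;\mathbb{R})$ restricts to zero on $Y \setminus E$, and a $\partial\bar\partial$-lemma argument analogous to the one used in Lemma \ref{lemma hat omega} supplies a representative supported in a compact neighborhood of $E$. Applying Theorem \ref{theorem main theorem}(ii) then produces the desired ALF Calabi-Yau metric. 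The most delicate point of the whole argument is the equivariance of $\Phi_a$ when $\gamma T = -T$: one must track how the reversal of the $(-I_1 T)$-flow under $\gamma$ exactly cancels the sign flip in $x_1$, a cancellation that is forced by the $I_1$-holomorphicity of $\gamma$ but requires some bookkeeping to state cleanly.
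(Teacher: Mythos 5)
Your proposal is correct and follows essentially the same route as the paper's (very terse) proof: deduce everything from Theorem \ref{theorem main theorem} by checking that $\Phi_a$ is $\Gamma$-equivariant (the sign flip in $x_1$ cancelling against the reversal of the $(-I_1T)$-flow when $\gamma T=-T$) and that every K\"ahler class on a crepant resolution of an isolated singularity is compactly supported, i.e.\ $H^2_c(Y,\mathbb{R})=H^2(Y,\mathbb{R})$. One small internal inconsistency worth fixing: in your first substep you write $\gamma^*x_1=\mp x_1$ for $\gamma T=\pm T$, whereas the correct relation is $\gamma^*x_1=\pm x_1$ with the \emph{same} sign --- which is in fact the relation you use where it matters, both for the invariance of $x_1^2$ and for the cancellation in the equivariance of $\Phi_a$.
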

\begin{proof}
We note that $\gamma T = \pm T$ implies that $x_1$ is either preserved or reversed by $\gamma$, in the latter case, the vector field $-I_1T$ is also reversed, so after all the map $\Phi_a$ is $\Gamma$-invariant. Consequently, this group action satisfies the assumption of Theorem \ref{theorem main theorem}. To apply theorem \ref{theorem main theorem}, it suffices to note that crepant resolution $Y$ of isolated singularity $\mathbb{C}^{2n}/\Gamma$ satisfies $H^2_c(Y,\mathbb{R}) = H^2(Y,\mathbb{R})$.
\end{proof}
As an application of Theorem \ref{theorem isolated singularity}, we have
\begin{example}\label{example C^2n generalized}
For $a_1,\dots,a_n\in\mathbb{Z}$, consider the following $\mathbb{S}^1$-action on $\mathbb{S}^{4n-1}$:
\begin{align}\label{S1 action on Hn}
e^{it}\cdot(u_1,\dots,u_n) = (u_1e^{ia_1t},\dots, u_ne^{ia_nt}).
\end{align}
In the above formula, we think of $\mathbb{S}^{4n-1}\subset\mathbb{H}^n$ and let $(u_1,\dots,u_n)\in \mathbb{H}^n$ be coordinates. It is clear that this action is locally free if and only if $a_\alpha \neq 0$ for all $\alpha = 1,\dots,n$. Applying the first part of Theorem \ref{theorem isolated singularity}, we get infinitely many different ALF metrics on $\mathbb{C}^{2n}$. To see that they are potentially different, we look at the concrete case where $n=2$, $a_1 = 1$, $a_2 = k$ for $k\geq 1$. Then $M_0 = \mu^{-1}(0)/\mathbb{S}^1 = \mathbb{C}^2/\mathbb{Z}_{k+1}$, so by Proposition \ref{asymptotic cone of g_a} the asymptotic cone of the ALF metric is $\mathbb{C}^2/\mathbb{Z}_{k+1} \times \mathbb{R}^3$. Here we mention that the orbifold $M_0$ for different choices of $n\geq 3$ and $a_1,\dots,a_n$ is very carefully studied in \cite{BGM1}.
\end{example}

Now we discuss a special case of the previous example.

\begin{example}\label{example EH blown down}
In the previous example, if we fix $n=2$, $a_1 = 1$, $a_2 = 1$, then $M_0 = \mu^{-1}(0)/\mathbb{S}^1 = \mathbb{C}^2/\mathbb{Z}_{2}$ and in fact for $x_1>0$ we know that $M_{x_1,0,0} = \mu^{-1}(x_1,0,0)/\mathbb{S}^1$ is the Eguchi-Hanson space. To see this, one may apply Proposition \ref{proposition omega_1_x_1,0,0} and equation \eqref{f_x_1,0,0} to find its K\"ahler potential (pushed to $M_0$) is given by
\begin{align}
    f_{x_1, 0, 0}=\sqrt{r^4+x_1^2}+2x_1\op{log}r - x_1\op{log}(\sqrt{r^4+x_1^2}+x_1).
\end{align}
here $r$ is the function on $M_0$ defined by $2r^2 = \rho^2$ and one recognizes that this is the potential of the Eguchi-Hanson metric on $T^*\mathbb{CP}^1$. The formula can be given explicitly because $|T|^2_{g_0} = \rho^2$, see Remark \ref{remark unit length}. In this case, $ED_{x_1,0,0}/\mathbb{S}^1$ is the zero section of $T^*\mathbb{CP}^1$ and is of size $\sqrt{x_1}$. Since $g_{x_1,0,0}$ is isometric to $x_1g_{1,0,0}$, the $C^0$-norm of its sectional curvature is of the order $\frac{1}{x_1}$. Using explicit formula, one can show that the metric $\Psi^*g_a$ is globally dominated by
\begin{align}
    a^2\sum_{j = 1}^3(dx_j)^2 + g_x + \frac{1}{a^2}\eta^2.
\end{align}
Along the singular direction $\{\Psi(m,x)\mid m=0\in\mu^{-1}(0),x\in \mathbb{R}^3\}$, $\rho_a$ is of order $|x|$, so the sectional curvature of $g_x$ is of order $\frac{1}{\rho_a}$, which intuitively explains the $O(\frac{1}{\rho_a})$ decay of the curvature. Also, when we blow down the metric $\lambda g$ by letting $\lambda\rightarrow 0$, the Eguchi-Hanson metric along the fiber over singular direction is blown down to $\mathbb{C}^2/\mathbb{Z}_2$ (see also Proposition \ref{proposition M_x_1,0,0 resolves M_0,0,0}), forming the singular locus $\{0\}\times \mathbb{R}^3$ in the asymptotic cone $(\mathbb{C}^2/\mathbb{Z}_2)\times \mathbb{R}^3$. In summary, both the $O(\frac{1}{\rho_a})$ decay of the curvature and the singularity of asymptotic cone are consequences of the slow growth rate of $g_x$.
\end{example}

Then we briefly discuss the case of $n = 1$.
\begin{example}\label{example n=1}
According to Kronheimer \cite{Kronheimer1}, it is known that any isolated singularity of type $\mathbb{C}^2/\Gamma$ admits an ALF hyperk\"ahler crepant resolution. It can be verified directly that when $\Gamma$ is the cyclic group $\mathbb{Z}_k$ for $k\geq 1$ or the binary dihedral group $D_k$ of order $4(k-2)$ ($k\geq 3$), then $\Gamma$ satisfies the assumption of Theorem \ref{theorem isolated singularity}. However we cannot apply the theorem because we assumed $n\geq 2$ in the theroem. Checking the proof of the theorem, we find that the only place we use the assumption $n\geq 2$ is Lemma \ref{lemma hat omega}, and we use it to produce $\hat{\omega}$. For this kind of ``Kummer construction'' of 4-dimensional ALF metrics, we refer to \cite{biquard2011kummer}. In their works, the construction of approximately Ricci-flat metric is more delicate.
\end{example}

Finally we turn to higher-dimensional crepant resolutions.
\begin{example}\label{example K_CP^2n-1 generalized}
We have already seen in Proposition \ref{proposition Taubian-Calabi} that $\mathcal{K}_{\mathbb{CP}^{2n-1}}$ is a crepant resolution of $\mathbb{C}^{2n}/\mathbb{Z}_{2n}$ admitting K\"ahler metrics. So for any choices $a_1,\dots,a_n\in \mathbb{Z}\setminus\{0\}$, the $\mathbb{S}^1$-symmetry defined by \eqref{S1 action on Hn} leads to new ALF Calabi-Yau metrics on $\mathcal{K}_{\mathbb{CP}^{2n-1}}$.
\end{example}

In general, it is difficult to determine whether there is a crepant resolution of $\mathbb{C}^{2n}/\Gamma$ for $n\geq 2$.
\begin{example}
In \cite[Theorem 3.1]{crepantHIS}, four types of finite subgroup $G\subset \SU(4)$ are defined, and it is claimed that $\mathbb{C}^4/G$ admits crepant resolution. Let $\mathbb{S}^1$
acts on $\mathbb{C}^4=\mathbb{H}^2$ by \eqref{S1 action on Hn} with $a_1,a_2\neq 0$, then it can be shown that the group $G$ of type $(i)$ does not preserve $x_2^2+x_3^2$, but the group of types $(ii),(iii),(iv)$ satisfies the assumption of Theorem \ref{theorem isolated singularity}.
It follows that there exist ALF Calabi-Yau metrics in every K\"ahler class (if there is any) of the corresponding crepant resolution.
\end{example}

\subsection{ALF Calabi-Yau metrics on the canonical bundle of the twistor space of the regular 3-Sasakian manifold}
We give another generalization of Proposition \ref{proposition Taubian-Calabi}.
\begin{theorem}\label{theorem twistor space}
Assumption as in Theorem \ref{theorem main theorem}(i). Assume furthermore that $S$ is regular and let $\mathcal{K}_Z$ be the canonical bundle of the twistor space $Z$ of $S$. Then for any $a>0,c>0, \varepsilon>0$, there exists an ALF Calabi-Yau metric defined on $\mathcal{K}_Z$ in the class $\varepsilon c_1(Z)$ which is asymptotic to $c\omega_1^a$.
\end{theorem}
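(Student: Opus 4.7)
The strategy is to apply Theorem \ref{theorem main theorem}(ii), so I must exhibit a finite group $\Gamma$ acting on $S$ satisfying its two bullet hypotheses, identify $\mathcal{K}_Z$ as an $I_1$-holomorphic crepant resolution of $M/\Gamma$, and verify that $\epsilon c_1(Z)$ can be represented by a compactly supported K\"ahler form.

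Since $S$ is regular, the Reeb field $\xi_1$ generates a free $\mathbb{S}^1$-action on $S$. Its $I_1$-complexification on $M$ is generated by $\xi_1$ and $\rho\frac{\partial}{\partial\rho}=-I_1\xi_1$, giving a free $\mathbb{C}^*$-action on $M\setminus\{0\}$ with quotient $Z$. Thus $M\setminus\{0\}$ is the complement of the zero section in a negative holomorphic line bundle $\mathcal{L}\to Z$, and $M$ itself is the affine cone obtained from $\mathcal{L}$ by contracting the zero section $Z$ to the vertex. Under the dilation $\rho\mapsto\lambda\rho$ the K\"ahler forms $\omega_i$ scale by $\lambda^2$, so the holomorphic symplectic form $\Omega_M=(\omega_2+i\omega_3)^n$ has weight $2n$ under the $\mathbb{C}^*$-action. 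The cyclic subgroup $\Gamma=\mathbb{Z}_{2n}\subset\mathbb{S}^1_{\xi_1}$ therefore preserves $\Omega_M$, and this forces the line-bundle identity $\mathcal{K}_Z\cong\mathcal{L}^{\otimes 2n}$. The total space of $\mathcal{K}_Z$, viewed as a line bundle over $Z$, is precisely the $\mathbb{Z}_{2n}$-quotient of $\mathcal{L}$, so collapsing its zero section yields $M/\Gamma$; the induced map $\pi:\mathcal{K}_Z\to M/\Gamma$ is the desired $I_1$-holomorphic crepant resolution.

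I next check the two bullet hypotheses on $\Gamma$. Since $\Gamma\subset\mathbb{S}^1_{\xi_1}$ acts by K\"ahler isometries of $(M,g_0,I_1)$, it is $I_1$-holomorphic. By Proposition \ref{Sp(1)-equivariance, generalized}, $L_{\xi_1}x_1=0$, $L_{\xi_1}x_2=-2x_3$ and $L_{\xi_1}x_3=2x_2$, so $\xi_1$ preserves $x_1$ and $x_2^2+x_3^2$, hence preserves $K_1^a=\tfrac{1}{2}\rho^2+a^2x_1^2+\tfrac{a^2}{2}(x_2^2+x_3^2)$. Because $[\xi_1,T]=0$, $\xi_1$ is $I_1$-holomorphic, and $L_{\xi_1}x_1=0$, the flow of $\xi_1$ commutes with the biholomorphism $\Phi_a(m)=(a^2x_1(m))\cdot m$ of Proposition \ref{complex sturcture}; consequently $\xi_1$, and therefore the finite subgroup $\Gamma$, preserves $K_{ALF}^a=K_1^a\circ\Phi_a^{-1}$. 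Finally, $Z$ is Fano (it carries a K\"ahler--Einstein metric in the class $c_1(Z)$), so the Calabi ansatz on the total space of $\mathcal{K}_Z$ produces, for every $\epsilon>0$, a compactly supported K\"ahler form in the class $\epsilon c_1(Z)\in H^2_c(\mathcal{K}_Z,\mathbb{R})$ which outside a compact set agrees with the conical K\"ahler form on $\mathcal{K}_Z$. With all hypotheses verified, Theorem \ref{theorem main theorem}(ii) immediately yields the desired ALF Calabi-Yau metric.

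The most delicate step is the global identification $\mathcal{K}_Z\cong\mathcal{L}^{\otimes 2n}$ together with the verification that $\pi:\mathcal{K}_Z\to M/\Gamma$ is genuinely crepant; both rest on the weight computation for $\Omega_M$, which pins down the index of $\Gamma$ in $\mathbb{S}^1_{\xi_1}$ as precisely $2n$. Once this numerology is secured the remaining checks are routine and all the analytic content is absorbed by Theorem \ref{theorem main theorem}(ii).
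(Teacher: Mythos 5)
Your overall strategy (reduce to Theorem \ref{theorem main theorem}(ii) with $\Gamma$ a cyclic subgroup of the Reeb circle $\mathbb{S}^1_{\xi_1}$, and use the Calabi ansatz to produce a compactly supported representative of $\epsilon c_1(Z)$) is the same as the paper's, and your verification of the two bullet hypotheses for such a subgroup is correct and in fact more detailed than the paper's. The gap is in the step you yourself single out as delicate: the identification $\mathcal{K}_Z\cong\mathcal{L}^{\otimes 2n}$ and the resulting choice $\Gamma=\mathbb{Z}_{2n}$. Regularity of $S$ does \emph{not} imply that the $\mathbb{S}^1$-action generated by $\xi_1$ is free: for regular $3$-Sasakian manifolds other than the sphere and its quotients the central element $-1\in Sp(1)$ acts trivially on $S$ (for instance, for $S=SU(m)/S(U(m-2)\times U(1))=N(\mathbb{C})/\mathbb{S}^1$ one has $-u=u\,e^{i\pi}$, so $-1$ already lies in the circle being quotiented), and only $\mathbb{S}^1_{\xi_1}/\mathbb{Z}_2$ acts freely. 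Consequently $M\setminus\{0\}$ is the $\mathbb{C}^{*}$-bundle of a line bundle $\tilde{\mathcal{L}}$ against which $\Omega_M$ has weight $n$, not $2n$; the weight computation then yields $\mathcal{K}_Z\cong\tilde{\mathcal{L}}^{\otimes n}$, and $\tilde{\mathcal{L}}$ need not admit a square root. Concretely, for $S=SU(3)/S(U(1)\times U(1))$ (so $n=2$) the twistor space is the full flag threefold $SU(3)/T$, whose Fano index is $2$, so $\mathcal{K}_Z$ is not the fourth power of any line bundle and your identity fails. This is exactly why the paper takes $\Gamma=\mathbb{Z}_{I(Z)}$ with $I(Z)$ the Fano index and quotes the identification $C(S)/\mathbb{Z}_{I(Z)}\cong\mathcal{K}_Z^{\times}$ from \cite{boyer2008sasakian}, rather than deriving the divisibility from the weight of $\Omega_M$ alone.

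The construction is likely repairable: the stabilizer $\mathbb{Z}_{2n}\subset\mathbb{S}^1_{\xi_1}$ of $\Omega_M$ contains the trivially acting $\mathbb{Z}_2$, so its effective image is $\mathbb{Z}_n$ and $M/\mathbb{Z}_{2n}=M/\mathbb{Z}_n$, which agrees with $M/\mathbb{Z}_{I(Z)}$ whenever $I(Z)=n$. But making this rigorous requires precisely the effective-versus-ineffective bookkeeping, the dichotomy between $I(Z)=n$ and $I(Z)=2n$, and the treatment of non-simply-connected $S$ (where the paper replaces $I(Z)$ by a divisor) that your argument omits. As written, the identification of $\pi:\mathcal{K}_Z\to M/\Gamma$ as a crepant resolution --- the load-bearing step --- is not established.
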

\begin{proof}
For simplicity, we first assume that $S$ is simply connected. Let $I(Z)$ be the Fano index of $Z$. It is known (see \cite{boyer2008sasakian}) that we can identify $C(S)/\mathbb{Z}_{I(Z)}$ with $\mathcal{K}_Z^\times$, the complement of its zero section in the total space of the canonical bundle $\mathcal{K}_Z$, as a smooth manifold. Here $\mathbb{Z}_{I(Z)}$ acts on $C(S)$ as a cyclic subgroup of the Reeb $\mathbb{S}^1$-action generated by $\xi_1$, and $\mathcal{K}_Z \rightarrow \mathcal{K}_Z^\times = M/\mathbb{Z}_{I(Z)}$ is a crepant resolution. Next one verifies that $\xi_1$ commutes with $T$ and preserves $x_2^2 + x_3^2$ (see Proposition \ref{Sp(1)-equivariance, generalized}), so is $\mathbb{Z}_{I(Z)}$. Since the class of the Calabi metric is compactly supported, we may apply Theorem \ref{theorem main theorem}(ii) to $\Gamma = \mathbb{Z}_{I(Z)}$ and $\pi:\mathcal{K}_Z\rightarrow M/\mathbb{Z}_{I(Z)}$.

For more general cases where $S$ is not simply connected, it suffices to replace $I(Z)$ by a divisor of $I(Z)$.
\end{proof}
Note that homogeneous 3-Sasakian manifolds are regular. (In fact, it is conjectured that the converse is also true.) So we may apply Theorem \ref{theorem twistor space} to homogeneous 3-Sasakian manifolds. To begin with, in the case of $S = \mathbb{S}^{4n-1} = \Sp(n)/\Sp(n-1)$, we have $M = C(S) = \mathbb{H}^n$ and $Z = \mathbb{CP}^{2n-1}$, so we have already discussed it in Example \ref{example K_CP^2n-1 generalized}.

Next we consider the case of $S = \SU(m)/\op{S}(\U(m-2)\times \U(1))$ for $m\geq 3$.
\begin{example}
For $m\geq 3$, the 3-Sasakian quotient of $\mathbb{S}^{4m-1}$ with respect to the action \eqref{S1 action on Hn} with $a_1=\dots = a_m =1$ is known to be the homogeneous 3-Sasakian manifold $\SU(m)/\op{S}(\U(m-2)\times \U(1))$. We will explain it briefly following \cite[Section 6]{BGM1}.

The moment maps $\mu_1,\mu_2,\mu_3$ of this action are given by \eqref{x_1 of C2n} and \eqref{x_2 x_3 of C2n}. Let $N(\mathbb{C}) = \mu^{-1}(0)\cap \mathbb{S}^{4n-1}$, then
\begin{align}
N(\mathbb{C}) = \Big\{(z,w)\in \mathbb{C}^m\times\mathbb{C}^m \mathrel{\Big|} |z|^2 = |w|^2 = \frac{1}{2}, \sum_{\alpha = 1}^mz_\alpha w_\alpha = 0 \Big\}.
\end{align}
From this we see that $N(\mathbb{C})$ can be identified with the complex Stiefel manifold $V^\mathbb{C}_{m,2}$. Thinking of the $\mathbb{S}^1$-action as a subgroup of the $\Sp(m)$-action, then by \cite[Lemma 6.13]{BGM1}, its centralizer in $\Sp(m)$ is $\U(m)$. The restriction of the $\U(m)$-action on $N(\mathbb{C})$ gives $N(\mathbb{C}) = \U(m)/\U(m-2)$.

The 3-Sasakian quotient of $\mathbb{S}^{4m-1}$ with respect to the $\mathbb{S}^1$-action is the smooth manifold $N(\mathbb{C})/\mathbb{S}^1 = \U(m)/(\U(m-2)\times \U(1)) = \SU(m)/\op{S}(\U(m-2)\times \U(1))$. The $\U(m)$-action on $N(\mathbb{C})$ descends to the quotient, making $\SU(m)/\op{S}(\U(m-2)\times \U(1))$ a homogeneous 3-Sasakian manifold.

Now consider an $\mathbb{S}^1-$subaction of this $\U(m)$-action, after a conjugation in $\U(m)$, we may assume that the $\mathbb{S}^1$-action is given by the diagonal matrix
\begin{align}\label{S1 action on N(C)}
\op{diag}(e^{ia_1t},\dots, e^{ia_mt}),
\end{align}
here $a_\alpha\in\mathbb{Z}$.
\end{example}

\begin{proposition}\label{proposition locally freeness of SU(m)}
The $\mathbb{S}^1$-action given by \eqref{S1 action on N(C)} on $\SU(m)/\op{S}(\U(m-2)\times \U(1))$ is locally free if and only if all $a_\alpha$ are distinct.
\end{proposition}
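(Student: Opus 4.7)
The plan is to reduce the question to a pointwise linear condition on the two commuting infinitesimal $\mathbb{S}^1$-generators. First I would make explicit how $U(m) \subset Sp(m)$ acts on the coordinates $(z, w)$ of $\mathbb{H}^m = \mathbb{C}^m \oplus \mathbb{C}^m j$. A direct quaternionic computation, combined with the identity $B^T\bar{B} = I$ for $B \in U(m)$, shows that the correct formula is $B \cdot (z, w) = (Bz,\, \bar{B}w)$; indeed this is the only formula of this shape that preserves the complex moment map $x_2 + ix_3 = -i\sum_\alpha z_\alpha w_\alpha$ cutting out $N(\mathbb{C})$. Specializing to the diagonal torus yields
\begin{align*}
D(t) \cdot (z_\alpha, w_\alpha) = \bigl(e^{ia_\alpha t} z_\alpha,\, e^{-ia_\alpha t} w_\alpha\bigr),
\end{align*}
so the infinitesimal generator of the new action on $N(\mathbb{C})$ is $T = (ia_\alpha z_\alpha,\, -ia_\alpha w_\alpha)_\alpha$, while the generator of $\mathbb{S}^1_{\text{old}}$ (the action quotiented out to form the 3-Sasakian manifold) is $T_{\text{old}} = (iz, -iw)$.

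Next I would translate local freeness of the descended action into a pointwise condition on $T$. The action on $N(\mathbb{C})/\mathbb{S}^1_{\text{old}} \cong SU(m)/S(U(m-2)\times U(1))$ is locally free iff its generating vector field is nowhere zero, equivalently iff there is no $(z,w) \in N(\mathbb{C})$ and no $c \in \mathbb{R}$ with $T(z,w) = c\, T_{\text{old}}(z,w)$. Componentwise, $T(z,w) = c\, T_{\text{old}}(z,w)$ is equivalent to $(a_\alpha - c)z_\alpha = 0 = (a_\alpha - c)w_\alpha$ for every $\alpha$, i.e.\ $a_\alpha = c$ for every index in the support $S := \{\alpha : z_\alpha \neq 0 \text{ or } w_\alpha \neq 0\}$.

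Finally I would exploit the observation that $|S| \geq 2$ at every point of $N(\mathbb{C})$: if $S = \{\alpha_0\}$ then $\sum_\gamma z_\gamma w_\gamma = z_{\alpha_0} w_{\alpha_0} = 0$ forces $z_{\alpha_0} = 0$ or $w_{\alpha_0} = 0$, contradicting $|z|^2 = |w|^2 = 1/2$. Hence if all $a_\alpha$ are distinct, no constant $c$ can agree with $a_\alpha$ on two indices simultaneously, the parallelism condition fails everywhere, and the action is locally free. Conversely, given $\alpha \neq \beta$ with $a_\alpha = a_\beta =: c$, the point $(z, w) = \bigl(\tfrac{1}{\sqrt{2}} e_\alpha,\, \tfrac{1}{\sqrt{2}} e_\beta\bigr)$ has disjoint supports, so it lies in $N(\mathbb{C})$, and $T = c\, T_{\text{old}}$ there, giving a full $\mathbb{S}^1$-isotropy. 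The only delicate point is pinning down the complex conjugation on the $w$-factor in the first paragraph: this is what makes distinctness of the $a_\alpha$ (rather than, say, the weaker condition $a_\alpha + a_\beta \neq 0$) the correct criterion.
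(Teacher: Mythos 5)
Your proof is correct and follows essentially the same route as the paper's: reduce local freeness to the generator $T'$ of the new action being nowhere parallel to the generator $T$ of the old one on $N(\mathbb{C})$, show that parallelism at a point forces all weights on the quaternionic support to coincide, and observe that the constraints defining $N(\mathbb{C})$ force that support to have at least two elements. The only cosmetic difference is that you verify the parallelism condition componentwise in the complex coordinates $(z,w)$, whereas the paper phrases it as the equality case of Cauchy--Schwarz via the Lagrange identity $|T'|^2|T|^2-|g_0(T,T')|^2=\sum_{\alpha<\beta}(a_\alpha-a_\beta)^2|u_\alpha|^2|u_\beta|^2$; both yield the same criterion.
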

\begin{proof}
Denote by $T^\p$ the generator of the $\mathbb{S}^1$-action given by \eqref{S1 action on N(C)} on $\mathbb{H}^m$, then $T^\p = (a_1u_1i,\dots, a_mu_mi)$. And define $T = ui$. Then $T^\p$ descends to a non-vanishing vector field on $N(\mathbb{C})/\mathbb{S}^1$ if and only if $T^\p$ is not parallel to $T$ on $N(\mathbb{C})$, which is equivalent to
\begin{align}
|g_0(T^\p,T)|^2 < |T^\p|^2|T|^2.
\end{align}
Now $|T|^2 = \sum_{\alpha = 1}^m|u_\alpha|^2$, $|T^\p|^2 = \sum_{\alpha = 1}^ma_\alpha^2|u_\alpha|^2$, and $g_0(T^\p,T) = \sum_{\alpha=1}^ma_\alpha|u_\alpha|^2$.
Direct calculation shows that
\begin{align}\label{Cauchy inequality difference}
|T^\p|^2|T|^2 - |g_0(T,T^\p)|^2 = \sum_{1\leq \alpha < \beta \leq m}(a_\alpha - a_\beta)^2|u_\alpha|^2|u_\beta|^2.
\end{align}
If there exists $\alpha\neq \beta$ such that $a_\alpha = a_\beta$, then there exists $u_\alpha,u_\beta\in\mathbb{H}$ such that $|u_\alpha|^2 + |u_\beta|^2 = 1$, $z_\alpha w_\alpha + z_\beta w_\beta = 0$, $\frac{1}{2}(|z_\alpha|^2 - |w_\alpha|^2) + \frac{1}{2}(|z_\beta|^2 - |w_\beta|^2) = 0$, so $(0,\dots,0,u_\alpha,0,\dots, 0, u_\beta, 0, \dots, 0)\in N(\mathbb{C})$ and $|g_0(T^\p,T)|^2 = |T^\p|^2|T|^2$ at this point. Here we identify $u_\alpha$ with $z_\alpha + w_\alpha j$

Conversely, if all the $a_\alpha$ are distinct, then we claim that $|g_0(T^\p,T)|^2 < |T^\p|^2|T|^2$. For if $|g_0(T^\p,T)|^2 = |T^\p|^2|T|^2$, then by \eqref{Cauchy inequality difference}, there is at most one nonzero $u_\alpha$, and $(0,\dots,0,u_\alpha,0,\dots,0)\in N(\mathbb{C})$ implies that $|u_\alpha|^2 = 1$, $z_\alpha w_\alpha = 0$ and $|z_\alpha|^2 = |w_\alpha|^2$ so $u_\alpha = 0$, which is impossible.
\end{proof}
Applying Theorem \ref{theorem twistor space}, we get
\begin{proposition}
There exist ALF hyperk\"ahler metrics on the cone over $S = \SU(m)/\op{S}(\U(m-2)\times \U(1))$ and ALF Calabi-Yau metrics on the canonical bundle of its twistor space.
\end{proposition}

Then we consider the case of $S = \SO(l)/(\SO(l-4)\times \Sp(1))$ for $l\geq 5$.
\begin{example}
For $l\geq 5$, it is known that the 3-Sasakian quotient of $\mathbb{S}^{4l-1}$ with respect to the right multiplication by $\Sp(1)$ is the homogeneous 3-Sasakian manifold $S = \SO(l)/(\SO(l-4)\times \Sp(1))$. Again we shall follow \cite[Section 6]{BGM1}.

For $u=(u_1,\dots,u_l)\in \mathbb{H}^l$, write $u_\alpha = u_\alpha^0 + u_\alpha^1i + u_\alpha^2j + u_\alpha^3k$. For $p = 0,1,2,3$, write $u^p = (u_1^p,\dots,u_k^p)^t\in \mathbb{R}^l$. Let $\mu:\mathbb{H}^l\rightarrow \mathfrak{sp}(1)^*\otimes \mathbb{R}^3$ be the hyperk\"ahler moment map, and let $N(\mathbb{H}) = \mu^{-1}(0)\cap \mathbb{S}^{4l-1}$. Then we have
\begin{align}\label{N_H}
N(\mathbb{H}) = \Big\{u\in \mathbb{H}^l \mathrel{\Big|}|u^0|^2 = |u^1|^2 = |u^2|^2 = |u^3|^2 = \frac{1}{4}, g_0(u^p,u^q) = \delta_{pq}\Big\}.
\end{align}
From this one sees that $N(\mathbb{H})$ can be identified with the real Stiefel manifold $V^\mathbb{R}_{l,4}$. Thinking of the $\Sp(1)$-action as the diagonal subgroup of the $\Sp(l)$-action by right multiplication, according to \cite[Lemma 6.13]{BGM1}, its centralizer in $\Sp(l)$ is $\op{O}(l)$, and the restriction of the $\op{O}(l)$-action on $N(\mathbb{H})$ gives $N(\mathbb{H}) = \op{O}(l)/\op{O}(l-4) = \SO(l)/\SO(l-4)$.

The 3-Sasakian quotient of $\mathbb{S}^{4l-1}$ with respect to the $\Sp(1)$-action is $N(\mathbb{H})/\Sp(1) = \SO(l)/(\SO(l-4)\times \Sp(1))$. The $\op{O}(l)$-action on $N(\mathbb{H})$ descends to the quotient, making the quotient a homogeneous 3-Sasakian manifold.

Now consider an $\mathbb{S}^1$-subaction on $\SO(l)/(\SO(l-4)\times \Sp(1))$, after a conjugation in $\op{O}(l)$, we may assume that the infinitesimal generator of the action is
\begin{align}\label{S1 action on N(H)}
\left(
\begin{matrix}
\left(
\begin{matrix}
    & b_1  \\
   -b_1 &
\end{matrix}
\right) &            & & \\
        &  \ddots    & & \\
        &            &  \left(
                        \begin{matrix}
                            & b_s  \\
                        -b_s &
                        \end{matrix}
                        \right) & \\
        &           &  & 0
\end{matrix}
\right)
\end{align}
if $l=2s + 1$ is odd. While if $l = 2s$ is even, then there is no $0$ in the last line. Here $b_\beta\in \mathbb{Z}$ for $\beta = 1,\dots,s$.
\end{example}
\begin{proposition}\label{proposition locally freeness of SO(l)}
The $\mathbb{S}^1$-action generated by \eqref{S1 action on N(H)} is locally free on the 3-Sasakian manifold $\SO(l)/(\SO(l-4)\times \Sp(1))$ if and only if all the $|b_\beta|$ are distinct.
\end{proposition}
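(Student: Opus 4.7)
The plan is to adapt the strategy of Proposition \ref{proposition locally freeness of SU(m)}. I would first characterize local freeness as an algebraic condition. Let $A$ denote the block matrix in \eqref{S1 action on N(H)}, viewed as a real matrix acting componentwise on $u\in\mathbb{H}^l$, so that the generator of the $\mathbb{S}^1$-action on $\mathbb{H}^l$ is $T^\p(u)=Au$. Since $A\in\mathfrak{so}(l)$ commutes with right multiplication by $\mathbb{H}$ and preserves $N(\mathbb{H})$, the $\mathbb{S}^1$-action on the quotient $N(\mathbb{H})/Sp(1)$ is locally free if and only if there is no pair $(u,q)$ with $u\in N(\mathbb{H})$ and $q\in\operatorname{Im}(\mathbb{H})\cup\{0\}$ satisfying the componentwise equation $Au=uq$; here $q=0$ covers the case $T^\p(u)=0$ and $q\neq 0$ covers $T^\p(u)\in\operatorname{span}_\mathbb{R}(ui,uj,uk)\setminus\{0\}$.

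Next I would localize this equation to each $2\times 2$ block. Writing $v=u_{2\beta-1}$, $w=u_{2\beta}$, the block equation reads $b_\beta w=vq$ and $-b_\beta v=wq$; assuming $b_\beta\neq 0$ and $(v,w)\neq 0$, elimination gives $v(q^2+b_\beta^2)=0$, and since $q^2=-|q|^2\in\mathbb{R}$ commutes with $v$, this forces $|q|=|b_\beta|$. If $b_\beta=0$ and $q\neq 0$, then $vq=wq=0$ forces $v=w=0$. In the odd case $l=2s+1$, the last-row equation $0=u_l q$ forces $u_l=0$ whenever $q\neq 0$.

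For the sufficient direction, assume all $|b_\beta|$ are distinct. If $q\neq 0$, then $u$ is supported only on the unique pair $\beta$ with $|b_\beta|=|q|$ (and $u_l=0$ when $l$ is odd), hence on at most $2$ real coordinates. If $q=0$, then $Au=0$ together with distinctness allows at most one $b_\beta$ to vanish, so $u$ is supported on at most that one pair plus $u_l$, i.e., on at most $3$ real coordinates. In either case, the four vectors $u^0,u^1,u^2,u^3\in\mathbb{R}^l$ all lie in a common subspace of dimension $\leq 3$, which contradicts the requirement from \eqref{N_H} that they be four pairwise orthogonal vectors of equal positive norm $\tfrac{1}{2}$. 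Hence the action is locally free.

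For the converse, given $|b_{\beta_1}|=|b_{\beta_2}|$ with $\beta_1\neq\beta_2$, I would exhibit an explicit witness: take $q=b_{\beta_1}i$ and set $u$ supported on the two pairs by
\begin{align*}
u_{2\beta_1-1}=\tfrac{1}{2},\quad u_{2\beta_1}=\tfrac{i}{2},\quad u_{2\beta_2-1}=\tfrac{j}{2},\quad u_{2\beta_2}=-\tfrac{b_{\beta_1}}{b_{\beta_2}}\cdot\tfrac{k}{2},
\end{align*}
zero elsewhere. One verifies directly that the $u^p$'s each have norm $\tfrac{1}{2}$ and are supported on pairwise disjoint coordinates (hence $u\in N(\mathbb{H})$), and a routine quaternionic computation using $ji=-k$, $ki=j$ confirms $Au=uq$. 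Thus $T^\p(u)$ is tangent to the $Sp(1)$-orbit, and local freeness fails. The main book-keeping obstacle is the case analysis in the sufficient direction — in particular, the split between $l$ even versus $l$ odd and the possibility of a single vanishing $b_\beta$; the rest is quaternionic algebra modelled on the $SU(m)$ case.
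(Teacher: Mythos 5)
Your proof is correct in substance but takes a genuinely different route from the paper's. The paper never solves an equation: it tests parallelism of $T^\p$ and $T_q$ via the Cauchy--Schwarz inequality, bounding $|T^\p|^2|T_q|^2-|g_0(T^\p,T_q)|^2$ from below by an explicit sum of squares $R$ weighted by $(|b_\beta|-|b_\gamma|)^2$, and then shows $R>0$ on $N(\mathbb{H})$ using the rank-$4$ condition on $(u^0,u^1,u^2,u^3)$. You instead characterize failure of local freeness by the exact quaternionic eigen-equation $Au=uq$ and solve it block by block, concluding that a solution must be supported on at most one block (plus possibly $u_l$), which collides with the same rank/orthogonality constraint from \eqref{N_H}. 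Your route is cleaner and more informative -- it pinpoints exactly where and for which $q$ degeneracy can occur ($|q|=|b_\beta|$) and avoids the inequality bookkeeping -- while the paper's estimate-based argument parallels its $SU(m)$ computation and reappears almost verbatim inside the proof of Proposition \ref{proposition locally freeness of G_2}, where one must handle $T^\p-\lambda T$ for a continuous parameter $\lambda$; there the quantitative remainder $R$ is what gets reused. Both converse directions exhibit explicit witnesses, and yours checks out: with $q=b_{\beta_1}i$ the two block equations are verified by $ji=-k$, $ki=j$ and $b_{\beta_1}^2=b_{\beta_2}^2$.

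One small gap in your converse: the witness coefficient $-b_{\beta_1}/b_{\beta_2}$ is $0/0$ when $b_{\beta_1}=b_{\beta_2}=0$, a case allowed by the hypothesis $|b_{\beta_1}|=|b_{\beta_2}|$. It is trivially repaired by taking $q=0$ and $u_{2\beta_2}=\tfrac{k}{2}$ there (then $Au=0=u\cdot 0$ on the support of $u$), but as written the formula does not cover it; the paper's witness, which is not rescaled by a ratio of the $b$'s, degenerates gracefully in this case.
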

\begin{proof}
Denote by $T^\p$ the generator of the action given by \eqref{S1 action on N(H)} on $\mathbb{H}^l$. For $q\in \op{Im}(\mathbb{H})$, $q\neq 0$, denote by $T_q$ the vector field $uq$ on $\mathbb{H}^l$, then $T^\p$ descends to a non-vanishing vector field on $\SO(l)/(\SO(l-4)\times \Sp(1))$ if and only if $T^\p$ is not parallel to any $T_q$ on $N(\mathbb{H})$, equivalently, for any $q$ as above, we need
\begin{align}
|g_0(T^\p,T_q)|^2 < |T^\p|^2|T_q|^2.
\end{align}
For the left hand side, we calculate that
\begin{align}
g_0(T^\p,T_q) = \sum_{\beta = 1}^s2b_\beta\op{Re}(u_{2\beta}q{\bar u_{2\beta-1}}),
\end{align}
so we have
\begin{align}
|g_0(T^\p,T_q)|^2 \leq 4|q|^2\left(\sum_{\beta=1}^s|b_\beta||u_{2\beta}||u_{2\beta-1}| \right)^2.
\end{align}
For the right hand side, we have
\begin{align}
|T^\p|^2 &= \sum_{\beta = 1}^s b_\beta^2(|u_{2\beta-1}|^2 + |u_{2\beta}|^2), \\
|T_q|^2  &= |q|^2\sum_{\alpha = 1}^l|u_\alpha|^2 \geq |q|^2 \sum_{\beta=1}^s(|u_{2\beta-1}|^2 + |u_{2\beta}|^2).
\end{align}
It follows that
\begin{align}
|T^\p|^2|T_q|^2 &\geq |q|^2\left(\sum_{\beta = 1}^s |b_\beta|(|u_{2\beta-1}|^2 + |u_{2\beta}|^2) \right)^2 + R \\
                &\geq 4|q|^2\left(\sum_{\beta=1}^s|b_\beta||u_{2\beta}||u_{2\beta-1}| \right)^2 + R \\
                &\geq |g_0(T^\p,T_q)|^2 + R,
\end{align}
where
\begin{align}
R = |q|^2 \sum_{1\leq \beta<\gamma \leq s}(|b_\beta| - |b_\gamma|)^2 (|u_{2\beta-1}|^2 + |u_{2\beta}|^2 )^2 (|u_{2\gamma-1}|^2 + |u_{2\gamma}|^2 )^2.
\end{align}
If $|b_\beta|$ are all distinct for $\beta = 1,\dots,s$, we will show by contradiction that $R>0$ on $N(\mathbb{H})$. Suppose $R=0$, then there is at most one $\beta$ such that $|u_{2\beta-1}|^2 + |u_{2\beta}|^2 > 0$, so there are at most three non-zero $u_\alpha$: $u_{2\beta-1},u_{2\beta},u_l$. In this case the $l$ by $4$ matrix $(u^0, u^1, u^2, u^3)$ is of rank at most $3$. On the other hand, by the description \eqref{N_H} of $N(\mathbb{H})$, $u^0,\dots,u^3$ are non-zero and mutually orthogonal, so the rank of the matrix is $4$, a contradiction. So we have shown that if $|b_\beta|$ are distinct, then the action generated by \eqref{S1 action on N(H)} is locally free on $\SO(l)/(\SO(l-4)\times \Sp(1))$.

Conversely, suppose that $|b_\beta|$ are not all distinct, then without loss of generality, we may assume that $|b_1| = |b_2|$. If $b_1=b_2\geq 0$, then taking $q=i$, $u_1=\frac{1}{2}$, $u_2=-\frac{1}{2}i$, $u_3 =\frac{1}{2}j$, $u_4=\frac{1}{2}k$ and $u_5 = \dots = u_l = 0$. One verifies that at $u\in N(\mathbb{H})$, the vector $T^\p$ is parallel to $T_q$. The other possibilities $b_1 = -b_2 \geq 0$, $b_1 = -b_2\leq 0$ and $b_1=b_2\leq 0$ can be treated in a similar way.
\end{proof}
Applying Theorem \ref{theorem twistor space}, we get
\begin{proposition}
There exist ALF hyperk\"ahler metrics on the cone over $S = \SO(l)/(\SO(l-4)\times \Sp(1))$ and ALF Calabi-Yau metrics on the canonical bundle of its twistor space.
\end{proposition}
\begin{remark}
This is our first ALF metric modeled on a Taub-NUT deformation of a non-toric hyperk\"ahler cone.
\end{remark}

Finally, we consider the case of $S=G_2/\Sp(1)$.
\begin{example}
According to \cite[Corollary 13.6.1]{boyer2008sasakian}, the 3-Sasakian manifold $\mathbb{S}^{27}$ admits an action of $\U(1)\times \Sp(1)$ such that the 3-Sasakian quotient is the compact 3-Sasakian orbifold $\mathbb{Z}_3\setminus G_2/\Sp(1)$.

More precisely, the $\Sp(1)$ acts on $\mathbb{H}^7$ by right multiplication, while $\U(1)$ acts on $\mathbb{H}^7$ by right multiplication with
\begin{align}\label{f(t)}
f(t) =
\left(
\begin{matrix}
A(t) &            & & \\
        &  A(t)    & & \\
        &            &  A(t) & \\
        &           &  & 1
\end{matrix}
\right),
\end{align}
where for $t\in [0,2\pi)$,
\begin{align}
A(t) =
\left(
\begin{matrix}
   \cos t & \sin t  \\
   -\sin t & \cos t
\end{matrix}
\right).
\end{align}
Its infinitesimal generator is given by \eqref{S1 action on N(H)} with $b_1 = b_2 = b_3 = 1$. This $\U(1)$-action has hyperk\"ahler moment map $\nu: \mathbb{H}^7\rightarrow \op{Im}(\mathbb{H}) = \mathbb{R}^3$ defined by
\begin{align}
\nu(u) = \sum_{\beta = 1}^3 (u_{2\beta - 1}\bar{u}_{2\beta} - u_{2\beta}\bar{u}_{2\beta-1}).
\end{align}
Let $N_\nu = N(\mathbb{H})\cap \nu^{-1}(0)$, then the 3-Sasakian quotient of $\mathbb{S}^{27}$ with respect to $\U(1)\times \Sp(1)$ is $N_{\nu}/(\U(1)\times \Sp(1)) = \mathbb{Z}_3\setminus G_2/\Sp(1)$.

For $b_1,b_2,b_3\in \mathbb{Z}$, consider the $\mathbb{S}^1$-action on $\mathbb{H}^7$ generated by \eqref{S1 action on N(H)}. It is clear that this action commutes with $\U(1)\times \Sp(1)$, so it descends to an $\mathbb{S}^1$-action on $\mathbb{Z}_3\setminus G_2/\Sp(1)$.
\end{example}
\begin{proposition}\label{proposition locally freeness of G_2}
If $b_1,b_2,b_3$ are all distinct, then the $\mathbb{S}^1$-action generated by \eqref{S1 action on N(H)} is locally free on $\mathbb{Z}_3\setminus G_2/\Sp(1)$.
\end{proposition}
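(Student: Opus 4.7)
The plan is to mirror the strategy of Propositions \ref{proposition locally freeness of SU(m)} and \ref{proposition locally freeness of SO(l)}: local freeness of the descended action on $N_\nu/(U(1)\times Sp(1))$ is equivalent to the assertion that, for every $u \in N_\nu$, the relation $T'(u) = c\,T_1(u) + T_q(u)$ with $(c, q) \in \mathbb{R}\times \op{Im}(\mathbb{H})$ forces $(c, q) = 0$. Writing $T''_c := T' - cT_1$, which is just \eqref{S1 action on N(H)} with parameters $(b_1-c, b_2-c, b_3-c)$ on the first six slots and $0$ on the seventh, I reduce the question to ruling out $T''_c(u) = T_q(u)$ for $(c,q) \neq 0$. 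Reading off the seventh component gives $u_7 q = 0$; in the case $u_7 \neq 0$ one then has $q = 0$ and $T''_c(u) = 0$, and the distinctness of the $b_\beta$ forces at most two of $u_1,\dots,u_6$ to be nonzero, so the $7 \times 4$ matrix $(u^p_\alpha)$ has fewer than $4$ nonzero rows, contradicting its rank-$4$ Stiefel condition from $N(\mathbb{H})$.

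So the core of the argument concerns $u_7 = 0$. Setting $A_\beta = |u_{2\beta-1}|^2 + |u_{2\beta}|^2$, the subcases with at most one $A_\beta$ positive again fail the rank condition. When all three $A_\beta$ are positive I would copy the Cauchy--Schwarz proof of Proposition \ref{proposition locally freeness of SO(l)} almost verbatim, obtaining the key lower bound
\begin{align*}
|T''_c(u)|^2|T_q(u)|^2 - |g_0(T''_c(u), T_q(u))|^2 \geq |q|^2 \sum_{\beta<\gamma} (|b_\beta - c| - |b_\gamma - c|)^2 A_\beta A_\gamma,
\end{align*}
which is strictly positive when $q \neq 0$, since three distinct reals cannot all have equal absolute distance from any single $c \in \mathbb{R}$; the subcase $q = 0$, $c \neq 0$ is immediate from $T''_c(u) = 0$ and the distinctness of the $b_\beta$.

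The main obstacle, absent from the $SU(m)$ and $SO(l)$ cases, is the remaining configuration: exactly two of the $A_\beta$ positive, say $A_1, A_2 > 0$ and $A_3 = 0$. Here the lower bound above genuinely degenerates whenever $|b_1 - c| = |b_2 - c|$. Solving $T''_c(u) = T_q(u)$ coordinate-wise shows $u_1, u_2, u_3, u_4$ are all nonzero with $|u_1|=|u_2|$ and $|u_3|=|u_4|$, and produces purely imaginary unit quaternions $x_1 := u_1^{-1} u_2$ and $x_2 := u_3^{-1} u_4 = -x_1$, together with $c = (b_1+b_2)/2$ and $q = -(b_1-c)x_1$. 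Substituting $u_2 = u_1 x_1$ and $u_4 = -u_3 x_1$ into $\nu(u) = 0$ yields the identity $u_1 x_1 \bar{u}_1 = u_3 x_1 \bar{u}_3$; taking norms gives $|u_1| = |u_3|$, and then $v := u_3^{-1} u_1$ has $|v| = 1$ and commutes with $x_1$, hence lies in $\mathbb{R} \oplus \mathbb{R} x_1$. I close the argument by invoking the $N(\mathbb{H})$ orthogonality: since $u_5 = u_6 = u_7 = 0$, the $4\times 4$ submatrix $(u^p_\alpha)_{\alpha \leq 4}$ lies in $\tfrac{1}{2} O(4)$, so its rows $u_1,\dots,u_4$ are mutually orthogonal in $\mathbb{R}^4$; the orthogonality $u_1 \perp u_3$ reduces to $|u_3|^2 \op{Re}(v) = 0$, forcing $v = \pm x_1$ and hence $u_3 = \mp u_2$, which contradicts $u_2 \perp u_3$ and completes the proof.
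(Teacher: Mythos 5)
Your proof is correct, and in the one genuinely hard case it takes a different route from the paper. Both arguments share the same skeleton: reduce local freeness to showing that $T'-cT_1$ is never of the form $T_q$ on $N_\nu$, dispose of the non-degenerate parameter values by the Cauchy--Schwarz/Lagrange-identity bound from Proposition \ref{proposition locally freeness of SO(l)} (your version of the remainder term, with $A_\beta A_\gamma$ rather than $A_\beta^2A_\gamma^2$, is in fact the correct one), and then confront the degenerate configuration $u_5=u_6=u_7=0$, $c=(b_1+b_2)/2$. The paper handles that configuration by extracting the necessary conditions ``$u_2q\bar u_1$ and $u_4q\bar u_3$ real with opposite signs,'' verifying their incompatibility at the single point $\tfrac12(1,i,k,j,0,0,0)$, and propagating to all of the locus via the transitivity of the $Sp(1)_+Sp(1)_-$ action together with the equivariance of those conditions. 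You instead solve $T''_c(u)=T_q(u)$ in closed form at an arbitrary point, obtaining $u_2=u_1x_1$, $u_4=-u_3x_1$ with $x_1$ a unit imaginary quaternion, and then play the moment-map constraint $\nu(u)=0$ (which gives $u_1x_1\bar u_1=u_3x_1\bar u_3$, hence $v=u_3^{-1}u_1$ unitary and commuting with $x_1$) against the row-orthogonality of the $\tfrac12 O(4)$ Stiefel frame to force $u_3=\mp u_2$, contradicting $u_2\perp u_3$. I checked the quaternion algebra and it is sound (your sign for $q$ differs from the paper's convention but is internally consistent). What your route buys is self-containedness: it avoids having to justify that the locus $\{u\in N_\nu: u_5=u_6=u_7=0\}$ is a single orbit of $Sp(1)_+Sp(1)_-$ and that the degeneracy conditions transform equivariantly, and it makes transparent exactly where $\nu(u)=0$ is used --- indeed at $\tfrac12(1,i,j,k,0,0,0)\in N(\mathbb{H})\setminus N_\nu$ the degenerate relation does have a solution, so the constraint is essential. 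The paper's version is shorter once the homogeneity is granted.

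One small point to tighten: local freeness requires that the relation $T'(u)=cT_1(u)+T_q(u)$ have \emph{no} solution $(c,q)$, not that any solution be $(0,0)$; in particular you must also exclude $T'(u)=0$. This is covered by exactly the rank argument you already use (distinctness of the $b_\beta$ leaves at most two nonzero entries among $u_1,\dots,u_6$, so at most three nonzero rows of $(u_\alpha^p)$), so you should simply state that the $q=0$ branch of your case analysis applies for $c=0$ as well.
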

\begin{proof}
Let $T^\p$ denote the generator of \eqref{S1 action on N(H)} on $\mathbb{H}^7$, let $T$ denote the generator of \eqref{f(t)} on $\mathbb{H}^7$, and $T_q(u) = uq$ the generator of right $\Sp(1)$ multiplication on $\mathbb{H}^7$.

If suffices to show that for any $\lambda \in \mathbb{R}$ and $0\neq q\in \op{Im}(\mathbb{H})$, the vector $T^\p-\lambda T$ is not parallel to $T_q$ on $N_\nu$.

If $|b_1-\lambda|$, $|b_2 - \lambda|$ and $|b_3-\lambda|$ are all distinct, then by the proof of Proposition \ref{proposition locally freeness of SO(l)}, $T^\p-\lambda T$ is not parallel to any $T_q$ on $N$, hence on $N_\nu$.

Since $b_1,b_2,b_3$ are distinct, it leaves the following possibility: $\lambda$ is the average of a pair $b_{\beta_1}, b_{\beta_2}$ so $b_{\beta_1} - \lambda = -(b_{\beta_2} - \lambda)$. Without loss of generality, assume that this pair is $b_1,b_2$ and assume $b_1<b_2$. We will show that in this case $T^\p-\lambda T$ is not parallel to any $T_q$ on $N_\nu$.

Examining the proof of Proposition \ref{proposition locally freeness of SO(l)}, if $T^\p-\lambda T$ is parallel to $T_q$ at $u$, then $u_5 = u_6 = u_7 = 0$, $u_2q\bar{u}_1$ and $u_4q\bar{u}_3$ are real with opposite signs.

First we consider a particular point $u_1 = \frac{1}{2}$, $u_2 = \frac{1}{2}i$, $u_3 = \frac{1}{2}k$, $u_4 = \frac{1}{2}j$, then $u_1\bar{u}_2 + u_3\bar{u}_4 = 0$ so $u\in N_\nu$. Since we want $u_2q\bar{u}_1$ and $u_4q\bar{u}_3$ to be real, the only choice of $q$ is $q = q_1i$ for some $q_1\in \mathbb{R}$, then $u_2q\bar{u}_1 = -q_1$, $u_4q\bar{u}_3 = -q_1$ and they have the same sign. So there is no $0\neq q \in \op{Im}(\mathbb{H})$ such that $T^\p-\lambda T$ is parallel to $T_q$ at $u$.

Next we turn to other possibilities of $u_1,u_2,u_3,u_4$. By description \eqref{N_H}, $(u_1,u_2,u_3,u_4)$ differs from $\frac{1}{2}(1,i,k,j)$ by a rotation of $\SO(4) = \Sp(1)_+\Sp(1)_-$.

If $u_\alpha^\p = q^\p u_\alpha$ for some $q^\p\in \Sp(1)$, then $u_{2\beta}^\p q \bar{u}_{2\beta-1}^\p = q^\p u_{2\beta} q \bar{u}_{2\beta-1} \bar{q}^\p = u_{2\beta} q \bar{u}_{2\beta-1}$ if $u_{2\beta} q \bar{u}_{2\beta-1}$ is real.

If $u_\alpha^{\p\p} = u_\alpha q^{\p\p}$ for some $q^{\p\p} \in \Sp(1)$, then $u_{2\beta}^{\p\p} q \bar{u}_{2\beta-1}^{\p\p} = u_{2\beta} (q^{\p\p}q \bar{q}^{\p\p}) \bar{u}_{2\beta-1}$.

So by the result at $\frac{1}{2}(1,i,k,j,0,0,0)$ and the discussions above, the result for general $u$ follows.
\end{proof}
Applying Theorem \ref{theorem twistor space}, we get
\begin{proposition}
There exist ALF hyperk\"ahler metrics on the cone over $S = G_2/\Sp(1)$ and ALF Calabi-Yau metrics on the canonical bundle of its twistor space.
\end{proposition}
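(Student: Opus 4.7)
The plan is to apply Theorem \ref{theorem twistor space} to $S = G_2/Sp(1)$, following the template used for the analogous propositions covering $SU(m)/S(U(m-2)\times U(1))$ and $SO(l)/(SO(l-4)\times Sp(1))$. Theorem \ref{theorem twistor space} requires two inputs: that $S$ is a regular 3-Sasakian manifold, and that it admits a locally free $\mathbb{S}^1$-symmetry. Regularity is automatic since every homogeneous 3-Sasakian manifold is regular, as noted immediately after the statement of Theorem \ref{theorem twistor space}.

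The substantive task is producing a locally free $\mathbb{S}^1$-symmetry on $G_2/Sp(1)$ itself (as opposed to on the orbifold $\mathbb{Z}_3\setminus G_2/Sp(1)$ considered in Proposition \ref{proposition locally freeness of G_2}). I would fix any triple of pairwise distinct integers $(b_1,b_2,b_3)$, say $(1,2,3)$, and use the linear $\mathbb{S}^1$-action on $\mathbb{H}^7$ generated by \eqref{S1 action on N(H)}. This action commutes with both the $U(1)$-action of \eqref{f(t)} and with the diagonal right $Sp(1)$-multiplication, so it descends to an $\mathbb{S}^1$-action on the 3-Sasakian orbifold $\mathbb{Z}_3\setminus G_2/Sp(1) = N_\nu/(U(1)\times Sp(1))$, and by Proposition \ref{proposition locally freeness of G_2} this descent is locally free. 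Since $G_2/Sp(1)$ is a three-fold 3-Sasakian cover of this orbifold with deck group $\mathbb{Z}_3$ acting by 3-Sasakian automorphisms that commute with the linear $\mathbb{S}^1$-action on $\mathbb{H}^7$, the same $\mathbb{R}$-action lifts (possibly after reparametrizing $\mathbb{S}^1$ by a finite cover) to a locally free $\mathbb{S}^1$-symmetry on $G_2/Sp(1)$.

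With both inputs in hand, Theorem \ref{theorem twistor space} delivers the desired conclusion: for every $a > 0$ the Taub-NUT deformation $g_a$ is an ALF hyperk\"ahler metric on the cone $C(G_2/Sp(1))$, and for every $c, \epsilon > 0$ there is an ALF Calabi-Yau metric in the class $\epsilon c_1(Z)$ on $\mathcal{K}_Z$ asymptotic to $c\omega_1^a$, where $Z$ is the twistor space of $G_2/Sp(1)$. The only potential subtlety is the lifting of the $\mathbb{S}^1$-symmetry from the orbifold to the cover; this is essentially bookkeeping, because the symmetry originates upstairs on the linear space $\mathbb{H}^7$ where all required commutation relations are immediate. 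The genuine content of the argument has already been supplied by Proposition \ref{proposition locally freeness of G_2}.
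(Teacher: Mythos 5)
Your proposal is correct and follows essentially the same route as the paper: the paper's proof consists of invoking Theorem \ref{theorem twistor space} (regularity being automatic for homogeneous 3-Sasakian manifolds) together with the locally free $\mathbb{S}^1$-action furnished by Proposition \ref{proposition locally freeness of G_2}. Your additional remark about lifting the action from the orbifold $\mathbb{Z}_3\setminus G_2/Sp(1)$ to its 3-fold cover $G_2/Sp(1)$ is a point the paper passes over in silence, and your treatment of it is sound.
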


\subsection{Some other examples}
If one only wants to apply the first part of Theorem \ref{theorem main theorem} then there are many situations where a 3-Sasakian manifold admits a locally free $\mathbb{S}^1$-action. In this subsection we will give some examples.

\begin{example}
Choosing $a = (a_1,\dots, a_n)\in (\mathbb{Z}\setminus{0})^n$ with $\gcd(a_\alpha,a_\beta) = 1$ for any $\alpha\neq \beta$, then the 3-Sasakian quotient $S(a)$ of $\mathbb{H}^n$ with respect to the $\mathbb{S}^1$-action given by \eqref{S1 action on Hn} will be a smooth 3-Sasakian manifold (see \cite[Theorem 13.7.6]{boyer2008sasakian}). And a proof similar to Proposition \ref{proposition locally freeness of SU(m)} shows that the $\mathbb{S}^1$-action $ue^{it}$ descends to a locally free symmetry of $S(a)$. So, there is an ALF hyperk\"ahler metric on the cone over $S(a)$ obtained as a Taub-NUT deformation.
\end{example}
This example suggests that there are potentially many examples of locally free $\mathbb{S}^1$-symmetry on toric hyperk\"ahler manifolds. As for a non-toric example, we have
\begin{example}
Take $0 < b_1 < b_2 < b_3$, with $b_i\in \mathbb{Z}$ such that $\gcd(b_{\alpha}, b_\beta) = 1$ for $\alpha \neq \beta$ and $\gcd(b_1 \pm b_2, b_1 \pm b_3) = 1$. Denote by $\U(1)_b$ the $\U(1)$-action generated by \eqref{S1 action on N(H)} on $\mathbb{H}^7$, here $b = (b_1,b_2,b_3)$. Then the 3-Sasakian quotient $\mathbb{S}^{27}/\!\!/\!\!/\ (\U(1)_b\times \Sp(1))$ is a smooth 3-Sasakian manifold (see \cite[Theorem 13.9.7]{boyer2008sasakian}). A proof similar to the proof of Proposition \ref{proposition locally freeness of G_2} shows that $f(t)$ defined by \eqref{f(t)} generates a locally free $\mathbb{S}^1$-action on $\mathbb{S}^{27}/\!\!/\!\!/\ (\U(1)_b\times \Sp(1))$. Applying Theorem \ref{theorem main theorem}, its Taub-NUT deformation is an ALF hyperk\"ahler metric on the cone over $\mathbb{S}^{27}/\!\!/\!\!/\ (\U(1)_b\times \Sp(1))$.
\end{example}

\bibliographystyle{abbrv}
\bibliography{bibnoteAconstructionofan8dimensionalALFmetricrevision}

\end{document}